\newtheorem{theorem}{Theorem}[section] 
\newtheorem{lemma}[theorem]{Lemma}     
\newtheorem{corollary}[theorem]{Corollary}
\newtheorem{proposition}[theorem]{Proposition}
\newtheorem{remark}[theorem]{Remark}
\newtheorem{definition}[theorem]{Definition}
\numberwithin{equation}{section}
\newcommand{\fa}{\mathfrak{a}}
\newcommand{\fb}{\mathfrak{b}}
\newcommand{\p}{\mathfrak{p}}
\newcommand{\fm}{\mathfrak{m}}
\newcommand{\fC}{\mathfrak{C}}
\newcommand{\cO}{\mathcal{O}}
\newcommand{\F}{\mathbb F}
\newcommand{\N}{\mathbb N}
\newcommand{\Q}{\mathbb Q}
\newcommand{\R}{\mathbb R}
\newcommand{\C}{\mathbb C}
\newcommand{\Z}{\mathbb Z}
\newcommand{\rinj}{\mathrm{inj}}
\newcommand{\rInj}{\mathrm{Inj}}
\newcommand{\rsoc}{\mathrm{soc}}
\newcommand{\Fil}{\mathrm{Fil}}
\newcommand{\ra}{\rightarrow}
\newcommand{\lra}{\longrightarrow}
\newcommand{\GL}{\mathrm{GL}}
\newcommand{\SL}{\mathrm{SL}}
\newcommand{\bFp}{\overline{\F}_p}
\newcommand{\bQp}{\overline{\Q}_p}
\newcommand{\Sym}{\mathrm{Sym}}
\providecommand{\cInd}{\mathrm{c}\textrm{-}\mathrm{Ind}}
\newcommand{\ide}{\mathbf{1}}
\newcommand{\xto}[1][]{\xrightarrow{#1}}
\newcommand{\simto}{
\xto[\sim]} 
\newcommand{\matr}[4]{\begin{pmatrix}{#1}&{#2}\\ {#3}&{#4}\end{pmatrix}}
\newcommand{\smatr}[4]{\bigl(\begin{smallmatrix} {#1}& {#2}\\ {#3}&{#4}\end{smallmatrix}\bigl)}
\DeclareMathOperator{\Ann}{{\mathrm{Ann}}}
\DeclareMathOperator{\End}{{\mathrm{End}}}
\DeclareMathOperator{\Ext}{{\mathrm{Ext}}}
\DeclareMathOperator{\Gal}{{\mathrm{Gal}}}
\DeclareMathOperator{\Hom}{{\mathrm{Hom}}}
\DeclareMathOperator{\im}{{\mathrm{Im}}}
\DeclareMathOperator{\Ind}{{\mathrm{Ind}}}
\DeclareMathOperator{\Ker}{{\mathrm{Ker}}}
\DeclareMathOperator{\Mod}{\mathrm{Mod}}
\DeclareMathOperator{\Rep}{{\mathrm{Rep}}}
\DeclareMathOperator{\soc}{{\mathrm{soc}}}
\DeclareMathOperator{\Sp}{{\mathrm{Sp}}}
\DeclareMathOperator{\Tor}{{\mathrm{Tor}}}
\def\VV{\check{\mathbb{V}}}
\def\bn{\mathbf{n}}
\def\CG{\mathfrak{C}(\mathscr{G})}
\def\CGtor{\mathfrak{C}^{\rm fg,tor}(\mathscr{G})}
\def\Tn{\mathscr{T}_1(p^{\mathbf{n}})}
\def\fC{\mathfrak{C}}
 \newcommand{\GKdim}{\mathrm{GK}\textrm{-}\dim}
\newcommand{\quash}[1]{}
\begin{document}

\title[]
{Multiplicities of cohomological automorphic forms on $\GL_2$ and mod $p$ representations of $\GL_2(\Q_p)$}

\author{Yongquan HU}\thanks{Morningside Center of
Mathematics, Academy of Mathematics and Systems Science, Chinese Academy of
Sciences, Beijing 100190, China; University of the Chinese Academy of
Sciences, Beijing 100049, China. \\
Email: \texttt{yhu@amss.ac.cn}\\
Partially supported by National Natural Science Foundation of China Grants
 11688101; China's Recruitement Program of Global Experts,  National Center for Mathematics and Inter disciplinary Sciences and Hua Loo-Keng Key Laboratory of Mathematics, Chinese Academy of Sciences}
\thanks{Mathematics Subject Classification 2010: 22E50 (Primary); 11F70, 11F75
 (Secondary).}
\date{}

\maketitle

\begin{abstract}
 We prove a new upper bound for the dimension of  the space of cohomological  automorphic forms of fixed level and growing parallel weight on $\GL_2$ over a number field which is not totally real, improving the one obtained in \cite{Mar-Annals}. 
 The main tool of the proof is the mod $p$ representation theory of $\GL_2(\Q_p)$ as started by Barthel-Livn\'e and Breuil,  and  developed by Pa\v{s}k\=unas.
\end{abstract}
  \tableofcontents

\section{Introduction}

Let $F$ be a finite extension of $\Q$ of degree $r$, and $r_1$ (resp. $2r_2$) be the number of real (resp. complex) embeddings. 
Let $F_{\infty}=F\otimes_{\Q}\R$, so that $\GL_2(F_{\infty})=\GL_2(\R)^{r_1}\times\GL_2(\C)^{r_2}$.
Let $Z_{\infty}$ be the centre of $\GL_2(F_{\infty})$, $K_f$ be a compact open subgroup of $\GL_2(\mathbb{A}_f)$ and let 
\[X=\GL_2(F)\backslash \GL_2(\mathbb{A})/K_fZ_{\infty}.\] 
If ${\bf d}=(d_1,...,d_{r_1+r_2})$ is an $(r_1+r_2)$-tuple of positive even integers, we let $S_{\bf d}(K_f)$ denote the space of cusp forms on $X$ which are of cohomological type with weight ${\bf d}$.

In this paper, we are interested in understanding the asymptotic behavior of the dimension of $S_{\bf d}(K_f)$ when ${\bf d}$ varies and $K_f$ is fixed.
Define 
\[\Delta({\bf d})=\prod_{1\leq i\leq r_1}d_i\times \prod_{r_1<i\leq r_1+r_2}d_i^2.\]
When $F$ is totally real and $K_f$ is fixed, Shimizu \cite{Shi} proved that\footnote{Given $r\geq 1$ and two functions $f,g: \N^{r}\ra \N$, we write $f\ll g$ for the usual notation $f=O(g)$, meaning that there exist $M, C>0$ such that for all ${\bf d}\in\N^{r}$ with $\max\{d_i\}>M$, $f(\mathbf{d})\leq Cg(\mathbf{d})$. We write $f\sim g$ if both $f\ll g$ and $g\ll f$ hold. In case  the constants $M, C$ depend on other inputs $*$, we write $f\sim_{*}g$ or $f\ll_{*}g$ to indicate this.} 
\[\dim_{\C} S_{\bf d}(K_f)\sim C\cdot \Delta({\bf d})\]
for some constant $C$ independent of ${\bf d}$. However, if $F$ is not totally real, the actual growth rate of $\dim_{\C}S_{\bf d}(K_f)$ is still a mystery; see the discussion below when $F$ is imaginary quadratic. 
 
The main result of this paper is the following (see Theorem \ref{theorem-global-A} for a slightly more general statement). 
\begin{theorem}\label{theorem-intro-A}
If $F$ is not totally real and ${\bf d}=(d,...,d)$ is a parallel weight with $d\geq 2$ even, then for any fixed $K_f$, we have 
\[\dim_{\C} S_{\bf d}(K_f)\ll_{\epsilon,K_f}d^{r-1/2+\epsilon}.\]
\end{theorem}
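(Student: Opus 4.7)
The plan is to transport the question to a $p$-adic setting, relating $\dim_{\C}S_{\bf d}(K_f)$ to multiplicities in Emerton's completed cohomology at an auxiliary prime $p$, and then controlling these via Pa\v{s}k\=unas's classification and Gelfand-Kirillov dimension estimates for mod $p$ representations of $\GL_2(\Q_p)$.

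\emph{Step 1 (Reduction to completed cohomology).} Choose a rational prime $p$ at which $K_f$ is hyperspecial together with a place $\p$ of $F$ above $p$ with $F_{\p}\cong\Q_p$; such a $\p$ exists by Chebotarev. Write $K_f=K_{\p}K^{\p}$ with $K_{\p}\subseteq\GL_2(\Z_p)$, fix a sufficiently large finite extension $L/\Q_p$, and denote by $\widetilde H^{\ast}(K^{\p})$ Emerton's completed cohomology. Cohomological automorphic forms of weight ${\bf d}$ appear as locally algebraic vectors in $\widetilde H^{\ast}$; this yields, in the appropriate cohomological degree $i$ (determined by the signature of $F_{\infty}$, with $r_2\geq 1$ playing a role since $F$ is not totally real), an embedding
\[
S_{\bf d}(K_f)\otimes_{\C}L\ \hookrightarrow\ \Hom_{K_{\p}}\!\bigl(V_{\p}(d)\otimes V^{\p}(d),\ \widetilde H^{i}(K^{\p})_L\bigr),
\]
where $V_{\p}(d)=\Sym^{d-2}L^2$ is the algebraic weight at $\p$ and $V^{\p}(d)$ is the algebraic representation encoding the parallel weight $d$ at the remaining $p$-adic places; in particular $\dim V^{\p}(d)\ll d^{r-1}$. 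Any saving over the Shimizu bound $d^r$ must therefore come from controlling the $V_{\p}(d)$-multiplicity.

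\emph{Step 2 (Multiplicity bound via mod $p$ theory).} Let $M:=\widetilde H^{i}(K^{\p},\cO)^{\vee}$, a finitely generated module over the Iwasawa algebra $\cO\llbracket\GL_2(\Z_p)\rrbracket$. Decompose $M$ into Hecke blocks $M[\brho]$ indexed by the finitely many residual Galois representations $\brho$ appearing in $M/\fm M$. For each block, Pa\v{s}k\=unas's classification of admissible $\GL_2(\Q_p)$-Banach representations, combined with Gelfand-Kirillov dimension estimates of the author and Pa\v{s}k\=unas, provides an upper bound on $\GKdim(M[\brho]/p)$ over $\F_p\llbracket K_{\p}\rrbracket$ (conjecturally $\leq 1$). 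A Hilbert-Samuel multiplicity argument then yields
\[
\dim_{\F_p}\Hom_{K_{\p}}\!\bigl(\Sym^{d-2}\F_p^2,\ (M[\brho]/p)^{\vee}\bigr)\ \ll_{\epsilon}\ d^{1/2+\epsilon},
\]
saving a factor of $d^{1/2}$ over the generic linear bound. Summing over the finitely many blocks and multiplying by $\dim V^{\p}(d)\ll d^{r-1}$ gives the announced bound $d^{r-1/2+\epsilon}$.

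\emph{Main obstacle.} The technical heart is Step 2: establishing the sharp Gelfand-Kirillov dimension bound on $M[\brho]/p$ uniformly in $\brho$, and converting it into the precise multiplicity estimate with exponent $1/2+\epsilon$. For generic (e.g.\ supersingular or irreducible) residual representations $\brho$ this lies within the reach of Pa\v{s}k\=unas's dictionary between two-dimensional Galois deformations and Banach representations, but the non-generic blocks have more complicated injective envelopes and require separate, more delicate analysis. Establishing uniformity across blocks -- and then extracting the precise Hilbert-Samuel estimate for $\Sym^{d-2}$ -- should be where the bulk of the technical work lies.
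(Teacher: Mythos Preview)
Your outline has the right shape (Emerton's completed cohomology plus mod $p$ representation theory of $\GL_2(\Q_p)$), but both the global setup and the local mechanism are off. On the global side, you choose a single place $\p\mid p$ with $F_{\p}\cong\Q_p$ and try to factor the weight as $V_{\p}(d)\otimes V^{\p}(d)$ so as to pay $d^{r-1}$ at the remaining places; this does not work, because completed cohomology formed at $\p$ alone does not decouple from the algebraic weights at the other $p$-adic places in a way that lets you simply multiply by $\dim V^{\p}(d)$. The paper (following Marshall) instead chooses $p\geq 5$ \emph{splitting completely} in $F$, so that the $p$-adic group is $\mathscr{G}=\prod_{i=1}^r\GL_2(\Q_p)$, and both the local estimate and the change-of-groups trick are applied at all $r$ places simultaneously. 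More importantly, you never invoke the hypothesis ``$F$ not totally real'': it enters only through Calegari--Emerton's result that $\widetilde H_{j,\Q_p}$ is a \emph{torsion} module over the Iwasawa algebra (because $\SL_2(\C)$ has no discrete series), and without this input no power saving is possible.

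On the local side, your Step~2 misidentifies the source of the $d^{1/2}$ saving. The reduction $M_j$ of $\widetilde H_j$ is not of finite length and does \emph{not} have Gelfand--Kirillov dimension $\leq 1$; as a torsion $\Lambda$-module (with $\Lambda=\F[[\mathscr{K}_1/\mathscr{Z}_1]]$) its canonical dimension is only known to be $\leq 3r-1$, and a Hilbert--Samuel count by itself gives nothing beyond the trivial bound. What the paper actually proves is
\[
\dim_{\F}H_i\bigl(\mathscr{T}_1(p^{\mathbf{n}})/\mathscr{Z}_1,\,M\bigr)\ \ll\ \kappa(\mathbf{n})^{r}\,p^{(2r-1)\kappa(\mathbf{n})}
\]
for every coadmissible torsion $M\in\mathfrak{C}(\mathscr{G})$ (Theorem~\ref{theorem-higher-r}). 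The argument uses Pa\v{s}k\=unas's description of the blocks, their projective envelopes and deformation rings to resolve $M$ by ``elementary'' modules built from $\Lambda$-free quotients of $P_{\pi^{\vee}}$ (Definition~\ref{definition-elementary}, via the Breuil--Pa\v{s}k\=unas construction in the appendix), reducing in the end to the key explicit computation $\dim_{\F}\Pi^{T_1(p^n)}\ll n$ for $\Pi$ of \emph{finite length} (Theorem~\ref{theorem-A}, relying on Morra's socle filtrations). The exponent is then halved by conjugating $\mathscr{K}_1(p^{\mathbf{n}})$ into $\mathscr{T}_1(p^{\mathbf{n}'})$ with $\kappa(\mathbf{n}')\approx\kappa(\mathbf{n})/2$ by a diagonal element of $\mathscr{G}$ (the change-of-groups trick, \S\ref{subsection-changegroups}); this, not a GK-dimension drop, is where the $d^{\,r-1/2}$ finally appears.
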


To compare our result with the previous ones, let us restrict to the case when $F$ is imaginary quadratic. In \cite{FGT}, Finis, Grunewald and Tirao  proved the bounds
\[d\ll \dim_{\C} S_{\bf d}(K_f)\ll \frac{d^2}{\ln d},\ \ {\bf d}=(d,d)\]
using base change and the trace formula respectively (the lower bound is conditional on $K_f$, see \cite{FGT}). In \cite{Mar-Annals}, Marshall has improved the upper bound  to be 
\begin{equation}\label{equation-intro-Marshall}\dim_{\C} S_{\bf d}(K_f)\ll_{\epsilon,K_f} d^{5/3+\epsilon}\end{equation}
while our Theorem \ref{theorem-intro-A} gives 
\[\dim_{\C} S_{\bf d}(K_f)\ll_{\epsilon,K_f} d^{3/2+\epsilon},\]
hence  a saving by a power $d^{1/6}$.
It is worth to point out that such a power saving is quite rare for tempered automorphic forms. Indeed, purely analytic methods, such as the trace formula, only allow to strengthen the trivial bound by a power of $\ln d$, see \cite{FGT, S-Xue}. We refer to the introduction of \cite{Mar-Annals} for a discussion on this point and a collection of known results.

 Finally, let us mention that    the experimental data of \cite{FGT} 
(when $F$ is imaginary quadratic) suggests that the actual growth rate of $\dim_{\C}S_{\bf d}(K_f)$ is probably $d$. We hope to return to this problem in future work. \medskip

Let us first explain Marshall's proof of the bound \eqref{equation-intro-Marshall}. 
It consists of two main steps, the first of which is to   convert the problem to bounding the dimension of  certain group cohomology of  Emerton's completed cohomology spaces $H^j$ (with mod $p$ coefficients) and the second one is to establish this bound. For the first step, he used the (generalized) Eichler-Shimura isomorphism, Shapiro's lemma and a fundamental spectral sequence due to Emerton. For the second, he actually  proved a bound  in a more general setting  which applies typically to  $H^j$. To make this precise, let us mention a key intermediate result in this step   (stated in the simplest version). Let $p$ be a prime number and define \[K_1=\matr{1+p\Z_p}{p\Z_p}{p\Z_p}{1+p\Z_p}, \ \ \ T_1(p^n)=\matr{1+p\Z_p}{p^n\Z_p}{p^n\Z_p}{1+p\Z_p}.\]
Let  $Z_1\cong 1+p\Z_p$ be the centre of $K_1$. Also let $\F$ be a sufficiently large finite extension of $\F_p$.  By a careful and involved analysis of the structure of finitely generated torsion modules  over the Iwasawa algebra $\Lambda:=\F[[K_1/Z_1]]$,  Marshall proved the following (\cite[Prop. 5]{Mar-Annals}): if $\Pi$ is a smooth admissible $\F$-representation of $K_1/Z_1$ which is cotorsion\footnote{That is, the Pontryagin dual $\Pi^{\vee}:=\Hom_{\F}(\Pi,\F)$ is torsion  as an $\F[[K_1/Z_1]]$-module.}, then for any $i\geq 0$, 
\begin{equation}\label{equation-intro-Marshall-2}\dim_{\F}H^i(T_1(p^n)/Z_1, \Pi)\ll p^{4n/3}.\end{equation}

Our proof of Theorem \ref{theorem-intro-A} follows closely the above strategy. Indeed, the first step is identical to Marshall's.  Our main innovation is in the second step  by improving the bound \eqref{equation-intro-Marshall-2}. 
The key observation is that Emerton's completed cohomology is not just a  representation of $K_1$, but also a representation of $\GL_2(\Q_p)$, which largely narrows the possible shape of ${H}^j$. This fact was already observed in \cite{Mar-Annals} and used \emph{once}\footnote{we mean the trick of `change of groups', see \S\ref{subsection-changegroups}} when deriving \eqref{equation-intro-Marshall} from \eqref{equation-intro-Marshall-2}. However, the mod $p$ representation theory of $\GL_2(\Q_p)$ developed by Barthel-Livn\'e \cite{BL}, Breuil \cite{Br03} and Pa\v{s}k\=unas \cite{Pa13,Pa15}, allows us to make the most of the action of $\GL_2(\Q_p)$ and prove the following result (see Theorem \ref{theorem-higher-r}).  
\begin{theorem}\label{theorem-intro-B}
Let $\Pi$ be a smooth admissible $\F$-representation of $\GL_2(\Q_p)$ on which $Z_1$ acts trivially. Assume that $\Pi$ is  cotorsion  as a $\Lambda$-module. Then for any $i\geq 0$,
\[\dim_{\F}H^i({T_1(p^n)}/Z_1,\Pi)\ll np^{n}.\]
\end{theorem}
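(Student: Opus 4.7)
The plan is to recast the cohomology bound as an Iwasawa-theoretic statement about $\Pi^\vee$, making essential use of the entire $\GL_2(\Q_p)$-action on $\Pi$ (rather than only the $K_1$-action available to Marshall). By Pontryagin duality,
\[ H^i(T_1(p^n)/Z_1, \Pi)^\vee \cong \Tor_i^{\F[[T_1(p^n)/Z_1]]}(\F, \Pi^\vee), \]
so it suffices to bound $\F$-dimensions of these Tor groups. Admissibility implies that $\Pi^\vee$ is finitely generated over the Iwasawa algebra $\Lambda = \F[[K_1/Z_1]]$ of the $3$-dimensional pro-$p$ uniform group $K_1/Z_1$.

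The core step is to show that, under the hypotheses, $\Pi^\vee$ has Gelfand--Kirillov dimension at most $1$ over $\Lambda$, which is strictly stronger than the bound $\leq 2$ forced merely by cotorsion. I would use the block decomposition of smooth admissible $\F$-representations of $\GL_2(\Q_p)$ with central character (Pa\v{s}k\=unas) to reduce to $\Pi$ lying in a single block, then invoke the classification of irreducible objects in each block: every infinite-dimensional absolutely irreducible smooth admissible $\F$-representation of $\GL_2(\Q_p)$ with central character---Steinberg twists, irreducible principal series, and supersingular representations---has dual of GK-dimension exactly $1$, while finite-dimensional constituents have GK-dim $0$. The cotorsion hypothesis rules out ``large'' summands (such as free $\Lambda$-factors) that would obstruct propagation of the GK-dim bound through a filtration of a possibly non-irreducible admissible $\Pi$.

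Given GK-dim $\leq 1$, I would bound $\dim_\F \Tor_i$ by filtering $\Pi^\vee$ by sub-$\Lambda$-modules whose successive quotients are cyclic modules $\Lambda/\fa$ of dimension $\leq 1$ (plus finite-length pieces), then applying a Koszul resolution of the trivial module $\F$ over $\F[[T_1(p^n)/Z_1]]$ to realize each $\Tor_i$ as homology of an explicit complex of length $3$. For each cyclic piece $\Lambda/\fa$ of GK-dim $1$, the Hilbert--Samuel function gives $\dim_\F (\Lambda/\fa)_{T_1(p^n)/Z_1} \ll p^n$, and combining across Koszul degrees and filtration length produces the factor of $n$ in the final bound $n p^n$.

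The main obstacle is the case $i \geq 1$: unlike $H^0$, where only Hilbert-growth information is needed, controlling higher Tor groups requires handling the syzygies of $\Pi^\vee$ and understanding how the Koszul differentials interact with the GK-dim $1$ filtration. A secondary difficulty is securing the GK-dim $\leq 1$ bound for a non-irreducible admissible $\Pi$: a priori extensions could increase GK-dim, so the block structure together with the cotorsion hypothesis must be leveraged to rule this out and to verify that the subquotients in the filtration remain GK-dim $\leq 1$.
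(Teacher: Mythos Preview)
Your central claim---that the hypotheses force $\delta_\Lambda(\Pi^\vee)\leq 1$---is false. Cotorsion gives only $\delta_\Lambda(\Pi^\vee)\leq 2$, and the value $2$ is attained: choose via Proposition~\ref{proposition-BP-adapted} a quotient $\overline P$ of some $P_{\pi^\vee}$ in $\fC(G)$ that is finite free over $\Lambda$, and set $M=\overline P/a\overline P$ for a nonzero $a\in\fm_R$ acting injectively on $\overline P$. This $M$ is coadmissible and torsion, yet Proposition~\ref{proposition-GKdim} applied to the injective endomorphism $a$ of $\overline P$ gives $\delta_\Lambda(M)=\delta_\Lambda(\overline P)-1=2$. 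Such an $M$ has infinite length in $\fC(G)$: every irreducible subquotient indeed has $\delta\leq 1$ (Theorem~\ref{theorem-Paskunas-Morra}), but the max formula \eqref{equation-delta-short} only propagates this through \emph{finite} filtrations. Your argument conflates ``admissible'' with ``finite length''.

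The paper's route is different from a GK-dimension argument. It first proves the much sharper estimate $\dim_\F\Pi^{T_1(p^n)}\ll n$ for $\Pi$ of finite length (Theorem~\ref{theorem-A}), using Morra's near-uniserial description of irreducibles together with the double-coset count of Lemma~\ref{lemma-coset}. For a general coadmissible torsion $M$ it then exploits the action of the center $R$ of $\End_{\fC}(P_{\pi^\vee})$: one covers $M$ by an elementary module $\overline P/a\overline P$, extends $a$ to a system of parameters of $\overline R$, and invokes Lemma~\ref{lemma-keystep-r}, which shows that replacing $M$ by $M/\phi(M)$ for $\phi\in R$ costs at most a factor $p^n$ in $T_1(p^n)$-coinvariants. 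One such step reaches finite length, producing $n\cdot p^n$. The higher degrees are handled by resolving $M$ inside $\fC(G)$ by elementary modules, not by a Koszul complex over $\Lambda$: a filtration of $\Pi^\vee$ by $\Lambda$-cyclic pieces $\Lambda/\fa$ carries no $G$-action, so the sharp degree-$0$ input would not apply to its graded pieces---this is exactly the obstruction the introduction warns about.
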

We obtain the above bound by using numerous results of the mod $p$ representation theory of $\GL_2(\Q_p)$.  First, the classification theorems of \cite{BL} and \cite{Br03} allow us to control the dimension of invariants for irreducible $\pi$ (i.e. when $i=0$), in which case we prove 
\begin{equation}\label{equation-Hu-irr}\dim_{\F}H^0(T_1(p^n)/Z_1,\pi)\ll n.\end{equation}
In fact, to do this we also need  more refined structure theorems due to Morra \cite{Mor11,Mor}. Second, the theory of Pa\v{s}k\=unas \cite{Pa13} allows us to pass to general admissible cotorsion representations. To explain this, let us assume moreover that all the Jordan-H\"older factors of $\Pi$ are isomorphic to a given supersingular irreducible representation $\pi$.  In \cite{Pa13} Pa\v{s}k\=unas  studied the universal deformation of $\pi^{\vee}$ and showed that the universal deformation space (with mod $p$ coefficients) is three dimensional. 
We show that the admissibility  and cotorsion conditions imposed on $\Pi$   force that $\Pi^{\vee}$ is a deformation of $\pi^{\vee}$ over a one-dimensional space. Knowing this, the case $i=0$ of Theorem \ref{theorem-intro-B} follows easily from \eqref{equation-Hu-irr}.

To  prove Theorem \ref{theorem-intro-B}  for higher cohomology degrees and to generalize it to a finite product of $\GL_2(\Q_p)$ (which is essential for our application), we need to solve several complications caused by the additional requirement of carrying an action of $\GL_2(\Q_p)$. 
 In \cite{CE-Annals,Mar-Annals} the higher cohomology degree case is treated by the standard dimension-shifting argument,  for which one needs to consider admissible representations $\Pi$ which are  not necessarily cotorsion, that is, the Pontryagin dual $\Pi^{\vee}$  has a positive rank  over $\Lambda$. Using the bound in the torsion case, one is reduced to consider torsion-free $\Pi^{\vee}$. The usual argument (as in \cite[\S3.2]{Mar-Annals}) uses the existence of morphisms $\Lambda^s\ra \Pi^{\vee}$ and $\Pi^{\vee}\ra \Lambda^s$ with torsion cokernels, where $s$ is the $\Lambda$-rank of $\Pi^{\vee}$. However, these are only  morphisms of $\Lambda$-modules, so the bound for torsion modules does not apply to these cokernels. This issue makes the cohomology of general torsion-free modules difficult to control. To solve this, we introduce a special class of (coadmissible) torsion modules,  called \emph{elementary}, whose higher degree cohomologies are zero except in degree $i=1$ and can be determined from its degree $0$ cohomology. We show that $\Pi^{\vee}$ has a resolution by elementary torsion modules. The proof uses a generalization of  an important construction of   Breuil-Pa\v{s}k\=unas \cite{BP} for $\GL_2(\Q_p)$ to a finite product of $\GL_2(\Q_p)$, which we carry out in the appendix \S\ref{section-appendix}.

\medskip


 \subsubsection*{Notation} Throughout the paper, we fix a prime $p$ and a finite extension $\F$ over $\F_p$ taken to be sufficiently large.

\subsubsection*{Acknowledgement} 
Our debt to the work of Vytautas Pa\v{s}k\=unas and Simon Marshall will be obvious to the reader.  We also thank Marshall for his comments on an earlier draft. We thank Pa\v{s}k\=unas for pointing out a mistake in a previous version and for many discussions around it. We also thank Pa\v{s}k\={u}nas and Lue Pan for discussions around the material in \S5.1.3. Finally we thank the referee whose  careful reading of the manuscript and insightful remarks helped  to make the paper substantially more readable.  

\section{Non-commutative Iwasawa algebras}
\label{section-Iwasawa}

Let $G$ be a $p$-adic analytic group of dimension $d$ and  $G_0$ be  an open compact subgroup of $G$. We assume that $G_0$ is uniform and pro-$p$.  Let $\Lambda$  be the \emph{Iwasawa algebra} of $G_0$ over $\F$, namely
\[\Lambda :=\F[[G_0]]=\varprojlim \F[G_0/N]\]
where the inverse limit is taken over  the open normal subgroups $N$ of $G_0$. It is a Noetherian local integral domain (\cite[\S3.6]{AB}).
A  finitely generated (left) $\Lambda$-module is said to have \emph{codimension} $c$ if $\Ext^i_{\Lambda}(M,\Lambda)=0$ for all $i<c$ and is non-zero for $i=c$; the codimension of the zero module is defined to be $\infty$. We denote the codimension by $j_{\Lambda}(M)$. If $M$ is non-zero, then $j_{\Lambda}(M)\leq d$. For our purpose, it is more convenient to use the   \emph{canonical dimension} of $M$ defined by
\[\delta_{\Lambda}(M)=d-j_{\Lambda}(M).\]
If $0\ra M'\ra M\ra M''\ra0$ is a short exact sequence of finitely generated $\Lambda$-modules, then 
\begin{equation}\label{equation-delta-short}\delta_{\Lambda}(M)=\max\{\delta_{\Lambda}(M'),\delta_{\Lambda}(M'')\}.\end{equation}

  If $M$ is a finitely generated $\Lambda$-module, we have the notion of \emph{Gelfand-Kirillov} dimension of $M$, defined to be the growth rate of the function $\dim_{\F}M/J^nM$, where $J$ denotes the maximal ideal of $\Lambda$.  We have the following important fact (\cite[\S5.4]{AB}).
\begin{theorem}\label{theorem-canonical=GK}
  For all finitely generated $\Lambda$-modules $M$, the canonical dimension and the Gelfand-Kirillov dimension of $M$ coincide.
\end{theorem}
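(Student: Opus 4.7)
The plan is to deduce the equality from the filtered/graded structure of $\Lambda$. First, equip $\Lambda$ with the $J$-adic filtration. Since $G_0$ is uniform pro-$p$ of dimension $d$, Lazard's theory identifies the associated graded ring $\mathrm{gr}_J\Lambda$ with a commutative polynomial ring $\F[x_1,\dots,x_d]$ in $d$ variables. This is the essential structural input: it converts questions about $\Lambda$ to commutative-algebra questions, up to control of the passage between filtered module and graded module.

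Next, for a finitely generated $\Lambda$-module $M$, choose a good filtration $F_\bullet M$, so that $\mathrm{gr}\,M$ is finitely generated over $\mathrm{gr}_J\Lambda$. On the commutative polynomial ring side, the Gelfand--Kirillov dimension of $M$, computed as the growth rate of $\dim_{\F}M/J^nM$, agrees with the Krull dimension of the support of $\mathrm{gr}\,M$ in $\mathrm{Spec}(\mathrm{gr}_J\Lambda)$, by the standard Hilbert--Samuel theory for finitely generated graded modules over $\F[x_1,\dots,x_d]$. In particular the quantity is independent of the chosen good filtration.

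On the codimension side, the point is that $\Lambda$ is Auslander regular of global dimension $d$, inherited from the Auslander regularity of $\mathrm{gr}_J\Lambda$ together with the fact that the filtration is separated and complete. The filtration on $\Lambda$ induces a compatible filtration on any free resolution of $M$, from which one extracts a convergent spectral sequence computing $\mathrm{Ext}^\bullet_\Lambda(M,\Lambda)$ from $\mathrm{Ext}^\bullet_{\mathrm{gr}\Lambda}(\mathrm{gr}\,M,\mathrm{gr}\,\Lambda)$. The standard Auslander-regular filtered-ring technology then yields the key identity
\[
j_\Lambda(M)\;=\;j_{\mathrm{gr}_J\Lambda}(\mathrm{gr}\,M).
\]
On the right-hand side, commutative algebra gives $j_{\mathrm{gr}\Lambda}(\mathrm{gr}\,M)=d-\dim\mathrm{supp}(\mathrm{gr}\,M)$. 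Combining these,
\[
\delta_\Lambda(M)\;=\;d-j_\Lambda(M)\;=\;\dim\mathrm{supp}(\mathrm{gr}\,M)\;=\;\GKdim(M),
\]
which is the desired equality.

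The main obstacle is the dimension-preservation statement $j_\Lambda(M)=j_{\mathrm{gr}\Lambda}(\mathrm{gr}\,M)$: one must verify that the spectral sequence above is well-behaved enough (convergence, no shift in vanishing degrees) to transfer the vanishing pattern of $\mathrm{Ext}^i_{\mathrm{gr}\Lambda}(\mathrm{gr}\,M,\mathrm{gr}\,\Lambda)$ to that of $\mathrm{Ext}^i_\Lambda(M,\Lambda)$. This is where completeness and separatedness of the $J$-adic filtration, together with Auslander regularity of $\mathrm{gr}_J\Lambda$, are genuinely needed; once this step is in hand, the rest is classical commutative graded algebra.
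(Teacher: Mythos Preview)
The paper does not prove this theorem; it simply states it as a known fact and cites \cite[\S5.4]{AB} (Ardakov--Brown's survey). Your sketch is essentially the standard argument underlying that reference: Lazard's identification of $\mathrm{gr}_J\Lambda$ with a polynomial ring, Auslander regularity transferred from graded to filtered via the filtered--graded spectral sequence, and Hilbert--Samuel on the commutative side. So there is nothing to compare against in the paper itself, and your outline is correct as a summary of why the cited result holds.
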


For $n\geq 0$, define inductively $G_{n+1}:=\overline{G_{n}^p[G_{n},G_0]}$ which are normal subgroups of $G_0$; the decreasing chain   $G_0\supseteq G_1\supseteq \cdots$ is called  the \emph{lower $p$-series} of $G_0$, see \cite[\S2.4]{AB}. Since $G_0$ is uniform, we have $|G_n:G_{n+1}|=p^d$. With this notation, the utility of   Theorem \ref{theorem-canonical=GK} is the following result (see \cite[Prop. 2.17]{EP18} and its proof).  
\begin{corollary}\label{corollary-CE}
Let $M$ be a finitely generated $\Lambda$-module with $\delta_{\Lambda}(M)=c$. Then there are real numbers $a\geq b>0$ such that
\begin{equation}\label{equation-CE}
bp^{cn}+O(p^{(c-1)n})\leq \dim_{\F}H_0(G_n,M)\leq ap^{cn}+O(p^{(c-1)n}).\end{equation}
Moreover, we have a uniform lower bound $b\geq 1/c!$.
\end{corollary}

\begin{proposition}\label{proposition-GKdim}
Let $M$ be a finitely generated $\Lambda$-module and  $\phi:M\ra M$ be an endomorphism of $\Lambda$-modules. Assume that $\bigcap_{k\geq 1}\phi^k(M)=0$. Then one of the following holds:
\begin{enumerate}
\item[(i)]  $\phi$ is nilpotent and $\delta_{\Lambda}(M)=\delta_{\Lambda}(M/\phi(M))$;
\item[(ii)] $\phi$ is not nilpotent and for $k_0\gg1$, 
\begin{equation}\label{equation-delta-phi-M}
\delta_{\Lambda}(M)=\max\big\{\delta_{\Lambda}(M/\phi(M)), \delta_{\Lambda}(\phi^{k_0}(M)/\phi^{k_0+1}(M))+1\big\}.\end{equation}
\end{enumerate}
In any case, $\delta_{\Lambda}(M)\leq \delta_{\Lambda}(M/\phi(M))+1$.
\end{proposition}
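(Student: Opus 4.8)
The plan is to analyze the $\phi$-adic behavior of $M$ by passing to the associated graded object and comparing canonical dimensions via the exact sequence formula \eqref{equation-delta-short}. First I would dispose of the nilpotent case: if $\phi^N=0$ for some $N$, then $M$ admits a finite filtration $M\supseteq \phi(M)\supseteq \phi^2(M)\supseteq\cdots\supseteq \phi^N(M)=0$ whose graded pieces are all quotients of $M/\phi(M)$ (since $\phi$ induces surjections $\phi^k(M)/\phi^{k+1}(M)\twoheadrightarrow \phi^{k+1}(M)/\phi^{k+2}(M)$). Applying \eqref{equation-delta-short} repeatedly gives $\delta_\Lambda(M)=\max_k \delta_\Lambda(\phi^k(M)/\phi^{k+1}(M))=\delta_\Lambda(M/\phi(M))$, which is case (i).

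For the non-nilpotent case, the key point is to stabilize the canonical dimension of the consecutive quotients $Q_k:=\phi^k(M)/\phi^{k+1}(M)$. Since $\Lambda$ is Noetherian and each $Q_k$ is a quotient of $M/\phi(M)$, the sequence $\delta_\Lambda(Q_k)$ is bounded; I would argue it is eventually constant — say equal to some $e$ for $k\ge k_0$ — using that $\phi$ induces surjections $Q_k\twoheadrightarrow Q_{k+1}$ so $\delta_\Lambda(Q_k)$ is non-increasing in $k$, hence stabilizes. Now set $M_{k_0}:=\phi^{k_0}(M)$ and consider the $\Lambda[\phi]$-module structure: the hypothesis $\bigcap_k \phi^k(M)=0$ means $M_{k_0}$ is separated for the $\phi$-adic filtration, and $\phi$ acts injectively on $\mathrm{gr}_\phi(M_{k_0})=\bigoplus_{k\ge k_0}Q_k$ in the sense that multiplication by the degree-one element is surjective with graded pieces $Q_k$. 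The idea is that $\mathrm{gr}_\phi(M_{k_0})$ behaves like a module over $\Lambda[t]$ supported in a way that adds $1$ to the dimension: more precisely, I would show $\delta_\Lambda(M_{k_0}) = e+1$ where $e=\delta_\Lambda(Q_{k_0})$, by relating the Hilbert-type growth of $M_{k_0}/J^n M_{k_0}$ to that of the $Q_k/J^n Q_k$ (using Theorem \ref{theorem-canonical=GK} identifying $\delta_\Lambda$ with the Gelfand–Kirillov dimension, and the additivity of the leading coefficient $\lambda$ from Corollary \ref{corollary-CE} and the Artin–Rees remark). Then from the exact sequence $0\to M_{k_0}\to M\to M/M_{k_0}\to 0$, together with the fact that $M/M_{k_0}$ is filtered by $Q_0,\dots,Q_{k_0-1}$, equation \eqref{equation-delta-short} yields $\delta_\Lambda(M)=\max\{\delta_\Lambda(M/M_{k_0}),\ \delta_\Lambda(M_{k_0})\}=\max\{\max_{k<k_0}\delta_\Lambda(Q_k),\ e+1\}$. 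Finally, observing that $\max_{k\ge 0}\delta_\Lambda(Q_k)=\delta_\Lambda(M/\phi(M))$ (again by surjectivity $Q_0\twoheadrightarrow Q_k$ for... wait, the surjection goes $Q_0\twoheadrightarrow Q_1\twoheadrightarrow\cdots$, so $\delta_\Lambda(Q_0)=\delta_\Lambda(M/\phi(M))$ is the maximum), the right-hand side collapses to $\max\{\delta_\Lambda(M/\phi(M)),\ \delta_\Lambda(Q_{k_0})+1\}$, which is \eqref{equation-delta-phi-M}. The last assertion $\delta_\Lambda(M)\le \delta_\Lambda(M/\phi(M))+1$ is then immediate in both cases, since $e=\delta_\Lambda(Q_{k_0})\le \delta_\Lambda(Q_0)=\delta_\Lambda(M/\phi(M))$.

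The main obstacle I anticipate is the claim that $\delta_\Lambda(\phi^{k_0}(M))=\delta_\Lambda(Q_{k_0})+1$, i.e. that passing to the $\phi$-adic graded module, once the graded pieces have stabilized, genuinely raises the canonical dimension by exactly one rather than leaving it unchanged. The lower bound $\delta_\Lambda(M_{k_0})\ge \delta_\Lambda(Q_{k_0})$ is easy (quotient), but the increment requires controlling infinitely many graded pieces simultaneously. I would handle this by working with the Rees-type module $\bigoplus_{k\ge k_0}\phi^k(M)$ over $\Lambda[t]$ — a $(d+1)$-dimensional Iwasawa-type algebra — showing it is finitely generated (this uses that $M$ is Noetherian and that the pieces stabilize), computing its canonical dimension over $\Lambda[t]$ as $\delta_\Lambda(Q_{k_0})+1$, and then descending to $\Lambda$; alternatively, a direct estimate on $\dim_\F M_{k_0}/J^n M_{k_0}$ via the filtration by the $Q_k$, bounding $\dim_\F Q_k/J^nQ_k = \lambda(Q_{k_0}) p^{en}+O(p^{(e-1)n})$ uniformly in $k\ge k_0$ and summing, gives growth of order $n\cdot p^{en}$, i.e. canonical dimension $e+1$. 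Making the error terms uniform in $k$ is the delicate point, and I expect this is where the hypothesis $\bigcap_k\phi^k(M)=0$ (ensuring the filtration is exhaustive with trivial intersection, so no "hidden" contribution escapes) and the Artin–Rees property of the $J$-adic filtration are both essential.
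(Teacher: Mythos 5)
Your reduction to the key claim $\delta_\Lambda(\phi^{k_0}(M)) = \delta_\Lambda(Q_{k_0}) + 1$ (where $Q_k := \phi^k(M)/\phi^{k+1}(M)$) is the right decomposition, and the nilpotent case together with the treatment of $M/\phi^{k_0}(M)$ via its finite $\phi$-adic filtration matches the paper. But two points in the non-nilpotent case do not hold up. First, you choose $k_0$ by stabilizing $\delta_\Lambda(Q_k)$ and then assert that $\phi$ acts injectively on $\mathrm{gr}_\phi(M_{k_0})$; that does not follow. The surjections $Q_k\twoheadrightarrow Q_{k+1}$ can have nonzero kernels of canonical dimension $\le e$ even when $\delta_\Lambda(Q_k)=\delta_\Lambda(Q_{k+1})=e$, so nothing forces those maps to be isomorphisms. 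The paper instead invokes Lemma \ref{lemma-phi}: since $\Lambda$ is Noetherian, $\ker(\phi^k)$ stabilizes, so for $k_0\gg1$ the map $\phi$ is genuinely injective on $\phi^{k_0}(M)$. That is the strictly stronger input one needs, and with it $\phi$ induces isomorphisms $Q_k\simto Q_{k+1}$, which is what would make a Rees-type picture clean.

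Second, and this is the real gap, the increment $\delta_\Lambda(\phi^{k_0}(M))=\delta_\Lambda(Q_{k_0})+1$ — which you correctly identify as the crux — is left as a sketch, and the two fallback arguments you propose do not close it. The direct Hilbert-function estimate hits exactly the uniformity problem you flag: the graded pieces of $M_{k_0}/J^nM_{k_0}$ for the $\phi$-adic filtration are $\phi^kM_{k_0}/\big(\phi^{k+1}M_{k_0}+\phi^kM_{k_0}\cap J^nM_{k_0}\big)$, and replacing $\phi^kM_{k_0}\cap J^nM_{k_0}$ by $J^{n-c}\phi^kM_{k_0}$ via Artin--Rees introduces a constant $c$ that a priori depends on $k$, so the lower bound $\delta\ge e+1$ does not follow from the sum. (The upper bound $\le e+1$ is fine once one arranges $\phi(M)\subset JM$, which is the content of Remark \ref{remark-condition-phi}.) The paper does not reprove this increment at all: after reducing via Lemma \ref{lemma-phi} to the case $\phi$ injective with $\bigcap_k\phi^k(M)=0$, it simply cites \cite[Lem.~A.15]{GN}, which is precisely the statement that such an endomorphism with $\phi(M)\subset JM$ raises the canonical dimension by one. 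So your outline is structurally the same as the paper's, but the core lemma remains unproved in your proposal, whereas the paper outsources it.
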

\begin{remark}\label{remark-condition-phi}
It would be more natural to impose the condition $\phi(M)\subset JM$. We consider the present one for the following reasons. On the one hand,  in practice we do need to deal with $\phi$ such that $\bigcap_{k\geq1}\phi^k(M)=0$ but $\phi(M)\nsubseteq JM$. On the other hand, since $M$ is finitely generated, $M/JM$ is finite dimensional over $\F$, hence the condition $\bigcap_{k\geq 1}\phi^k(M)=0$ implies $\phi^k(M)\subset JM$ for $k\gg1$.
\end{remark}

\begin{proof}
We assume first that $\phi$ is nilpotent, say $\phi^{k_0}=0$ for some $k_0\geq 1$.  Then $M$ admits a finite filtration by $\phi^k(M)$ (for $0\leq k\leq k_0$). Since each of the graded pieces is a quotient of $M/\phi(M)$, the assertion follows from \eqref{equation-delta-short}.
 
Now assume that $\phi$ is not nilpotent, so by Lemma \ref{lemma-phi} below $\phi$ induces an injection $\phi^{k_0}(M)\ra \phi^{k_0}(M)$ for some $k_0\gg1$. It is clear that the RHS of \eqref{equation-delta-phi-M} does not depend on the choice of $k_0$. The above argument shows that \[\delta_{\Lambda}(M/\phi^{k_0}(M))=\delta_{\Lambda}(M/\phi(M)).\] Hence, by \eqref{equation-delta-short} applied to the short exact sequence $0\ra \phi^{k_0}(M)\ra M\ra M/\phi^{k_0}(M)\ra0$, it suffices to show 
\[\delta_{\Lambda}(\phi^{k_0}(M))=\delta_{\Lambda}\big(\phi^{k_0}(M)/\phi^{k_0+1}(M)\big)+1.\]
That is, by replacing $M$ by $\phi^{k_0}(M)$, we may assume $\phi$ is injective and need to show $\delta_{\Lambda}(M)=\delta_{\Lambda}(M/\phi(M))+1$. Under the assumption $\bigcap_{k\geq 1}\phi^k(M)=0$, this follows from \cite[Lem. A.15]{GN} using Remark \ref{remark-condition-phi}.   
\end{proof}

\begin{lemma}\label{lemma-phi}
Let $M$ be a finitely generated $\Lambda$-module and $\phi:M\ra M$ be an endomorphism of $\Lambda$-modules.  Then one of the following holds:
\begin{enumerate}
\item[(i)] $\phi$ is nilpotent; 
\item[(ii)] $\phi$ is not nilpotent  and for $k_0\gg0$, $\phi$ induces an injection $\phi^{k_0}(M)\ra \phi^{k_0}(M)$.
\end{enumerate}
\end{lemma}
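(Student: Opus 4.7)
The plan is to rely on the fact that, since $G_0$ is uniform pro-$p$, the Iwasawa algebra $\Lambda=\F[[G_0]]$ is (left and right) Noetherian, so the finitely generated $\Lambda$-module $M$ is a Noetherian $\Lambda$-module. The only input I would use is the ascending chain condition on submodules of $M$ applied to the chain of kernels
\[\ker(\phi)\subseteq\ker(\phi^2)\subseteq\ker(\phi^3)\subseteq\cdots,\]
which must stabilize at some index $k_0\geq 1$, i.e.\ $\ker(\phi^{k_0})=\ker(\phi^{k_0+j})$ for all $j\geq 0$.

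Once stabilization is established, I would check directly that $\phi$ restricted to $\phi^{k_0}(M)$ is injective: given $x=\phi^{k_0}(y)\in\phi^{k_0}(M)$ with $\phi(x)=0$, one has $\phi^{k_0+1}(y)=0$, hence $y\in\ker(\phi^{k_0+1})=\ker(\phi^{k_0})$, and therefore $x=\phi^{k_0}(y)=0$. The same argument works with $k_0$ replaced by any larger $k$, so $\phi$ remains injective on every further image $\phi^{k}(M)$. This already settles the dichotomy: if $\phi$ is not nilpotent this is precisely (ii); if $\phi$ is nilpotent then for $k$ large $\phi^{k}(M)=0$ and (i) holds trivially. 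There is no real obstacle—this is the ``Noetherian half'' of a Fitting-type decomposition, where one cannot expect stabilization of images (which would require $M$ to be Artinian as well), but stabilization of kernels is exactly what the conclusion of the lemma needs.
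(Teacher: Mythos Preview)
Your proof is correct and follows essentially the same approach as the paper: both use the Noetherian property of $\Lambda$ (hence of $M$) to stabilize the ascending chain of kernels $\ker(\phi^k)$ at some $k_0$, and then check directly that $\phi$ is injective on $\phi^{k_0}(M)$ via the identity $\ker(\phi^{k_0+1})=\ker(\phi^{k_0})$. Your write-up of the injectivity step is in fact more explicit than the paper's (which contains an apparent typo, writing $M[\phi^{k_0}]$ where $\phi^{k_0}(M)$ is meant).
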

\begin{proof}
Let $M[\phi^{\infty}]\subset M$ denote the submodule $\cup_{k\geq 1}\ker(\phi^k)$. Since $\Lambda$ is Noetherian, $M[\phi^{\infty}]$ is finitely generated, so there exists $k_0\gg 1$ such that $M[\phi^{\infty}]=M[\phi^{k_0}]$. If $M=M[\phi^{k_0}]$, then $\phi$ is nilpotent; otherwise, $\phi$ is not nilpotent, and $\phi:\phi^{k_0}(M)\ra \phi^{k_0}(M)$ is injective. 
\end{proof}

\subsection{Torsion vs torsion-free}  As recalled above, $\Lambda$ is a Noetherian local integral domain. Let $\mathcal{L}$ be the skew field of fractions of $\Lambda$ (\cite[\S3.6]{AB}). If $M$ is a finitely generated $\Lambda$-module, then $\mathcal{L}\otimes_{\Lambda} M$ is a finite dimensional $\mathcal{L}$-vector space, and we define the rank of $M$ to be the dimension  of this vector space.  We see that rank is additive in short exact sequences and that $M$ has rank $0$ if and only if $M$ is  torsion as a $\Lambda$-module. 

Let $\cO=W(\F)$ be the ring of Witt vectors with coefficients in $\F$. 
Similar to $\Lambda=\F[[G_0]]$, we may form the Iwasawa algebras 
\[\widetilde{\Lambda}:=\cO[[G_0]]=\varprojlim_{N\triangleleft G_0\ \mathrm{open}}\cO[G_0/N],\ \ \ \widetilde{\Lambda}_{\Q_p}=\widetilde{\Lambda}\otimes_{\Z_p}\Q_p.\]
They are both integral domains. 
 Let  $\mathcal{L}_{\Q_p}$ be the skew field of fractions of $\widetilde{\Lambda}_{\Q_p}$. 
If $M$ is a finitely generated module over $\widetilde{\Lambda}_{\Q_p}$, we define its rank as above and the analogous facts hold.
 
Recall the following simple fact, see \cite[Lem. 1.17]{CE-survey}.
\begin{lemma}\label{lemma-p-free}
Let $M$ be a finite generated $\widetilde{\Lambda}$-module which is furthermore $p$-torsion free. Then $M\otimes_{\Z_p}\Q_p$ is  a torsion $\widetilde{\Lambda}_{\Q_p}$-module if and only if $M\otimes_{\Z_p}\F_p$ is  a torsion $\Lambda$-module. 
\end{lemma}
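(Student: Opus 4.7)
The plan is to reduce both torsion conditions to equalities of canonical dimensions, and then exploit the endomorphism $\phi = p\cdot$ on $M$. Set $d = \dim G$, so that $\delta_{\widetilde{\Lambda}}(\widetilde{\Lambda}) = d+1$ and $\delta_{\Lambda}(\Lambda) = d$. Since $\widetilde{\Lambda}$ is an integral domain (as $G_0$ is uniform pro-$p$) and $\widetilde{\Lambda}_{\Q_p}$ shares its field of fractions $\mathcal{L}_{\Q_p}$, the rank of $M$ over $\widetilde{\Lambda}$ equals the rank of $M\otimes_{\Z_p}\Q_p$ over $\widetilde{\Lambda}_{\Q_p}$. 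Hence the first torsion condition is equivalent to $\delta_{\widetilde{\Lambda}}(M) \leq d$, and the second is equivalent to $\delta_{\Lambda}(M/pM) \leq d-1$. So it suffices to prove
$$\delta_{\widetilde{\Lambda}}(M) \;=\; \delta_{\Lambda}(M/pM) + 1.$$

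First, I would apply Proposition \ref{proposition-GKdim} (whose statement and proof go through verbatim over $\widetilde{\Lambda}$, since \cite[Lem.~A.15]{GN} is formulated in this generality) to the endomorphism $\phi = p\cdot$ on $M$. The hypothesis that $\phi$ be injective is exactly the $p$-torsion free assumption, and $\bigcap_{k\geq 1} p^kM = 0$ follows from Krull's intersection theorem, since $p$ lies in the maximal ideal of the local ring $\widetilde{\Lambda}$ and $M$ is finitely generated over the Noetherian ring $\widetilde{\Lambda}$. Because $\phi$ is injective, for any $k_0$ multiplication by $p^{k_0}$ induces an isomorphism $M/pM \simto p^{k_0}M/p^{k_0+1}M$, so case (ii) of the proposition collapses the maximum to a single term and yields $\delta_{\widetilde{\Lambda}}(M) = \delta_{\widetilde{\Lambda}}(M/pM) + 1$.

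Next, I would show that $\delta_{\widetilde{\Lambda}}(M/pM) = \delta_{\Lambda}(M/pM)$ using Theorem \ref{theorem-canonical=GK}. Since $p$ acts as zero on $M/pM$, the maximal ideal $\widetilde{J}$ of $\widetilde{\Lambda}$ and the maximal ideal $J = \widetilde{J}/(p)$ of $\Lambda$ induce the same filtration on $M/pM$, i.e.\ $\widetilde{J}^n(M/pM) = J^n(M/pM)$ for all $n$. Consequently the growth rate of $\dim_{\F}(M/pM)/J^n(M/pM)$ is the same whether $M/pM$ is viewed over $\widetilde{\Lambda}$ or over $\Lambda$, so its Gelfand--Kirillov dimension is independent of the algebra; Theorem \ref{theorem-canonical=GK} then identifies both canonical dimensions with this common value.

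Combining the two displays gives $\delta_{\widetilde{\Lambda}}(M) = \delta_{\Lambda}(M/pM) + 1$, which is exactly what was required. The only real content here is the application of Proposition \ref{proposition-GKdim} to the multiplication-by-$p$ endomorphism; the remaining work is bookkeeping with the canonical dimension and with the elementary interpretation of torsion in an integral Iwasawa algebra.
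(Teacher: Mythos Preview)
Your argument is correct. The paper does not actually give a proof here; it simply cites \cite[Lem.~1.17]{CE-survey}. Your route via canonical dimensions is a genuine (and self-contained) alternative: you reduce both torsion statements to the single equality $\delta_{\widetilde{\Lambda}}(M)=\delta_{\Lambda}(M/pM)+1$, then obtain this equality by applying the $\widetilde{\Lambda}$-version of Proposition~\ref{proposition-GKdim} to the injective endomorphism $\phi=p$ and identifying $\delta_{\widetilde{\Lambda}}(M/pM)$ with $\delta_{\Lambda}(M/pM)$ through the Gelfand--Kirillov interpretation. All three ingredients are sound: the extension of Proposition~\ref{proposition-GKdim} to $\widetilde{\Lambda}$ is unproblematic (the proof only uses Auslander regularity, Noetherianity, and \cite[Lem.~A.15]{GN}, all valid over $\cO[[G_0]]$); Krull's intersection theorem gives $\bigcap_k p^kM=0$; and the filtration comparison $\widetilde{J}^n(M/pM)=J^n(M/pM)$ is immediate since $p$ annihilates $M/pM$.

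Two small remarks. First, you silently use the equivalence ``torsion $\Leftrightarrow j\geq 1$'' for finitely generated modules over an Auslander-regular domain; this is standard (e.g.\ in \cite{AB}) but is not stated in the paper, so you might add a word of justification. Second, your approach is somewhat heavier than what the cited lemma in \cite{CE-survey} presumably does (a short rank comparison), but it has the virtue of staying entirely within the dimension-theoretic framework of \S\ref{section-Iwasawa}, which makes it a natural fit here.
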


\section{Mod $p$ representations of $\GL_2(\Q_p)$} \label{section-Paskunas}

\textbf{Notation.}  Let $p$ be a prime\footnote{The assumption $p\geq 5$ is not always necessary,   but for convenience we make this assumption throughout the paper.} $\geq 5$, $G=\GL_2(\Q_p)$, $K=\GL_2(\Z_p)$, $Z$ be the centre of $G$, $T$ be the diagonal torus, and $B=\matr{*}*0*$ the upper Borel subgroup.

 Let $\Rep_{\F}(G)$  denote the category of smooth $\F$-representations of $G$ with a (fixed) central character, say $\zeta:Z\ra \F^{\times}$. 
Let $\Rep^{\rm l,fin}_{\F}(G)$ denote the subcategory of $\Rep_{\F}(G)$ consisting of locally finite objects.  Here an object $\Pi\in\Rep_{\F}(G)$  is said  to be \emph{locally finite} if for all $v\in \Pi$ the $\F[G]$-submodule generated by $v$ is of finite length.  

\medskip

Let $\Mod_{\F}^{\rm pro}(G)$ be the category of compact  left $\F[[K]]$-modules with an action of $\F[G]$ such that the two actions coincide when restricted to $\F[K]$ and that $Z$ acts via  $\zeta^{-1}$. It is anti-equivalent to $\Rep_{\F}(G)$ under Pontryagin dual $\Pi\mapsto \Pi^{\vee}:=\Hom_{\F}(\Pi,\F)$. Here,  $\Pi^{\vee}$  is naturally a \emph{right} $\F[G]$-module, but for convenience we view it as a \emph{left} $\F[G]$-module using the canonical anti-automorphism $\F[G]\simto \F[G]$  induced by $(g\mapsto g^{-1}):G\ra G$.  Let $\mathfrak{C}=\mathfrak{C}(G)$ be the full subcategory of $\Mod^{\rm pro}_{\F}(G)$ anti-equivalent to $\Rep^{\rm l,fin}_{\F}(G)$.

An object $M\in \mathfrak{C}$ is called \emph{coadmissible} if $M^{\vee}$ is admissible in the usual sense, i.e. $(M^{\vee})^H$ is finite dimensional for any open subgroup $H$ of $G$. This is equivalent to requiring $M$ to be finitely generated over $\F[[K]]$ (or equivalently, finitely generated over $\F[[H]]$ for any open compact subgroup $H\subset K$).
 
If $H$ is a closed subgroup of $K$,  denote by $\Rep_{\F}(H)$ the category of smooth $\F$-representations of $H$ on which $H\cap Z$ acts via the restriction of $\zeta$. Let $\mathfrak{C}(H)$ be the dual category of $\Rep_{\F}(H)$.

\medskip

For $n\geq 1$, let $K_n=\smatr{1+p^n\Z_p}{p^n\Z_p}{p^n\Z_p}{1+p^n\Z_p}$. Also let  $Z_1:=K_1\cap Z$.  Since $Z_1$ is pro-$p$ and $\zeta$ is smooth,  the restriction of $\zeta$ to ${Z_1}$ is trivial, so any  $\F$-representation of $G$ (resp. $K$) with central character $\zeta$ can be viewed as a representation of $G/Z_1$ (resp. $K/Z_1$).  Set \[\Lambda:=\F[[K_1/Z_1]] . \]
Since $K_1/Z_1$ is uniform (as $p>2$) and pro-$p$, the results in \S\ref{section-Iwasawa} apply  to $\Lambda$. Note that $\dim(K_1/Z_1)=3$. To simplify, we write $j(\cdot)=j_{\Lambda}(\cdot)$ and $\delta(\cdot)=\delta_{\Lambda}(\cdot)$. 
 
\medskip

 If $H$ is a closed subgroup of $G$ and $\sigma$ is a smooth representation of $H$, we denote by $\Ind^G_H\sigma$ the usual smooth induction. When $H$ is moreover open, we let $\cInd^G_H\sigma$ denote the compact induction, meaning the subspace of $\Ind^G_H\sigma$ consisting of functions whose support is compact modulo $H$.
\medskip

Let $\omega:\Q_p^{\times}\ra \F^{\times}$ be the mod $p$ cyclotomic character. If $H$ is any group, we write $\ide_H$ for the trivial representation of $H$ (over $\F$).

\subsection{Irreducible representations}\label{subsection-Breuil}
The work of Barthel-Livn\'e \cite{BL}  shows that absolutely irreducible objects in $\Rep_{\F}(G)$ fall into four classes:
\begin{enumerate}
\item one dimensional representations $\chi\circ\det$, where $\chi:\Q_p^{\times}\ra \F^{\times}$ is a smooth character;
\item (irreducible) principal series $\Ind_B^G\chi_1\otimes\chi_2$ with $\chi_1\neq \chi_2$;
\item special series, i.e.  twists of the Steinberg representation  $\Sp:=(\Ind_B^G\ide_T)/\ide_G$;
\item supersingular representations, i.e.  irreducible representations which are not isomorphic to subquotients of any parabolic induction. \end{enumerate}
For $0\leq r\leq p-1$, let $\Sym^r\F^2$ denote the standard symmetric power representation of $\GL_2(\F_p)$. Up to a twist by $\det^{m}$ with $0\leq m\leq p-2$, any absolutely irreducible $\F$-representation of $\GL_2(\F_p)$ is isomorphic to $\Sym^r\F^2$. Inflating to $K$ and letting $\smatr{p}00p$ act trivially, we may view  $\Sym^r\F^2$ as a representation of $KZ$. Let $I(\Sym^{r}\F^2):=\cInd_{KZ}^G\Sym^{r}\F^2$ denote the compact induction to $G$. It is well-known that $\End_G(I(\Sym^r\F^2))$ is isomorphic to $\F[T]$ for a certain Hecke operator $T$ (\cite{BL}). For $\lambda\in \F$ we  define
\[\pi(r,\lambda):=I(\Sym^r\F^2)/(T-\lambda).\] 
If  $\chi:\Q_p^{\times}\ra \F^{\times}$ is a smooth character, then let $\pi(r,\lambda,\chi):=\pi(r,\lambda)\otimes\chi\circ\det$.
In \cite{BL}, Barthel and Livn\'e showed that  any supersingular representation of $G$ is a \emph{quotient} of $\pi(r,0,\chi)$ for suitable $(r,\chi)$. Later on, Breuil 
 proved that $\pi(r,0,\chi)$ is itself irreducible (\cite{Br03}), hence completed the classification of irreducible objects in $\Rep_{\F}(G)$.  We will refer to $(r,\lambda,\chi)$ as above as a \emph{parameter triple}.

 Recall the link between non-supersingular representations and compact inductions: if $\lambda\neq0$ and $(r,\lambda)\neq (0,\pm1)$, then 
 \begin{equation}\label{equation-PS-cInd}\pi(r,\lambda)\cong \Ind_B^G\mu_{\lambda^{-1}}\otimes\mu_{\lambda}\omega^r,\end{equation}
 where $\mu_{x}:\Q_p^{\times}\ra \F^{\times}$ denotes the unramified character sending $p$ to $x$.  If $(r,\lambda)\in\{(0,\pm1),(p-1,\pm1)\}$, then we have non-split exact sequences:
 \begin{equation}\label{equation-pi(0)}0\ra  \Sp\otimes\mu_{\pm1}\circ\det\ra \pi(0,\pm1)\ra \mu_{\pm1}\circ\det\ra0,\end{equation}
 \begin{equation}\label{equation-pi(p-1)}0\ra \mu_{\pm1}\circ\det\ra \pi(p-1,\pm1)\ra \Sp\otimes\mu_{\pm1}\circ\det\ra0.\end{equation}

It is clear  for non-supersingular   representations and follows from \cite{Br03} for supersingular representations that any absolutely irreducible $\pi\in\Rep_{\F}(G)$ is admissible. Therefore $\pi^{\vee}$ is coadmissible and it makes sense to talk about $\delta(\pi^{\vee})$.  
\begin{theorem}\label{theorem-Paskunas-Morra}
Let $\Pi\in\Rep_{\F}(G)$. 
If $\Pi$ is of finite length, then $\Pi$ is admissible and $\delta(\Pi^{\vee})\leq 1$. 
\end{theorem}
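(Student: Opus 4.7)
The plan is to treat admissibility and the bound $\delta(\Pi^\vee)\leq 1$ separately, reducing the bound to the case where $\Pi$ is absolutely irreducible.

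For admissibility: since $\Pi$ has finite length, it admits a composition series with absolutely irreducible graded pieces, and every absolutely irreducible object of $\Rep_\F(G)$ is admissible (as noted in the paragraph preceding the theorem). Since admissibility is closed under extensions in $\Rep_\F(G)$, $\Pi$ itself is admissible.

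For the bound, I would dualize a composition series of $\Pi$ and apply \eqref{equation-delta-short} inductively to reduce to showing $\delta(\pi^\vee)\leq 1$ for an absolutely irreducible $\pi$. If $\pi$ is one-dimensional then $\pi^\vee$ is finite over $\F$ and $\delta(\pi^\vee)=0$. If $\pi$ is non-supersingular and infinite-dimensional, the exact sequences \eqref{equation-PS-cInd}, \eqref{equation-pi(0)}, \eqref{equation-pi(p-1)} combined with \eqref{equation-delta-short} reduce the claim to the principal series $\pi=\Ind_B^G\chi_1\otimes\chi_2$; by Iwasawa decomposition $G=BK$, as a $K$-representation this is $\Ind_{B\cap K}^K(\chi_1\otimes\chi_2)|_{B\cap K}$, so $\pi^\vee$ is a cyclic $\F[[K]]$-module whose Gelfand--Kirillov dimension over $\Lambda$ equals $\dim_{\Q_p}(K_1/(B\cap K_1))=1$, hence $\delta(\pi^\vee)=1$ by Theorem \ref{theorem-canonical=GK}. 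If $\pi$ is supersingular, I would invoke Morra's refined structural results \cite{Mor11,Mor} describing $\pi^\vee$ sufficiently explicitly as an $\F[[I_1/Z_1]]$-module (with $I_1$ the pro-$p$ Iwahori) to check directly that $\dim_\F\pi^\vee/J^n\pi^\vee$ grows linearly in $p^n$, yielding $\delta(\pi^\vee)=1$ once more.

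The main obstacle is clearly the supersingular case, since supersingular representations admit no induction-from-Borel presentation and hence their $\Lambda$-module structure cannot be read off from generalities of the form used above. Morra's explicit $I_1$-module description of $\pi^\vee$---or equivalently a deformation-theoretic input from Pa\v{s}k\=unas \cite{Pa13} to the effect that $\pi^\vee$ cuts out an at-most one-dimensional locus in a three-dimensional universal deformation---is what makes this last computation go through, and is the single substantive ingredient beyond the Barthel--Livn\'e--Breuil classification.
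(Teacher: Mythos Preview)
Your proposal is correct and follows essentially the same route as the paper: reduce to absolutely irreducible $\pi$ via \eqref{equation-delta-short}, handle the non-supersingular cases by elementary means (the paper simply cites \cite[Prop.~5.3]{Mor}, while your Iwasawa-decomposition argument is a fine direct substitute), and invoke Morra's explicit structural results for the supersingular case. One small correction: the alternative reference the paper gives for the supersingular bound is \cite[Thm.~1.2]{Pa10}, a direct computation of $\dim_\F\pi^{K_n}$, not the deformation theory of \cite{Pa13}; in fact the logical flow in the paper runs the other way, with $\delta(\pi^\vee)=1$ serving as an \emph{input} to the deformation-theoretic statements (e.g.\ Proposition~\ref{proposition-residue=finitelength}), so your suggested deformation-theoretic shortcut would be circular as stated.
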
  
\begin{proof}
 The first assertion is clear. For the second, we may assume $\Pi$ is absolutely irreducible. Corollary \ref{corollary-CE} allows us to translate the problem to computing the  growth of $\dim_{\F}\Pi^{K_n}$.  If $\Pi$ is non-supersingular, then it is easy, see \cite[Prop. 5.3]{Mor} for a proof. If $\Pi$ is supersingular, this  is first done in \cite[Thm. 1.2]{Pa10} and later in \cite[Cor. 4.15]{Mor}. 
 \end{proof}
\medskip 

 Recall that a \emph{block} in $\Rep_{\F}(G)$ is an equivalence class of  absolutely irreducible objects in $\Rep_{\F}(G)$, where $\tau\sim \pi$ if and only if there exists a series of  irreducible representations $\tau=\tau_0,\tau_1,\dots,\tau_n=\pi$ such that $\Ext^1_G(\tau_i,\tau_{i+1})\neq0$ or $\Ext^1_G(\tau_{i+1},\tau_i)\neq0$ for each $i$.  
 \begin{proposition}\label{proposition-Gabriel}
The category $\Rep_{\F}^{\rm l,fin}(G)$ decomposes into a direct product of subcategories 
\[\Rep_{\F}^{\rm l,fin}(G)=\bigoplus_{\mathfrak{B}}\Rep_{\F}^{\rm l,fin}(G)^{\mathfrak{B}}\]
where the direct sum is taken over all the blocks $\mathfrak{B}$ and the objects of $\Rep_{\F}^{\rm l,fin}(G)^{\mathfrak{B}}$ are representations with all the irreducible subquotients lying in $\mathfrak{B}$. Correspondingly, we have a decomposition
of categories $\mathfrak{C}=\prod_{\mathfrak{B}}\mathfrak{C}^{\mathfrak{B}}$, where $\mathfrak{C}^{\mathfrak{B}}$ denotes the dual category of $\Rep_{\F}(G)^{\mathfrak{B}}$.
\end{proposition}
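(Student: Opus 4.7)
The plan is to follow the standard Gabriel-type block decomposition of a locally finite abelian category. The single key input is the vanishing
\[\Ext^1_G(\tau,\pi)=0\quad\text{whenever $\tau,\pi$ are irreducible in distinct blocks,}\]
which is tautological from the definition of the equivalence relation defining blocks. Everything else is formal.

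First I would bootstrap this vanishing from irreducibles to finite length objects: by induction on length, using the long exact sequence of $\Ext^\bullet_G(-,\pi)$ applied to a short exact sequence $0\to\tau\to\Pi\to\Pi'\to0$ with $\tau$ irreducible, one shows that if $\Pi$ and $\Pi'$ are finite length with all Jordan--H\"older constituents lying, respectively, in disjoint unions of blocks, then $\Ext^1_G(\Pi,\Pi')=0$. (We work throughout in $\Rep_{\F}^{\rm l,fin}(G)$, so all $\Ext^1$'s are extensions with a central character, as required.)

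Next I would establish the decomposition for finite length objects $\Pi$ by induction on the length. If $\Pi$ is irreducible, it lies in a unique block and there is nothing to prove. In general, choose an irreducible subobject $\tau\hookrightarrow\Pi$ lying in some block $\mathfrak{B}_0$, and apply the inductive hypothesis to $\Pi/\tau=\bigoplus_{\mathfrak{B}}Q_{\mathfrak{B}}$. For each $\mathfrak{B}\neq\mathfrak{B}_0$, the preimage $\widetilde{Q}_{\mathfrak{B}}\subset\Pi$ sits in an extension
\[0\lra\tau\lra\widetilde{Q}_{\mathfrak{B}}\lra Q_{\mathfrak{B}}\lra0,\]
which splits by the vanishing statement above, yielding a canonical lift of $Q_{\mathfrak{B}}$ into $\Pi$. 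Setting $\Pi^{\mathfrak{B}}$ to be this lift for $\mathfrak{B}\neq\mathfrak{B}_0$, and $\Pi^{\mathfrak{B}_0}$ to be the preimage of $Q_{\mathfrak{B}_0}$ in $\Pi$, the resulting map $\bigoplus_{\mathfrak{B}}\Pi^{\mathfrak{B}}\to\Pi$ is an isomorphism (surjectivity from the splitting, injectivity because subobjects with Jordan--H\"older factors in different blocks have zero intersection, as an irreducible subobject of the intersection would belong to both blocks at once). Uniqueness of this decomposition is automatic for the same reason, so $\Pi\mapsto\Pi^{\mathfrak{B}}$ is functorial.

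To pass from finite length to arbitrary $\Pi\in\Rep_{\F}^{\rm l,fin}(G)$, I would use that by definition every $v\in\Pi$ lies in a finite length $G$-subrepresentation. Define $\Pi^{\mathfrak{B}}$ to be the sum of all finite length subobjects of $\Pi$ whose Jordan--H\"older constituents lie in $\mathfrak{B}$; equivalently, the filtered colimit of the $\mathfrak{B}$-components of finite length subobjects. The functoriality of the finite length case shows that $\Pi^{\mathfrak{B}}\in\Rep_{\F}^{\rm l,fin}(G)^{\mathfrak{B}}$ and that $\Pi=\bigoplus_{\mathfrak{B}}\Pi^{\mathfrak{B}}$. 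This gives the direct sum decomposition of $\Rep_{\F}^{\rm l,fin}(G)$. Finally, applying the Pontryagin duality $\Pi\mapsto\Pi^{\vee}$, which is an anti-equivalence between $\Rep_{\F}^{\rm l,fin}(G)$ and $\mathfrak{C}$ and turns direct sums into (restricted) products, I obtain the corresponding product decomposition $\mathfrak{C}=\prod_{\mathfrak{B}}\mathfrak{C}^{\mathfrak{B}}$.

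The only step that is not entirely formal is the splitting argument for finite length objects, but this is controlled by the single $\Ext^1$ vanishing built into the definition of blocks, so the main issue is really just bookkeeping with central characters (to ensure the $\Ext^1$ computed in $\Rep_{\F}^{\rm l,fin}(G)$ matches the one used to define blocks).
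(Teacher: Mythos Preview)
Your argument is correct and is the standard Gabriel-type decomposition; the paper itself gives no proof here but simply cites \cite[Prop.~5.34]{Pa13}, where essentially the same reasoning is carried out. So there is nothing to compare beyond noting that you have spelled out what the paper imports as a black box.
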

\begin{proof}
See \cite[Prop. 5.34]{Pa13}.
\end{proof}

The following theorem describes the blocks (when $p\geq 5$ as we are assuming). 
 
\begin{theorem}  \label{theorem-block}
Let $\pi\in \Rep_{\F}(G)$ be absolutely  irreducible and let $\mathfrak{B}$ be the block in which $\pi$ lies.  Then one of the following holds:
\begin{enumerate}
\item[(I)] if $\pi$ is supersingular, then $\mathfrak{B}=\{\pi\}$;
\item[(II)] if $\pi\cong \Ind_{B}^G\chi_1\otimes\chi_2\omega^{-1}$ with $\chi_1\chi_2^{-1}\neq \ide,\omega^{\pm1}$, then
\[\mathfrak{B}=\big\{\Ind_B^G\chi_1\otimes\chi_2\omega^{-1},\ \Ind_B^G\chi_2\otimes\chi_1\omega^{-1}\big\};\]
\item[(III)] if $\pi=\Ind_B^G\chi\otimes\chi\omega^{-1}$, then $\mathfrak{B}=\{\pi\}$;
\item[(IV)] otherwise, $\mathfrak{B}=\{\chi\circ\det,\Sp\otimes\chi\circ\det, (\Ind_B^G\alpha)\otimes\chi\circ\det\}$, where $\alpha=\omega\otimes\omega^{-1}$. 
\end{enumerate}
\end{theorem}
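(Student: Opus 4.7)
The plan is to determine, for every ordered pair of absolutely irreducible objects of $\Rep_{\F}(G)$ with a common central character, whether $\Ext^1_G$ between them is non-zero, and then read off the blocks as the connected components of the resulting graph. The Ext computations I will need are due to Colmez, Emerton and Pa\v{s}k\=unas; concretely I would invoke the Ext computations in \cite{Pa13} together with the classification of \cite{BL,Br03} recalled in \S\ref{subsection-Breuil}. The assumption $p\geq 5$ enters both to ensure uniformity of $K_1/Z_1$ (used earlier) and to guarantee that all the relevant irreducibles are generic enough for the clean dichotomy below.

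For the supersingular case (I), Pa\v{s}k\=unas shows that if $\pi$ is absolutely irreducible supersingular, then $\Ext^1_G(\pi,\tau)=0=\Ext^1_G(\tau,\pi)$ for every absolutely irreducible $\tau\not\cong\pi$ sharing the central character of $\pi$. This yields the singleton block $\{\pi\}$. Intuitively, it reflects the fact that the universal deformation ring of $\pi^{\vee}$ has no factor in common with that of any non-isomorphic irreducible, so no extension between different blocks can exist.

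For case (II), if $\pi=\Ind_B^G\chi_1\otimes\chi_2\omega^{-1}$ with $\chi_1\chi_2^{-1}\notin\{\ide,\omega^{\pm1}\}$, then $\pi$ is irreducible by \cite{BL}, and I claim the only absolutely irreducible $\tau$ with the same central character for which $\Ext^1_G(\pi,\tau)$ or $\Ext^1_G(\tau,\pi)$ is non-zero is the swapped principal series $\Ind_B^G\chi_2\otimes\chi_1\omega^{-1}$, which is also irreducible (by $\chi_2\chi_1^{-1}\neq\omega^{\pm1}$) and distinct from $\pi$ (by $\chi_1\neq\chi_2$). The non-vanishing of $\Ext^1_G$ between these two comes from a parabolic induction/intertwining computation, and vanishing against characters, Steinberg twists and supersingulars (with matching central character) uses precisely the genericity hypothesis $\chi_1\chi_2^{-1}\neq\omega^{\pm1}$. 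Case (III) is the degenerate variant: when $\chi_1=\chi_2=\chi$, the swap yields the same principal series, and no other irreducible is Ext-linked to it, giving the singleton $\{\Ind_B^G\chi\otimes\chi\omega^{-1}\}$.

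Case (IV) arises when $\chi_1\chi_2^{-1}\in\{\omega,\omega^{-1}\}$, i.e.\ when the inducing character collides with an edge of the generic range. The non-split exact sequences \eqref{equation-pi(0)} and \eqref{equation-pi(p-1)} (twisted by an appropriate $\chi\circ\det$) already produce non-trivial classes in $\Ext^1_G(\chi\circ\det,\Sp\otimes\chi\circ\det)$ and $\Ext^1_G(\Sp\otimes\chi\circ\det,\chi\circ\det)$, linking the character and the Steinberg twist in one block. The remaining link with the non-generic principal series $\Ind_B^G\alpha\otimes\chi\circ\det$ with $\alpha=\omega\otimes\omega^{-1}$ is provided by the Ext computations in \cite{Pa13}, which also verify that the three listed objects share a common central character and that no fourth irreducible is linked to them. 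The main obstacle, and the step where one truly needs the full Pa\v{s}k\=unas machinery, is proving \emph{completeness}: that there is no $\Ext^1$ among the listed irreducibles in distinct cases (I)--(IV), so that the decomposition is indeed into blocks and not a finer refinement.
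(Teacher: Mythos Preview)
Your proposal is correct and takes essentially the same approach as the paper: the paper's own proof is simply a citation to \cite[Prop.~5.42]{Pa13}, and what you have sketched is precisely the Ext-group analysis carried out there (using the classification of \cite{BL,Br03} and the non-split sequences \eqref{equation-pi(0)}, \eqref{equation-pi(p-1)} for the type (IV) links). Your outline is a faithful expansion of that citation.
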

\begin{proof}
See \cite[Prop. 5.42]{Pa13}.
\end{proof}

\textit{Convention}: By \cite[Lem. 5.10]{Pa13}, any smooth irreducible $\bFp$-representation of $G$ with a central character is defined over a finite extension of $\F_p$. Theorem \ref{theorem-block} then implies that for a given block $\mathfrak{B}$, there is a common finite field $\F$ such that irreducible objects in ${\mathfrak{B}}$ are absolutely irreducible.  Hereafter, given  a finite set of blocks, we take $\F$ to be sufficiently large such that irreducible objects in these blocks are absolutely irreducible.

 \subsection{Projective envelopes}
 
 Fix $\pi\in\Rep_{\F}(G)$ irreducible and let $\mathfrak{B}$ be the block in which $\pi$ lies.  Let $\rInj_G\pi$ be an injective envelope of $\pi$ in $\Rep_{\F}^{\rm l,fin}(G)$; the existence is guaranteed by \cite[Cor. 2.3]{Pa13}.
  Let $P=P_{\pi^{\vee}}:=(\rInj_G\pi)^{\vee}\in \mathfrak{C}$ and $E=E_{\pi^{\vee}}:=\End_{\mathfrak{C}}(P)$. Then $P$ is  a projective envelope of $\pi^{\vee}$ in $\mathfrak{C}$ and is  naturally a left $E$-module.  Since $P$ is indecomposable, Proposition \ref{proposition-Gabriel} implies that (the dual of) every irreducible subquotient of $P$ lies in $\mathfrak{B}$. Also, 
  $E$ is a local $\F$-algebra (with residue field $\F$). Pa\v{s}k\=unas has computed $E$ and showed in particular that $E$ is commutative, except when $\mathfrak{B}$ is of type (III) listed in Theorem \ref{theorem-block}; in any case, we denote by $R=Z(E)$ the centre of $E$. 

\begin{theorem}(Pa\v{s}k\=unas)\label{theorem-Paskunas}
Keep the above notation.

(i) $R$ is naturally isomorphic to the Bernstein centre of $\mathfrak{C}^{\mathfrak{B}}$. In particular, $R$ acts on any object in $\mathfrak{C}^{\mathfrak{B}}$ and any morphism in $\mathfrak{C}^{\mathfrak{B}}$ is $R$-equivariant.

(ii)  
If $\mathfrak{B}$ is not of type (IV), $R$ is a regular local $\F$-algebra of  Krull dimension $3$. If $\mathfrak{B}$ is of type (IV),  $R$ is  isomorphic to $\F[[x,y,z,w]]/(xw-yz)$.  In particular, $R$ is Cohen-Macaulay of Krull dimension $3$.

(iii) $E=R$ except for blocks of type (III) in which case $E$ is a  free $R$-module of rank $4$.

(iv) If $\mathfrak{B}$ is not of type (IV), then $P$ is flat over both $E$ and $R$.
\end{theorem}

\begin{proof}
(i) This is \cite[Thm. 1.5]{Pa13}.

(ii) (iii) These are proved  in \cite{Pa13}. Precisely, see  Prop. 6.3 for type (I),  Cor. 8.7  for type (II),  \S9  for type (III) and  Cor. 10.78, Lem. 10.93 for type (IV). 

(iv) The flatness of $P$ over $E$ follows from  \cite[Cor. 3.12]{Pa13}, because the setting in \emph{loc. cit.} is satisfied for blocks of type (I)-(III). The flatness of $P$ over $R$ for blocks of type (III) follows from this and (iii).
\end{proof}

 \begin{proposition}\label{proposition-residue=finitelength}
 $\F\otimes_EP$ (resp. $\F\otimes_RP$) has finite length in $\mathfrak{C}$ and $\delta(\F\otimes_EP)=\delta(\F\otimes_RP)=1$.
\end{proposition}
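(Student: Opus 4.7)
The plan is to reduce the question for $\F\otimes_E P$ to the analogous one for $\F\otimes_R P$, then establish finite length via a case-by-case use of Pa\v{s}k\=unas's structure theorems, and finally pin down $\delta$ from the cosocle of $P$.

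Since $R=Z(E)$ is local with $\mathfrak{m}_R=\mathfrak{m}_E\cap R$, one has $\mathfrak{m}_R P\subseteq \mathfrak{m}_E P$, so $\F\otimes_E P$ is a non-zero quotient of $\F\otimes_R P$. Thus it suffices to show that $\F\otimes_R P$ is of finite length in $\mathfrak{C}$. Using Proposition \ref{proposition-Gabriel} and Theorem \ref{theorem-block}, I would treat the four block types separately. In cases (I)--(III), $R\cong \F[[x,y,z]]$ is regular local of Krull dimension $3$, and by the results of \cite[\S\S 6, 8, 9]{Pa13}, $P$ is flat over $E$ (hence over $R$). Picking a regular system of parameters $(t_1,t_2,t_3)\subseteq\mathfrak{m}_R$, I would compute $\F\otimes_R P$ by successively quotienting by the $t_i$, applying Proposition \ref{proposition-GKdim} to the endomorphism of $P/(t_1,\ldots,t_{i-1})P$ induced by $t_i$ in order to control the canonical dimension at each step, and exhibit an explicit finite filtration whose graded pieces are irreducibles in $\mathfrak{B}^{\vee}$; this yields finite length. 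In case (IV), $R=E\cong \F[[x,y,z,w]]/(xw-yz)$ is Cohen-Macaulay (though not regular) of Krull dimension $3$, and an analogous argument combined with the explicit filtration of $P$ from \cite[\S 10]{Pa13} should go through.

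Once finite length is established, Theorem \ref{theorem-Paskunas-Morra} yields $\delta(\F\otimes_E P)\leq 1$. For the reverse inequality, since $\F\otimes_E P$ is a non-zero quotient of the projective envelope $P$, its cosocle is $\pi^{\vee}$, so \eqref{equation-delta-short} applied to the exact sequence $0\to \ker\to \F\otimes_E P\to \pi^{\vee}\to 0$ gives $\delta(\F\otimes_E P)\geq \delta(\pi^{\vee})$. When $\pi$ is infinite-dimensional, $\delta(\pi^{\vee})=1$ by the proof of Theorem \ref{theorem-Paskunas-Morra}, and one gets the desired equality. When $\pi$ is a character (only possible in a block of type (IV)), $\delta(\pi^{\vee})=0$, and I would check separately that $\F\otimes_E P$ admits an infinite-dimensional irreducible Jordan--H\"older factor, using the non-trivial extensions inside the block that force $\F\otimes_E P$ to contain a subquotient isomorphic to $\Sp\otimes\chi\circ\det$ or $\Ind_B^G\alpha\otimes\chi\circ\det$.

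The hard part will be the finite-length step in case (IV), where $R$ is not regular and $P$ need not be $R$-flat, so that the Koszul-type argument must be replaced by careful use of the explicit structure of $P$ obtained in \cite[\S 10]{Pa13}. A minor subtlety present in all cases is verifying the hypothesis $\bigcap_{k\geq 1} t_i^k M=0$ required by Proposition \ref{proposition-GKdim} on the successive quotients of $P$, which is not automatic since $P$ itself is not coadmissible.
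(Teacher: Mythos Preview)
Your proposal has a genuine gap in the finite-length step. Proposition \ref{proposition-GKdim} concerns the canonical dimension $\delta_\Lambda$ of \emph{finitely generated} $\Lambda$-modules, but $P$ is not coadmissible, and neither are the intermediate quotients $P/(t_1,\ldots,t_{i-1})P$ until enough parameters have been killed; so the proposition does not apply where you invoke it. More fundamentally, even if one could make sense of $\delta_\Lambda$ along the way, controlling the canonical dimension does not yield finite length in $\mathfrak{C}$: a coadmissible object with $\delta\leq 1$ need not be of finite length without further input (no converse to Theorem \ref{theorem-Paskunas-Morra} is available here). So the phrase ``exhibit an explicit finite filtration whose graded pieces are irreducibles'' is doing all the work, and nothing in your outline produces such a filtration from the dimension bounds. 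The ``minor subtlety'' you flag at the end is a symptom of this structural problem, not a side issue.

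The paper's approach is both more direct and runs in the opposite order. The key observation is that $\F\otimes_E P$ is the maximal quotient of $P$ in which $\pi^{\vee}$ occurs with multiplicity one (since $\Hom_{\mathfrak{C}}(P,\F\otimes_E P)\cong\F$). One then identifies this object explicitly block by block from Pa\v{s}k\=unas's results: for types (I) and (III) it is $\pi^{\vee}$ itself; for type (II) it has length $2$ by \cite[Prop.~6.1]{Pa15}; for type (IV) it is computed in \S\ref{subsection-type-IV} (Proposition \ref{proposition-Q-pi}). The equality $\delta(\F\otimes_E P)=1$ then follows from the explicit description together with Theorem \ref{theorem-Paskunas-Morra}, and this already handles the case where $\pi$ is a character without the separate check you propose. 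Finally, the $\F\otimes_R P$ statement is deduced \emph{from} the $\F\otimes_E P$ one (not the other way around) via $\F\otimes_R P\cong (\F\otimes_R E)\otimes_E P$ and the finiteness of $E$ over $R$. Remark \ref{remark-unnecessary} also points to an alternative short argument in \cite[Lem.~5.8]{Pa18} that avoids the explicit identification altogether.
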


\begin{proof}
We note that $\F\otimes_EP$ is characterized as the maximal quotient of $P$ which contains  $\pi^{\vee}$ with multiplicity one, see \cite[Rem. 1.13]{Pa13}. This object is denoted by $Q$ in \cite[\S3]{Pa13} and can be described explicitly. If $\mathfrak{B}$ is of type (I) or (III), $Q$ is just $\pi^{\vee}$. If $\mathfrak{B}$ is of type (II), it has  length $2$ by \cite[Prop. 6.1, (34)]{Pa15}.  If $\mathfrak{B}$ is of type (IV), the assertion follows from Proposition \ref{proposition-Q-pi} below in \S\ref{subsection-type-IV} where the explicit structure of $\F\otimes_EP$ is determined. The assertion $\delta(\F\otimes_EP)=1$ follows from the explicit description together with Theorem \ref{theorem-Paskunas-Morra}.

To see that $\F\otimes_RP$  has finite length, we may assume $\mathfrak{B}$ is of type (III), in which case $E$ is a free $R$-module of rank $4$. 
Since $E$ is a local ring with residue field $\F$, any irreducible (right) $E$-module is isomorphic to $\F$, so $\F\otimes_RE$ has a  filtration of finite length with graded pieces isomorphic to $\F$. The result then follows from the isomorphism $\F\otimes_RP\cong(\F\otimes_RE)\otimes_EP$; cf.  the proof of \cite[Cor. 4.2]{Pa18}.
\end{proof}

\begin{remark}\label{remark-unnecessary}
Note that Pa\v{s}k\=unas gave a short proof of Proposition \ref{proposition-residue=finitelength} without explicitly determining $\F\otimes_EP$, see \cite[Lem. 5.8]{Pa18}. We keep this proof because knowing the explicit form  might be of independent interest.
\end{remark}

\subsection{Serre weights}\label{subsection-Serreweight}
We keep the notation in the previous subsection. 
Let $\pi\in\Rep_{\F}(G)$ be irreducible.
By a \emph{Serre weight} of $\pi$ we mean an isomorphism class of (absolutely) irreducible $\F$-representations  of $K$, say $\sigma$,
 such that $\Hom_{K}(\sigma,\pi)\neq0$. Denote by $\mathscr{D}(\pi)$ the set of Serre weights of $\pi$. The description of $\mathscr{D}(\pi)$ can be deduced from \cite{BL} and \cite{Br03}; see \cite[Rem. 6.2]{Pa15} for a summary (of most cases). 

\begin{lemma}\label{lemma-D(pi)neqD(pi')}
If $\pi\neq \pi'$ are two objects in a block $\mathfrak{B}$, then $\mathscr{D}(\pi)\cap \mathscr{D}(\pi')=\emptyset$. 
\end{lemma}
\begin{proof}
The assertion is trivial if $\mathfrak{B}$ is of type (I) or (III). For type (II) or type (IV), it is  a direct check (using the assumption $p\geq 5$), see \cite[Rem. 6.2]{Pa15}.
\end{proof}

As before, write $P=P_{\pi^{\vee}}$ and let $\mathfrak{B}$ be the block in which $\pi$ lies. 
 
\begin{lemma}\label{lemma-Serreweight}
Let $\pi\in\Rep_{\F}(G)$ be irreducible.
 If $\pi\notin\{\ide_G,\Sp\}$ up to twist, then $\Hom_{K}^{\rm cont}(P,\sigma^{\vee})\neq0$ if and only if $\sigma\in\mathscr{D}(\pi)$. If $\pi\in\{\ide_G,\Sp\}$, then $\Hom_{K}^{\rm cont}(P,\sigma^{\vee})\neq0$ if and only if $\sigma\in\{\Sym^{0}\F^2,\Sym^{p-1}\F^2\}$.  
\end{lemma}
\begin{proof}
The assertion is trivial if $\mathfrak{B}$ is of type (I) or (III), because the block contains only one irreducible object. If $\mathfrak{B}$ is of type (II) or $\pi=\pi_{\alpha}$ up to twist, it is proved in \cite[Thm. 6.6]{Pa15}. 
Assume $\pi\in\{\ide_G,\Sp\}$ in the rest. Since $\mathscr{D}(\ide_G)=\{\Sym^0\F^2\}$, resp. $\mathscr{D}(\Sp)=\{\Sym^{p-1}\F^2\}$, we only need to prove (after taking dual)  
\[\Hom_{K}(\sigma', \rInj_{G}\pi)=0,\] where $\sigma'=\Sym^{p-3}\F^2\otimes\det$ is the only Serre weight for $\pi_{\alpha}$ (see \cite[Rem. 6.2]{Pa15}). By Frobenius reciprocity, it is equivalent to show 
$\Hom_G(I(\sigma'),\rInj_{G}\pi)=0$. If $\phi$ is such a morphism, then since $\rInj_G\pi$ is locally finite, $\phi$ must factor through a certain quotient $I(\sigma')/f(T)$ for some non-zero polynomial $f(T)\in \F[T]$ (\cite[Thm. 19]{BL}). If $\phi$ were non-zero, then its image must contain $\pi$ as its $G$-socle. But it follows from \cite[Thm. 33(2)]{BL} that $I(\sigma')/f(T)$ can not have $\ide_G$ or $\Sp$ as a subquotient. This contradiction allows to conclude. 
\end{proof}

\begin{proposition}\label{proposition-HT}
(i) Let $\sigma\in\Rep_{\F}(K)$ be irreducible.  Whenever $\Hom_K^{\rm cont}(P,\sigma^{\vee})^{\vee}$ is non-zero,  it is a cyclic $E$-module and if $J_{\sigma}$ denotes its annihilator then $E/J_{\sigma}\cong\F[[S]]$ (where $S$ is a formal variable). Moreover, if $\mathfrak{B}$ is not of type (I), then  $J_{\sigma}\subset E$ is  independent of $\sigma$.

(ii) Let $\widetilde{\sigma}=\oplus_{\sigma}\sigma$ where the sum is taken over all irreducible $\sigma$ such that $\Hom_K^{\rm cont}(P,\sigma^{\vee})\neq0$. Then $\Hom_{K}^{\rm cont}(P,\widetilde{\sigma}^{\vee})^{\vee}$ is a Cohen-Macaulay $R$-module of Krull dimension $1$.
\end{proposition}
\begin{proof}
(i) If $\mathfrak{B}$ is of type (I), it is proved in \cite[Thm. 6.6, (38)]{Pa15}.
Assume $\mathfrak{B}$ is not of type (I) and let $(r,\lambda,\chi)$ be a parameter triple of $\pi$.  If $(r,\lambda)\neq (p-1,\pm1)$, this is proved in \cite[Prop. 2.9]{HT} via \cite[Cor. 2.5]{HT}, where an (unfortunate) assumption (\textbf{H}) is imposed.   
If $(r,\lambda)=(p-1,\pm1)$, i.e. the assumption (\textbf{H}) is not satisfied, the statements are  still true and the proof can be adapted from the case of $(r,\lambda)=(0,\pm1)$, see \cite[Rem. 2.6]{HT}. 

(ii)  If $\mathfrak{B}$ is of type (I), it is a special case of \cite[Lem. 2.33]{Pa15} via \cite[Thm. 5.2]{Pa15}. 
If $\mathfrak{B}$ is of type (II) or (IV), then $E=R$ and the result is a weaker form of (i). If $\mathfrak{B}$ is of type (III),  we need to show that the image  of $R\hookrightarrow E\twoheadrightarrow \F[[S]]$ contains a regular element,  for which it is enough to show that the image contains a non-zero element in the maximal ideal of $\F[[S]]$. But this is clear because $\F[[S]]$ is  finite over (the image of) $R$. Alternatively, one can apply Proposition \ref{proposition-III-image} below where the image of $R$ in $\F[[S]]$ is determined.
\end{proof}

The following general result is extracted from \cite[Thm. 3.5]{HP}.
\begin{proposition}\label{proposition-projective}
Let $\widetilde{P}\in \mathfrak{C}(K)$ and $f\in \End_{\mathfrak{C}(K)}(\widetilde{P})$. Assume 
\begin{enumerate}
\item[(i)] $\widetilde{P}$ is projective in $\mathfrak{C}(K)$;
\item[(ii)] for any irreducible $\sigma\in\Rep_{\F}(K)$, the induced morphism 
\[f_*:\Hom_{K}^{\rm cont}(\widetilde{P},\sigma^{\vee})^{\vee}\ra \Hom_{K}^{\rm cont}(\widetilde{P},\sigma^{\vee})^{\vee}\] is injective. 
\end{enumerate}
Then $f$ is injective and $\widetilde{P}/f\widetilde{P}$ is projective in $\mathfrak{C}(K)$.  
\end{proposition}

\begin{proof}
Consider the complex $P_{\bullet}$ of projective modules $0\ra P_1\overset{f}{\ra} P_0\ra0$, where $P_0=P_1=\widetilde{P}$. Applying $\Hom_{K}^{\rm cont}(-,\sigma^{\vee})$ to it, where $\sigma\in \Rep_{\F}(K)$ is irreducible, we obtain a convergent spectral sequence 
\[E_2^{ij}:=\Ext^{i}_{K/Z_1}(H_j(P_{\bullet}),\sigma^{\vee})\Rightarrow H^{i+j}(\Hom_K^{\rm cont}(P_{\bullet},\sigma^{\vee}))\]
which gives the following  long exact sequence:
\[ \Ext^1_{K/Z_1}(\mathrm{coker}(f),\sigma^{\vee})\hookrightarrow H^1(\Hom_K^{\rm cont}(P_{\bullet},\sigma^{\vee}))\ra \Hom_K^{\rm cont}(\ker(f),\sigma^{\vee})\ra \Ext^2_{K/Z_1}(\mathrm{coker}(f),\sigma^{\vee}).\]
The assumption (after taking dual) says that the morphism
\[\Hom_K^{\rm cont}(P_0,\sigma^{\vee})\ra\Hom_{K}^{\rm cont}(P_1,\sigma^{\vee})\] 
is surjective, i.e. $H^1(\Hom_{K}^{\rm cont}(P_{\bullet},\sigma^{\vee}))=0$,  which forces $\Ext^1_{K/Z_1}(\mathrm{coker}(f),\sigma^{\vee}))=0$. This being true for any $\sigma$, we deduce that  $\mathrm{coker}(f)$ is projective in $\mathfrak{C}(K)$, proving the second assertion. As a consequence, $\Ext^2_{K/Z_1}(\mathrm{coker}(f),\sigma^{\vee})$ vanishes, and so does $\Hom_K^{\rm cont}(\ker(f),\sigma^{\vee})$. This being true for any $\sigma$, we deduce that $\ker(f)=0$, i.e. $f$ is injective.
\end{proof}

\subsection{Principal series and deformations}\label{subsection-PS}
 
Recall that $T$ denotes the diagonal torus of $G$. If $\eta:T\ra \F^{\times}$ is a smooth character, let $\pi_{\eta}:=\Ind_B^G\eta$ which is possibly reducible. Let $\rInj_T\eta$ be an injective envelope of ${\eta}$ in $\Rep_{\F}(T)$ (with central character) and set $\Pi_{\eta}:=\Ind_B^G\rInj_T\eta$. Then $\Pi_{\eta}$ is a locally finite smooth representation of $G$. It is easy to see that $\rsoc_{G}\Pi_{\eta}=\rsoc_G\pi_{\eta}$, which we denote by $\pi$.  So there is a $G$-equivariant embedding $\Pi_{\eta}\hookrightarrow \rInj_G\pi$ and by \cite[Prop. 7.1]{Pa13} the image does not depend on the choice of the embedding.

\begin{proposition}\label{proposition-not-admissible}
$\Pi_{\eta}$ is not admissible.
\end{proposition}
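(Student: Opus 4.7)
My plan is to show that $\dim \Pi_\eta^{K_n}=\infty$ for every $n\geq 1$, directly contradicting admissibility. By the Iwasawa decomposition $G=BK$, and since $\rInj_T\eta$ is viewed as a $B$-representation by inflation from $T$ (so the unipotent radical acts trivially), the restriction $\Pi_\eta|_K$ is isomorphic to $\Ind_{B\cap K}^K \rInj_T\eta$. Taking the subspace of functions in $\Pi_\eta^{K_n}$ supported on the identity double coset $(B\cap K)K_n$ yields
\[
\Pi_\eta^{K_n} \;\supseteq\; (\rInj_T\eta)^{B\cap K_n}\;=\;(\rInj_T\eta)^{T_n},
\]
where $T_n=T\cap K_n=(1+p^n\Z_p)^2$ and the equality uses that the unipotent radical acts trivially. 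It therefore suffices to show that $(\rInj_T\eta)^{T_n}$ is infinite-dimensional.

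For this I construct arbitrarily long Jordan-block self-extensions of $\eta$ in $\Rep_\F(T)$ and embed them into $\rInj_T\eta$ via injectivity. Write $\eta=\chi_1\otimes\chi_2$ and identify $T/Z\cong \Q_p^\times=p^\Z\times\Z_p^\times$ via $(a,d)\mapsto a/d$; under this isomorphism $\eta$ corresponds to $\chi_1$, sending $p$ to some $\alpha\in\F^\times$ and restricting on $\Z_p^\times$ to a character $\chi$ that factors through the prime-to-$p$ quotient $\mu_{p-1}$, because $\F^\times$ has no $p$-torsion. For each $k\geq 1$, let $V_k$ be the smooth $T$-representation with central character $\zeta$ on $\F^k$ in which $p$ acts as a single Jordan block $\alpha I+N$ (with $N$ the standard nilpotent super-diagonal matrix) and $\Z_p^\times$ acts by the scalar $\chi$. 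These two commuting actions give a valid smooth representation of $T/Z$, and hence of $T$ after twisting by $\zeta$, whose simple socle is isomorphic to $\eta$. By injectivity of $\rInj_T\eta$ in $\Rep_\F(T)$, the inclusion $\eta\hookrightarrow V_k$ extends to a morphism $V_k\to\rInj_T\eta$, which is necessarily injective since its kernel cannot meet the socle of $V_k$.

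Under the identification $T/Z\cong \Q_p^\times$, the subgroup $T_n/Z_n$ lies inside $1+p^n\Z_p\subset\Z_p^\times$, on which $\chi$ vanishes (it factors through $\mu_{p-1}$); the central character $\zeta$ is likewise trivial on $Z_n$ for the same $p$-torsion reason. Hence $T_n$ acts trivially on every $V_k$, so $V_k^{T_n}=V_k$ contributes a $k$-dimensional subspace to $(\rInj_T\eta)^{T_n}$. Letting $k\to\infty$ shows $(\rInj_T\eta)^{T_n}$, and therefore $\Pi_\eta^{K_n}$, is infinite-dimensional.

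The main obstacle is that $\rInj_T\eta$ lives in the full category $\Rep_\F(T)$ of smooth representations rather than the locally finite subcategory, so its structure cannot be analyzed directly by the Iwasawa-algebra methods of Section~\ref{section-Iwasawa}. The argument above sidesteps this by invoking only the universal property of injectives to absorb the explicit Jordan blocks $V_k$, whose unbounded length reflects the non-compactness of the $p^\Z$-direction in $T/Z$.
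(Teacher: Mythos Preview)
Your proof is correct and takes a genuinely different, more elementary route than the paper's. The paper argues via the Barthel--Livn\'e structure theory: choosing a parameter triple $(r,\lambda,\chi)$ with $\pi_\eta\cong\pi(r,\lambda,\chi)$, it uses the explicit $\F[T,T^{-1}]$-linear map $P$ of \cite{BL} to identify $I(\Sym^r\F^2)/(T-\lambda)^n$ with a parabolic induction of a length-$n$ deformation of $\eta$, thereby embedding it into $\Pi_\eta$ for every $n$ and forcing $\dim_\F\Hom_K(\Sym^r\F^2,\Pi_\eta)=\infty$. You instead use only the Iwasawa decomposition to reduce to showing $(\rInj_T\eta)^{T_n}$ is infinite-dimensional, and then feed in explicit Jordan-block deformations of $\eta$ along the discrete $p^{\Z}$-direction of $T/Z$. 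This avoids any case analysis on $(r,\lambda)$ and does not invoke the classification of \cite{BL,Br03}. What the paper's approach buys, however, is finer information: it pins down a \emph{specific} $K$-type of infinite multiplicity, and the same argument yields Remark~\ref{remark-not-admissible} (that $\Hom_K(\Sym^0\F^2,\Pi_\eta)$ is infinite-dimensional when $r=p-1$), which is genuinely needed in the proof of Proposition~\ref{prop-Ind-specialcase}.

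Two small comments on exposition. First, the sentence ``under this isomorphism $\eta$ corresponds to $\chi_1$'' is correct only after one implicitly untwists by a fixed extension of the central character $\zeta$ to $T$ (for instance $1\otimes\zeta$); it would be clearer to define $V_k$ directly by letting $(a,d)$ act as $\eta(a,d)\cdot U(a/d)$ with $U(p)$ a unipotent Jordan block and $U|_{\Z_p^\times}=\id$. Second, your closing remark that $\rInj_T\eta$ ``lives in the full category rather than the locally finite subcategory'' is not the real obstruction: in fact $(\rInj_T\eta)^\vee$ is the universal deformation of $\eta^\vee$ over $\F[[x,y]]$ (Lemma~\ref{lemma-Meta-flat}), so $\rInj_T\eta$ \emph{is} locally finite. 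The genuine point, which you identify correctly in the final clause, is that the $p^{\Z}$-direction makes $(\rInj_T\eta)^\vee$ not finitely generated over $\F[[T\cap K_1/Z_1]]$.
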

\begin{proof}
Let $(r,\lambda,\chi)$ be a parameter triple such that $\pi_{\eta}\cong \pi(r,\lambda,\chi)$. This is always possible by \cite[Thm. 30]{BL} and we have $(r,\lambda)\neq (0,\pm1)$. Moreover, we may assume $\chi=1$ up to twist. It suffices to prove that 
\begin{equation}\label{equation-dim=infty}\dim_{\F}\Hom_{K}(\Sym^r\F^2,\Pi_{\eta})=+\infty,\end{equation}
which follows from \cite{BL,BL2} as we explain below. 

Recall that $\F[T]$ denotes the Hecke algebra associated  to $I(\Sym^{r}\F^2)$. In \cite[\S6]{BL} is constructed an $\F[T,T^{-1}]$-linear morphism 
\[P:I(\Sym^r\F^2)\otimes_{\F[T]}\F[T,T^{-1}]\ra \Ind_B^GX_1\otimes X_2\]
where $X_i:\Q_p^{\times}\ra (\F[T,T^{-1}])^{\times}$ are  tamely ramified characters given by
 \[X_1\ \mathrm{unramified},\ \ \ X_1(p)=T^{-1},\ \ \ X_1X_2=\omega^r.\]  
Note that specializing to $T=\lambda$, we have $X_1\equiv\mu_{\lambda^{-1}}$ so that $\Ind_B^GX_1\otimes X_2\ (\mathrm{mod}\ T-\lambda)\cong\pi_{\eta}$ by \eqref{equation-PS-cInd}. 
  
 By \cite[Thm. 25]{BL}, $P$ is an isomorphism except for $r=0$, in which case $P$ is injective  and we have an exact sequence (\cite[Thm. 20]{BL2})
 \[0\ra I(\Sym^0\F^2)\otimes_{\F[T]}\F[T,T^{-1}]\overset{P}{\ra} \Ind_B^GX_1\otimes X_2\ra \Sp\otimes \F[T,T^{-1}]/(T^{-2}-1)\ra0. \]
But since $\lambda\neq \pm1$, $T-\lambda$ acts invertibly on the last term, hence  $P$ modulo  $(T-\lambda)^n$ induces an isomorphism  for any $n\geq1$:
\[P_n: I(\Sym^{0}\F^2)/(T-\lambda)^n \cong \Ind_{B}^GX_1\otimes X_2 \mod (T-\lambda)^n .\]
As remarked above,  the RHS is the parabolic induction of a deformation of $\eta$ to $\F[T]/(T-\lambda)^n$, hence embeds in $\Pi_{\eta}$. This implies
 \eqref{equation-dim=infty}.
\end{proof}
\begin{remark} \label{remark-not-admissible}
Keep the notation in the proof of Proposition \ref{proposition-not-admissible}. If $r=p-1$, then $\omega^{p-1}=\omega^0$, and the above proof  shows that $\Hom_{K}(\Sym^0\F^2,\Pi_{\eta})$ is also infinite dimensional.
\end{remark}


Let $M_{\eta^{\vee}}:=(\Pi_{\eta})^{\vee}$ and $E_{\eta^{\vee}}=\End_{\mathfrak{C}}(M_{\eta^{\vee}})$ .

\begin{lemma} \label{lemma-Meta-flat}
 $E_{\eta^{\vee}}$ is isomorphic to $\F[[x,y]]$
 and $M_{\eta^{\vee}}$ is  flat over $E_{\eta^{\vee}}$.
\end{lemma}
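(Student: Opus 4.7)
The plan is to identify $(\rInj_T\eta)^\vee$ as a universal smooth deformation of $\eta^\vee$ at the torus level and then transfer this to $G$ via parabolic induction. First I would show that $(\rInj_T\eta)^\vee$ is free of rank one over $\F[[x,y]]$, whence $\End_T(\rInj_T\eta)\cong \F[[x,y]]$. Since $T$ is abelian and $T/Z\cong \Q_p^\times$, the space $\Ext^1_T(\eta,\eta)$ of smooth self-extensions in $\Rep_{\F}(T)$ (with fixed central character $\zeta:=\eta|_Z$) has $\F$-dimension $2$: one dimension $y$ coming from the valuation factor of $T/Z\cong \Q_p^\times$ (deforming $\eta$ at a chosen uniformizer) and one dimension $x$ coming from $\Hom_{\mathrm{cts}}(1+p\Z_p,\F)$ (deforming $\eta$ along the pro-$p$ part of $T/Z$). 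Because $T$ is abelian, these smooth deformations are obstruction-free, so the universal smooth deformation ring of $\eta$ is exactly $\F[[x,y]]$ and $(\rInj_T\eta)^\vee$ is free of rank one over it via a universal smooth character $\widetilde{\eta}:T\to \F[[x,y]]^\times$ lifting $\eta$.

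Next, I would invoke Emerton's ordinary parts to transfer this computation to $G$. Write $\rInj_T\eta=\bigcup_n V_n$ with $V_n:=(\rInj_T\eta)[\mathfrak{m}^n]$, where $\mathfrak{m}=(x,y)$; each $V_n$ is finite-dimensional over $\F$, hence admissible, and $\Ind_B^G V_n$ is admissible over $G$. Emerton's identity $\Ord_B(\Ind_B^G V_n)\cong V_n$, together with the adjunction $\Hom_G(\Ind_B^G V_n,W)\cong \Hom_T(V_n,\Ord_B W)$ and the fact that $\Ord_B$ commutes with filtered colimits on locally admissible representations, yields $\Ord_B(\Pi_\eta)\cong \rInj_T\eta$, and therefore
\[
\End_G(\Pi_\eta)\cong \Hom_T\bigl(\rInj_T\eta,\Ord_B\Pi_\eta\bigr)\cong \End_T(\rInj_T\eta)\cong \F[[x,y]].
\]
Passing to opposite rings (trivial, as $\F[[x,y]]$ is commutative) and to Pontryagin duals then gives $E_{\eta^\vee}\cong \F[[x,y]]$.

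Finally, for flatness, the functor $F:N\mapsto (\Ind_B^G N^\vee)^\vee$ from $\mathfrak{C}(T)$ to $\mathfrak{C}$ is exact (since $\Ind_B^G$ is exact on smooth representations and Pontryagin duality is exact) and carries $(\rInj_T\eta)^\vee$ to $M_{\eta^\vee}$ compatibly with the $\F[[x,y]]$-actions from Steps~1 and~2. Applying $F$ to the Koszul complex of $(\rInj_T\eta)^\vee$ against the regular sequence $(x,y)$ produces the Koszul complex of $M_{\eta^\vee}$; by the freeness of $(\rInj_T\eta)^\vee$ the higher homologies of the first complex vanish, and by exactness of $F$ so do those of the second, giving $\Tor_i^{\F[[x,y]]}(\F,M_{\eta^\vee})=0$ for $i\geq 1$ and hence the desired flatness. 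The main obstacle is Step~2: since $\rInj_T\eta$ is not admissible (by Proposition \ref{proposition-not-admissible}), Emerton's identity $\Ord_B\circ\Ind_B^G\cong \mathrm{id}$ has to be carefully bootstrapped from admissible pieces via a colimit argument, and one must verify that no extra endomorphisms arise in the limit from the opposite Weyl chamber.
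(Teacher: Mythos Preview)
Your proposal is correct and follows the same route as the paper: reduce to the torus via the parabolic-induction/ordinary-parts adjunction to identify $E_{\eta^\vee}\cong\End_{\mathfrak{C}(T)}((\rInj_T\eta)^\vee)\cong\F[[x,y]]$, and deduce flatness of $M_{\eta^\vee}$ from freeness of $(\rInj_T\eta)^\vee$ together with exactness of (the dual of) parabolic induction. The paper simply cites \cite[Prop.~7.1, Cor.~7.2, \S3.2]{Pa13} for these steps while you spell them out; the obstacle you flag in Step~2 is exactly the content of \cite[Prop.~7.1]{Pa13}.
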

\begin{proof}
 By \cite[Prop. 7.1]{Pa13}, we have a natural isomorphism $E_{\eta^{\vee}}\cong \End_{\mathfrak{C}(T)}((\rInj_T\eta)^{\vee})$ and the latter ring is isomorphic to $\F[[x,y]]$ by \cite[Cor. 7.2]{Pa13}.
 
 By \cite[\S3.2]{Pa13}, $(\rInj_T\eta)^{\vee}$ is isomorphic to the universal deformation of the $T$-representation $\eta^{\vee}$ (with fixed central character), with $E_{\eta^{\vee}}$ being  the universal deformation ring. In particular, it is flat over $E_{\eta^{\vee}}$. The second assertion follows from this and the definition of $M_{\eta^{\vee}}$.  
\end{proof}

Recall that $\pi$ denotes the $G$-socle of $\pi_{\eta}$. Let $P=P_{\pi^{\vee}}$. 

\begin{proposition}\label{prop-Ind-specialcase} 
Let $M\in\mathfrak{C}$ be a coadmissible quotient of $M_{\eta^{\vee}}$.  Then $\delta(M)\leq 2$.
\end{proposition}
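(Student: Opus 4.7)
The direct application of Proposition~\ref{proposition-GKdim} twice---first with $\phi=x$, then with $\phi=y$ on $M/xM$---yields only the bound $\delta(M)\le 3$: one checks that $M/(x,y)M$ is Pontryagin-dual to a subrepresentation of $\Pi_\eta[x,y]=\pi_\eta$, which has finite length, so $\delta(M/(x,y)M)\le 1$ by Theorem~\ref{theorem-Paskunas-Morra}. Sharpening to $\delta(M)\le 2$ requires exploiting coadmissibility more seriously; my plan is to show that $\Ann_{E_{\eta^\vee}}(M)\ne 0$, which will then make a single application of Proposition~\ref{proposition-GKdim} sufficient.

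First I verify that $E_{\eta^\vee}=\F[[x,y]]$ acts on $M$. Although $E_{\eta^\vee}=\End_\mathfrak{C}(M_{\eta^\vee})$ does not a priori preserve arbitrary quotients, the Bernstein center $R$ of the block $\mathfrak{B}$ containing $\pi=\soc_G\pi_\eta$ does act functorially on every object of $\mathfrak{C}^{\mathfrak{B}}$ by Theorem~\ref{theorem-Paskunas}(i), and for the principal-series block the induced map $R\to E_{\eta^\vee}$ from the action on $M_{\eta^\vee}$ is surjective (its kernel is the height-one ``reducibility'' prime of $R$). Coadmissibility then forces $\mathfrak{m}_{E_{\eta^\vee}}^NM\subset JM$ for some $N$: indeed $M/JM$ is finite-dimensional and $E_{\eta^\vee}$-equivariant, so the $E_{\eta^\vee}$-action on it factors through a local Artinian quotient. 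Iterating and combining with the Artin-Rees property of the $J$-adic filtration on $\Lambda$, one gets $\bigcap_kf^kM=0$ for any $f\in\mathfrak{m}_{E_{\eta^\vee}}$, verifying the hypothesis of Proposition~\ref{proposition-GKdim}.

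Granting the key claim $\Ann_{E_{\eta^\vee}}(M)\ne 0$, the proof concludes quickly. Fix any nonzero $f$ in this annihilator, so $M$ becomes a module over $E_{\eta^\vee}/(f)$, a ring of Krull dimension at most one. By Krull's height theorem, some $g\in\mathfrak{m}_{E_{\eta^\vee}}$ makes $(f,g)$ into an $\mathfrak{m}_{E_{\eta^\vee}}$-primary ideal. Applying Proposition~\ref{proposition-GKdim} with $\phi=g$ then yields $\delta(M)\le\delta(M/gM)+1$. Since $(f,g)$ is primary, some power $\mathfrak{m}_{E_{\eta^\vee}}^{n}$ annihilates $M/gM$; hence $M/gM$ is a quotient of $M_{\eta^\vee}/\mathfrak{m}_{E_{\eta^\vee}}^{n}M_{\eta^\vee}$, whose Pontryagin dual is the finite-length representation $\Ind_B^G(\rInj_T\eta)[\mathfrak{m}_{E_{\eta^\vee}}^{n}]$. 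Theorem~\ref{theorem-Paskunas-Morra} gives $\delta(M/gM)\le 1$, and therefore $\delta(M)\le 2$.

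The main obstacle is the key claim $\Ann_{E_{\eta^\vee}}(M)\ne 0$. The intuition is that admissibility of $M^\vee\subset\Pi_\eta$ cannot coexist with a faithful $E_{\eta^\vee}$-action: by Frobenius reciprocity, $\Hom_K(\sigma,\Pi_\eta)$ identifies with the $(\eta|_{T\cap K})$-isotypic component of $\rInj_T\eta$, which is isomorphic to the Pontryagin dual of $\F[[y]]$ and on which the ramified variable $x$ acts trivially; for admissible $M^\vee$ the space $\Hom_K(\sigma,M^\vee)$ is therefore a finite-dimensional $\F[[y]]$-submodule of $(\F[[y]])^\vee$, necessarily annihilated by a power of $y$. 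Propagating this control through the higher $K_n$-invariants and across the set of $K$-types appearing in $M^\vee$---using the flatness of $M_{\eta^\vee}$ over $E_{\eta^\vee}$ (Lemma~\ref{lemma-Meta-flat}) to transport the constraint between the two deformation directions---should yield a uniform element of $\mathfrak{m}_{E_{\eta^\vee}}$ annihilating $M^\vee$. Making this uniformity precise is the central technical step, and is where the detailed structure of $\Pi_\eta$ as a flat $\F[[x,y]]$-module enters crucially.
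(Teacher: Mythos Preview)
Your overall strategy---show some nonzero $f\in E_{\eta^\vee}$ kills $M$, then apply Proposition~\ref{proposition-GKdim} once with a complementary $g$---is exactly the paper's, and your deduction from the key claim to $\delta(M)\le 2$ is correct. The gap is the key claim itself, which you acknowledge is left open: your sketch of bounding $\Hom_K(\sigma,M^\vee)$ weight by weight and ``propagating'' to a uniform annihilator does not close, since it would require controlling infinitely many $K$-types simultaneously, and flatness of $M_{\eta^\vee}$ over $E_{\eta^\vee}$ by itself does not supply that uniformity.

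The paper's argument for the key claim is short and conceptually different: it works with the kernel $M':=\ker(M_{\eta^\vee}\twoheadrightarrow M)$ rather than with $M$. The input you are missing is Proposition~\ref{proposition-not-admissible} ($\Pi_\eta$ is not admissible), which forces $M'\ne 0$. One then shows $\Hom_{\mathfrak{C}}(M_{\eta^\vee},M')\ne 0$ by reducing to $\Hom_{\mathfrak{C}}(P_{\pi^\vee},M')\ne 0$ via \cite[Prop.~7.1(iii)]{Pa13}; the latter holds because $\pi^\vee$ must occur in $M'$ (automatic when $\pi_\eta$ is irreducible, and handled via Remark~\ref{remark-not-admissible} in the only reducible case $\pi_\eta\cong\pi(p-1,\pm1)$ up to twist). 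Any nonzero $f\in\Hom_{\mathfrak{C}}(M_{\eta^\vee},M')\subset E_{\eta^\vee}$ then has $f(M_{\eta^\vee})\subset M'$, so $M$ is a quotient of $M_{\eta^\vee}/fM_{\eta^\vee}$, and the rest goes as you wrote. (Incidentally, your assertion that $R\to E_{\eta^\vee}$ is surjective fails for blocks of type (III), but this is not needed: one can apply Proposition~\ref{proposition-GKdim} to $M_{\eta^\vee}/fM_{\eta^\vee}$, on which $g$ acts via $E_{\eta^\vee}$, rather than to $M$ directly.)
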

\begin{proof}
Since $M$ is  coadmissible while $M_{\eta^{\vee}}$ is not by Proposition \ref{proposition-not-admissible}, the kernel of $M_{\eta^{\vee}}\twoheadrightarrow M$ is non-zero and not coadmissible; denote it by $M'$.
We claim that $\Hom_{\mathfrak{C}}(M_{\eta^{\vee}},M')\neq 0$. For this it suffices to prove $\Hom_{\mathfrak{C}}(P,M')\neq0$, because any morphism $P\ra M'$ must factor through $P\twoheadrightarrow M_{\eta^{\vee}}\ra M'$, see  \cite[Prop. 7.1(iii)]{Pa13}. Assume $\Hom_{\mathfrak{C}}(P,M')=0$ for a contradiction. Then $\pi^{\vee}$  does not  occur as a subquotient in $M'$. This is impossible unless $\pi_{\eta}$ is reducible, i.e. $\pi_{\eta}\cong\pi(p-1,1)$ up to twist.  
Assuming it is the case,  we have $\pi=\ide_G$ and all irreducible subquotients of $M'$ are isomorphic to $\Sp^{\vee}$, see \eqref{equation-pi(p-1)}. In particular, we obtain $\Hom_{K}^{\rm cont}(M',(\Sym^0\F^2)^{\vee})=0$ (as the $K$-socle of $\Sp$ is isomorphic to $\Sym^{p-1}\F^2$). However, this would imply an isomorphism 
\[\Hom_K^{\rm cont}(M_{\eta^{\vee}},(\Sym^0\F^2)^{\vee})\cong \Hom_K^{\rm cont}(M,(\Sym^{0}\F^2)^{\vee}).\]
Together with Remark \ref{remark-not-admissible} this contradicts the coadmissibility of $M$.
 
 Via the embedding $\Hom_{\mathfrak{C}}(M_{\eta^{\vee}},M')\hookrightarrow \Hom_{\mathfrak{C}}(M_{\eta^{\vee}},M_{\eta^{\vee}})=E_{\eta^{\vee}}$, the claim implies the existence of a non-zero element
 $f\in E_{\eta^{\vee}}$ which annihilates $M$. Since $E_{\eta^{\vee}}\cong \F[[x,y]]$ is a regular ring of dimension $2$, we may find $g\in E_{\eta^{\vee}}$ such that $f,g$ is a system of parameters of $E_{\eta^{\vee}}$.  Then $E_{\eta^{\vee}}/(f,g)$ is finite dimensional over $\F$, and consequently  $M_{\eta^{\vee}}/(f,g)$ has finite length in $\mathfrak{C}$. Hence, $M/(f,g)M=M/gM$  also has finite length and  Theorem \ref{theorem-Paskunas-Morra} implies that $\delta(M/gM)\leq 1$. We then conclude by Proposition \ref{proposition-GKdim}. 
\end{proof}

The inclusion $\Pi_{\eta}\hookrightarrow \rInj_G\pi$ induces a surjection $P\twoheadrightarrow M_{\eta^{\vee}}$. By \cite[Prop. 7.1]{Pa13} this  induces a surjective ring morphism $E\twoheadrightarrow E_{\eta^{\vee}}$, via which $P\twoheadrightarrow M_{\eta^{\vee}}$ is a morphism of  (left) $E$-modules.
\begin{corollary}\label{corollary-P=Meta}
For any irreducible $\sigma\in \Rep_{\F}(K)$, the natural surjective morphism
\begin{equation}\label{equation-P=Meta}\Hom_{K}^{\rm cont}(P,\sigma^{\vee})^{\vee}\ra \Hom_K^{\rm cont}(M_{\eta^{\vee}},\sigma^{\vee})^{\vee}\end{equation}
is an isomorphism of $E$-modules.
\end{corollary}
\begin{proof}
The quotient  $P\twoheadrightarrow M_{\eta^{\vee}}$ is a morphism of $E$-modules, hence so is  \eqref{equation-P=Meta}. 

We may assume $\Hom_K^{\rm cont}(P,\sigma^{\vee})$ is non-zero, so that it is isomorphic to $\F[[S]]$ as an $E$-module by Proposition \ref{proposition-HT}(i). Hence, to prove the injectivity of \eqref{equation-P=Meta}, it suffices to prove that   $\Hom_K^{\rm cont}(M_{\eta^{\vee}},\sigma^{\vee})$ is infinite dimensional. This is already established in the proof of Proposition \ref{proposition-not-admissible}, together with Remark \ref{remark-not-admissible} and Lemma \ref{lemma-Serreweight} if $\sigma\in\{\Sym^0\F^2,\Sym^{p-1}\F^2\}$ up to twist.
\end{proof}

\subsection{Coadmissible quotients}
\label{subsection-coadmissible}
Keep the notation in the previous subsection.  Let $M\in\mathfrak{C}$ be a coadmissible quotient of $P=P_{\pi^{\vee}}$.  We set ${\rm m}(M):=\Hom_{\mathfrak{C}}(P,M)$ which is  a finitely generated right $E$-module. There is a natural morphism 
\begin{equation}\label{equation-evaluation}
{\rm ev}: {\rm m}(M)\otimes_EP\ra M
\end{equation} 
which is surjective  by \cite[Lem. 2.10]{Pa13}.   Remark that we should have written $\mathrm{m}(M)\widehat{\otimes}_EP$ in \eqref{equation-evaluation}, where $\widehat{\otimes}$ means taking completed tensor product. But since $\mathrm{m}(M)$ is finitely generated over $E$, the completed and usual tensor product coincide, see the discussion before \cite[Lem. 2.1]{Pa15}.

Let $\Ker$ be the kernel of \eqref{equation-evaluation}. By \cite[Lem. 2.9]{Pa13} we have  $$\Hom_{\mathfrak{C}}(P,{\rm m}(M)\otimes_EP)\cong {\rm m}(M),$$ so $\Hom_{\mathfrak{C}}(P,\Ker)=0$ because $P$ is projective in $\mathfrak{C}$. This implies that $\Ker$ does not admit $\pi^{\vee}$ as a subquotient, i.e. if $\pi'$ is an irreducible subquotient of $\Ker$, then $\pi'\in\mathfrak{B}$ and $\pi'\ncong \pi$. In particular, if $\mathfrak{B}$ is of type (I) or (III) of Theorem \ref{theorem-block}, then $\Ker=0$ and \eqref{equation-evaluation} is an isomorphism. In any case, we have the following fact. 

\begin{corollary}\label{corollary-serreweight-ok}
If $\sigma\in\mathscr{D}(\pi)$, then \eqref{equation-evaluation} induces an isomorphism
 \[\Hom_{K}^{\rm cont}({\rm m}(M)\otimes_EP,\sigma^{\vee})^{\vee}\cong \Hom_K^{\rm cont}(M,\sigma^{\vee})^{\vee}.\]
\end{corollary} 
\begin{proof}
Using Lemma \ref{lemma-D(pi)neqD(pi')}, the above argument shows that  $\Hom_{K}^{\rm cont}(\Ker,\sigma^{\vee})^{\vee}=0$ for any $\sigma\in\mathscr{D}(\pi)$, giving  the result.
\end{proof}

\begin{proposition}\label{proposition-M-to-m(M)}
Let $M\in\mathfrak{C}$ be a coadmissible quotient of $P=P_{\pi^{\vee}}$.   The following statements hold.

(i)   ${\rm m}(M)\otimes_EP$ is coadmissible. 

(ii) If $M$ is torsion as a  $\Lambda$-module, then so is ${\rm m}(M)\otimes_EP$.
\end{proposition}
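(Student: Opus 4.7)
My approach begins by establishing that $\mathrm{m}(M)=\mathrm{Hom}_{\mathfrak{C}}(P,M)$ is finitely generated over $E$. By projectivity of $P$, the $J_E$-torsion $\mathrm{m}(M)[J_E]$ identifies with $\mathrm{Hom}_{\mathfrak{C}}(\F\otimes_E P,M)$, which is finite-dimensional over $\F$ since $\F\otimes_E P$ has finite length (Proposition~\ref{proposition-residue=finitelength}) and $M^{\vee}$ is admissible; a Nakayama argument over the complete local ring $E$ then yields finite generation. Picking a finite presentation $E^s\to E^r\to\mathrm{m}(M)\to 0$ and tensoring over $E$ with $P$ gives a right-exact sequence $P^s\to P^r\to\mathrm{m}(M)\otimes_E P\to 0$ in $\mathfrak{C}$; composing with $\mathrm{ev}$ recovers the tautological surjection $P^r\twoheadrightarrow M$ determined by a choice of generators of $\mathrm{m}(M)$.

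To prove (i), I would deduce coadmissibility of $\mathrm{m}(M)\otimes_E P$ from the structural results of Theorem~\ref{theorem-Paskunas}. In the flat cases (types (I)--(III)), the functor $(-)\otimes_E P$ is exact, and the finite length of $\F\otimes_E P$ together with a dévissage of the finitely generated $E$-module $\mathrm{m}(M)$ by submodules whose successive quotients have bounded $E$-length controls the resulting $\F[[K]]$-module structure sufficiently to force finite generation of $\mathrm{m}(M)\otimes_E P$ over $\F[[K]]$. In type~(IV), the non-flatness is handled by resolving $\mathrm{m}(M)$ by modules flat over the regular local ring $\F[[x,y,z,w]]$ surjecting onto $E\cong\F[[x,y,z,w]]/(xw-yz)$, and controlling the resulting Tor contributions via admissibility of $M^{\vee}$.

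For (ii), by (i) the short exact sequence $0\to\ker(\mathrm{ev})\to\mathrm{m}(M)\otimes_E P\to M\to 0$ sits in $\mathfrak{C}$, and by~\eqref{equation-delta-short} the desired bound reduces to $\delta_\Lambda(\ker(\mathrm{ev}))\le 2$. Since $M$ is a cyclic quotient of $P$, $\mathrm{m}(M)\cong E/I$ with $I=\mathrm{Ann}_E(M)$; the $\Lambda$-torsion hypothesis on $M$, combined with the Krull dimension of $E$ (Theorem~\ref{theorem-Paskunas}) and the non-coadmissibility of $P$ itself, forces $I\neq 0$. Fixing $0\neq f\in I$, the module $\mathrm{m}(M)\otimes_E P=P/IP$ is annihilated by $f$ and hence is a quotient of $P/fP$; applying Proposition~\ref{proposition-GKdim} to multiplication by $f$ on a suitable coadmissible module (using flatness of $P$ over $E$ in types (I)--(III), or the explicit structure of Theorem~\ref{theorem-Paskunas} in type~(IV)) yields $\delta_\Lambda(P/fP)\le 2$, and therefore $\delta_\Lambda(\mathrm{m}(M)\otimes_E P)\le 2$.

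The principal obstacle will be the coadmissibility step in (i), particularly in type~(IV): the non-flatness of $P$ over $E$ obstructs a direct dévissage, and one must carefully exploit the finer structure of the universal deformation $P$ over $E\cong\F[[x,y,z,w]]/(xw-yz)$ from~\cite{Pa13}, using that the singular locus is controlled and that admissibility of $M^{\vee}$ bounds the Tor contributions uniformly.
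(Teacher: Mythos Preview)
Both parts of your proposal contain genuine gaps.

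For (i), the d\'evissage argument does not work as stated. A finitely generated $E$-module $\mathrm{m}(M)$ is not of finite $E$-length in general, so there is no finite filtration by pieces of bounded length. You presumably have the $\fm_E$-adic filtration in mind, whose graded pieces are finite-dimensional over $\F$; but then $\mathrm{m}(M)\otimes_E P$ is only realized as an inverse limit of finite-length objects in $\mathfrak{C}$, and such limits need not be coadmissible (indeed $P=\varprojlim P/\fm_E^n P$ itself is not). The paper's argument is quite different and much shorter: it invokes the identity $\Hom_K(\mathrm{m}(M)\otimes_E P,\sigma^\vee)^\vee\cong \mathrm{m}(M)\otimes_E\Hom_K(P,\sigma^\vee)^\vee$ from \cite[Prop.~2.4]{Pa15} and bounds the right-hand side directly using that $\Hom_K(M,\sigma^\vee)$ is finite-dimensional (this is where coadmissibility of $M$ actually enters).

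For (ii), your reduction to $\delta_\Lambda(\ker(\mathrm{ev}))\le 2$ is correct and agrees with the paper. The problem is the next step: to conclude $\delta_\Lambda(P/fP)\le 2$ via Proposition~\ref{proposition-GKdim}, you would need to apply that proposition to a finitely generated $\Lambda$-module. But $P$ is not coadmissible, and for an arbitrary $0\neq f\in I$ there is no reason for $P/fP$ to be coadmissible either (for instance, in type~(I) one has $\Hom_K(P,\sigma^\vee)^\vee\cong\F[[x]]$ for $\sigma$ a Serre weight of $\pi$, and if $f$ lies in the kernel of $E\twoheadrightarrow\F[[x]]$ then $\Hom_K(P/fP,\sigma^\vee)$ is infinite-dimensional). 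So the phrase ``a suitable coadmissible module'' hides exactly the difficulty. The paper proceeds instead by a structural analysis of $\Ker(\mathrm{ev})$: since $\Hom_{\mathfrak{C}}(P,\Ker)=0$, the object $\Ker$ contains no subquotient isomorphic to $\pi^\vee$, hence vanishes in types~(I) and~(III). In types~(II) and~(IV) one uses the explicit exact sequences from \cite[\S10]{Pa13} relating the various $P_{\pi'^\vee}$ (for $\pi'\in\mathfrak{B}$) to objects of the form $M_{\eta^\vee}$, and then Proposition~\ref{prop-Ind-specialcase} supplies the bound $\delta_\Lambda\le 2$ for coadmissible quotients of such objects.
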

\begin{proof}
Since $\Ker=0$ if $\mathfrak{B}$ is of type (I) or (III),  both the assertions are trivial in these cases. Thus, we assume that $\mathfrak{B}$ is of type (II) or (IV) in the rest. 

(i)  
The coadmissibility of $M$ is equivalent to that $\Hom_K^{\rm cont}(M,\sigma^{\vee})$ is finite dimensional over $\F$ for any irreducible $\sigma\in \Rep_{\F}(K)$, and we need to check this property for $\mathrm{m}(M)\otimes_EP$. 
By \cite[Prop. 2.4]{Pa15}\footnote{This result is stated for a commutative subring of $E$ rather than for $E$ itself which can be non-commutative, but the same proof goes through without change.} we have a natural isomorphism of finitely generated $E$-modules:
\begin{equation}\label{equation-Pas-Prop2.4}\Hom_{K}^{\rm cont}(\mathrm{m}(M)\otimes_EP,\sigma^{\vee})^{\vee}\cong \mathrm{m}(M)\otimes_E\Hom_{K}^{\rm cont}(P,\sigma^{\vee})^{\vee},\end{equation}
hence we only need to consider those (irreducible) $\sigma$ such that $\Hom_K^{\rm cont}(P,\sigma^{\vee})^{\vee}\neq0$.  Choose any weight $\sigma'\in \mathscr{D}(\pi)$, which clearly implies $\Hom_K^{\rm cont}(P,\sigma'^{\vee})\neq0$. Then using \eqref{equation-Pas-Prop2.4}, Proposition \ref{proposition-HT}(i) and Corollary \ref{corollary-serreweight-ok}, we obtain isomorphisms
\[\Hom_K^{\rm cont}(\mathrm{m}(M)\otimes_EP,\sigma^{\vee})^{\vee}  \cong \Hom_K^{\rm cont}(\mathrm{m}(M)\otimes_EP,\sigma'^{\vee})^{\vee}\cong \Hom_{K}^{\rm cont}(M,\sigma'^{\vee})^{\vee}. \]
Since $M$ is coadmissible by assumption,  all these  spaces are finite dimensional over $\F$.

(ii)  It is equivalent to show that $\Ker$ is a torsion  $\Lambda$-module (it is coadmissible  as a consequence of (i)).  Since the case of type (II) is similar and simpler, we assume in the rest that $\mathfrak{B}$ is of type (IV), so that $\mathfrak{B}$ consists of three irreducible objects and we let $\pi_1,\pi_2$ be the two other  than $\pi$. Since $\Ker$ is coadmissible by (i) and does not admit $\pi^{\vee}$ as a subquotient, we can find  $s_1,s_2\geq 0$ and a surjection \[P_{\pi_1^{\vee}}^{\oplus s_1}\oplus P_{\pi_2^{\vee}}^{\oplus s_2}\twoheadrightarrow \Ker.\]
Let $Q_1$ (resp. $Q_2$) be  the maximal quotient of $P_{\pi_1^{\vee}}$ (resp. $P_{\pi_2^{\vee}}$) none of whose irreducible subquotients is isomorphic to  $\pi^{\vee}$. Then the above surjection must factor through $Q_1^{\oplus s_1}\oplus Q_2^{\oplus s_2}\twoheadrightarrow \Ker$.  Hence, it is enough to show that any  coadmissible quotient of $Q_1$ (resp. $Q_2$)   is torsion. This follows from the results in \cite[\S10]{Pa13} as we explain below. Up to twist we may assume $\mathfrak{B}=\{\ide_G,\Sp,\pi_{\alpha}\}$.
 
Let us first assume $\pi=\pi_{\alpha}$, so that up to order $\pi_1=\ide_G$ and $\pi_2=\Sp$. We have the following exact sequences  
\[0\ra P_{\pi_{\alpha}^{\vee}}\ra P_{\ide_G^{\vee}}\ra M_{\ide_T^{\vee}}\ra0,\]
\[P_{\pi_{\alpha}^{\vee}}^{\oplus 2}\ra P_{\Sp^{\vee}}\ra M_{\ide_T^{\vee},0}\ra0,\]
see \cite[(234),(236)]{Pa13}, where $M_{\ide_T^{\vee},0}$ is a submodule of $M_{\ide_T^{\vee}}$ defined by \cite[(233)]{Pa13}, namely defined by the  exact sequence
\begin{equation}\label{equation-Pas-233}
0\ra M_{\ide_T^{\vee},0}\ra M_{\ide_T^{\vee}}\ra \F\ra0.\end{equation} It is easy to see that $M_{\ide_T^{\vee}}$ (resp. $M_{\ide_T^{\vee},0}$) does not admit $\pi^{\vee}$ as a subquotient, hence  $Q_1$ (resp. $Q_2$) is equal to $M_{\ide_T^{\vee}}$ (resp. $M_{\ide_T^{\vee},0}$).  Proposition \ref{prop-Ind-specialcase} then implies that any coadmissible quotient of $Q_i$ is a torsion $\Lambda$-module; remark that  $M_{\ide_T^{\vee},0}$ is \emph{not} a quotient of $M_{\ide_T^{\vee}}$ so that Proposition \ref{prop-Ind-specialcase} does not apply directly to coadmissible quotients of $M_{\ide_T^{\vee},0}$, but we can conclude using \eqref{equation-Pas-233}.
Finally, a similar argument works in the case $\pi\in\{\ide_G,\Sp\}$.
\end{proof}

\begin{remark}
Given a block $\mathfrak{B}$, let $P_{\mathfrak{B}}=\oplus_{\pi\in\mathfrak{B}}P_{\pi^{\vee}}$ and $E_{\mathfrak{B}}=\End_{\mathfrak{C}}(P_{\mathfrak{B}})$. It follows from \cite[Lem. 2.9, Lem. 2.10]{Pa13} that for any $M\in \mathfrak{C}^{\mathfrak{B}}$, the evaluation morphism \[\Hom_{\mathfrak{C}}(P_{\mathfrak{B}},M)\otimes_{E_{\mathfrak{B}}}P_{\mathfrak{B}}\ra M\] is always an isomorphism. This may suggest that, even for a quotient of $P_{\pi^{\vee}}$ for some fixed $\pi\in\mathfrak{B}$, we should consider the $E_{\mathfrak{B}}$-module $\Hom_{\mathfrak{C}}(P_{\mathfrak{B}},M)$ rather than the $E_{\pi^{\vee}}$-module $\Hom_{\mathfrak{C}}(P_{\pi^{\vee}},M)$ (they are both finitely generated modules over $Z(E_{\mathfrak{B}})=Z(E_{\pi^{\vee}})$).  However, for our main application, we need to translate the $\Lambda$-torsionness of $M$ into a commutative algebra statement, and Proposition \ref{proposition-dim-equality} below shows that $\Hom_{\mathfrak{C}}(P_{\pi^{\vee}},M)$ fits in with the need.  
The analogue of Proposition \ref{proposition-dim-equality} for $\Hom_{\mathfrak{C}}(P_{\mathfrak{B}},M)$ need not hold:  take $\mathfrak{B}=\{\ide_G,\Sp,\pi_{\alpha}\}$ and $M=\ide_G^{\vee}$ so that $\delta(M)=0$. (But see \cite[Prop. 3.6.12, Rem. 3.6.13]{Pan} for one inequality.) 
\end{remark}

\subsection{Blocks of type (III)} \label{subsection-III}

In this subsection, we assume $\mathfrak{B}$ is of type (III), that is, $\mathfrak{B}=\{\pi\}$ with $\pi\cong \Ind_B^G\eta$, where $\eta=\chi\otimes\chi\omega^{-1}$. After twisting, we assume $\chi=\ide$ is trivial. Let $P=P_{\pi^{\vee}}$ and $E=\End_{\mathfrak{C}}(P)$. Then $E$ is non-commutative, and is  free over its centre $R$ of rank $4$, see Theorem \ref{theorem-Paskunas}.   
 Let $M_{\eta^{\vee}}$ and $E_{\eta^{\vee}}$ be as in \S\ref{subsection-PS}; recall that $E_{\eta^{\vee}}$ is isomorphic to $\F[[x,y]]$ and is identified with $E^{ab}$ (the maximal abelian quotient of $E$)  by the discussion before \cite[Lem. 9.2]{Pa13}. On the other hand, it is known that $\mathscr{D}(\pi)$ consists of one single weight, i.e. $\sigma=\Sym^{p-2}\F^2$, see \cite[Rem. 6.2]{Pa15}. Hence we have surjective morphisms via \eqref{equation-P=Meta} and Proposition \ref{proposition-HT}:
\[E  \twoheadrightarrow E^{ab}\twoheadrightarrow \F[[S]].\]
The goal of this subsection is to  prove the following result.  
\begin{proposition}\label{proposition-III-image}
(i) We may choose the variables $x,y$ in such a way that the image of the composite morphism $R\hookrightarrow  E\twoheadrightarrow E^{ab}\cong \F[[x,y]]$ is equal to $\F[[x^2,xy,y^2]]$.

(ii) We may choose the variable $S$ in such a way that the image of  the composite morphism $R\hookrightarrow E\twoheadrightarrow  \F[[S]]$ is equal to $\F[[S^2]]$.  

(iii) Let $I=\Ker(R\ra \F[[S]])$ and $J=\Ker(E\twoheadrightarrow \F[[S]])$. Then $J^4\subset IE\subset J$.
\end{proposition}

The proof of Proposition \ref{proposition-III-image} relies on another explicitly constructed ring defined in  \cite[(145)]{Pa13}, which we denote by $E'$ (instead of $R$ in \emph{loc. cit.}).  
We briefly recall the construction.
 Let $\mathcal{G}$ be the maximal pro-$p$ quotient of $G_{\Q_p}:=\Gal(\bQp/\Q_p)$ and $\mathcal{G}^{ab}$ the maximal abelian quotient of $\mathcal{G}$. By local class field theory, we have 
 \[G_{\Q_p}^{ab}\cong \Gal(\Q_p(\mu_{p^{\infty}})/\Q_p)\times\Gal(\Q_p^{\rm ur}/\Q_p)\cong \Z_p^{\times}\times\widehat{\Z},\]
where $\mu_{p^{\infty}}$ is the group of $p$-power order roots of unity in $\bQp$ and $\Q_p^{\rm ur}$ is the maximal unramified extension of $\Q_p$. Since $\mathcal{G}^{ab}$ is equal to the maximal pro-$p$ quotient of $ G_{\Q_p}^{ab}$, we obtain 
$$\mathcal{G}^{ab}\cong (1+p\Z_p)\times\Z_p.$$ 
We choose a pair of generators $\overline{\gamma},\overline{\delta}$ of $\mathcal{G}^{ab}$ such that $\bar{\gamma}\mapsto (1+p,0)$ and $\bar{\delta}\mapsto (1,1)$. Then $\mathcal{G}$ is a free pro-$p$ group generated by $2$ elements $\gamma,\delta$ which lift respectively $\overline{\gamma},\overline{\delta}$. See \cite[\S2]{San13} for details. Following \cite[(145)]{Pa13}, we let (note that  in \emph{loc. cit.} the ring is defined over $\cO$ and is denoted by $R$),  $E'$ is defined as:
\[E':=\frac{\F[[t_1,t_2,t_3]]\widehat{\otimes}_{\F}\F[[\mathcal{G}]]}{J}\]
for a certain closed two-sided ideal $J$ generated by the relations listed in \cite[(146),(147)]{Pa13}. 

 With the notation in \emph{loc. cit.}, we have the following facts:
\begin{enumerate}
\item[(a)] The natural morphism $\F[[t_1,t_2,t_3]]\ra E'$ is injective and identifies $\F[[t_1,t_2,t_3]]$ with the centre of $E'$, denoted by $R'$. $E'$ is a free $R'$-module of rank $4$, and  $E'$ contains two elements 
\[u:=\gamma-1-t_1,\ \ v:=\delta-1-t_2\]
 such that  $\{1,u,v,t'\}$ is an $R'$-basis,  where $t':=uv-vu$.  See \cite[Cor. 9.24, Cor. 9.25]{Pa13}.
\item[(b)] $E'^{ab}$  is isomorphic to  $\F[[\overline{u},\overline{v}]]$, where  $\overline{u}$ (resp. $\overline{v}$) denotes the image of $u$ (resp. $v$). The kernel of $E'\ra E'^{ab}$ is equal to $t'E'$. See Lemma \cite[Lem. 9.3]{Pa13} and the proof of \cite[Cor. 9.27]{Pa13}. 
\item[(c)]   $E'$ is equipped with an involution  $*$ which satisfies $u^*=-u$, $v^*=-v$, $t'^*=-t'$ and
\[R'=\{\phi\in E': \ \phi=\phi^{*}\}.\] 
See   \cite[(161), Lem. 9.14]{Pa13}. 
\item[(d)] There exists a ring isomorphism $\varphi: E\cong E'^{op}$ by \cite[Cor. 9.27]{Pa13}. Moreover, it is compatible with Colmez's functor $\VV$ (modified as  in \cite[\S5.7]{Pa13}) in the following sense. As explained in \cite[\S9.1]{Pa13},  $\VV$ induces a natural transformation $\mathrm{Def}_{\pi^{\vee}}\ra \mathrm{Def}_{\VV(\pi^{\vee})}$ between certain deformation functors of $\pi^{\vee}$ and of $\VV(\pi^{\vee})$,  which are respectively pro-represented by $E$ and $\F[[\mathcal{G}]]^{op}$, hence induces a ring morphism by Yoneda's Lemma
\[\varphi_{\VV}: \F[[\mathcal{G}]]^{op}\ra E\]
which is uniquely determined up to conjugation by $E^{\times}$. Here,  we consider  deformation problems with coefficients in finite local (possibly non-commutative) Artinian $\F$-algebras with residue field $\F$.
Then the following diagram is commutative
\begin{equation}\label{equation-III-varphi-VV}\xymatrix{\F[[\mathcal{G}]]^{op}\ar[r]\ar_{\varphi_{\VV}}[dr]&E'^{op}\ar_{\cong}^{\varphi^{-1}}[d]\\ 
&E}\end{equation}
where the upper horizontal morphism is the natural one.
\end{enumerate} 
In summary, we have a commutative diagram
\begin{equation}\label{equation-III-diagram}\xymatrix{R'\ar[r]\ar_{\cong}[d]&E'^{op}\ar@{->>}[r]\ar_{\cong}^{\varphi^{-1}}[d]&E'^{ab}\ar_{\cong}[d]&\\
R\ar[r]&E\ar@{->>}[r]&E^{ab}\ar@{->>}[r]&\F[[S]].}\end{equation}
Thus, to prove Proposition \ref{proposition-III-image}, we may work with $R'$, $E'^{op}$, $E'^{ab}$ instead of $R$, $E$, $E^{ab}$, via the isomorphism $\varphi^{-1}$.  
\begin{lemma}\label{lemma-III-mod-v}
(i) The element $\gamma\in\mathcal{G}$ is sent to $1$ under the composite map
\begin{equation}\label{equation-III-compositemap}
\F[[\mathcal{G}]]^{op}\overset{\varphi_{\VV}}{\lra} E\twoheadrightarrow \F[[S]].\end{equation}
(ii) The element $u\in E'$ is sent to $0$ under the composite map  $E'^{op}\simto E\twoheadrightarrow \F[[S]]$.
\end{lemma}
\begin{proof}
(i) Recall that $P$ is flat over $E$ and can be viewed as a deformation of $\pi^{\vee}$ over $E$, in the sense of \cite[\S3.1]{Pa13}. Consider $\F[[S]]\otimes_EP$ and view it as a deformation of $\pi^{\vee}$ to $\F[[S]]$. 
It is proved in \cite[1.5.9]{ki09} and reformulated in \cite[Prop. 2.9, Prop. 2.11]{HT}  that 
\[\VV(\F[[S]]\otimes_EP)\cong \mu_{S+1}^{-1}\]
where $\mu_{S+1}:G_{\Q_p}\ra \F[[S]]^{\times}$ is the unramified character sending geometric Frobenii to $S+1$. Here we have used the isomorphism $\pi\cong\pi(p-2,1)$, see \eqref{equation-PS-cInd}.  It is clear that $\mu_{S+1}$ factors through $G_{\Q_p}\twoheadrightarrow \mathcal{G}^{ab}$ and   $\mu_{S+1}(\overline{\gamma})=1$  by our choice of $\gamma$.   The result follows from this because, when viewed as a deformation of $\VV(\pi^{\vee})$ to $\F[[S]]$, $\VV(\F[[S]]\otimes_EP)$ is obtained from the universal deformation $\F[[\mathcal{G}]]$ via  the map \eqref{equation-III-compositemap}. Here, $\F[[\mathcal{G}]]^{op}$ is viewed as the universal deformation ring via $\F[[\mathcal{G}]]^{op}\cong\End_{\F[[\mathcal{G}]]}(\F[[\mathcal{G}]])$), see \cite[\S3.2]{Pa13}.

(ii)  
Since  $u=\frac{\gamma-\gamma^{-1}}{2}$ (as is shown after \cite[(160)]{Pa13}), the result follows from (i).  
\end{proof}

Denote by $\overline{R}'$ the image of $R'\hookrightarrow  E'\twoheadrightarrow E'^{ab}\cong \F[[\overline{u},\overline{v}]]$, and let $\fm_{\overline{R}'}$ be its maximal ideal.
\begin{lemma}\label{lemma-III-1}
$\overline{u}^2, \overline{v}^2, \overline{u}\overline{v}\in\overline{R}'$.
\end{lemma}
\begin{proof}
It is proved in \cite[(159)]{Pa13}\footnote{There is a typo in the formula, namely we should get  $u^2=2t_1+t_1^2$ and $v^2=2t_2+t_2^2$ from \cite[(148)]{Pa13}. This does not affect the rest of \cite[\S9]{Pa13}, because we still deduce that $u^2,v^2$ are central elements and only this fact  is used later (see \cite[Lem 9.18]{Pa13}). } that ${u}^2, v^2\in R'$,  hence  $\overline{u}^2,\overline{v}^2\in \overline{R}'$. On the other hand, we know that $uv+vu\in R'$ by (c), hence $\overline{u} \overline{v}\in \overline{R}'$ because $2$ is invertible in $\overline{R}'$ (recall $p\geq 5$).
\end{proof}
 
\begin{lemma}\label{lemma-III-2}
For any $(a,b)\in \F^2\backslash\{(0,0)\}$, $a\overline{u}+b\overline{v}\notin \fm_{\overline{R}'}\F[[\overline{u},\overline{v}]]$.
\end{lemma}
 
\begin{proof}
The condition $a\overline{u}+b\overline{v}\in \fm_{\overline{R}'}\cdot\F[[\overline{u},\overline{v}]]$ is equivalent to the existence of $\lambda_1, \lambda_2,\lambda_3,\lambda_4\in R'$ and $\phi\in \fm_{R'}E'$ such that (in $E'$)
\[au+bv=\phi+t'(\lambda_1+\lambda_2u+\lambda_3v+\lambda_4t').\]
Taking the involution $*$, we obtain 
\[\begin{array}{rll}-au-bv&=&\phi^*+(\lambda_1-\lambda_2u-\lambda_3v-\lambda_4t')(-t')\\
&=&\phi^{*}+t'(-\lambda_1-\lambda_2u-\lambda_3v+\lambda_4t'),\end{array}\]
where the first equality follows from (c) and the second from \cite[(160)]{Pa13}. 
This implies, as $2$ is invertible, 
\begin{equation}\label{equation-III-au+bv} au+bv=t'(\lambda_1+\lambda_2u+\lambda_3v)+\frac{\phi-\phi^{*}}{2}.\end{equation}
A computation using the relations established in the proof of \cite[Lem. 9.18]{Pa13} gives 
\[t'(\lambda_1+\lambda_2u+\lambda_3v)=\mu_2u+\mu_3v+\lambda_1t'\]
where $\mu_2,\mu_3\in R'$ are given by 
\[\mu_2=\lambda_2(uv+vu)+2\lambda_3v^2, \ \ \ \mu_3=-\lambda_3(uv+vu)-2\lambda_2u^2.\] 
In particular, $\mu_2,\mu_3\in \fm_{R'}$. On the other hand, since $\frac{\phi-\phi^{*}}{2}\in\fm_{R'}E'$, it can be written as $\lambda_1'+\lambda_2'u+\lambda_3'v+\lambda_4't'$ with $\lambda_i'\in\fm_{R'}$. Since $E'$ is free over $R'$ with basis $\{1,u,v,t'\}$, \eqref{equation-III-au+bv} forces $a,b\in\fm_{R'}$, hence  $a=b=0$.
\end{proof}

 \begin{proof}[Proof of Proposition   \ref{proposition-III-image}] 
The diagram \eqref{equation-III-diagram} shows that we may work with $R'$, $E'^{op}$, $E'^{ab}$ instead of $R$, $E$, $E^{ab}$, via  the isomorphism $\varphi^{-1}$.    
 
 (i) By Lemmas \ref{lemma-III-1} and \ref{lemma-III-2},  the ideal $\fm_{\overline{R}'} \F[[\overline{u},\overline{v}]]$ is equal to $(\overline{u}^2, \overline{v}^2, \overline{u}\overline{v})$, which is an ideal minimally generated by $3$ elements.  As a consequence, the embedding dimension of $\overline{R}'$ is greater or equal to $3$. But this embedding dimension is $\leq 3$, because $\overline{R}'$ a quotient of $R'$ whose embedding dimension is $3$.  Therefore,   $\overline{R}'$ is exactly the subring of $\F[[\overline{u},\overline{v}]]$ topologically generated by $\overline{u}^2,  \overline{v}^2,  \overline{u} \overline{v}$. This finishes the proof of (i).
 
(ii) It follows from (i) using Lemma \ref{lemma-III-mod-v}; for example we may take $S$ to be the image of $v$.
  
(iii) Let $I'=\mathrm{Ker}(R'\simto R\ra\F[[S]])$ and $J'=\mathrm{Ker}(E'^{op}\simto E\twoheadrightarrow \F[[S]])$. We need to show $J'^4\subset I'E'^{op}$ (note that $I'$ is contained in the centre of $E'$). On the one hand, we have   $t'\in J'$ by (b)  and $u\in J'$  by Lemma \ref{lemma-III-mod-v}. Since $E'^{op}/(t',u)\cong\F[[\overline{v}]]$, and the morphism $E'^{op}\twoheadrightarrow \F[[S]]$ is surjective, we must have $J'=(t',u)$. On the other hand, it is easy to see that $t'^2, u^2\in I'$. Using  the relation \cite[(160)]{Pa13}, one easily checks the desired inclusion.
\end{proof}

We note the following consequence of Proposition \ref{proposition-III-image}. 
\begin{corollary}\label{corollary-III-gh}
The kernel of $R \ra \F[[S]]$ is minimally generated by two elements.
\end{corollary}
\begin{proof}
By Proposition \ref{proposition-III-image}(ii), the image of $R \ra \F[[S]]$ is isomorphic to $\F[[S^2]]$, which is a regular local ring. It is a standard fact  that the kernel of a  surjective local morphism between two regular local rings can be generated by a regular sequence, see \cite[Thm. 21.2(ii)]{Mat}.
\end{proof}

\begin{remark}
In general, given a height two prime ideal $\mathfrak{p}$ in a $3$-dimensional regular local ring, e.g. $\F[[t_1,t_2,t_3]]$, it is not clear whether $\mathfrak{p}$ is the radical of an ideal generated by  $2$ elements, or equivalently, whether there exists a reduction of $\mathfrak{p}$ with  $2$ generators (\cite[Chap. 8]{HS}). 
\end{remark}

\subsection{Blocks of type (IV)}
\label{subsection-type-IV}

In this subsection, we complement some results in the work of Pa\v{s}k\={u}nas \cite{Pa13,Pa15} when $\mathfrak{B}$ is of type (IV).  Proposition \ref{proposition-Q-pi} in \S\ref{subsubsection-1}   was used in the proof of Proposition \ref{proposition-residue=finitelength}, but can be avoided as explained in Remark \ref{remark-unnecessary}. The results in \S\ref{subsubsection-2}, except Lemma \ref{lemma-IV-variables},  will not be used in this paper, but might be found useful elsewhere.

   The notation here is the same as in the previous subsections.  In particular,  $\pi\in \Rep_{\F}(G)$ is irreducible  of type (IV), and $P_{\pi^{\vee}}$ is a projective envelope of $\pi^{\vee}$ in $\mathfrak{C}$ and $ E_{\pi^{\vee}}=\End_{\mathfrak{C}}(P_{\pi^{\vee}})$.  Note that  the rings $E_{\pi^{\vee}}$ are naturally isomorphic (to $\F[[x,y,z,w]]/(xw-yz)$) for any $\pi\in\mathfrak{B}$ (see \cite[\S10]{Pa13}), so the subscript will be omitted in the rest (while the one of $P_{\pi^{\vee}}$ will be kept). Up to twist, we may assume $\mathfrak{B}=\{\ide_G,\Sp,\pi_{\alpha}\}$.

 \subsubsection{ $\F\otimes_E P_{\pi^{\vee}}$}\label{subsubsection-1}
 Our first aim is to determine $\F\otimes_EP_{\pi^{\vee}}$ for $\pi\in\mathfrak{B}$. 
For $\pi_1,\pi_2\in\Rep_{\F}^{\rm l, fin}(G)^{\mathfrak{B}}$ (in particular, $Z$ acts trivially on them), we write following \cite[\S10]{Pa13}:
\[e^1(\pi_1,\pi_2):=\dim_{\F}\Ext^1_{G/Z}(\pi_1,\pi_2).\]
 For convenience of the reader, we recall the list of $e^1(\pi_1,\pi_2)$ for $\pi_1,\pi_2\in\mathfrak{B}$, see   \cite[\S10.1]{Pa13}: 
\[e^1(\ide_G,\ide_G)=0,\ \ \ \ e^1(\Sp,\ide_G)=1,\ \ \ \ e^1(\pi_{\alpha},\ide_G)=1,\]
\[e^1(\ide_G,\Sp)=2,\ \ \ \ e^1(\Sp,\Sp)=0,\ \ \ \ e^1(\pi_{\alpha},\Sp)=0,\]
\[e^1(\ide_G,\pi_{\alpha})=0,\ \ \ \ e^1(\Sp,\pi_{\alpha})=1, \ \ \ \ e^1(\pi_{\alpha},\pi_{\alpha})=2.\]

We deduce that there exists a unique (up to isomorphism) non-split extension
\begin{equation}\label{equation-def-kappa}
0\ra \ide_G\ra\kappa\ra \pi_{\alpha}\ra0.\end{equation}
Also,  let $\tau_1$ be the universal extension of $\ide_G^{\oplus 2}$ by $\Sp$, i.e. we have 
\[0\ra \Sp\ra \tau_1\ra \ide_G^{\oplus2}\ra0\]
with $\rsoc_G\tau_1=\Sp$. 
\begin{lemma}\label{lemma-extension-dimension}
We have
\[e^1(\Sp,\kappa)=2,\ \ e^1(\pi_{\alpha},\tau_1)=2,\ \ e^1(\tau_1,\pi_{\alpha})=1.\]
\end{lemma}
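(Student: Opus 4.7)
I would derive each of the three equalities by applying the long exact sequence of $\Ext_G^\bullet$ to the defining short exact sequence of $\kappa$ or $\tau_1$, then plugging in the table of $e^1(\pi_1,\pi_2)$ for $\pi_1,\pi_2\in\mathfrak B$ that is listed just before the lemma, together with the vanishing of $\Hom_G(\pi_1,\pi_2)$ for distinct irreducible $\pi_1,\pi_2$ in $\mathfrak B$.

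Concretely, applying $\Ext_G^\bullet(\Sp,-)$ to $0\to\ide_G\to\kappa\to\pi_\alpha\to 0$ collapses (all Hom terms vanish) to
\[
0\to \Ext^1_G(\Sp,\ide_G)\to \Ext^1_G(\Sp,\kappa)\to \Ext^1_G(\Sp,\pi_\alpha)\xrightarrow{\delta_1}\Ext^2_G(\Sp,\ide_G),
\]
which pins $\dim_\F \Ext^1_G(\Sp,\kappa)$ to $1$ or $2$, with value $2$ iff $\delta_1=0$. Exactly the same kind of manipulation starting from $0\to\Sp\to\tau_1\to\ide_G^{\oplus 2}\to 0$, first with $\Ext_G^\bullet(\pi_\alpha,-)$ and then with $\Ext_G^\bullet(-,\pi_\alpha)$, reduces the other two equalities to
\[
0\to \Ext^1_G(\pi_\alpha,\tau_1)\to \Ext^1_G(\pi_\alpha,\ide_G^{\oplus 2})\xrightarrow{\delta_2}\Ext^2_G(\pi_\alpha,\Sp)
\]
and
\[
0\to \Ext^1_G(\tau_1,\pi_\alpha)\to \Ext^1_G(\Sp,\pi_\alpha)\xrightarrow{\delta_3}\Ext^2_G(\ide_G^{\oplus 2},\pi_\alpha),
\]
giving $\dim\Ext^1_G(\pi_\alpha,\tau_1)\leq 2$ (equality iff $\delta_2=0$) and $\dim\Ext^1_G(\tau_1,\pi_\alpha)\leq 1$ (equality iff $\delta_3=0$).

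The main obstacle is to verify the vanishing of the three connecting maps $\delta_i$, each of which is Yoneda multiplication by the class of $\kappa$ or $\tau_1$. I would handle this on the dual side in $\mathfrak{C}^{\mathfrak B}$, where Pa\v{s}k\=unas's analysis of the projective envelopes $P_{\pi^\vee}$ and of the ring $E\cong \F[[x,y,z,w]]/(xw-yz)$ in \cite[\S10]{Pa13}, together with the short exact sequences of the form $0\to P_{\pi_\alpha^\vee}\to P_{\ide_G^\vee}\to M_{\ide_T^\vee}\to 0$ and its variant already used in \S\ref{subsection-coadmissible}, provide enough of a minimal projective resolution of each of $\ide_G^\vee$, $\Sp^\vee$, $\pi_\alpha^\vee$ in $\mathfrak C^{\mathfrak B}$ to read off the relevant $\Ext^2_G$-groups and the Yoneda products $\delta_i$ directly.

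As a sanity check one can also try to exhibit the extensions explicitly: for $e^1(\Sp,\kappa)=2$, one extension is the pushout of the unique non-split $0\to\ide_G\to E_1\to\Sp\to 0$ along $\ide_G\hookrightarrow\kappa$, and a second is a lift of the non-split $0\to\pi_\alpha\to E_2\to\Sp\to 0$ to an extension of $\Sp$ by $\kappa$; analogous pushout/pullback constructions along $\Sp\hookrightarrow\tau_1$ and $\tau_1\twoheadrightarrow\ide_G^{\oplus 2}$ cover the other two cases. The existence of such lifts is however tautologically equivalent to $\delta_i=0$, so in either approach the heart of the argument lies in the $\Ext^2$-vanishing, which is where I expect the real work to be.
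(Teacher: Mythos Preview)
Your long-exact-sequence setup is correct and is the natural approach; the reductions to the vanishing of $\delta_1,\delta_2,\delta_3$ are exactly right. The paper does not carry out these computations either: its proof consists entirely of citations to \cite[Lem.~10.18, Lem.~10.12, (187)]{Pa13}, where Pa\v{s}k\=unas establishes precisely these three equalities. So in substance you are proposing to reprove the cited lemmas rather than to cite them.

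That said, your proposal stops at the point where the actual work begins, and your sketch of how to finish is vaguer than the actual arguments in \cite{Pa13}. For instance, Pa\v{s}k\=unas's proof of $e^1(\pi_\alpha,\tau_1)=2$ in \cite[Lem.~10.12]{Pa13} does not go through projective resolutions in $\mathfrak{C}^{\mathfrak B}$ as you suggest; it uses Emerton's ordinary-parts functor and its derived functor to reduce to an $\Ext$ computation on the torus. Similarly, \cite[Lem.~10.18]{Pa13} for $e^1(\Sp,\kappa)=2$ uses the explicit description of $\kappa$ as (a twist of dual of) $\Ind_B^G\ide_T$ modulo $\Sp$ rather than chasing Yoneda products in the ring $E$. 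Your plan to read everything off from the structure of $E\cong\F[[x,y,z,w]]/(xw-yz)$ and the exact sequences among the $P_{\pi^\vee}$ is in principle feasible, but circular in this paper's logical order: those exact sequences in \cite[\S10.4--10.5]{Pa13} are themselves built up using the very $\Ext$-dimensions you are trying to compute. If you want a self-contained argument, the cleaner route is the one Pa\v{s}k\=unas takes: use parabolic induction/ordinary parts to compute the relevant $\Ext^2$ terms directly.
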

\begin{proof}
See \cite[Lem. 10.18]{Pa13} for the first equality, \cite[Lem. 10.12]{Pa13} for the second,  \cite[(187)]{Pa13} and the argument before it for the third. 
\end{proof}

\begin{proposition}\label{proposition-Q-pi}
Let $\pi\in\mathfrak{B}$ and set $Q_{\pi^{\vee}}=\F\otimes_EP_{\pi^{\vee}}.$ In the following statements, the existence of the extension  is guaranteed by Lemma \ref{lemma-extension-dimension}.
\begin{enumerate}
\item[(i)] If $\pi=\ide_G$,  $Q_{\ide_G^{\vee}}$ is isomorphic to the universal extension of $\kappa^{\vee}$ by $(\Sp^{\vee})^{\oplus 2}$: 
\begin{equation}\label{equation-Q-ide}0\ra (\Sp^{\vee})^{\oplus 2}\ra Q_{\ide_G^{\vee}}\ra \kappa^{\vee}\ra0.\end{equation}
\item[(ii)] If $\pi=\Sp$,  $Q_{\Sp^{\vee}}$ is isomorphic to the universal extension of  $\tau_1^{\vee}$ by $(\pi_{\alpha}^{\vee})^{\oplus 2}$:   
\begin{equation}\label{equation-Q-Sp}0\ra (\pi_{\alpha}^{\vee})^{\oplus 2}\ra Q_{\Sp^{\vee}}\ra \tau_1^{\vee}\ra0.\end{equation}
\item[(iii)] If $\pi=\pi_{\alpha}$,  $Q_{\pi_{\alpha}^{\vee}}$ is isomorphic to the unique non-split extension of  $\pi_{\alpha}^{\vee}$ by $\tau_1^{\vee}$: 
\begin{equation}\label{equation-Q-pialpha}0\ra \tau_1^{\vee} \ra Q_{\pi_{\alpha}^{\vee}}\ra \pi_{\alpha}^{\vee}\ra0.\end{equation}
\end{enumerate}
 \end{proposition}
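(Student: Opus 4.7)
By the proof of Proposition~\ref{proposition-residue=finitelength}, $Q_{\pi^\vee} := \F \otimes_E P_{\pi^\vee}$ is characterized as the maximal quotient of $P_{\pi^\vee}$ in which $\pi^\vee$ occurs as a Jordan--H\"older factor with multiplicity one. The plan, in each of the three cases, is to identify this maximal quotient with the candidate extension $Y_\pi$ described in the statement. I will proceed in three steps, the third of which will be the main obstacle.

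Step 1: construct $Y_\pi$ and show it has simple cosocle $\pi^\vee$. In cases (i) and (ii), Lemma~\ref{lemma-extension-dimension} gives a $2$-dimensional Ext-space, so the \emph{universal} extension exists and is unique up to isomorphism once one fixes a basis of this space; in case (iii), the unique non-split extension is available since $e^1(\tau_1,\pi_\alpha)=1$. Universality then forces $Y_\pi$ to have simple cosocle $\pi^\vee$: the quotient piece ($\kappa^\vee$, $\tau_1^\vee$, or $\pi_\alpha^\vee$) already has cosocle $\pi^\vee$, while the subobject piece cannot contribute any further quotients isomorphic to $\pi^\vee$. By projectivity of $P_{\pi^\vee}$ and Nakayama's lemma, the cosocle surjection $Y_\pi \twoheadrightarrow \pi^\vee$ lifts to a surjection $P_{\pi^\vee} \twoheadrightarrow Y_\pi$.

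Step 2: verify that $\pi^\vee$ appears exactly once as a Jordan--H\"older factor of $Y_\pi$. This is a direct count in each case, using the description of $\kappa$ (length $2$, factors $\ide_G$ and $\pi_\alpha$) and $\tau_1$ (length $3$, socle $\Sp$, cosocle $\ide_G^{\oplus 2}$). The maximality of $Q_{\pi^\vee}$ then implies that $P_{\pi^\vee} \twoheadrightarrow Y_\pi$ factors as $P_{\pi^\vee} \twoheadrightarrow Q_{\pi^\vee} \twoheadrightarrow Y_\pi$.

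Step 3 (main obstacle): show that $Q_{\pi^\vee} \twoheadrightarrow Y_\pi$ is an isomorphism. The plan is to argue by contradiction: suppose the kernel $N$ is nonzero, and pick an irreducible subobject $\xi^\vee \subset N$ with $\xi \in \mathfrak{B}$. Since $\pi^\vee$ already has multiplicity one in $Q_{\pi^\vee}$, necessarily $\xi \neq \pi$. Combining the $e^1$-table with the structure of $P_{\pi^\vee}$ given by the exact sequences \cite[(234)--(236)]{Pa13} (which relate $P_{\pi^\vee}$ to the parabolically induced objects $M_{\eta^\vee}$ from \S\ref{subsection-PS}), one argues that such a $\xi^\vee$ cannot appear at the bottom of $Q_{\pi^\vee}$ without either forcing an extra copy of $\pi^\vee$ elsewhere (contradicting multiplicity one) or violating the universality already built into $Y_\pi$. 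An alternative, more computational route is to describe $Q_{\pi^\vee} = P_{\pi^\vee}/\mathfrak{m}_E P_{\pi^\vee}$, with $\mathfrak{m}_E = (x,y,z,w) \subset E = \F[[x,y,z,w]]/(xw-yz)$, and to compute the action of $x,y,z,w$ on $P_{\pi^\vee}$ using the descriptions of $P_{\ide_G^\vee}$, $P_{\Sp^\vee}$ and $P_{\pi_\alpha^\vee}$ in \cite[\S10]{Pa13}. The technical heart of either approach is the careful bookkeeping of extensions inside the three-element block $\mathfrak{B}$, and this is where I expect the argument to require the most care.
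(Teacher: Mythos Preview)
Your Steps 1 and 2 are correct and match the paper's setup. The problem is in Step 3, and it is twofold.

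First, a direction error: to produce a nonsplit extension of $Y_\pi$ by $\xi^\vee$ inside $Q_{\pi^\vee}$, you must take $\xi^\vee$ in the \emph{cosocle} of $N$ (an irreducible quotient), not an irreducible subobject. If $N \twoheadrightarrow \xi^\vee$ has kernel $N'$, then $Q_{\pi^\vee}/N'$ sits in a nonsplit sequence $0 \to \xi^\vee \to Q_{\pi^\vee}/N' \to Y_\pi \to 0$, yielding a nonzero class in $\Ext^1_{\mathfrak{C}}(Y_\pi,\xi^\vee) \cong \Ext^1_G(\xi, Y_\pi^\vee)$. An irreducible \emph{subobject} of $N$ gives no such class, so the phrase ``appear at the bottom of $Q_{\pi^\vee}$'' is aiming at the wrong end.

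Second, and more substantively, once this is fixed the argument collapses to the clean criterion the paper actually uses: $Y_\pi = Q_{\pi^\vee}$ if and only if $\Ext^1_G(\xi, Y_\pi^\vee) = 0$ for every $\xi \in \mathfrak{B} \setminus \{\pi\}$. Your proposed mechanisms (``forcing an extra copy of $\pi^\vee$'', or invoking the sequences \cite[(234)--(236)]{Pa13}, or computing the action of $x,y,z,w$) are not how this is established; one simply computes the two relevant Ext groups from the long exact sequence attached to the defining extension of $Y_\pi^\vee$. In case (i), with $\tau := Y_{\ide_G}^\vee$, the vanishing $\Ext^1_G(\Sp,\tau)=0$ follows from universality together with $e^1(\Sp,\Sp)=0$, while $\Ext^1_G(\pi_\alpha,\tau)=0$ uses $e^1(\pi_\alpha,\Sp)=0$ and the extra input $e^1(\pi_\alpha,\kappa)=0$ from \cite[(194)]{Pa13}. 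Cases (ii) and (iii) are already in the literature (\cite[Lem.~4.4]{HT} and \cite[Prop.~6.1]{Pa15}), and the paper simply cites them.
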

\begin{proof}
Note that  $Q_{\pi^{\vee}}$ is characterized as the maximal quotient of $P_{\pi^{\vee}}$ which contains  $\pi^{\vee}$ with multiplicity one, see \cite[Rem. 1.13]{Pa13}. We need to check that  if $\pi'$ is irreducible such that $\Ext^1_{G/Z}(\pi',(Q_{\pi^{\vee}})^{\vee})\neq0$, then $\pi'\cong \pi$. Proposition \ref{proposition-Gabriel} implies that we may assume $\pi'\in\mathfrak{B}$.

(i)  Write (in this proof) $\tau$ for the dual of the extension \eqref{equation-Q-ide}; we need to check  
\[\Ext^1_{G/Z}(\Sp,\tau)=0=\Ext^1_{G/Z}(\pi_{\alpha},\tau).\]
Since $e^1(\Sp,\Sp)=0$, the first equality follows from the  construction of $\tau$. The second is clear since $e^1(\pi_{\alpha},\Sp)=0$ (see the formulae recalled above) and $e^1(\pi_{\alpha},\kappa)=0$ by \cite[(194)]{Pa13}.

(ii) is proved in \cite[Lem. 4.4, (19)]{HT}.

(iii) is proved in \cite[Prop. 6.1, (35)]{Pa15}.
\end{proof}

\subsubsection{  $\Tor_i^E(\F,P_{\pi^{\vee}})$}\label{subsubsection-2}

Recall that if $\eta: T\ra \F^{\times}$ is a smooth character, we let $\pi_{\eta}=\Ind_{B}^G\eta$ and
\[\Pi_{\eta}=\Ind_B^G\rInj_T\eta,\ \ \ M_{\eta^{\vee}}=(\Pi_{\eta})^{\vee},\ \ \ E_{\eta^{\vee}}=\End_{\mathfrak{C}}(M_{\eta^{\vee}})\]
where $\rInj_T\eta$ denotes an injective envelope of $\eta$ in $\Rep_{\F}(T)$. In the rest of this subsection, we only consider $\eta\in\{\ide_T,\alpha\}$. By \cite[Prop. 7.1]{Pa13} there is a natural surjective ring homomorphism $q: E\twoheadrightarrow E_{\eta^{\vee}}$ induced by $P_{\pi_{\eta}^{\vee}}\twoheadrightarrow M_{\eta^{\vee}}$. 
\begin{lemma}\label{lemma-IV-variables}
In the isomorphism $E\cong \F[[x,y,z,w]]/(xw-yz)$, we may choose the variables so that the kernel of $q:E\twoheadrightarrow E_{\eta^{\vee}}$ is equal to $(z,w)$.
\end{lemma}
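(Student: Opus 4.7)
The plan is to exploit the explicit presentation $E \cong \F[[x,y,z,w]]/(xw-yz)$ together with the fact that $E_{\eta^{\vee}} \cong \F[[s,t]]$ is regular of dimension $2$ (Lemma~\ref{lemma-Meta-flat}) in order to identify $\mathfrak{p} := \ker(q)$ explicitly and then re-choose the variables so that $\mathfrak{p} = (z,w)$. First I would observe that $\mathfrak{p}$ is a height-$1$ prime of $E$ with $E/\mathfrak{p}$ regular of dimension $2$, by comparing Krull dimensions. Fixing any isomorphism $E \cong \F[[x,y,z,w]]/(xw-yz)$, I would lift the composite $\F[[x,y,z,w]] \twoheadrightarrow E \twoheadrightarrow E/\mathfrak{p} \cong \F[[s,t]]$ to a surjective $\F$-algebra homomorphism sending $x \mapsto f$, $y \mapsto g$, $z \mapsto h$, $w \mapsto k$; the relation $xw=yz$ forces $fk = gh$ in $\F[[s,t]]$, and surjectivity forces the linear parts of $f,g,h,k$ to span $(s,t)/(s,t)^2$.

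The key step is to exploit unique factorization in the regular ring $\F[[s,t]]$. After applying a linear change of $(x,y,z,w)$ in the similitude group of the form $xw-yz$ (generated by $\SL_2 \times \SL_2$ acting on the matrix $\smatr{x}{y}{z}{w}$ by left and right multiplication, together with the transpose $y \leftrightarrow z$), I may arrange that $f, g$ are linearly independent modulo $(s,t)^2$; after a further invertible change of coordinates on $\F[[s,t]]$ we may assume $f = s$ and $g = t$. The equation $sk = th$ in the UFD $\F[[s,t]]$, with coprime irreducibles $s, t$, forces $h = s\cdot h_1$ and $k = t\cdot h_1$ for some common factor $h_1 \in \F[[s,t]] = \F[[x,y]]$. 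Pulling back, one obtains $\mathfrak{p} = \bigl(z - x\,h_1(x,y),\ w - y\,h_1(x,y)\bigr)$.

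To conclude, I would introduce the endomorphism $\phi$ of $\F[[x,y,z,w]]$ defined by $x \mapsto x$, $y \mapsto y$, $z \mapsto z + x\,h_1(x,y)$, $w \mapsto w + y\,h_1(x,y)$. A direct calculation gives $\phi(xw - yz) = xw - yz$, so $\phi$ descends to an automorphism of $\F[[x,y,z,w]]/(xw - yz)$ which carries $(z,w)$ to $\mathfrak{p}$; composing the original isomorphism $E \cong \F[[x,y,z,w]]/(xw - yz)$ with $\phi^{-1}$ yields a system of variables in which $\mathfrak{p} = (z,w)$, as required. The main technical obstacle is the preliminary linear normalization bringing $(f,g)$ to a basis of $(s,t)/(s,t)^2$: this reflects the fact that $\mathfrak{p}$ corresponds to one of the two rulings of the Segre quadric cone $xw=yz$, and it requires a small case analysis of the possible linear dependencies among $f,g,h,k$, each case being handled by an appropriate element of the similitude group.
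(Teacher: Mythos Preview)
Your approach is correct and genuinely different from the paper's. The paper does not argue abstractly with the presentation of $E$; instead it passes to the Galois side via Colmez's functor, identifying $E$ with the special fibre $\overline{R}^{\psi}$ of a universal pseudo-deformation ring, and then quotes explicit results of Pa\v{s}k\={u}nas \cite[Cor.~B.5, B.6, Lem.~10.80]{Pa13} which present $\overline{R}^{\psi}$ as $\F[[c_0,c_1,d_0,d_1]]/(c_0d_1+c_1d_0)$ and identify $\ker(q)$ with the reducible locus $(c_0,c_1)$. So the paper's proof is essentially a citation to deformation-theoretic machinery, whereas yours is a self-contained commutative-algebra argument showing that \emph{any} prime $\mathfrak{p}\subset E$ with $E/\mathfrak{p}$ regular of dimension $2$ is carried to $(z,w)$ by an automorphism of $E$. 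Your route is more elementary and avoids all Galois-side input, at the cost of the linear normalization you flag; that step is indeed routine once one notes that the linear parts $(\bar f,\bar g,\bar h,\bar k)$ cut out a $2$-plane on the cone $\{xw=yz\}$ and that the similitude group (generated by $M\mapsto AMB$ for $A,B\in\GL_2$ together with transpose) acts transitively on such planes. One small slip: with $\mathfrak{p}=(z-xh_1,\,w-yh_1)$, your $\phi$ as written sends $(z,w)$ to $(z+xh_1,\,w+yh_1)$, not to $\mathfrak{p}$; replace $+h_1$ by $-h_1$ in the definition of $\phi$ (the verification $\phi(xw-yz)=xw-yz$ is unchanged).
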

\begin{proof}
First,  via Colmez's functor we may identity $E$ with the special fiber of a certain universal Galois pseudo-deformation ring over $\cO:=W(\F)$, see \cite[Thm. 10.71]{Pa13}.  This ring is denoted by $R^{\psi}$ in \emph{loc. cit.} and we write $\overline{R}^{\psi}$ for its special fiber. Let $\mathfrak{r}$ denote the reducible locus of $R^{\psi}$ (see \cite[Cor. B.6]{Pa13} for its definition) and $\overline{\mathfrak{r}}$ its image in $\overline{R}^{\psi}$. Then by \cite[Cor. B.5, B.6]{Pa13}, $\overline{R}^{\psi}$ is isomorphic to $\F[[c_0,c_1,d_0,d_1]]/(c_0d_1+c_1d_0)$ and $\overline{\mathfrak{r}}=(c_0,c_1)$.  
On the other hand, via the natural isomorphism $E\cong \overline{R}^{\psi}$, $\ker(q)$ is identified with  $\overline{\mathfrak{r}}$ and $E_{\eta^{\vee}}$ with $\overline{R}^{\psi}/\overline{\mathfrak{r}}$, see \cite[Lem. 10.80]{Pa13}. This gives the result up to a change of variables. Note that the choice we make is not the one in \cite[Lem. 10.93]{Pa13}.
 \end{proof}

\begin{lemma}\label{lemma-Tor-Eeta}
We have 
\[\Tor_i^E(\F,M_{\eta^{\vee}})\cong (\pi_{\eta}^{\vee})^{\oplus 2},\ \  \forall i\geq 1.\]
\end{lemma}
\begin{proof}
By Lemma \ref{lemma-IV-variables}, we have a periodic (infinite) resolution of $E_{\eta^{\vee}}$ by free $E$-modules: 
\[\cdots \lra E^{\oplus 2}\overset{d'}{\lra}  E^{\oplus 2}\overset{d}{\lra} E^{\oplus 2}\overset{d'}{\longrightarrow}  E^{\oplus 2}\overset{d}{\longrightarrow} E^{\oplus 2}\overset{(w,z)}\longrightarrow E\ra E_{\eta^{\vee}}\ra0,\]
where $d$ is represented by the matrix $\smatr{x}{-z}{-y}{w}$,  sending $\binom{e_1}{e_2}$  to $\smatr{x}{-z}{-y}{w}\binom{e_1}{e_2}$; similarly $d'$ is represented by the matrix $\smatr{w}{z}{y}{x}$.
We deduce that
\[\Tor_i^{E}(\F,E_{\eta^{\vee}})\cong \F^{\oplus 2},\ \ \forall i\geq 1.\]
Because $M_{\eta^{\vee}}$ is a flat $E_{\eta^{\vee}}$-module by Lemma \ref{lemma-Meta-flat},   by flat base change we obtain 
\[\Tor_i^E(\F,M_{\eta^{\vee}})\cong \Tor_i^E(\F,E_{\eta^{\vee}})\otimes_{E_{\eta^{\vee}}}M_{\eta^{\vee}}\cong (\pi_{\eta}^{\vee})^{\oplus 2},\]
as required.
\end{proof}

\begin{lemma}\label{lemma-Hom(P,Tor)=0}
For  $i\geq 1$, we have 
$\Hom_{\mathfrak{C}}(P,\Tor^E_i(\F,P))=0.$
\end{lemma}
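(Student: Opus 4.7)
The plan is to exploit the projectivity of $P$ in $\mathfrak{C}$ together with a free resolution of $\F$ as an $E$-module, in order to identify $\Hom_{\mathfrak{C}}(P,\Tor^E_i(\F,P))$ with $\Tor^E_i(\F,E)$, which vanishes for $i\geq 1$ because $E$ is free over itself.

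Concretely, first choose a resolution $F_\bullet \to \F \to 0$ of $\F$ by finitely generated free $E$-modules; this is possible since $E\cong \F[[x,y,z,w]]/(xw-yz)$ (or one of the analogous rings described in Theorem \ref{theorem-Paskunas}) is a noetherian local ring with residue field $\F$. Writing $F_n = E^{a_n}$ with $a_n$ finite, we have natural isomorphisms in $\mathfrak{C}$
\[
F_n\otimes_E P \;\cong\; P^{a_n},
\]
where the completed tensor product coincides with the ordinary one since $F_n$ is finitely generated over $E$ (cf.\ the discussion preceding \cite[Lem.~2.1]{Pa15}). In particular, each term $F_n\otimes_E P$ lies in $\mathfrak{C}$, and the complex $F_\bullet\otimes_E P$ computes $\Tor_i^E(\F,P)$ inside $\mathfrak{C}$.

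Now apply $\Hom_{\mathfrak{C}}(P,-)$. Since $P$ is projective in $\mathfrak{C}$, this functor is exact and thus commutes with taking homology. Moreover, under the identifications above,
\[
\Hom_{\mathfrak{C}}(P,F_n\otimes_E P)\;\cong\;\Hom_{\mathfrak{C}}(P,P^{a_n})\;\cong\;E^{a_n} \;=\; F_n,
\]
and these isomorphisms are compatible with the differentials induced by $F_\bullet$. Therefore
\[
\Hom_{\mathfrak{C}}\bigl(P,\Tor_i^E(\F,P)\bigr)\;\cong\;H_i\bigl(\Hom_{\mathfrak{C}}(P,F_\bullet\otimes_E P)\bigr)\;\cong\;H_i(F_\bullet)\;=\;\Tor_i^E(\F,E),
\]
which is zero for $i\geq 1$ as $E$ is a free $E$-module.

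The only delicate point is verifying that the evaluation isomorphism $\Hom_{\mathfrak{C}}(P,F_n\otimes_E P)\cong F_n$ is natural in $F_n$, so that it genuinely identifies the resulting complex with $F_\bullet$ itself. Once one unwinds that this follows directly from $\End_{\mathfrak{C}}(P)=E$ together with the fact that $\Hom_{\mathfrak{C}}(P,-)$ commutes with finite direct sums, the rest of the argument is formal.
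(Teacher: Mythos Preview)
Your proof is correct and follows essentially the same approach as the paper: take a free resolution $F_\bullet\to\F$, use $\Hom_{\mathfrak{C}}(P,F_\bullet\otimes_E P)\cong F_\bullet$, and invoke exactness of $\Hom_{\mathfrak{C}}(P,-)$ to commute with homology. Your added remarks on naturality and on completed versus ordinary tensor products are reasonable elaborations, but the core argument is identical to the paper's.
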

\begin{proof}
Choose a  resolution of $\F$ by finite free $E$-modules:
$F_{\bullet}\ra \F\ra0$.
Then the homology of $F_{\bullet}\otimes_EP$ computes $\Tor_i^{E}(\F,P)$. It is clear that \[\Hom_{\mathfrak{C}}(P,F_{\bullet}\otimes_E P)\cong F_{\bullet}.\]
Since $\Hom_{\mathfrak{C}}(P,-)$ is exact, this implies 
\[\Hom_{\mathfrak{C}}(P,H_i(F_{\bullet}\otimes_EP))\cong H_i(F_{\bullet})\]
as required.
\end{proof}

\begin{proposition}\label{proposition-Tor-P}
For any $i\geq 1$, we have 
 \[\Tor_i^E(\F,P_{\ide_G^{\vee}})\cong (\Sp^{\vee})^{\oplus 2}, \ \ \  \Tor_i^E(\F,P_{\pi_{\alpha}^{\vee}})=(\ide_G^{\vee})^{\oplus 2},\]
 and there is a short exact sequence
 \[0\ra (\pi_{\alpha}^{\vee})^{\oplus2}\ra \Tor_i^E(\F,P_{\Sp^{\vee}})\ra (\ide_G^{\vee})^{\oplus2}\ra0.\]
\end{proposition}
\begin{remark}
It is natural to ask if $\Tor_i^E(\F,P_{\Sp^{\vee}})$ is actually isomorphic to  $(\kappa^{\vee})^{\oplus 2}$, where $\kappa$ is defined by \eqref{equation-def-kappa}. 
\end{remark}
\begin{proof}
We first observe the following facts:
\begin{enumerate}
\item[(a)] $\SL_2(\Q_p)$ acts trivially on $\Tor_i^E(\F,P_{\pi_{\alpha}^{\vee}})$ for $i\geq 1$. Indeed, \cite[Cor. 10.43]{Pa13} states this for $i=1$ but the proof works for all $i\geq 1$.  This implies that  $\Tor_i^E(\F,P_{\pi_{\alpha}^{\vee}})$ is isomorphic to a finite direct sum of $\ide_G^{\vee}$ since $e^1(\ide_G,\ide_G)=0$.
\item[(b)] $\ide_G^{\vee}$ does not occur in $\Tor_i^{E}(\F,P_{\ide_G^{\vee}})$ for $i\geq 1$; this is a special case of Lemma \ref{lemma-Hom(P,Tor)=0}.
\end{enumerate}

Recall the following exact sequences 
\begin{equation}\label{equation-different-P-1'}0\ra P_{\pi_{\alpha}^{\vee}}\ra P_{\ide_G^{\vee}}\ra M_{\ide_T^{\vee}}\ra0,\end{equation} 
\begin{equation}\label{equation-different-P-4'}
0\ra P_{\Sp^{\vee}}\ra P_{\pi_{\alpha}^{\vee}}\ra M_{\alpha^{\vee}}\ra0,\end{equation}
see \cite[(234), (235)]{Pa13}. From 
\eqref{equation-different-P-1'} we obtain a long exact sequence
\[ \cdots\ra \Tor_1^E(\F,P_{\pi_{\alpha}^{\vee}}) {\ra} \Tor_1^E(\F,P_{\ide_G^{\vee}})\ra \Tor_1^E(\F,M_{\ide_T^{\vee}}) \ra Q_{\pi_{\alpha}^{\vee}}  \ra Q_{\ide_G^{\vee}}\ra \pi_{\ide_T}^{\vee}\ra0.\]
From 
 (a), (b), we deduce that the morphisms $ \Tor_i^E(\F,P_{\pi_{\alpha}^{\vee}})\ra \Tor_i^E(\F,P_{\ide_G^{\vee}})$ are zero  for $i\geq 1$, hence we obtain a long exact sequence 
\begin{equation}\label{equation-longsequence-1} 0\ra\Tor_1^E(\F,P_{\ide_G^{\vee}})\ra \Tor_1^E(\F,M_{\ide_T^{\vee}})\ra Q_{\pi_{\alpha}^{\vee}}\ra Q_{\ide_G^{\vee}}\ra \pi_{\ide_T}^{\vee}\ra0,\end{equation}
and short exact sequences for $i\geq 2$
\begin{equation}\label{equation-longsequence-2}0\ra \Tor_i^E(\F,P_{\ide_G^{\vee}})\ra \Tor_i^E(\F,M_{\ide_T^{\vee}})\ra \Tor_{i-1}^E(\F,P_{\pi_{\alpha}^{\vee}})\ra0.\end{equation}
 Using Proposition  \ref{proposition-Q-pi} and Lemma \ref{lemma-Tor-Eeta}, \eqref{equation-longsequence-1} implies   
$\Tor_1^E(\F,P_{\ide_G^{\vee}})\cong(\Sp^{\vee})^{\oplus 2}$, 
while \eqref{equation-longsequence-2} and (a), (b) imply  for $i\geq 2$
\[\Tor_{i-1}^E(\F,P_{\pi_{\alpha}^{\vee}})\cong(\ide_G^{\vee})^{\oplus 2},\ \ \Tor_{i}^E(\F,P_{\ide_G^{\vee}})\cong(\Sp^{\vee})^{\oplus 2}.\]
 This proves the first two assertions.

Similarly, the sequence \eqref{equation-different-P-4'} induces 
\[\cdots\ra \Tor_1^E(\F,P_{\Sp^{\vee}})\ra \Tor_1^E(\F,P_{\pi_{\alpha}^{\vee}})\ra \Tor_1^E(\F,M_{\alpha^{\vee}})\ra Q_{\Sp^{\vee}}\ra Q_{\pi_{\alpha}^{\vee}}\ra \pi_{\alpha}^{\vee}\ra0.\]
By Lemma \ref{lemma-Tor-Eeta} and (a), we see that the morphisms  $\Tor_i^E(\F,P_{\pi_{\alpha}^{\vee}})\ra \Tor_i^E(\F,M_{\alpha^{\vee}})$  are zero for $i\geq 1$, hence  obtain short exact sequences 
\[0\ra \Tor_{i+1}^E(\F,M_{\alpha^{\vee}})\ra\Tor_i^E(\F,P_{\Sp^{\vee}})\ra \Tor_i^E(\F,P_{\pi_{\alpha}^{\vee}})\ra0.\] 
Using Lemma \ref{lemma-Tor-Eeta} and what has been proved,  we deduce  the result for $\Tor_i^E(\F,P_{\Sp^{\vee}})$. 
\end{proof}

\subsubsection{Miracle flatness}
 In this subsection we prove the following result, using the  ``miracle flatness'' criterion in \cite[Prop. A.30]{GN}.  Recall  that the block $\mathfrak{B}$ is of type (IV).

\begin{proposition}\label{proposition-flatness}
There exists a  subring $R'\subset E$ which is a  regular local $\F$-algebra of Krull dimension $3$ such that $E$ is finite free over $R'$ and $P$ is flat over $R'$. 
\end{proposition}
 
\begin{proof} 
Choose a weight $\sigma\in\mathscr{D}(\pi)$. With the notation in  Proposition \ref{proposition-HT} we have  isomorphisms 
\begin{equation}\label{equation-miracle-F[[S]]}
\Hom_{K}^{\rm cont}(P,\sigma^{\vee})^{\vee}\cong E/J_{\sigma}\cong \F[[S]].\end{equation} 
Let $x_1\in E$ be a lift of $S$. Then $x_1$ is a regular element in $E$ (which is a domain). Since $E$ is a Cohen-Macaulay ring of Krull dimension $3$, we may extend $x_1$ to  a regular sequence in $E$, say $\{x_1,x_2,x_3\}$. Then the subring $R':=\F[[x_1,x_2,x_3]]$  is a regular local ring of Krull dimension $3$ and $E$ is finite over $R'$.  Moreover, the Auslander-Buchsbaum formula implies that $E$ is free over $R'$. 

We are left to prove that $P$ is flat over $R'$.   As in \cite[A.14]{GN}, we consider  
\[A=\Lambda \widehat{\otimes}_{\F} \F[[x_1]],\ \ B=\Lambda\widehat{\otimes}_{\F}R'\]
which are both Auslander regular rings (\cite[Def. A.2]{GN}). The natural inclusion $\F[[x_1]]\subset R'$ induces an inclusion   $A\subset B$. We may view $P$ as a module over both $A$ and $B$. The exact sequence $ P\overset{\times x_1}{\ra} P\ra P/x_1P\ra0$ induces a  sequence \[0\ra\Hom_{K}^{\rm cont}(P,\sigma^{\vee})^{\vee}\overset{\times S}{\ra}\Hom_{K}^{\rm cont}(P,\sigma^{\vee})^{\vee}\ra\Hom_K^{\rm cont}(P/x_1P,\sigma^{\vee})^{\vee}\ra0\]
which is exact by \eqref{equation-miracle-F[[S]]}.
By Proposition \ref{proposition-HT}, the above exact sequence still exists if we replace $\sigma$ by any irreducible $\sigma'\in \Rep_{\F}(K)$ with $\Hom_{K}^{\rm cont}(P,\sigma'^{\vee})^{\vee}\neq0$, and $\Hom_K^{\rm cont}(P/x_1P,\sigma'^{\vee})^{\vee}$ is $1$-dimensional over $\F$.  
Therefore, $P/x_1P$ is coadmissible and $P$ is finitely generated as an $A$-module (resp. as a $B$-module) by Nakayama's lemma.  On the other hand,  Proposition \ref{proposition-projective}   implies that $x_1$ is $P$-regular and $P/x_1P$ is also projective in $\mathfrak{C}(K)$; here we have used the main result of  \cite{EP} stating that $P$ remains projective in $\mathfrak{C}(K)$. 

In particular, we see that $P/x_1P$ is a Cohen-Macaulay module over $\Lambda$. Since $x_1$ is both $A$- and $P$-regular, a standard argument (using \cite[Lem. A.15]{GN} for example) shows that $P$ is  a Cohen-Macaulay $A$-module and 
\[\delta_A(P)=\delta_{\Lambda}(P/x_1P)+1=3+1=4.\]  
Using \cite[Cor. A.29]{GN}, $P$ is also a Cohen-Macaulay $B$-module with $\delta_B(P)=4$.  Since $E$ is finite free over $R'$, we have $\delta_{\Lambda}(\F\otimes_{R'}P)=1$ by Proposition \ref{proposition-residue=finitelength}, so  
\[j_B(P)=\dim B-\delta_B(P)=2=j_{\Lambda}(\F\otimes_{R'}P).\]  
By \cite[Prop. A.30]{GN}, we deduce that $P$ is flat over $R'$. 
\end{proof}

\begin{remark}\label{remark-flatness}
We construct  an explicit subring $R'$ of $E$ as  in Proposition \ref{proposition-flatness} as follows. Choose the isomorphism $E\cong \F[[x,y,z,w]]/(xw-yz)$ in such a way that the kernel of $q: E\twoheadrightarrow E_{\eta^{\vee}}$ is equal to $(z,w)$, see Lemma \ref{lemma-IV-variables}. Clearly, the elements $x, y-z, w$ form a regular sequence in $R$; 
let 
\begin{equation}\label{equation-define-R'} 
R':=\F[[x,y-z,w]].
\end{equation}
Then the composite  morphism $R'\hookrightarrow E\twoheadrightarrow E_{\eta^{\vee}}
$ remains surjective. 
In particular, a suitable linear combination of $x,y-z,w$ serves as a lift of $S$. This proves that $R'$ is one of the rings considered there.


\end{remark}

\section{Key computation}

We keep the notation of Section \ref{section-Paskunas}. For $n\geq 1$, let 
\[K_n=\matr{1+p^n\Z_p}{p^n\Z_p}{p^n\Z_p}{1+p^n\Z_p}, \ \ T_1(p^n)=\matr{1+p\Z_p}{p^n\Z_p}{p^n\Z_p}{1+p\Z_p}.\]
Recall that $\Lambda:=\F[[K_1/Z_1]]$.

\begin{theorem}\label{theorem-A}
Let $\Pi\in\Rep_{\F}(G)$ be an object of finite length. Then
\[\dim_{\F}\Pi^{T_1(p^n)}\ll n.\]
\end{theorem}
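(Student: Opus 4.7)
The plan is to devissage to irreducible $\pi$ and then handle each of the four classes in the Barthel-Livn\'e--Breuil classification of \S\ref{subsection-Breuil} separately.

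\emph{Step 1 (reduction to irreducibles).} Since $\Pi$ has finite length $m$ (independent of $n$) and the functor $(-)^{T_1(p^n)}$ is left exact, a composition series $0 = \Pi_0 \subset \cdots \subset \Pi_m = \Pi$ gives
\[
\dim_{\F} \Pi^{T_1(p^n)} \;\le\; \sum_{i=1}^{m} \dim_{\F} (\Pi_i/\Pi_{i-1})^{T_1(p^n)}.
\]
It therefore suffices to prove $\dim_{\F} \pi^{T_1(p^n)} \ll n$ for every absolutely irreducible $\pi \in \Rep_{\F}(G)$, which I will now do case by case.

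\emph{Step 2 (case analysis).} A one-dimensional $\pi = \chi \circ \det$ contributes $\dim_{\F} \pi^{T_1(p^n)} = 1$. For an irreducible principal series $\pi \cong \Ind_B^G \chi$, the Iwasawa decomposition $G = BK$ gives $\pi|_K \cong \Ind_{B \cap K}^K \chi$, so a Mackey-style decomposition reduces the computation of $\pi^{T_1(p^n)}$ to counting the $T_1(p^n)$-orbits on $\bP^1(\Z_p) = (B \cap K) \backslash K$ on which $\chi$ is trivial on the relevant stabilizer; an explicit $p$-adic coordinate calculation shows that this count grows linearly in $n$. Twisted Steinberg representations are then handled via the non-split sequences \eqref{equation-pi(0)} and \eqref{equation-pi(p-1)}, which realize them as sub- or quotient-representations of $\pi(0, \pm 1)$ or $\pi(p-1, \pm 1)$, modulo a one-dimensional summand. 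Finally, for supersingular $\pi = \pi(r, 0, \chi)$, no parabolic induction is available, and I invoke the refined structure theorems of Morra \cite{Mor11, Mor}, which realize $\pi$ as a Hecke quotient of $\cInd_{KZ}^G \Sym^r \F^2$ equipped with an explicit tree-parameterized basis; in this model the $T_1(p^n)$-fixed vectors correspond to a subset of tree vertices of size $O(n)$.

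\emph{Main obstacle.} The supersingular case is the crux. The absence of a parabolic induction model rules out the Iwasawa/Mackey argument that works for principal series, and one must instead work directly with $\cInd_{KZ}^G \Sym^r \F^2$ and carefully control how the Hecke operator $T$ interacts with the filtration by congruence subgroups $T_1(p^n)$. Morra's combinatorial description via a tree-indexed basis is precisely the tool needed, but the delicate step is verifying that after passing to the quotient by $T$ only $O(n)$ tree vertices can carry a $T_1(p^n)$-fixed vector; this is where all the work lies, and without it the only a priori bound is $\dim_{\F}\pi^{T_1(p^n)} = O(p^{n})$ coming from the ambient $K$-representation.
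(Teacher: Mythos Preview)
Your reduction to irreducibles and the case-by-case plan match the paper's overall strategy. For principal series your direct Mackey computation on $(B\cap K)\backslash K / T_1(p^n)$ is correct --- the orbit count on $\bP^1(\Z_p)$ is indeed $O(n)$ --- and is more elementary than what the paper actually does, which instead treats principal series and supersingular representations uniformly via the $K$-subspaces $\overline{R}_n$ and Proposition~\ref{proposition-T1-invariant}.

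The genuine gap is in the supersingular case. Morra's structure theorems give the $K$-socle filtration of $\pi$ and the growth $\dim_\F \pi^{K_n}\sim p^n$; they do \emph{not} directly describe the $T_1(p^n)$-invariants, and there is no ``$O(n)$ tree vertices'' statement to invoke. The idea missing from your proposal is the one-parameter subgroup $H=\smatr{1}{0}{0}{1+p\Z_p}\subset T_1(p^n)$. Proposition~\ref{proposition-T1-invariant} shows that for any \emph{quotient} $V$ of $\Ind_{K_0(p^n)}^K\sigma$ one has $\dim_\F V^H\ll n$; the key input is that $H\cong\Z_p$ has cohomological dimension~$1$, so the $H$-invariants of a quotient are bounded by those of the full induction (Lemma~\ref{lemma-H1(Zp)}), and the latter are counted by the double cosets of Lemma~\ref{lemma-coset}. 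Morra's results enter only to locate $\pi^{K_n}\subset\overline{R}_n\oplus\overline{R}_{n+1}$, via the dimension bound combined with the nearly-uniserial property (Theorem~\ref{theorem-Morra-SS}, Corollary~\ref{corollary-Morra-SS}); since each $\overline{R}_m$ is a quotient of $\Ind_{K_0(p^m)}^K\sigma_m$, one then has $\pi^{T_1(p^n)}\subset(\overline{R}_n)^H\oplus(\overline{R}_{n+1})^H$ and Proposition~\ref{proposition-T1-invariant} gives the $O(n)$ bound. Without the $H$-trick I do not see how to extract this from Morra's work alone.
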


It is clear that we may assume $\Pi$ is irreducible in Theorem \ref{theorem-A}. Further, by the recall in \S\ref{subsection-Breuil},  up to twist it is enough to prove the following.  

\begin{theorem}\label{theorem-B}
For any $0\leq r\leq p-1$ and $\lambda\in \F$, we have 
\[\dim_{\F}\pi(r,\lambda,1)^{T_1(p^n)}\ll n.\]
\end{theorem}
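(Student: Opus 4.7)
My plan is to split $\pi(r,\lambda,1)$ into cases according to the classification recalled in \S\ref{subsection-Breuil}. When $\lambda \neq 0$ and $(r,\lambda) \notin \{(0,\pm 1),(p-1,\pm 1)\}$, \eqref{equation-PS-cInd} identifies $\pi(r,\lambda,1) \cong \Ind_B^G \mu_{\lambda^{-1}} \otimes \mu_\lambda \omega^r$, an irreducible principal series. For such $\pi$, I would compute $\pi^{T_1(p^n)}$ directly: the Iwasawa decomposition $G = B \cdot K$ identifies $\pi|_K$ with $\Ind_{B\cap K}^K \chi|_{B \cap K}$, and then the Bruhat decomposition of $K$ together with the triangular factorization of $T_1(p^n)$ into lower unipotent, diagonal and upper unipotent parts parameterizes the invariants as locally constant $\chi$-twisted functions on a disc of radius $p^{-n}$. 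A direct count produces a bound linear in $n$.

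For the singular parameters $(r,\lambda) \in \{(0,\pm 1),(p-1,\pm 1)\}$, the representation $\pi(r,\lambda,1)$ sits in one of the non-split sequences \eqref{equation-pi(0)} or \eqref{equation-pi(p-1)}, whose irreducible constituents are a twist of $\ide_G$ and a twist of $\Sp$. The trivial character contributes only a constant to $\dim \pi^{T_1(p^n)}$; applying left-exactness of $T_1(p^n)/Z_1$-invariants to \eqref{equation-pi(0)} or \eqref{equation-pi(p-1)}, and using the preceding case applied to the full reducible principal series $\Ind_B^G \mu_{\pm 1} \otimes \mu_{\pm 1}\omega^r$ (whose invariants are also $O(n)$ by the same computation), bounds the invariants of $\Sp$ up to twist, and hence those of $\pi(r,\pm 1,1)$, by $O(n)$.

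The supersingular case $\lambda = 0$ is the heart of the argument, and I would reduce it to the refined structure theorems of Morra \cite{Mor11, Mor}. Via Barthel-Livn\'e and Breuil, $\pi(r,0,1) \cong I(\Sym^r \F^2)/T$, and Morra describes an explicit basis of $I(\Sym^r \F^2)^{I_1}$ indexed by vertices of the Bruhat-Tits tree, together with the action of the Iwahori and deeper congruence subgroups. Taking $T_1(p^n)$-invariants cuts off the basis at depth $n$, giving a subspace of dimension $O(n)$ in $I(\Sym^r\F^2)^{T_1(p^n)}$, and one then shows that killing $T$ preserves this linear bound by a bookkeeping on how $T$ moves vectors between consecutive depths. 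The main obstacle is precisely this last step: one must ensure that the quotient by $T$ does not introduce a new contribution of higher-than-linear order, and that Morra's combinatorial model is explicit enough to yield a constant uniform in $r \in \{0,\dots,p-1\}$. This is essentially the content of \cite[Cor.~4.15]{Mor} and its variants, so the proof consists of unpacking these statements and verifying that the bound they give is $O(n)$ rather than just $O(p^n)$.
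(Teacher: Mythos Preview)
Your principal series computation via Iwasawa/Mackey is plausible and would likely go through, though it differs from the paper's uniform treatment. The real problem is the supersingular case, where your argument contains a genuine gap.

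You claim that ``taking $T_1(p^n)$-invariants cuts off the basis at depth $n$, giving a subspace of dimension $O(n)$ in $I(\Sym^r\F^2)^{T_1(p^n)}$''. This is false: for $n\geq 1$ one has $T_1(p^n)\subset I_1$, so passing to $T_1(p^n)$-invariants \emph{enlarges} the space of $I_1$-invariants rather than truncating it, and in fact $I(\Sym^r\F^2)^{T_1(p^n)}$ is infinite-dimensional. Likewise, Morra's \cite[Cor.~4.15]{Mor} says only that $\dim_\F \pi(r,0,1)^{K_n}\sim p^n$; it does not contain or imply an $O(n)$ bound for $T_1(p^n)$, and unpacking it will not upgrade $p^n$ to $n$. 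So the step ``verifying that the bound they give is $O(n)$ rather than just $O(p^n)$'' cannot succeed as stated.

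What you are missing is the key new ingredient of the paper: the one-parameter subgroup $H=\smatr{1}{0}{0}{1+p\Z_p}\subset T_1(p^n)$. A double-coset count (Lemma~\ref{lemma-coset}) shows $|K_0(p^m)\backslash K/H|$ grows \emph{linearly} in $m$; combined with the fact that $H\cong\Z_p$ has cohomological dimension $1$, this yields $\dim_\F V^H\leq 2dpm$ for any quotient $V$ of $\Ind_{K_0(p^m)}^K\sigma$ (Proposition~\ref{proposition-T1-invariant}). The role of Morra's results is then only to localize: they show $\pi^{K_n}\subset \overline{R}_n\oplus\overline{R}_{n+1}$ (supersingular) or $\overline{R}_n$ (non-supersingular), where $\overline{R}_m$ is a quotient of $\Ind_{K_0(p^m)}^K\sigma_m$. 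Since $K_n\subset T_1(p^n)$ and $H\subset T_1(p^n)$, one gets $\pi^{T_1(p^n)}\subset (\overline{R}_n)^H\oplus(\overline{R}_{n+1})^H$, which is $O(n)$ by the coset bound. Without this $H$-trick there is no mechanism in your proposal that produces the linear bound in the supersingular case.
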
 

\subsection{Preparation}
We need some preparation to prove Theorem \ref{theorem-B}. To begin with, we establish a double coset decomposition formula in $K$. Let \[K_0(p^n)=\matr{\Z_p^{\times}}{\Z_p}{p^n\Z_p}{\Z_p^{\times}},\ \ H=\matr100{1+p\Z_p}.\]

\begin{lemma}\label{lemma-coset}
For any $n\geq 1$, we have 
\[|K_0(p^n)\backslash K/H|= (2n-1)(p-1)+2.\]
\end{lemma}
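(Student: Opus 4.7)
The plan is to identify the coset space $K_0(p^n)\backslash K$ with $\bP^1(\Z/p^n\Z)$ and then count the orbits of the right action of $H$ on the latter.

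\textbf{Step 1: Identify the single cosets.} For $g=\smatr{a}{b}{c}{d}\in K$, consider the second row $(c,d)$ modulo $p^n$. A left multiplication by $k=\smatr{\alpha}{\beta}{p^n\gamma}{\delta}\in K_0(p^n)$ replaces this row with $(\delta c,\delta d)\pmod{p^n}$, where $\delta\in\Z_p^\times$. Since $g$ is invertible, $(c,d)\bmod p^n$ is a primitive vector, and the map $g\mapsto [c:d]\in\bP^1(\Z/p^n\Z)$ is well-defined and sets up a bijection
\[
K_0(p^n)\backslash K\;\simto\;\bP^1(\Z/p^n\Z),
\]
of cardinality $p^{n-1}(p+1)$. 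For $h=\diag(1,u)\in H$, the matrix $gh$ has second row $(c,du)$, so the right action of $H$ corresponds to
\[
[c:d]\cdot h \;=\; [c:du].
\]

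\textbf{Step 2: Split $\bP^1(\Z/p^n\Z)$ into two affine pieces and count orbits on each.} Every class has a unique representative either of the form $[1:d]$ with $d\in\Z/p^n\Z$ (``affine'' chart, $p^n$ points) or $[c:1]$ with $c\in p\Z/p^n\Z$ (``infinity'' chart, $p^{n-1}$ points). The image of $H$ in $(\Z/p^n\Z)^\times$ is $U:=1+p\Z/p^n\Z$, acting by multiplication in the $d$-coordinate in the first chart, and (after renormalizing $[c:u]=[cu^{-1}:1]$) by multiplication in the $c$-coordinate in the second chart.

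\textbf{Step 3: Count $U$-orbits on $\Z/p^n\Z$ under multiplication.} Writing $d=p^k e$ with $e\in\Z_p^\times$, the orbit of $p^ke$ under $U$ equals $p^ke+p^{k+1}\Z/p^n\Z$, of size $p^{n-k-1}$ for $0\le k\le n-1$ and size $1$ for $d=0$. The elements with $v_p(d)=k$ form a set of size $p^{n-k-1}(p-1)$, giving exactly $p-1$ orbits for each $k\in\{0,1,\dots,n-1\}$, plus the single orbit $\{0\}$. Hence the first chart contributes $n(p-1)+1$ double cosets. The same count restricted to $p\Z/p^n\Z$ (i.e.\ $k\geq 1$) gives $(n-1)(p-1)+1$ double cosets from the second chart.

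\textbf{Step 4: Add up.} Summing the two contributions yields
\[
|K_0(p^n)\backslash K/H|\;=\;\bigl(n(p-1)+1\bigr)+\bigl((n-1)(p-1)+1\bigr)\;=\;(2n-1)(p-1)+2.
\]
The only mildly delicate point is checking that the representatives in the two affine charts are disjoint and cover $\bP^1(\Z/p^n\Z)$, and that the $U$-action preserves the chart (which is clear since $[1:d]\cdot u=[1:du]$ stays in the first chart and $[c:1]\cdot u=[cu^{-1}:1]$ stays in the second). Everything else is a straightforward orbit count of $U$ on the cyclic $p$-group $\Z/p^n\Z$ by multiplication.
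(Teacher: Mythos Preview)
Your proof is correct. Both arguments ultimately rest on the same case split according to the $p$-adic valuation of the bottom row of a matrix, but the packaging differs. The paper works directly with matrices: it writes down explicit factorizations $A=k_0\cdot R\cdot h$ with $k_0\in K_0(p^n)$, $h\in H$, and produces a list of representatives $R$ (namely $\smatr{1}{0}{p^k}{[\lambda]}$ and $\smatr{0}{1}{1}{[\lambda]}$, etc.), then checks they are inequivalent. Your approach replaces this with the identification $K_0(p^n)\backslash K\simeq\bP^1(\Z/p^n\Z)$ and reduces the count to the orbit structure of $U=1+p\Z/p^n\Z$ acting multiplicatively on $\Z/p^n\Z$, which is a clean one-line computation stratified by valuation. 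Your two affine charts correspond exactly to the paper's cases $c\in\Z_p^\times$ and $c\in p\Z_p$. The paper's version has the minor advantage of exhibiting explicit double coset representatives, but these are not used elsewhere; for the application (Proposition~\ref{proposition-T1-invariant} only needs the numerical bound), your argument is shorter and more transparent.
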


\begin{proof}
Let $A=\smatr abcd\in K$. We have the following facts:
\begin{enumerate}
\item[(i)]  if $A\in K_0(p)$, i.e. $c\in p\Z_p$, we have two subcases: 
\begin{enumerate}
\item[$\bullet$] if $c\in p^n\Z_p$, then $\smatr{a}bcd\in K_0(p^n)$; 
\item[$\bullet$] if $c\in p\Z_p\backslash p^n\Z_p$ (so $n\geq 2$), write $c=up^k$ with $u\in\Z_p^{\times}$ and $1\leq k\leq n-1$, then 
\[\matr abcd=\matr{d^{-1}(ad-bc)}{ubd^{-1}}0{u}\matr10{p^k}{[\lambda]}\matr{1}{0}{0}{t}\]
where $\lambda:=\overline{u^{-1}d}\in\F_p^{\times}$ and $t:=\frac{u^{-1}d}{[\lambda]}\in 1+p\Z_p$.
\end{enumerate}
We deduce  that 
\[K_0(p)=K_0(p^n)\bigcup \bigg(\bigcup_{1\leq k\leq n-1, \lambda\in\F_p^{\times}}K_0(p^n)\matr{1}0{p^k}{[\lambda]}H\bigg).\]
It is easy to check that this is a disjoint union, hence the cardinality of $K_0(p^n)\backslash K_0(p)/H$ is $1+(n-1)(p-1)$. 
\item[(ii)] if $A\notin K_0(p)$, i.e.  $c\in\Z_p^{\times}$,  we still have two subcases: 
\begin{enumerate} 
\item[$\bullet$] if $d\in\Z_p^{\times}$, then 
\[\matr abcd=\matr{-[\lambda]d^{-1}(ad-bc)}a0c\matr{0}11{[\lambda]}\matr{1}00{t}\]
where $\lambda:=\overline{c^{-1}d}\in\F_p^{\times}$ and $t=\frac{c^{-1}d}{[\lambda]}\in 1+p\Z_p$;
\item[$\bullet$] if  $d\in p\Z_p$, then 
\[\matr{a}bcd\matr0110=\matr badc\in K_0(p).\]
\end{enumerate}
\end{enumerate}
Combining (i) and (ii),   the cardinality of $K_0(p^n)\backslash K/H$ is equal to 
\[[1+(n-1)(p-1)]+[(p-1)+1+(n-1)(p-1)]= (2n-1)(p-1)+2.\]
\end{proof}

\begin{proposition}\label{proposition-T1-invariant}
Let $n\geq 1$ and $\sigma$ be a smooth $\F$-representation of $K_0(p^n)$ of finite dimension $d$. Let $V$ be a quotient $K$-representation of $\Ind_{K_0(p^n)}^K\sigma$, then $\dim_{\F}V^{H}\leq 2dpn$.
\end{proposition}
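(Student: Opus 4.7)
The strategy is to first bound $\dim W^H$ where $W := \Ind_{K_0(p^n)}^K\sigma$, via Mackey decomposition combined with the double-coset count from Lemma \ref{lemma-coset}, and then to transfer this bound to the quotient $V$ by exploiting the fact that $H\cong 1+p\Z_p$ is a topologically cyclic pro-$p$ group.

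For the first step, Mackey's formula restricts the induction to $H$ as
\[
W|_H \;\cong\; \bigoplus_{g\in H\backslash K/K_0(p^n)} \Ind_{H\,\cap\, gK_0(p^n)g^{-1}}^{H}\bigl({}^g\sigma\bigr),
\]
where ${}^g\sigma$ is the twist of $\sigma$ by $g$. Writing $H_g := H\cap gK_0(p^n)g^{-1}$, which is open in $H$, Frobenius reciprocity for invariants gives $(\Ind_{H_g}^H{}^g\sigma)^H \cong ({}^g\sigma)^{H_g}$, whose $\F$-dimension is at most $d$. Summing over the double cosets and invoking Lemma \ref{lemma-coset} (applied to the opposite double cosets, since $|H\backslash K/K_0(p^n)|=|K_0(p^n)\backslash K/H|$) yields
\[
\dim_\F W^H \;\leq\; d\cdot\bigl((2n-1)(p-1)+2\bigr) \;\leq\; 2dpn,
\]
where the final inequality is elementary for $n\geq 1$ and $p\geq 2$.

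To pass from $W$ to its quotient $V$, observe that $H\cong 1+p\Z_p$ is a topologically cyclic pro-$p$ group (using $p$ odd, which is well below our hypothesis $p\geq 5$). On any finite dimensional smooth $\F$-representation $M$ of $H$, the action factors through a finite cyclic $p$-group $H/H'$; choosing a topological generator $g\in H$ and setting $y:=g-1$ (viewed as an endomorphism of $M$), we have $M^H=\ker(y\mid M)$ and $M_H=\mathrm{coker}(y\mid M)$, so rank-nullity forces $\dim_\F M^H=\dim_\F M_H$. Now $H$-coinvariants is right exact, so the surjection $W\twoheadrightarrow V$ induces a surjection $W_H\twoheadrightarrow V_H$, giving $\dim V_H\leq \dim W_H$. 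Combining,
\[
\dim V^H \;=\; \dim V_H \;\leq\; \dim W_H \;=\; \dim W^H \;\leq\; 2dpn.
\]

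The main subtle point is this second step. A priori, the inequality $\dim V^H \leq \dim W^H$ for a general quotient is false in characteristic $p$ because the $H$-invariants functor is only left exact, and the comparison map on invariants need not be surjective. The argument succeeds here precisely because $H$ is topologically cyclic pro-$p$: every finite $p$-group quotient of $H$ has a principal augmentation ideal, which forces the identity $\dim M^H=\dim M_H$ and lets us trade the left-exact invariants for the right-exact coinvariants, where the quotient $V$ is easy to control.
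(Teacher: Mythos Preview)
Your proof is correct and follows essentially the same approach as the paper: both bound $\dim_{\F}(\Ind_{K_0(p^n)}^K\sigma)^H$ via the double-coset count of Lemma~\ref{lemma-coset}, and both transfer this bound to the quotient $V$ using the special feature that $H\cong\Z_p$. Your use of the identity $\dim M^H=\dim M_H$ together with right-exactness of coinvariants is equivalent to the paper's argument via the long exact sequence in $H$-cohomology and Lemma~\ref{lemma-H1(Zp)} (indeed $H^1(H,M)\cong M_H$ for $H\cong\Z_p$), and your direct Mackey bound for arbitrary $d$ is a mild streamlining of the paper's reduction to $d=1$ by filtering $\sigma$.
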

\begin{proof}
Let $W$ be the corresponding kernel so that we have an exact sequence
\[0\ra W\ra \Ind_{K_0(p^n)}^K\sigma\ra V\ra0.\]
Taking $H$-invariants, it induces
\[0\ra W^{H}\ra \big(\Ind_{K_0(p^n)}^K\sigma\big)^{H}\ra V^{H}\overset{\partial}{\ra} H^1(H,W),\]
hence an equality of dimensions
\begin{equation}\label{equation-equalitydim}\dim_{\F} W^{H}+\dim_{\F}V^{H}=\dim_{\F}\big(\Ind_{K_0(p^n)}^K\sigma\big)^{H}+\dim_{\F}\im(\partial).\end{equation}
Now note that $H\cong 1+p\Z_p\cong \Z_p$ is a pro-$p$ group of cohomological dimension $1$, so by Lemma \ref{lemma-H1(Zp)} below we have \[\dim_{\F}W^{H}=\dim_{\F}H^1(H,W)\geq \dim_{\F}\im(\partial),\]
hence by \eqref{equation-equalitydim}
\[\dim_{\F}V^{H}\leq \dim_{\F}\big(\Ind_{K_0(p^n)}^K\sigma\big)^{H}.\] 
We are thus reduced to prove the proposition in the special case $V=\Ind_{K_0(p^n)}^K\sigma$. Using  \cite[Lem. 3]{BL}, it is easy to see that  any irreducible smooth $\F$-representation of $K_0(p^n)$ is one-dimensional, so there exists a  filtration of $\sigma$ by sub-representations, of length $d$,  such that all graded pieces are one-dimensional.   Hence, we  may assume  $d=1$, in which case the result follows from Lemma \ref{lemma-coset}.
\end{proof}

\begin{lemma}\label{lemma-H1(Zp)}
Let $W$ be a finite dimensional smooth $\F$-representation of $\Z_p$, then
\[
\dim_{\F} H^1(\Z_p, W)=\dim_{\F}H^0(\Z_p,W).\]
\end{lemma}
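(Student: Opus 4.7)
The plan is to reduce the computation of both cohomology groups to the kernel and cokernel of a single linear endomorphism of the finite-dimensional space $W$, and then invoke rank-nullity.

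First I would choose a topological generator $\gamma$ of $\Z_p$ and identify $H^0(\Z_p,W)=\ker(\gamma-1:W\to W)$, which is immediate since $W$ is discrete so invariance under $\Z_p$ is the same as invariance under $\gamma$.

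Next, I would identify $H^1(\Z_p,W)=\coker(\gamma-1:W\to W)$. The cleanest way is to use the short exact sequence of $\F[[\Z_p]]$-modules
\[
0\lra \F[[\Z_p]]\xrightarrow{\gamma-1}\F[[\Z_p]]\lra \F\lra 0,
\]
which gives a length-two projective resolution of the trivial module $\F$ (note that $W$ being a finite $\F$-vector space with a continuous $\Z_p$-action factors through some finite quotient $\Z/p^n$, so one can alternatively use a finite group computation). Applying $\Hom_{\F[[\Z_p]]}(-,W)$ yields the two-term complex $W\xrightarrow{\gamma-1}W$ whose cohomology in degrees $0$ and $1$ is exactly $\ker(\gamma-1)$ and $\coker(\gamma-1)$.

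Finally, since $W$ is finite dimensional over $\F$, the rank-nullity theorem applied to the endomorphism $\gamma-1$ gives
\[
\dim_{\F}\ker(\gamma-1)=\dim_{\F}W-\dim_{\F}\im(\gamma-1)=\dim_{\F}\coker(\gamma-1),
\]
which is the desired equality. The only subtle point (and really the only thing to be careful about) is the identification of $H^1$ with the cokernel in the continuous/profinite setting; everything else is linear algebra.
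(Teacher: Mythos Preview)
Your proof is correct but takes a different route from the paper's. The paper argues by induction on $\dim_{\F}W$: in the base case $\dim_{\F}W=1$ the action is trivial (since $\Z_p$ is pro-$p$ and $\F$ has characteristic $p$), so $H^0$ and $H^1\cong\Hom(\Z_p,\F)$ are both one-dimensional; for the inductive step one picks a one-dimensional subrepresentation $W'\subset W$ (which exists for the same reason) and uses the long exact sequence together with $H^2(\Z_p,-)=0$ to get additivity of the Euler characteristic $\dim H^0-\dim H^1$ across $0\to W'\to W\to W/W'\to 0$. Your argument instead fixes a topological generator $\gamma$, identifies $H^0$ and $H^1$ with the kernel and cokernel of $\gamma-1$ on $W$ via the free resolution $0\to\F[[\Z_p]]\xrightarrow{\gamma-1}\F[[\Z_p]]\to\F\to 0$, and finishes with rank--nullity. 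Your approach is more direct and avoids induction entirely; the paper's approach, on the other hand, uses only the abstract facts that $\Z_p$ has cohomological dimension $1$ and that a pro-$p$ group always has nonzero invariants on a nonzero $\F$-module, without ever writing down a generator or a resolution.
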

\begin{proof}
This is clear if $\dim_{\F} W=1$ because then $
W$ must be the trivial representation of
$\Z_p$ so that
$H^1(\Z_p, W)\cong\Hom(\Z_p,\F)$
is of dimension 1. The general case is proved by induction on $\dim_{\F} W$ using the fact that $H^2(\Z_p,*)=0$ and  that  $W$ always contains a one-dimensional sub-representation. 
\end{proof}
\begin{remark}
In the proof of Proposition \ref{proposition-T1-invariant}, we crucially used the fact that $H$  has cohomological dimension $1$. This fact, very special to the group $\GL_2(\Q_p)$, is also used  in \cite{Br03} and \cite{Pa10} (but for the unipotent subgroup of $B(\Z_p)$). 
\end{remark}

\subsection{Supersingular case}
We give the proof of Theorem \ref{theorem-B} when $\Pi$ is supersingular, i.e. $\Pi=\pi(r,0,1)$ for some $0\leq r\leq p-1$. Since we have a $G$-equivariant isomorphism (\cite[Thm. 1.3]{Br03})
\[\pi(r,0,1)\cong \pi(p-1-r,0,\omega^r)\]
we may assume $r>0$ in the following.\medskip

Set $\sigma:=\Sym^r\F^2$ and for $n\geq 1$ denote by $\sigma_n$ the following representation of $K_0(p^n)$:
\[\sigma_n\bigg(\matr ab{p^nc}d\bigg):=\sigma\bigg(\matr dc{p^nb}a\bigg).\]
Let $R_0:=\sigma$ and $R_n:=\Ind_{K_0(p^n)}^K\sigma_n$ for $n\geq 1$. It is easy to see that 
\begin{equation}\label{equation-dim-Rn}\dim_{\F}R_0=(r+1),\ \ \dim_{\F}R_n=(r+1)(p+1)p^{n-1},\ \forall n\geq 1.\end{equation}
Moreover, the following properties hold (see \cite[\S4]{Br07}):
\begin{enumerate}
\item[(i)] $\cInd_{KZ}^G\sigma|_{K}\cong \oplus_{n\geq0}R_n$;
\item[(ii)] the Hecke operator $T|_{R_n}:R_n\ra R_{n+1}\oplus R_{n-1}$ is the sum of a $K$-equivariant injection $T^+:R_n\hookrightarrow R_{n+1}$ and (for $n\geq 1$) a $K$-equivariant surjection $T^-:R_n\twoheadrightarrow R_{n-1}$.
\item[(iii)] we have an isomorphism of $K$-representations
\begin{equation}\label{equation-pi-twoparts}\pi(r,0,1)\cong \big(\varinjlim_{n\ \rm{even}}R_0\oplus_{R_1}\oplus R_2\oplus_{R_3}\oplus\cdots\oplus R_n\big)\oplus \big(\varinjlim_{n\ \rm{odd}}(R_1/R_0)\oplus_{R_2}R_3\oplus_{R_4}\oplus \cdots\oplus R_n\big).\end{equation}
\end{enumerate}
Denote by $\Pi_0=\varinjlim\limits_{n\ \rm{even}}$  and $\Pi_1=\varinjlim\limits_{n\ \rm{odd}}$ the two direct summands of $\Pi$ in \eqref{equation-pi-twoparts}. For all $n\geq 0$, we let $\overline{R}_n$ be the image of $R_n\ra \pi(r,0,1)$. Then $\overline{R}_n\subset \Pi_0$ if $n$ is even, and $\overline{R}_n\subset \Pi_1$ if $n$ is odd.   
\begin{lemma}\label{lemma-dim-Rn}
For all $n\geq 0$, we have $\overline{R}_{n}\subset \overline{R}_{n+2}$ and 
$\dim_{\F}\overline{R}_{n}=(r+1)p^n$.
\end{lemma}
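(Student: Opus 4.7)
The plan is to establish the two claims using the description of $\pi(r,0,1)|_K$ in \eqref{equation-pi-twoparts} as a direct sum of iterated pushouts built out of the $R_n$'s.

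First I would prove the inclusion $\overline{R}_n\subset \overline{R}_{n+2}$ by a direct Hecke-operator trick. Given $v\in R_n$, property (ii) provides the surjection $T^-: R_{n+1}\twoheadrightarrow R_n$, so one picks $v'\in R_{n+1}$ with $T^- v' = v$. Because $\pi(r,0,1) = I(\Sym^r\F^2)/T$, the element $Tv' = T^+ v' + T^- v' = T^+ v' + v$ vanishes in $\pi$, hence the image of $v$ equals the image of $-T^+ v'\in R_{n+2}$. This proves the inclusion and, incidentally, reconfirms $\Pi_0 = \bigcup_k \overline{R}_{2k}$ and $\Pi_1 = \bigcup_k \overline{R}_{2k+1}$ consistently with \eqref{equation-pi-twoparts}.

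For the dimension formula I would exploit the iterated pushout structure of \eqref{equation-pi-twoparts}. Concretely, $\overline{R}_{2k}$ is a finite pushout $R_0 \oplus_{R_1} R_2 \oplus \cdots \oplus_{R_{2k-1}} R_{2k}$, and unwinding the colimit one writes it as the cokernel
\[
\overline{R}_{2k} \;=\; \Bigl(\bigoplus_{i=0}^{k} R_{2i}\Bigr)\Big/\mathrm{Im}(\Phi),
\]
where $\Phi:\bigoplus_{j=0}^{k-1} R_{2j+1} \to \bigoplus_{i=0}^k R_{2i}$ sends $v\in R_{2j+1}$ to $(T^- v, -T^+ v)\in R_{2j}\oplus R_{2j+2}$ (this is exactly the relation $T^+ v + T^- v \equiv 0$ in $\pi$). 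The key point is the injectivity of $\Phi$: if $\sum v_{2j+1}$ lies in $\ker\Phi$ and $M$ is the maximal index with $v_M\ne0$, then the $R_{M+1}$-component of $\Phi(\sum v_{2j+1})$ is $T^+ v_M$, which vanishes only if $v_M = 0$ by the injectivity of $T^+$ from property (ii); this contradiction forces all $v_j = 0$.

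Granting injectivity, a direct count gives
\[
\dim_\F \overline{R}_{2k} \;=\; \sum_{i=0}^{k}\dim R_{2i} - \sum_{j=0}^{k-1}\dim R_{2j+1}.
\]
Plugging in $\dim R_0 = r+1$ and $\dim R_m = (r+1)(p+1)p^{m-1}$ from \eqref{equation-dim-Rn} and summing the two geometric progressions telescopes to $(r+1)p^{2k}$. The odd case is handled identically, using that the first term of the pushout for $\Pi_1$ is $R_1/R_0$ of dimension $(r+1)p$ (equivalently, a direct check shows $\overline{R}_1 = R_1/T^+ R_0$ by running the same injectivity argument with the singleton index). The main obstacle will not be anything deep but rather the bookkeeping: matching the abstract pushout description in \eqref{equation-pi-twoparts} with the concrete maps $T^\pm$, keeping track of signs, and verifying the injectivity of $\Phi$ cleanly across parities.
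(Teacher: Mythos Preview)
Your proof is correct and follows essentially the same route as the paper: both the inclusion and the dimension formula are extracted from the iterated pushout description \eqref{equation-pi-twoparts} together with \eqref{equation-dim-Rn}. You simply make explicit what the paper leaves implicit---the Hecke-operator identity $T^-v'\equiv -T^+v'$ for the inclusion, and the injectivity of $\Phi$ (via injectivity of $T^+$) to justify the alternating-sum formula $\dim_\F\overline{R}_n=\sum_{k=0}^n(-1)^{n-k}\dim_\F R_k$.
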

\begin{proof}
The inclusion $\overline{R}_n\subset \overline{R}_{n+2}$ follows from (ii) and \eqref{equation-pi-twoparts}. The dimension formula follows from \eqref{equation-pi-twoparts} using \eqref{equation-dim-Rn}.
Precisely, \eqref{equation-pi-twoparts} implies that if $n$ is even  then by \eqref{equation-dim-Rn} \[\dim_{\F}\overline{R}_n=\sum_{k=0}^{n}(-1)^k\dim_{\F}R_k=(r+1)\Big(1+(p+1)\sum_{k=1}^{n}(-1)^kp^{k-1}\Big)=(r+1)p^n,
\]
 and if $n$ is odd  then similarly
\[\dim_{\F}\overline{R}_n=\sum_{k=0}^{n}(-1)^{k+1}\dim_{\F}R_k=(r+1)p
^n.\]
\end{proof}

At this point, we need the following result of Morra. Recall that $\Pi=\Pi_0\oplus \Pi_1$ as $K$-representations.
\begin{theorem}\label{theorem-Morra-SS}
Let $n\geq 1$. For $i\in\{0,1\}$, the dimension of $K_n$-invariants of $\Pi_i$ satisfies
\[\dim_{\F}\Pi_i^{K_n}\leq(p+1)p^{n-1}.\]
Moreover, $\Pi_i$ is nearly uniserial in the following sense: if $W_1,W_2$ are two $K$-stable subspaces of $\Pi_i$ such that 
\[\dim_{\F}W_2-\dim_{\F}W_1\geq p,\]
then $W_1\subset W_2$. 
\end{theorem}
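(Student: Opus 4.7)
The plan is to reduce both statements to a concrete analysis of the chain $\overline{R}_0 \subset \overline{R}_2 \subset \cdots$ (and the odd analogue) inside $\Pi_i$, following the approach in \cite{Mor11, Mor}. Since $\Pi_i = \bigcup_{m \equiv i \!\!\!\pmod 2} \overline{R}_m$, the two claims separate into (a) a uniform bound on $\dim_{\F}\overline{R}_m^{K_n}$ independent of $m$ and (b) a structure theorem for $K$-stable subspaces of each $\overline{R}_m$.

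For the dimension bound, I would first compute $R_m^{K_n}$ via Mackey: decomposing $R_m|_{K_n}$ along the double cosets $K_n \backslash K / K_0(p^m)$ reduces the problem to $\dim_{\F} \sigma_m^{K_n \cap g K_0(p^m) g^{-1}}$ for suitable coset representatives $g$, which can be controlled using that $K_n \subseteq K_1$ acts through a pro-$p$ quotient and that $\sigma_m$ has dimension $r+1$. A bookkeeping using Iwasawa-type coordinates on $K/K_0(p^m)$ combined with the explicit formula for $\sigma_m$ should give $\dim_{\F} R_m^{K_n} \leq (p+1)p^{n-1} \cdot c$ for a small constant $c$ depending on $r$; the surjection $R_m \twoheadrightarrow \overline{R}_m$ can then only decrease this. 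The key subtlety is to ensure the bound is independent of $m$ as $m \to \infty$, which I would obtain by showing that the canonical map $\overline{R}_m \hookrightarrow \overline{R}_{m+2}$ (built out of the two copies of $T^+$ in $T|_{R_m}$) does not create new $K_n$-invariants beyond those already present at level $m = n$, so that $\overline{R}_m^{K_n}$ stabilizes once $m \geq n$.

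For the near-uniseriality, I would use the quotient structure of $\overline{R}_{m+2}/\overline{R}_m$. The point is that this quotient, viewed as a $K$-representation, admits a very rigid socle filtration whose irreducible constituents are parametrized by certain weights of $K_0(p^{m+2})/K_1$-characters lifted to $K$ via compact induction, and each constituent has dimension at least $p$ except for at most one ``atomic'' one-dimensional piece at each end of the filtration. Consequently, any $K$-stable $W \subseteq \Pi_i$ lies between $\overline{R}_m$ and $\overline{R}_{m+2}$ for some $m$, and the dimensions of the possible $W$ in this range differ from $\dim_{\F}\overline{R}_m$ or $\dim_{\F}\overline{R}_{m+2}$ by at most $p-1$. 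Given two such $W_1, W_2$ in different ``layers'', i.e.\ with $\dim_{\F} W_2 - \dim_{\F} W_1 \geq p$, they cannot live in the same transitional window, so one must be contained in $\overline{R}_{m+2} \subseteq W_2$ and the other must contain $\overline{R}_m \supseteq W_1$, forcing $W_1 \subseteq W_2$.

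The main obstacle will be establishing the rigidity of the socle filtration of $\overline{R}_{m+2}/\overline{R}_m$ needed for near-uniseriality; this requires the detailed tree/Bruhat-Tits analysis of Morra which distinguishes the ``new'' constituents appearing at radius $m+2$ from those at radius $m$ and shows that no unexpected $K$-stable subspaces interpolate between them. The invariant bound, by contrast, should follow from Mackey and the pro-$p$ nature of $K_n$ without substantial new input beyond careful bookkeeping.
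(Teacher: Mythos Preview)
Your near-uniseriality argument has a real gap. The filtration by the $\overline{R}_m$ is far too coarse for this purpose: since $\dim_{\F}\overline{R}_m=(r+1)p^m$, the jump $\dim_{\F}\overline{R}_{m+2}-\dim_{\F}\overline{R}_m=(r+1)p^m(p^2-1)$ grows without bound in $m$, so there are very many $K$-stable subspaces strictly between $\overline{R}_m$ and $\overline{R}_{m+2}$. Your assertion that any $K$-stable $W$ in this range has dimension within $p-1$ of one of the two endpoints is simply false, and the hypothesis $\dim_{\F}W_2-\dim_{\F}W_1\geq p$ cannot force $W_1,W_2$ into different $\overline{R}$-layers.

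What is actually needed (and what the paper quotes from \cite[Thm.~1.1]{Mor11}) is a much finer filtration $\Fil^k\Pi_i$ whose successive quotients for $k\geq 2$ are principal series $\Ind_{B(\F_p)}^{\GL_2(\F_p)}\chi_k$ of dimension exactly $p+1$, together with the rigidity property that any $K$-stable $W$ with $\dim_{\F}\Fil^k\Pi_i\leq\dim_{\F}W\leq\dim_{\F}\Fil^{k'}\Pi_i$ satisfies $\Fil^k\Pi_i\subset W\subset\Fil^{k'}\Pi_i$. From this, if $k_1$ is minimal with $W_1\subset\Fil^{k_1}\Pi_i$ then $\dim_{\F}\Fil^{k_1}\Pi_i-\dim_{\F}W_1\leq p$, whence $\dim_{\F}W_2\geq\dim_{\F}\Fil^{k_1}\Pi_i$ and rigidity gives $W_2\supset\Fil^{k_1}\Pi_i\supset W_1$. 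The essential point is that the filtration steps must have \emph{uniformly bounded} size (here $p+1$), independent of depth; the $\overline{R}_m$ do not have this property. For the dimension bound the paper simply cites \cite[Cor.~4.14, 4.15]{Mor}; your Mackey computation might in principle yield a bound of the form $c\cdot(p+1)p^{n-1}$ for some constant $c$ depending on $r$, but extracting the sharp constant $c=1$ directly from that approach is not obvious.
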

\begin{proof}
See \cite[Cor. 4.14, 4.15]{Mor} for the dimension formula. Note that the formula in \emph{loc. cit.} is for the dimension of $\Pi_0^{K_n}\oplus \Pi_1^{K_n}$. The second statement follows from \cite[Thm. 1.1]{Mor11} which describes the $K$-socle filtration of $\Pi_i$. To explain this, fix $i\in\{0,1\}$. By \cite[Thm. 1.1]{Mor11}, $\Pi_i$ admits an increasing filtration $\Fil^k\Pi_i$, $k\geq 0$ such that
\[\Fil^0\Pi_i=0,\ \ \Fil^1\Pi_i=\rsoc_K\Pi_i,\ \ \Fil^{k+1}\Pi_i/\Fil^k\Pi_i\cong \Ind_{B(\F_p)}^{\GL_2(\F_p)}\chi_k,\ \forall k\geq 2,\]
for suitable characters $\chi_k:B(\F_p)\ra \F^{\times}$. In particular, the graded pieces have dimension $p+1$ except for the first.
Moreover, the filtration satisfies the property that for any $K$-stable subspace $W\subset\Pi_i$ and any $k
\leq k'$, the condition  $\dim_{\F}\Fil^k\Pi_i\leq \dim_{\F}W\leq \dim_{\F}\Fil^{k'}\Pi_i$ implies $\Fil^k\Pi_i\subset W\subset \Fil^{k'}\Pi_i$.  

Now, for the given $W_1$ let $k_1$ be the smallest index such that $W_1\subset \Fil^{k_1}\Pi_i$; then $\dim_{\F}\Fil^{k_1}\Pi_i-\dim_{\F}W_1\leq p$. The assumption then implies $\dim_{\F}\Fil^{k_1}\Pi_i\leq \dim_{\F}W_2$ and  that $W_2$ contains $\Fil^{k_1}\Pi_i$, proving the result.    
\end{proof}

\begin{corollary}\label{corollary-Morra-SS}
We have 
$\Pi^{K_n}\subset \overline{R}_n\oplus \overline{R}_{n+1}$.
\end{corollary}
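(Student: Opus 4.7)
The plan is to split $\Pi^{K_n}$ along the $K$-stable decomposition $\Pi=\Pi_0\oplus\Pi_1$ from \eqref{equation-pi-twoparts}. Since $K_n$ is normal in $K$, the invariants $\Pi^{K_n}$ form a $K$-stable subspace of $\Pi$, and this $K$-decomposition descends to $\Pi^{K_n}=\Pi_0^{K_n}\oplus\Pi_1^{K_n}$, with each summand a $K$-stable subspace of $\Pi_i$. Writing $n_i\in\{n,n+1\}$ for the unique index whose parity makes $\overline{R}_{n_i}\subset\Pi_i$, the corollary reduces to the two separate containments $\Pi_i^{K_n}\subset\overline{R}_{n_i}$ for $i=0,1$.

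To get each containment I would combine the upper bound on invariants with the dimension formula for $\overline{R}_{n_i}$ and invoke nearly uniserialness. Theorem \ref{theorem-Morra-SS} gives
\[
\dim_{\F}\Pi_i^{K_n}\leq (p+1)p^{n-1},
\]
while Lemma \ref{lemma-dim-Rn} gives $\dim_{\F}\overline{R}_{n_i}=(r+1)p^{n_i}\geq (r+1)p^n$. Applying the nearly uniserial statement of Theorem \ref{theorem-Morra-SS} to the pair of $K$-stable subspaces $(\Pi_i^{K_n},\overline{R}_{n_i})$ of $\Pi_i$ yields $\Pi_i^{K_n}\subset\overline{R}_{n_i}$ as soon as the gap
\[
\dim_{\F}\overline{R}_{n_i}-\dim_{\F}\Pi_i^{K_n}\ \geq\ (r+1)p^{n_i}-(p+1)p^{n-1}
\]
is at least $p$. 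When $n_i=n+1$ the gap equals $p^{n-1}\bigl((r+1)p^2-(p+1)\bigr)$, which comfortably exceeds $p$ for $p\geq 5$ and any $r\geq 0$. When $n_i=n$ the gap equals $p^{n-1}(rp-1)$, which is $\geq p$ as soon as $n\geq 2$, using $r\geq 1$ guaranteed by the reduction to $r>0$ made at the start of the subsection.

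The hard part is the single borderline situation $n=1$ with $n_i=1$ and $r$ small (the pinch case being $r=1$), where the gap $rp-1$ falls just short of $p$ and the crude form of nearly uniserialness no longer suffices. To handle it I would not use the weakened statement but rather the full socle filtration structure from \cite[Thm.~1.1]{Mor11} recalled in the proof of Theorem \ref{theorem-Morra-SS}: every $K$-stable subspace $W\subset\Pi_i$ is sandwiched as $\Fil^k\Pi_i\subset W\subset\Fil^{k+1}\Pi_i$ for a uniquely determined $k$, where each graded piece has dimension $p+1$. Matching dimensions via
\[
\dim_{\F}\Fil^k\Pi_i=\dim_{\F}\rsoc_K\Pi_i+(k-1)(p+1)
\]
identifies $\overline{R}_{n_i}$ with a specific $\Fil^{k_0}\Pi_i$, and the Morra bound on $\dim_{\F}\Pi_i^{K_n}$ forces the filtration index of $\Pi_i^{K_n}$ to be at most $k_0$, finishing the containment in this edge case.
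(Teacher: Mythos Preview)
Your approach is the same as the paper's: decompose $\Pi^{K_n}=\Pi_0^{K_n}\oplus\Pi_1^{K_n}$, bound each summand by Morra's estimate, and compare with $\dim_{\F}\overline{R}_{n_i}$ via the nearly uniserial property. The paper compresses your case split into the single chain $\dim_{\F}\overline{R}_n\geq 2p^n\geq (p+1)p^{n-1}+p\geq \dim_{\F}\Pi_i^{K_n}+p$ (using $r\geq 1$) and does not isolate the case $n=1$; in fact that displayed middle inequality fails for $n=1$, so your extra vigilance is justified rather than superfluous.

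Your sketch for the pinch case $n=1$, $r=1$ is on the right track but one step is not yet justified. The assertion that ``matching dimensions identifies $\overline{R}_{n_i}$ with a specific $\Fil^{k_0}\Pi_i$'' is not automatic: it requires knowing $\dim_{\F}\rsoc_K\Pi_1$, which is not recorded in the paper. The missing input is that for $1\leq r\leq p-1$ one has $\rsoc_K\Pi_1\cong \Sym^{p-1-r}\F^2\otimes\det^r$, of dimension $p-r$; this gives $\dim_{\F}\Fil^{r+1}\Pi_1=(p-r)+r(p+1)=(r+1)p=\dim_{\F}\overline{R}_1$, so indeed $\overline{R}_1=\Fil^{r+1}\Pi_1$, and then $\dim_{\F}\Pi_1^{K_1}\leq p+1<2p=\dim_{\F}\Fil^2\Pi_1\leq\dim_{\F}\Fil^{r+1}\Pi_1$ finishes the containment. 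Alternatively, and more robustly, you only need the weaker fact $\dim_{\F}\rsoc_K\Pi_1\leq p-1$ (true since $r\geq 1$), which forces $\dim_{\F}\Fil^2\Pi_1\leq 2p\leq\dim_{\F}\overline{R}_1$ while $\dim_{\F}\Fil^2\Pi_1\geq p+2>\dim_{\F}\Pi_1^{K_1}$, so $\Pi_1^{K_1}\subset\Fil^2\Pi_1\subset\overline{R}_1$ without needing exact equality.
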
 

\begin{proof}
We have assumed $r\geq 1$, so by Lemma \ref{lemma-dim-Rn} we get for $n\geq 1$: \[\dim_{\F}\overline{R}_n\geq 2p^n\geq (p+1)p^{n-1}+p\geq \dim_{\F}\Pi_i^{K_n}+p.\] By the nearly uniserial property of $\Pi_i$, this implies   
$\Pi_0^{K_n}\subset \overline{R}_n$ if $n$ is even, 
while  $\Pi_1^{K_n}\subset \overline{R}_n$ if $n$ is odd. Putting them together, we obtain 
the result.
\end{proof}

\begin{proof}[Proof of Theorem \ref{theorem-B} when $\lambda=0$]
Since $T_1(p^n)$ contains $K_n$, we have an inclusion  $\Pi^{T_1(p^n)}\subset \Pi^{K_n}$, so   Corollary \ref{corollary-Morra-SS} implies $\Pi^{T_1(p^n)}\subset \overline{R}_n\oplus \overline{R}_{n+1}$, hence
\[\Pi^{T_1(p^n)}\subset (\overline{R}_n)^{T_1(p^n)}\oplus (\overline{R}_{n+1})^{T_1(p^n)}\subset (\overline{R}_n)^{H}\oplus (\overline{R}_{n+1})^{H}.\]
Noting that $\dim_{\F}\sigma\leq p$, we obtain by  Proposition \ref{proposition-T1-invariant}: 
\[\dim_{\F} \Pi^{T_1(p^n)}\leq \dim_{\F}(\overline{R}_n)^{H}+\dim_{\F}(\overline{R}_{n+1})^{H}\leq 4p^2n, \]
hence the result.
\end{proof}

\subsection{Non-supersingular case}

Assume from now on $\lambda\neq 0$. We define the subspaces  $R_n$ ($n\geq 0$) of $\cInd_{KZ}^G\sigma$ as above. We still have the properties (i) and (ii) recalled there. The only difference, also the key difference, with the supersingular case is that the induced morphisms 
$R_n\ra \pi(r,\lambda,1)$
are all injective (because $\lambda\neq0$). Moreover, if we write $\overline{R}_n$ for the image of $R_n$ in $\pi(r,\lambda,1)$, then  $\overline{R}_n\subset \overline{R}_{n+1}$
 and
 \[\pi(r,\lambda,1)=\varinjlim_{n\geq 0} \overline{R}_n. \]  
 
\begin{proposition}\label{prop-Morra-PS}
Let $n\geq 1$, we have an inclusion
$\pi(r,\lambda,1)^{K_n}\subset \overline{R}_n$.
\end{proposition}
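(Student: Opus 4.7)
The strategy mirrors the supersingular case treated in Corollary \ref{corollary-Morra-SS}: combine a dimension bound on $\pi(r,\lambda,1)^{K_n}$ with the structure of the $K$-socle filtration of $\pi(r,\lambda,1)$, then use a ``nearly uniserial'' comparison. The essential new feature relative to the supersingular case is that, since $\lambda \neq 0$, the map $R_n \hookrightarrow \pi(r,\lambda,1)$ is injective and the $\overline{R}_n$ form an increasing exhaustive chain; in particular
\[
\dim_{\F}\overline{R}_n \;=\; \dim_{\F} R_n \;=\; (r+1)(p+1)p^{n-1} \qquad (n\geq 1),
\]
which is considerably larger than in the supersingular case.

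The first step is to invoke the principal-series analogue of Morra's results \cite[Thm.~1.1]{Mor11} and \cite[Cor.~4.14, 4.15]{Mor}. These yield, on the one hand, a bound of the form $\dim_{\F}\pi(r,\lambda,1)^{K_n} \leq C\cdot p^{n-1}$ for a constant $C=C(p,r)$, and, on the other hand, a $K$-stable filtration $\Fil^{k}\pi(r,\lambda,1)$ whose first piece is $\rsoc_K\pi(r,\lambda,1)$ and whose higher graded pieces are principal series $\Ind_{B(\F_p)}^{\GL_2(\F_p)}\chi_k$ of dimension $p+1$. This filtration enjoys the nearly uniserial property already recalled in Theorem \ref{theorem-Morra-SS}: any $K$-stable subspace $W$ with $\dim_{\F}\Fil^{k}\pi \leq \dim_{\F}W \leq \dim_{\F}\Fil^{k'}\pi$ satisfies $\Fil^{k}\pi \subset W \subset \Fil^{k'}\pi$.

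The second step is to match the chain $\{\overline{R}_n\}$ with the filtration $\{\Fil^{k}\pi\}$. Because $\overline{R}_n \subset \overline{R}_{n+1}$, the union of the $\overline{R}_n$ is all of $\pi(r,\lambda,1)$, and each $\overline{R}_n$ is $K$-stable, the nearly uniserial property forces each $\overline{R}_n$ to coincide with some $\Fil^{k(n)}\pi$; the index $k(n)$ is determined by dimension comparison from the formula above. Applying the nearly uniserial property to $W := \pi(r,\lambda,1)^{K_n}$, the inequality $\dim_{\F}W + p \leq \dim_{\F}\overline{R}_n$ (which holds for $n\geq 1$ since $(r+1)(p+1)p^{n-1} - Cp^{n-1} \geq p$) then yields $W \subset \overline{R}_n$, as required.

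The main obstacle will be verifying (and citing precisely) the principal-series analogues of Morra's socle-filtration results: the supersingular statements are cleanly packaged in Theorem \ref{theorem-Morra-SS}, but for $\pi(r,\lambda,1)$ with $\lambda\neq 0$ one must extract the corresponding statements from \cite{Mor11, Mor}, and in addition treat the reducible cases $(r,\lambda)\in\{(0,\pm 1),(p-1,\pm 1)\}$ separately, where the extensions \eqref{equation-pi(0)} and \eqref{equation-pi(p-1)} produce extra Steinberg or character subquotients that slightly alter the bottom of the socle filtration but do not affect the asymptotic dimension comparison.
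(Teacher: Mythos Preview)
Your approach is essentially the paper's: invoke Morra's nearly-uniserial structure for $\pi(r,\lambda,1)$ together with a dimension bound on the $K_n$-invariants, and compare with $\dim_{\F}\overline{R}_n=(r+1)(p+1)p^{n-1}$. The paper's proof is terser because it quotes the \emph{exact} formula $\dim_{\F}\pi(r,\lambda,1)^{K_n}=(p+1)p^{n-1}$ from \cite[\S5]{Mor} and the uniserial statement \cite[Thm.~1.2]{Mor11}, then simply says ``conclude as in the supersingular case''.

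Two small points deserve tightening. First, your bound $\leq C\,p^{n-1}$ with unspecified $C$ and the ``$+p$'' threshold does not cover $r=0$: there the exact value is $C=p+1$, so $\dim_{\F}\overline{R}_n-\dim_{\F}\pi(r,\lambda,1)^{K_n}=r(p+1)p^{n-1}=0$. You should use the exact dimension and the finer form of Morra's filtration property (any $K$-stable $W$ with $\dim_{\F}\Fil^k\leq\dim_{\F}W\leq\dim_{\F}\Fil^{k'}$ lies between $\Fil^k$ and $\Fil^{k'}$), which handles equality of dimensions directly. Second, your separate treatment of the reducible parameters $(r,\lambda)\in\{(0,\pm1),(p-1,\pm1)\}$ is unnecessary: Morra's results in \cite{Mor11,Mor} are stated for $\pi(r,\lambda,1)$ itself (not its irreducible constituents), so the argument applies uniformly.
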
 
\begin{proof}
By \cite[Thm. 1.2]{Mor11}, $\pi(r,\lambda,1)$ satisfies a (nearly) uniserial property as in the supersingular case.  Moreover, we have (see \cite[\S5]{Mor})
$\dim_{\F}\pi(r,\lambda,1)^{K_n}=(p+1)p^{n-1}$ 
while \[\dim_{\F}\overline{R}_n=\dim_{\F}R_n=(r+1)(p+1)p^{n-1}.\]
We then conclude as in the supersingular case.
\end{proof}
\begin{proof}[Proof of Theorem \ref{theorem-B} when $\lambda\neq0$] Since $T_1(p^n)$ contains $K_n$, we obtain by Proposition \ref{prop-Morra-PS}
\[\pi(r,\lambda,1)^{T_1(p^n)}\subset  (\overline{R}_n)^{T_1(p^n)}\subset (\overline{R}_n)^{H}.\]
The result then follows from Proposition \ref{proposition-T1-invariant}.
\end{proof}

\section{Main results}\label{section-generalization}
 For application in \S\ref{section-global}, we need to generalize Theorem \ref{theorem-A}  to higher cohomological degrees and to representations of a finite product of   $\GL_2(\Q_p)$.  


We let $G=\GL_2(\Q_p)$, $K=\GL_2(\Z_p)$ and other subgroups of $G$ are defined as in the previous section. Given $r\geq 1$, we let 
\[\mathscr{G}=\prod_{i=1}^rG,\ \ \ \mathscr{K}=\prod_{i=1}^rK,\ \  \ \mathscr{K}_1=\prod_{i=1}^r K_1,\]
\[\mathscr{Z}_1=\prod_{i=1}^rZ_1,\ \ \  \Lambda=\F[[\mathscr{K}_1/\mathscr{Z}_1]]\cong\widehat{\otimes}_{i=1}^r\F[[K_1/Z_1]].\]
That is, $\mathscr{G}$ is a product of $r$ copies of $G$, and so on. If $\bn=(n_1,...,n_r)\in(\Z_{\geq 1})^r$,  let
\[\mathscr{T}_1(p^{\bf n})=\prod_{i=1}^rT_1(p^{n_i}),\ \ \ \mathscr{K}_{\bn}=\prod_{i=1}^rK_{n_i}.\]
As in the case $r=1$, let $\Rep_{\F}(\mathscr{G})$ denote the category of smooth $\F$-representations of $\mathscr{G}$ with a central character and $\Rep_{\F}^{\rm l,fin}(\mathscr{G})$ the subcategory consisting of locally finite objects. 
Let $\mathfrak{C}(\mathscr{G})$ be the dual category of $\Rep_{\F}^{\rm l,fin}(\mathscr{G})$ under Pontryagin dual and $\mathfrak{C}^{\rm fg,tor}(\mathscr{G})$   the subcategory of $\mathfrak{C}(\mathscr{G})$ consisting of coadmissible torsion objects. 

 \subsection{Generalisations}\label{subsection-lemmas}
\subsubsection{Blocks} 
For $1\leq i\leq r$, let $\pi_i\in \mathfrak{C}(G)$ be  (absolutely) irreducible and $\mathfrak{B}_i$ be the block in which $\pi_i$ lies. Recall that $\pi_i$ is admissible (see \S\ref{subsection-Breuil}).

\begin{lemma}\label{lemma-tensor-pi}
(i) The tensor product $\pi_1\otimes \cdots\otimes  \pi_r$ is an   irreducible admissible representation of $\mathscr{G}$.
Conversely, up to enlarge $\F$, each  irreducible representation $\pi$ in $\Rep_{\F}(\mathscr{G})$ is of this form; in particular, $\pi$ is admissible.

(ii) Let $\pi=\pi_1\otimes\cdots\otimes\pi_r$ be as in (i). Then $\delta_{\Lambda}(\pi^{\vee})$ is equal to the cardinality of $i\in\{1,...,r\}$ such that $\pi_i$ is infinite dimensional. 

(iii) Let $\pi=\otimes_{i=1}^r\pi_i$ and $\pi'=\otimes_{i=1}^{r}\pi_i'$ be irreducible representations in $\Rep_{\F}(\mathscr{G})$. Then $\pi\sim \pi'$ (i.e. in the same block) if and only if $\pi_i\sim\pi_i'$ for all $i$.
  \end{lemma}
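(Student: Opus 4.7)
The plan is to handle the three parts in turn, using distinct tools.

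For part (i), I would invoke the standard Flath-type tensor product theorem for smooth admissible representations of a finite product of locally profinite groups. Admissibility of $\pi=\pi_1\otimes\cdots\otimes\pi_r$ is immediate from $\pi^{\mathscr{K}_1}=\bigotimes_i\pi_i^{K_1}$. For irreducibility, absolute irreducibility of each $\pi_i$ (arranged by taking $\F$ sufficiently large) combined with Schur's lemma and a standard density argument on admissible representations suffices. For the converse, given an irreducible admissible $\pi$ of $\mathscr{G}$, restrict to $G\times\{1\}\times\cdots\times\{1\}$ and pick an irreducible $G$-subrepresentation $\pi_1$ (which exists since $\pi^{\mathscr{K}_1}\neq 0$). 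Because the commuting factor $\{1\}\times G^{r-1}$ preserves the $\pi_1$-isotypic component, that component is a nonzero $\mathscr{G}$-stable subspace, hence equal to $\pi$. Schur's lemma then produces an isomorphism $\pi\cong\pi_1\otimes W$ with $W$ irreducible admissible over $G^{r-1}$, and one inducts on $r$.

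For part (ii), I pass to Gelfand--Kirillov dimension via Theorem \ref{theorem-canonical=GK} and compute using Corollary \ref{corollary-CE}. The uniform pro-$p$ group $\mathscr{K}_1/\mathscr{Z}_1$ is a direct product of copies of $K_1/Z_1$, and since cross-factor commutators vanish, its lower $p$-series coincides with the product of the lower $p$-series of the factors. Pontryagin duality together with the identity $\pi^{\prod H_i}=\bigotimes_i\pi_i^{H_i}$ gives
\[\dim_{\F}H_0\big((\mathscr{K}_1/\mathscr{Z}_1)_n,\pi^{\vee}\big)=\prod_{i=1}^r\dim_{\F}\pi_i^{(K_1/Z_1)_n}.\]
Matching the asymptotic growth rates supplied by Corollary \ref{corollary-CE} on both sides yields $\delta_{\Lambda}(\pi^{\vee})=\sum_i\delta_{\F[[K_1/Z_1]]}(\pi_i^{\vee})$. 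By Theorem \ref{theorem-Paskunas-Morra}, each summand is at most $1$, and it vanishes precisely when $\pi_i^{\vee}$ is finite-dimensional, i.e.\ exactly when $\pi_i$ itself is finite dimensional; the formula follows.

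For part (iii), my plan is to use a Künneth-type formula
\[\Ext^1_{\mathscr{G}}(\pi,\pi')\;\cong\;\bigoplus_{i=1}^r\Ext^1_G(\pi_i,\pi_i')\otimes_{\F}\bigotimes_{j\neq i}\Hom_G(\pi_j,\pi_j').\]
Schur's lemma reduces each $\Hom_G(\pi_j,\pi_j')$ to $\F$ when $\pi_j\cong\pi_j'$ and to $0$ otherwise, so a nonzero $\Ext^1_{\mathscr{G}}(\pi,\pi')$ forces $\pi$ and $\pi'$ to coincide in every slot but one, and in the exceptional slot to admit a nonzero $\Ext^1_G$. The ``only if'' direction of the claim then follows by transitivity of the block relation. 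Conversely, given chains realizing $\pi_i\sim\pi_i'$ in each slot, one lifts them to a chain of irreducibles of $\mathscr{G}$ by altering a single factor at a time, each one-step link carrying nonzero $\Ext^1_{\mathscr{G}}$ by the Künneth formula. The main obstacle I expect is the careful justification of this Künneth formula inside the category $\Rep_{\F}^{\rm l,fin}(\mathscr{G})$ with its fixed central character constraint; this rests on the behaviour of projective envelopes under completed tensor product in $\mathfrak{C}(\mathscr{G})$, foundational points presumably developed in the appendix where the Breuil--Pa\v{s}k\=unas construction is generalized to $\mathscr{G}$.
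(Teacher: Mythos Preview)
Your proposal is correct and follows essentially the same route as the paper. The paper's own proof is extremely terse: part (i) is declared ``standard'', part (ii) is called a ``direct consequence'' of Theorem~\ref{theorem-Paskunas-Morra}, and part (iii) is reduced to the assertion that $\Ext^1_{\mathscr{G}}(\pi,\pi')\neq 0$ if and only if $\pi_j\cong\pi_j'$ for all $j\neq i$ and $\Ext^1_G(\pi_i,\pi_i')\neq 0$ for some $i$---precisely the content of your K\"unneth formula. Your write-up simply unpacks these pointers. One small remark: the appendix does not in fact develop the K\"unneth formula for $\Ext^1$; that is taken as a standard (unproved) fact in the paper as well, so your concern about justifying it is no more and no less than the paper's own gap.
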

\begin{proof}
(i) The first part is standard. For the second, see \cite[Lem. B.7]{GN} for a proof. Note that we only need to assume $\pi$ carries a central character, then $\pi$ is automatically admissible; the proof uses the classification of irreducible objects in $\Rep_{\F}(G)$   (cf. \cite{BL},\cite{Br03}). 

(ii) It is a direct consequence of Theorems \ref{theorem-Paskunas-Morra}, using \cite[Lem. A.11]{GN}.

(iii) It follows from the fact that $\Ext^1_{\mathscr{G}}(\pi,\pi')\neq0$ if and only if there exists $1\leq i\leq r$ such that  $\Ext^1_{G}(\pi_i,\pi_i')\neq0$ and $\pi_j\cong \pi_j'$ for all $j\neq i$; see \cite[Lem. 3.4.10]{Pan} for a proof.
\end{proof}

Let $\mathfrak{B}$ be the block in which $\pi=\otimes_{i=1}^r\pi_i$ lies. As a consequence of  Lemma \ref{lemma-tensor-pi},  $\mathfrak{B}$ is equal to $\mathfrak{B}_1\otimes\cdots\otimes \mathfrak{B}_r:=\{\otimes_{i=1}^r\pi_i': \pi_i'\in\mathfrak{B}_i\}$.

Let $P_{\pi_i^{\vee}}$ be a projective envelope of $\pi_{i}^{\vee}$ in $\mathfrak{C}(G)$ and set $E_{\pi_i^{\vee}}=\End_{\mathfrak{C}(G)}(P_{\pi_i^{\vee}})$. Write
\[P:=\widehat{\otimes}_{i=1}^rP_{\pi_i^{\vee}}, \ \ \ E:=\widehat{\otimes}_{i=1}^rE_{\pi_i^{\vee}},\]
where $\widehat{\otimes}$ denotes the completed tensor product   over $\F$ (see \cite[\S B.1]{GN} for the definition and basic properties). For each $i$, let $R_{\pi_i^{\vee}}$ be the centre of $E_{\pi_i^{\vee}}$ and set 
\[R:=\widehat{\otimes}_{i=1}^rR_{\pi_i^{\vee}}\]
Then  $R$ is 
contained in  the centre of $E$.\footnote{It seems that $R$ is exactly the centre of $E$, see  \cite[Rem. 3.4.12]{Pan}; but we don't need this property.}

\begin{lemma}\label{lemma-tensor-P}
(i) $P$ is a projective envelope of $\widehat{\otimes}_{i=1}^r\pi_i^{\vee}$  in $\mathfrak{C}(\mathscr{G})$ and $\End_{\mathfrak{C}(\mathscr{G})}(P)\cong E$.

(ii) $R$  is a Noetherian complete local  $\F$-algebra, Cohen-Macaulay of Krull dimension $3r$.   Moreover, $E$ is finite free over $R$. 

(iii)  $\F\otimes_EP$ (resp. $\F\otimes_{R}P$) has  finite length in $\mathfrak{C}(\mathscr{G})$ and $\delta_{\Lambda}(\F\otimes_EP)=\delta_{\Lambda}(\F\otimes_RP)=r$. 
\end{lemma}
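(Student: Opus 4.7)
The strategy is to derive each claim from the corresponding single-factor statement recalled in the previous subsections, exploiting the fact that completed tensor product over $\F$ interacts well with projectivity, endomorphism algebras, centers, Cohen-Macaulayness, and the canonical dimension.

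For (i), I would argue via Pontryagin duality, which translates the assertion into a statement about injective envelopes in $\Rep^{\mathrm{l,fin}}_{\F}(\mathscr{G})$: the dual of $\widehat{\otimes}_{i} P_{\pi_i^{\vee}}$ is identified with the outer tensor product $\otimes_{i} \rInj_G \pi_i$. This outer tensor product is injective in $\Rep^{\mathrm{l,fin}}_{\F}(\mathscr{G})$ by a standard Hom-tensor argument (injectivity reduces to each factor), and its socle is $\otimes_{i}\pi_i$ by Lemma~\ref{lemma-tensor-pi}(i). The identification $\End_{\mathfrak{C}(\mathscr{G})}(P)\cong E$ follows from the Hom-tensor isomorphism
\[
\End_{\mathfrak{C}(\mathscr{G})}\bigl(\widehat{\otimes}_{i} P_{\pi_i^{\vee}}\bigr) \cong \widehat{\otimes}_{i}\End_{\mathfrak{C}(G)}(P_{\pi_i^{\vee}}),
\]
valid since each $P_{\pi_i^{\vee}}$ is projective.

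For (ii), since $\F$ is a field, taking centers commutes with completed tensor product, giving $R=Z(E)=\widehat{\otimes}_{i}Z(E_{\pi_i^{\vee}})=\widehat{\otimes}_{i}R_{\pi_i^{\vee}}$. By Theorem~\ref{theorem-Paskunas}(iii) each $E_{\pi_i^{\vee}}$ is free of finite rank over $R_{\pi_i^{\vee}}$, hence $E$ is free of finite rank over $R$. A completed tensor product of complete local noetherian $\F$-algebras with residue field $\F$ is again of this type, with Krull dimension the sum of the individual dimensions; a regular system of parameters on each $R_{\pi_i^{\vee}}$ yields one on $R$, establishing both the dimension claim and Cohen-Macaulayness. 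Finally, by Theorem~\ref{theorem-Paskunas}(ii) each $R_{\pi_i^{\vee}}$ is isomorphic to either $\F[[x_1,x_2,x_3]]$ or $\F[[x,y,z,w]]/(xw-yz)$, so $R$ is a quotient of a power series ring over $\F$ by an ideal generated by polynomials of the form $xw-yz$ in disjoint sets of variables. Each such polynomial is irreducible in the ambient power series ring, and polynomials in disjoint variable sets together generate a prime ideal, giving the domain property.

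For (iii), iterated base change gives
\[
\F\otimes_EP \;\cong\; \widehat{\otimes}_{i}\bigl(\F\otimes_{E_{\pi_i^{\vee}}}P_{\pi_i^{\vee}}\bigr).
\]
By Proposition~\ref{proposition-residue=finitelength} each factor has finite length in $\mathfrak{C}(G)$ with $\delta_{\Lambda_i}=1$. Combined with Lemma~\ref{lemma-tensor-pi}(i) and a Jordan-H\"older argument, this shows $\F\otimes_EP$ has finite length in $\mathfrak{C}(\mathscr{G})$. For the canonical dimension, additivity of Gelfand-Kirillov dimension under completed tensor products, together with Theorem~\ref{theorem-canonical=GK}, yields
\[
\delta_\Lambda(\F\otimes_EP) \;=\; \sum_{i=1}^{r}\delta_{\Lambda_i}\bigl(\F\otimes_{E_{\pi_i^{\vee}}}P_{\pi_i^{\vee}}\bigr) \;=\; r.
\]
Finite length of $\F\otimes_RP$ reduces to that of $\F\otimes_EP$ via $\F\otimes_RP\cong(\F\otimes_RE)\otimes_EP$ and the finite freeness of $E$ over $R$ proved in (ii).

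The steps I expect to need the most care are the domain property in (ii) (verifying primeness in a power series ring over the finite field $\F$) and the additivity of the canonical dimension under completed tensor product, which though standard requires invoking Theorem~\ref{theorem-canonical=GK} in order to transport the statement from codimension to Gelfand-Kirillov dimension where additivity is most naturally available.
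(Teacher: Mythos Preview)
Your approach is the same as the paper's: reduce each assertion to the single-factor statements (Theorem~\ref{theorem-Paskunas} for (ii), Proposition~\ref{proposition-residue=finitelength} for (iii)), using that completed tensor product over $\F$ is compatible with projective envelopes, centers, dimension, and Cohen--Macaulayness. The paper's own proof is a single sentence (``(i) is obvious, (ii) follows from Theorem~\ref{theorem-Paskunas}, and (iii) from Proposition~\ref{proposition-residue=finitelength}''), so you have supplied considerably more detail than the author; your one flagged concern about the domain property is legitimate but harmless here, since the rings $\F[[x,y,z]]$ and $\F[[x,y,z,w]]/(xw-yz)$ are geometrically integral (the relation $xw-yz$ stays irreducible over every extension of $\F$), which is exactly what is needed for the completed tensor product of domains over a non-algebraically-closed field to remain a domain.
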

\begin{proof}
(i) It is proved in \cite[Lem. B.8]{GN}.

(ii) It follows from Theorem \ref{theorem-Paskunas}.

(iii) It follows  from Proposition \ref{proposition-residue=finitelength} and \cite[Lem. A.11]{GN}.  
\end{proof}

\begin{lemma}\label{lemma-pi-r-finitelength}
If $M\in\mathfrak{C}(\mathscr{G})$ has finite length, then $\delta_{\Lambda}(M)\leq r$ and $\dim_{\F}M_{\mathscr{T}_1(p^{\bf n})}\ll \prod_{i=1}^rn_i$.
\end{lemma}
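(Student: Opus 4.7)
The plan is to prove both statements simultaneously by induction on the length of $M$ in $\mathfrak{C}(\mathscr{G})$. The reduction is straightforward: given a short exact sequence $0 \to M' \to M \to M'' \to 0$ of finite length objects, the first claim $\delta_{\Lambda}(M) \leq r$ is additive thanks to \eqref{equation-delta-short}, and the right exactness of the $H_0$-functor yields
\[
\dim_{\F} M_{\mathscr{T}_1(p^{\bf n})} \leq \dim_{\F} M'_{\mathscr{T}_1(p^{\bf n})} + \dim_{\F} M''_{\mathscr{T}_1(p^{\bf n})},
\]
so the asymptotic bound $\ll \prod_i n_i$ passes through extensions. Thus we are reduced to the case where $M$ is irreducible.

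For the irreducible case, Lemma \ref{lemma-tensor-pi}(i) says $M \cong \pi^{\vee}$ with $\pi = \pi_1 \otimes \cdots \otimes \pi_r$ and each $\pi_i \in \Rep_{\F}(G)$ absolutely irreducible. The bound $\delta_{\Lambda}(\pi^{\vee}) \leq r$ is then given directly by Lemma \ref{lemma-tensor-pi}(ii). For the coinvariants, dualise: since $(\pi^{\vee})_{\mathscr{T}_1(p^{\bf n})}$ is Pontryagin-dual to $\pi^{\mathscr{T}_1(p^{\bf n})}$, and since $\mathscr{T}_1(p^{\bf n}) = \prod_i T_1(p^{n_i})$ acts factorwise on the tensor product $\pi = \bigotimes_i \pi_i$, we have
\[
\pi^{\mathscr{T}_1(p^{\bf n})} \;\cong\; \bigotimes_{i=1}^r \pi_i^{T_1(p^{n_i})}.
\]
Applying Theorem \ref{theorem-A} to each factor gives $\dim_{\F} \pi_i^{T_1(p^{n_i})} \ll n_i$, and multiplying out yields $\dim_{\F} \pi^{\mathscr{T}_1(p^{\bf n})} \ll \prod_{i=1}^r n_i$, as required.

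Combining the base case with the inductive step completes the proof. There is no serious obstacle here: the only point worth noting is that one should be careful with the constants in the implicit $\ll$, but since $M$ is of finite length the total number of irreducible constituents is bounded by a constant depending on $M$, and so the sum of the individual bounds remains $\ll \prod_i n_i$ with a constant depending on $M$.
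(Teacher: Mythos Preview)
Your proof is correct and follows essentially the same approach as the paper. The paper's proof is terser --- it simply says to reduce to the irreducible case and then invoke the $r=1$ results (Theorem~\ref{theorem-Paskunas-Morra} and Theorem~\ref{theorem-A}) --- whereas you spell out the reduction via \eqref{equation-delta-short} and right-exactness of $H_0$, and make the tensor-product factorisation of invariants explicit; these are exactly the details implicit in the paper's argument.
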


\begin{proof}
We may assume $M$ is irreducible, so that $M \cong \widehat{\otimes}_{i=1}^r\pi_{i}^{\vee}$ with each $\pi_i$ irreducible. The result then follows from Lemma \ref{lemma-tensor-pi}(ii) and   Theorem \ref{theorem-A}.
\end{proof}

\subsubsection{Coadmissibility} Similar to Lemma \ref{lemma-tensor-pi}(i), an irreducible representation of $\mathscr{K}$ is of the form $\sigma=\otimes_{i=1}^r\sigma_i$ with each $\sigma_i\in \Rep_{\F}(K)$ irreducible. We have the obvious notion of a \emph{Serre weight} for $\pi=\otimes_{i=1}^r\pi_i$ as in \S\ref{subsection-Serreweight}; denote by $\mathscr{D}(\pi)$ the set of Serre weights of $\pi$. Clearly, $\sigma\in\mathscr{D}(\pi)$ if and only if $\sigma_i\in\mathscr{D}(\pi_i)$ for each $i$. The following lemma is a direct generalisation of Lemma \ref{lemma-D(pi)neqD(pi')} and Proposition \ref{proposition-HT}.

\begin{lemma}\label{lemma-Serre-r}
(i) If $\pi\neq \pi'$ are two objects in a block $\mathfrak{B}$, then $\mathscr{D}(\pi)\cap \mathscr{D}(\pi')=\emptyset$.

(ii) Let $\sigma\in\Rep_{\F}(\mathscr{K})$ be irreducible. Whenever  non-zero, $\Hom_{\mathscr{K}}^{\rm cont}(P,\sigma^{\vee})^{\vee}$ is a cyclic $E$-module and if $J_{\sigma}$ denotes its annihilator then $E/J_{\sigma}\cong\F[[S_1,...,S_r]]$. 
If $\sigma=\otimes_i\sigma_i$ and $\sigma'=\otimes_i\sigma_i'$ are two irreducible $\mathscr{K}$-representations such that $\sigma_i=\sigma_i'$ whenever $\pi_i$ is supersingular (i.e. $\mathfrak{B}_i$ is of type (I)), then  \[\Hom_{\mathscr{K}}^{\rm cont}(P,\sigma^{\vee})^{\vee}\cong \Hom_{\mathscr{K}}^{\rm cont}(P,\sigma'^{\vee})^{\vee}\] as $E$-modules when they are both non-zero. 

(iii) Let $\widetilde{\sigma}=\oplus_{\sigma}\sigma$ where the sum is taken over all irreducible $\sigma$ such that $\Hom_{\mathscr{K}}^{\rm cont}(P,\sigma^{\vee})\neq0$.  
Then $\Hom_{\mathscr{K}}^{\rm cont}(P,\widetilde{\sigma}^{\vee})^{\vee}$ is a Cohen-Macaulay $R$-module of Krull dimension $r$. 
\end{lemma}
\begin{proof}
The universal property of the completed tensor product, see the proof of  \cite[Lem. B.8]{GN},  gives 
\[\Hom_{\mathscr{K}}^{\rm cont}(\widehat{\otimes}_{i=1}^r\pi_i^{\vee},\widehat{\otimes}_{i=1}^{r}\sigma_i^{\vee})\cong \widehat{\otimes}_{i=1}^r\Hom_{K}^{\rm cont}(\pi_i^{\vee},\sigma_i^{\vee}).\]
This proves (i) using Lemma \ref{lemma-D(pi)neqD(pi')}. (ii) is proved in a similar way using Proposition \ref{proposition-HT}(i). 

(iii) One checks that $\widetilde{\sigma}=\otimes_{i=1}^r\widetilde{\sigma}_i$, where $\widetilde{\sigma}_i$ is the direct sum of all irreducible $\sigma_i$ such that $\Hom_K^{\rm cont}(P_{\pi_i^{\vee}},\sigma_i^{\vee})\neq0$. Hence 
\[\Hom_{\mathscr{K}}^{\rm cont}(P,\widetilde{\sigma}^{\vee})^{\vee}\cong \widehat{\otimes}_{i=1}^r\Hom_K^{\rm cont}(P_{\pi_i^{\vee}},\widetilde{\sigma}_i^{\vee})^{\vee}\]
as $R$-modules. The result follows from Proposition \ref{proposition-HT}(ii) which says that each component $\Hom_K^{\rm cont}(P_{\pi_i^{\vee}},\widetilde{\sigma}_i^{\vee})^{\vee}$ is a Cohen-Macaulay $R_{\pi_i^{\vee}}$-module of Krull dimension $1$.
\end{proof}

If $M\in\mathfrak{C}^{\mathfrak{B}}$, let $\mathrm{m}(M):=\Hom_{\CG}(P,M)$ which is a compact right $E$-module. If $\tau\in\Rep_{\F}(\mathscr{K})$  is of finite length, let \[P(\tau):=\Hom_{\mathscr{K}}^{\rm cont}(P,\tau^{\vee})^{\vee}\] which is a finitely generated left $E$-module.
 \begin{proposition}\label{proposition-M-to-m(M)-adm}
Let $M\in\mathfrak{C}(\mathscr{G})$ be a coadmissible quotient of $P$. Then
 $\mathrm{m}(M)\otimes_EP$ is coadmissible.
\end{proposition}
\begin{proof}
The  proof is similar to the proof of Proposition \ref{proposition-M-to-m(M)}(i).
Let $\Ker$ be the kernel of the natural morphism 
\[\mathrm{ev}: \mathrm{m}(M){\otimes}_EP\twoheadrightarrow M,\]
which is surjective by \cite[Lem. 2.10]{Pa13}.
Using \cite[Lem. 2.9]{Pa13} and the projectivity of $P$, we have $\Hom_{\mathfrak{C}(\mathscr{G})}(P,\Ker)=0$, that is $\pi^{\vee}$ does not occur in $\Ker$.  

We need to show that $\Hom_{\mathscr{K}}^{\rm cont}(\mathrm{m}(M){\otimes}_EP,\sigma^{\vee})$ is finite dimensional for any irreducible $\sigma\in\Rep_{\F}(\mathscr{K})$. By \cite[Prop. 2.4]{Pa15}, we have an isomorphism of finitely generated $E$-modules 
\[\Hom_{\mathscr{K}}^{\rm cont}(\mathrm{m}(M){\otimes}_EP,\sigma^{\vee})^{\vee}\cong \mathrm{m}(M){\otimes}_EP(\sigma).\]
Hence, it suffices to consider those $\sigma$ such that $P(\sigma)\neq 0$, or  equivalently $\Hom_{K}^{\rm cont}(P_{\pi_{i}^{\vee}},\sigma_i^{\vee})\neq 0$ for all $i$. Note that this implies automatically $\sigma_i\in\mathscr{D}(\pi_i)$ if $\pi_i$ is supersingular, i.e. $\mathfrak{B}_i$ is of type (I). 
We choose another weight $\sigma'=\otimes_{i=1}^r\sigma_i'$ as follows: if $\mathfrak{B}_i$ is of type (I), let $\sigma_i':=\sigma_i$; otherwise, let $\sigma_i'$ be any weight in $\mathscr{D}(\pi_i)$. Then the assumption of Lemma \ref{lemma-Serre-r}(ii) is satisfied and we obtain an isomorphism of $E$-modules
$P(\sigma)\cong P(\sigma')$. 
Moreover,  we have $\sigma'\in\mathscr{D}(\pi)$ by construction, hence  $\Hom_{\mathscr{K}}^{\rm cont}(\Ker,\sigma'^{\vee})=0$ by Lemma \ref{lemma-Serre-r}(i) because $\pi^{\vee}$ does not occur as a subquotient in $\Ker$. 
As in the proof of Proposition \ref{proposition-M-to-m(M)}(i), we obtain isomorphisms
\[\Hom_{\mathscr{K}}^{\rm cont}(\mathrm{m}(M){\otimes}_EP,\sigma^{\vee})^{\vee}\cong \Hom_{\mathscr{K}}^{\rm cont}(\mathrm{m}(M){\otimes}_EP,\sigma'^{\vee})^{\vee}\cong \Hom_{\mathscr{K}}^{\rm cont}(M,\sigma'^{\vee})^{\vee}\]
from which the result follows.
\end{proof}
 
 \begin{proposition}\label{proposition-EtoR-adm}
 Let $\mathrm{m}$ be a finitely generated right $E$-module such that $\mathrm{m}\otimes_EP$ is coadmissible. Then $\mathrm{m}\otimes_RP$ is also coadmissible.
 \end{proposition}
\begin{proof}
 As in the proof of Proposition \ref{proposition-M-to-m(M)-adm}, we need to show that $\mathrm{m}\otimes_RP(\sigma)$ is finite dimensional for any irreducible $\sigma\in\Rep_{\F}(\mathscr{K})$. Let $I_{\sigma}\subset R$ (resp. $J_{\sigma}\subset E$) be the annihilator of $P(\sigma)$. Then we have an isomorphism $E/J_{\sigma}\cong P(\sigma)$, see Lemma \ref{lemma-Serre-r}(ii). Since $P(\sigma)$ is finitely generated over $R$, a standard argument  in commutative algebra  shows that the Krull dimension of  $\mathrm{m}\otimes_RP(\sigma)$ is equal to that of $\mathrm{m}\otimes_RR/I_{\sigma}$.   Letting $J_{\sigma}':=I_{\sigma} E$ which is a two-sided ideal of $E$ contained in $J_{\sigma}$, there is an isomorphism 
\[\mathrm{m}\otimes_RR/I_{\sigma}\cong \mathrm{m}\otimes_EE/J_{\sigma}',\]
so that we are left to compare $E/J_{\sigma}'$ and $E/J_{\sigma}$.
Since $\mathrm{m}\otimes_EE/J_{\sigma}$ is finite dimensional over $\F$ by assumption, it suffices to show that $J_{\sigma}^n\subset J_{\sigma}'$ for some  $n\geq 1$, because then 
\[\delta_{\Lambda}(\mathrm{m}\otimes_EE/J_{\sigma}^n)\geq \delta_{\Lambda}(\mathrm{m}\otimes_EE/J_{\sigma}')\geq \delta_{\Lambda}(\mathrm{m}\otimes_EE/J_{\sigma}),\]
and  consequently \[\delta_{\Lambda}(\mathrm{m}\otimes_EE/J_{\sigma})=\delta_{\Lambda}(\mathrm{m}\otimes_EE/J_{\sigma}')\]
because $E$ is Noetherian. Recall that $E$ and $R$ only differ at the indices $i$ where $\mathfrak{B}_i$ is of type (III), in which case $J_{\sigma_i}^4\subset J_{\sigma_i}'$ by Proposition \ref{proposition-III-image}(iii). The result follows.
\end{proof}

\subsubsection{Dimension formula} \label{subsection-dimformula}
We introduce another ring which lies between $R$ and $E$: let
\[E':=\big(\widehat{\otimes}_{i\notin \mathrm{IV}}E_{\pi_i^{\vee}}\big)\widehat{\otimes}\big(\widehat{\otimes}_{i\in \mathrm{IV}}R_{\pi_i^{\vee}}'\big)\]
 where the subscript $i\in \mathrm{IV}$ (resp. $i\notin\mathrm{IV}$) indicates that  $\mathfrak{B}_i$ is (resp. not)  of type (IV), and  $R_{\pi_i^{\vee}}'$ is one of the  subrings of $E_{\pi_i^{\vee}}$ constructed in Proposition  \ref{proposition-flatness} (e.g. the one constructed in Remark \ref{remark-flatness}). 
By Theorem \ref{theorem-Paskunas} and Proposition \ref{proposition-flatness}, $E$ is finite free over $E'$ and $P$ is flat over $E'$.
\begin{lemma}\label{lemma-compare-GKdimension}
Let $\mathrm{m}$ be a  finitely generated right $E$-module and assume that $\mathrm{m}\otimes_EP$ is coadmissible. Then $\mathrm{m}\otimes_{E'}P$ is also coadmissible and $\delta_{\Lambda}(\mathrm{m}\otimes_{E'}P)=\delta_{\Lambda}(\mathrm{m}\otimes_{E}P)$.
\end{lemma}
\begin{proof}
The first assertion can be deduced from Proposition \ref{proposition-EtoR-adm}; but the proof below gives another proof.  

We claim that there exists an exact sequence of $(E,E)$-bimodules for some $n\geq 1$
\begin{equation}\label{equation-E-otimes-E}E^{\oplus n}\ra E\otimes_{E'}E\ra E\ra0,\end{equation}
where the second morphism sends $x\otimes y$ to $xy$. In fact, by definition of $E'$, we only need to construct such an exact sequence for $E$ (resp. $E'$) replaced by $\widehat{\otimes}_{i\in \mathrm{IV}}E_{\pi_i^{\vee}}$ (resp. $\widehat{\otimes}_{i\in \mathrm{IV}}R_{\pi_i^{\vee}}'$), and \eqref{equation-E-otimes-E} can be obtained by tensoring it with $\widehat{\otimes}_{i\notin \mathrm{IV}}E_{\pi_i^{\vee}}$.
But since $\widehat{\otimes}_{i\in \mathrm{IV}}E_{\pi_i^{\vee}}$ and $\widehat{\otimes}_{i\in \mathrm{IV}}R_{\pi_i^{\vee}}'$  are commutative  Noetherian rings,  the claim is standard.

We have a natural isomorphism $\mathrm{m}\otimes_{E'}P\cong \mathrm{m}\otimes_E(E\otimes_{E'}E)\otimes_EP$, which together with \eqref{equation-E-otimes-E} gives an exact sequence
\[(\mathrm{m}\otimes_EP)^{\oplus n}\ra \mathrm{m}\otimes_{E'}P\ra \mathrm{m}\otimes_EP\ra0.\]
The result follows from this.
\end{proof}

We prove the following dimension formula.   
\begin{proposition}\label{proposition-dim-equality}
Let $\mathrm{m}$ be a non-zero finitely generated (right) $E$-module and assume that $\mathrm{m}\otimes_EP$ is coadmissible. Then we have an equality 
\[\delta_{\Lambda}(\mathrm{m}\otimes_EP)=\dim_{R}\mathrm{m}+r,\]
where $\dim_R\mathrm{m}$ denotes the Krull dimension of $\mathrm{m}$ as  an $R$-module.
\end{proposition}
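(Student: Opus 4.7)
Set $M := \mathrm{m}\otimes_E P$ and $s := \dim_R\mathrm{m}$. The plan is induction on $s$, establishing both inequalities $\delta_\Lambda(M)\leq s+r$ and $\delta_\Lambda(M)\geq s+r$ simultaneously.

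For the base case $s=0$, since $E$ is finite over $R$ by Lemma \ref{lemma-tensor-P}(ii), $\mathrm{m}$ has finite length as an $E$-module. Filtering $\mathrm{m}$ by $E$-submodules with simple quotients (all isomorphic to $\F$ by our convention of enlarging $\F$), right-exactness of $-\otimes_EP$ produces a filtration of $M$ whose graded pieces are quotients of $\F\otimes_EP$. Combining with Lemma \ref{lemma-tensor-P}(iii) and \eqref{equation-delta-short} yields $\delta_\Lambda(M)\leq r$. Conversely, by Nakayama $M$ surjects onto $(\mathrm{m}/\mathfrak{m}_E\mathrm{m})\otimes_EP\cong (\F\otimes_EP)^{\oplus d}$ with $d\geq 1$, so $\delta_\Lambda(M)\geq r$.

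For the inductive step $s\geq 1$, I use prime avoidance in the Cohen-Macaulay local ring $R$ (Lemma \ref{lemma-tensor-P}(ii)) to choose $f\in\mathfrak{m}_R$ avoiding both the (finitely many) associated primes of $\mathrm{m}$ as an $R$-module and the minimal primes of $\mathrm{Supp}_R(\mathrm{m})$ of maximal dimension. Then $f$ acts as a non-zero-divisor on $\mathrm{m}$ and $\dim_R(\mathrm{m}/f\mathrm{m})=s-1$. The quotient $M/fM\cong(\mathrm{m}/f\mathrm{m})\otimes_EP$ is coadmissible as a quotient of $M$, and by the inductive hypothesis $\delta_\Lambda(M/fM)=(s-1)+r$. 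A key observation is that the map $E\twoheadrightarrow E/\mathfrak{m}_E\cong\F$ factors the action of $E$ on $P/JP\cong\pi_1^\vee\otimes\cdots\otimes\pi_r^\vee$; hence $\mathfrak{m}_E$ annihilates $P/JP$, giving $fP\subseteq JP$ and thus $fM\subseteq JM$. This ensures $\bigcap_k f^k M=0$, so Proposition \ref{proposition-GKdim} applies to the endomorphism $f$ on $M$, producing the upper bound $\delta_\Lambda(M)\leq\delta_\Lambda(M/fM)+1=s+r$.

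The main obstacle is the reverse inequality. The natural route is to show that $f$ acts injectively on $M$, placing us in case (ii) of Proposition \ref{proposition-GKdim} with $k_0=0$ and delivering the desired equality. The kernel $M[f]$ is a quotient of $\mathrm{Tor}_1^E(\mathrm{m}/f\mathrm{m},P)$ via the long exact Tor-sequence associated to $0\to\mathrm{m}\xrightarrow{f}\mathrm{m}\to\mathrm{m}/f\mathrm{m}\to 0$. If no factor $\pi_i$ lies in a block of type (IV), then $P_{\pi_i^\vee}$ is flat over $E_{\pi_i^\vee}$ by Theorem \ref{theorem-Paskunas}, hence $P$ is flat over $E$ by base change, this Tor vanishes, and we are done. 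The hard case is when some $\pi_i$ lies in a block of type (IV), where flatness fails. Here the plan is to invoke Proposition \ref{proposition-Tor-P}, which computes the Tor-modules $\mathrm{Tor}^{E_{\pi_i^\vee}}_*(\F,P_{\pi_i^\vee})$ explicitly and shows they are of canonical dimension $\leq 1$ at each $\GL_2(\Q_p)$-factor; combined with the tensor product structure of $E$ and $P$ and additivity \eqref{equation-delta-short}, one deduces that $\mathrm{Tor}_1^E(\mathrm{m}/f\mathrm{m},P)$ has canonical dimension at most $(s-1)+r$, and therefore $\delta_\Lambda(M[f])<s+r$. Applying \eqref{equation-delta-short} to $0\to M[f]\to M\xrightarrow{f}fM\to 0$ gives $\delta_\Lambda(fM)=\delta_\Lambda(M)$, and then Proposition \ref{proposition-GKdim}(ii) applied to the injective endomorphism $f$ on $fM$ (or more generally, on the eventual image $f^{k_0}M$ provided by Lemma \ref{lemma-phi}) yields $\delta_\Lambda(M)\geq\delta_\Lambda(M/fM)+1=s+r$, completing the induction.
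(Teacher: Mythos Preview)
Your base case and the upper-bound half of the induction are essentially the paper's argument (the paper packages the upper bound by choosing a full system of parameters $a_1,\dots,a_d$ for $\mathrm m$ and applying the last line of Proposition~\ref{proposition-GKdim} $d$ times, which is equivalent to what you do).

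For the lower bound the paper takes a shorter and more uniform route. It does \emph{not} require $x$ to be $\mathrm m$-regular and never touches $M[x]$ or $\Tor$: one simply picks $x\in\mathfrak m_R$ with $\dim_R(\mathrm m/x\mathrm m)=d-1$, notes that this forces $x$ to be non-nilpotent on $\mathrm m$ (and hence on $M$, because $\mathrm m(x^kM)=x^k\mathrm m\neq 0$), checks that $\dim_R(x^k\mathrm m/x^{k+1}\mathrm m)=d-1$ and $x^k\mathrm m/x^{k+1}\mathrm m\neq 0$ for every $k$, applies the inductive hypothesis to $(x^k\mathrm m/x^{k+1}\mathrm m)\otimes_EP$, and invokes Proposition~\ref{proposition-GKdim}. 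No case distinction by block type is needed.

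Your type-(IV) branch has two concrete gaps. First, Proposition~\ref{proposition-Tor-P} computes only $\Tor_i^{E_{\pi^\vee}}(\F,P_{\pi^\vee})$ for a \emph{single} $\GL_2(\Q_p)$-factor and with coefficient module $\F$; it gives no direct handle on $\Tor_1^E(\mathrm m/f\mathrm m,P)$ for an arbitrary $E$-module $\mathrm m/f\mathrm m$ of Krull dimension $s-1$ over the product ring $E=\widehat\otimes_iE_{\pi_i^\vee}$, and the vague appeal to ``tensor product structure plus additivity'' does not bridge this. Second, even granting $\delta_\Lambda(M[f])\leq(s-1)+r$, your final step does not go through. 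Proposition~\ref{proposition-GKdim}(ii) (or its proof, applied to the eventual image $f^{k_0}M$ on which $f$ is injective) yields
\[
\delta_\Lambda(M)\ \geq\ \delta_\Lambda\bigl(f^{k_0}M/f^{k_0+1}M\bigr)+1,
\]
not $\delta_\Lambda(M)\geq\delta_\Lambda(M/fM)+1$. Since $f$ is $\mathrm m$-regular one has $(f^{k_0}\mathrm m/f^{k_0+1}\mathrm m)\otimes_EP\cong(\mathrm m/f\mathrm m)\otimes_EP=M/fM$, but $f^{k_0}M/f^{k_0+1}M$ is only a \emph{quotient} of this (precisely because $P$ is not $E$-flat in type (IV)), so you cannot read off a lower bound on its canonical dimension from that of $M/fM$. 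Your control of $\delta_\Lambda(M[f])$ does not repair this: it is compatible with $\delta_\Lambda(M)=\delta_\Lambda(M[f])=(s-1)+r$ and $\delta_\Lambda(f^{k_0}M/f^{k_0+1}M)\leq(s-2)+r$, which is exactly the scenario you must exclude.
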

\begin{proof} 
Let $d:=\dim_{R}\mathrm{m}$. We first prove (for possibly zero $\mathrm{m}$) \begin{equation}\label{equation-<}\delta_{\Lambda}(\mathrm{m}\otimes_{E}P)\leq d+r.\end{equation}
 Indeed,  we may find  a system of parameters  (in the maximal ideal of $R$) for $\mathrm{m}$, say $a_1,...,a_{d}$, so that $\dim_{\F}\mathrm{m}/(a_1,...,a_d)$ is finite. Thus, $\mathrm{m}/(a_1,...,a_d)\otimes_{E} P$ has finite length in $\CG$ by Lemma \ref{lemma-tensor-P}(iii), and has canonical dimension $\leq r$ by Lemma \ref{lemma-pi-r-finitelength}.   As $\mathrm{m}/(a_1,...,a_d)\otimes_{E} P\cong (\mathrm{m}\otimes_{E}P)/(a_1,...,a_d)$,  
 we deduce \eqref{equation-<} from Proposition \ref{proposition-GKdim}.

Assuming $\mathrm{m}$ non-zero, we prove the inequality $\delta_{\Lambda}(\mathrm{m}\otimes_{E}P)\geq d+r$ by induction on $d$. By Lemma \ref{lemma-compare-GKdimension}, 
we are reduced to prove
\[\delta_{\Lambda}(\mathrm{m}\otimes_{E'}P)\geq\dim_{R}\mathrm{m}+r.\] 
The advantage to work with $E'$ is that $P$ is flat over $E'$.
 If $d=0$ (but $\mathrm{m}$ is non-zero), then $\mathrm{m}$ is finite dimensional over $\F$, and the result is a consequence of Lemma \ref{lemma-tensor-P}(iii) and Lemma \ref{lemma-compare-GKdimension}.
Assume $d\geq 1$ and the statement is true for all (non-zero) $E$-modules $\mathrm{m}'$ with $\dim_{R}\mathrm{m}'\leq d-1$. Since $\mathrm{m}$ is finitely generated over $R$, we may choose $x\in R$ such that $\dim_{R}(\mathrm{m}/x\mathrm{m})=d-1$; this implies that $x$ is not nilpotent on $\mathrm{m}$ and $\dim_{R}(x^k\mathrm{m}/x^{k+1}\mathrm{m})=d-1$  for any $k\geq 0$ (otherwise we would have $\dim_R(x^k\mathrm{m})\leq d-1$, hence $\dim_R\mathrm{m}\leq d-1$). Moreover,  since $x^k\mathrm{m}$ is non-zero, $x^k\mathrm{m}/x^{k+1}\mathrm{m}$ is also non-zero by Nakayama's lemma.  The inductive hypothesis then implies  \[\delta_{\Lambda}\big((x^k\mathrm{m}/x^{k+1}\mathrm{m})\otimes_{E'}P\big)\geq (d-1)+r.\]
Since $P$ is flat over $E'$, we have an isomorphism 
\[(x^k\mathrm{m}/x^{k+1}\mathrm{m})\otimes_{E'}P\cong x^k(\mathrm{m}\otimes_{E'}P)/x^{k+1}(\mathrm{m}\otimes_{E'}P),\] 
 and we conclude by Proposition \ref{proposition-GKdim}(ii) applied to $M=\mathrm{m}\otimes_{E'}P$.  
 \end{proof}
 
 \begin{corollary}
 Let $\mathrm{m}$ be a  finitely generated right $E$-module and assume that $\mathrm{m}\otimes_EP$ is coadmissible. Then  $\delta_{\Lambda}(\mathrm{m}\otimes_{R}P)=\delta_{\Lambda}(\mathrm{m}\otimes_{E}P)$.
 \end{corollary}
 \begin{proof}
 We  know that $\mathrm{m}\otimes_RP$ is coadmissible by Proposition \ref{proposition-EtoR-adm} and it is clear that $\delta_{\Lambda}(\mathrm{m}\otimes_RP)\geq\delta_{\Lambda}(\mathrm{m}\otimes_EP)$.  
 The first part of the proof of Proposition \ref{proposition-dim-equality} still works if we replace $E$ by $R$, showing 
 \[\delta_{\Lambda}(\mathrm{m}\otimes_RP)\leq \dim_R\mathrm{m}+r.\]
The result then follows from Proposition \ref{proposition-dim-equality}.
 \end{proof}
 
 \begin{corollary}
 Let $M$ be a coadmissible quotient of $P$. Then $\dim_R \mathrm{m}(M)\leq 2r$.
 \end{corollary}
 \begin{proof}
 By Proposition \ref{proposition-M-to-m(M)-adm}, $\mathrm{m}(M)\otimes_EP$ is also coadmissible, so  $\delta_{\Lambda}(\mathrm{m}(M)\otimes_EP)\leq \dim\Lambda=3r$. The result then follows from Proposition \ref{proposition-dim-equality}. 
 \end{proof}
 \begin{proposition}\label{proposition-BP-replaced}
 Let $M$ be a coadmissible quotient of $P$. There exists  a sequence $f_1,...,f_r\in \mathrm{Ann}_R(M)$ which is  both $R$-regular and $P$-regular, such that  $P/(f_1,...,f_r)$ is  finite free over $\Lambda$.
 \end{proposition} 
\begin{proof}
Since $M$ is coadmissible, so is $\mathrm{m}(M)\otimes_RP$ by Propositions \ref{proposition-M-to-m(M)-adm} and \ref{proposition-EtoR-adm}. Moreover, as in \emph{loc. cit.}, this implies that $\mathrm{m}(M)\otimes_RP(\widetilde{\sigma})$ 
is  finite dimensional over $\F$, where $\widetilde{\sigma}=\oplus_{\sigma}\sigma$ is the sum of all irreducible $\sigma$ such that $P(\sigma)\neq0$.  Since both $\mathrm{m}(M)$ and $P(\widetilde{\sigma})$ are finitely generated $R$-modules, we deduce that
$R/\big(\mathrm{Ann}_R(\mathrm{m}(M))+\mathrm{Ann}_R(P(\widetilde{\sigma}))\big)$
is an Artinian ring.
Writing $\fa=\mathrm{Ann}_R(\mathrm{m}(M))$  (which coincides with $\mathrm{Ann}_R(M)$ by Lemma \ref{lemma-annihilator} below),  then $R/\fa\otimes_RP(\widetilde{\sigma})$ is also finite dimensional over $\F$. Since $P(\widetilde{\sigma})$ is a Cohen-Macaulay $R$-module of Krull dimension $r$,  we can find a regular sequence $f_1,...,f_r$ in $\fa$ for $P(\widetilde{\sigma})$, see \cite[Thm. 2.1.2(b)]{BH-CM}.  

 As a generalisation of \cite{EP}, it is proved in Corollary \ref{corollary-EP} below that $P|_{\mathscr{K}}$ remains projective in $\mathfrak{C}(\mathscr{K})$. Applying repeatedly Proposition \ref{proposition-projective}, we obtain that  $f_1,...,f_r$ is  a regular sequence for $P$ and $P/(f_1,...,f_r)$ is again projective in $\mathfrak{C}(\mathscr{K})$. Moreover, $\Hom_{\mathscr{K}}^{\rm cont}(P/(f_1,...,f_r),\widetilde{\sigma}^{\vee})$ is finite dimensional over $\F$, so  $P/(f_1,...,f_r)$ is coadmissible by the choice of $\widetilde{\sigma}$. 
 
We are left to check that $f_1,...,f_r$ is an $R$-regular sequence. Since the sequence is $P$-regular, it suffices to prove that $R/(f_1,...,f_i)$ acts faithfully on $P/(f_1,...,f_i)$ for all $1\leq i\leq r$. For this, it suffices to prove that there is a natural isomorphism 
 \begin{equation}\label{equation-End}
 E/(f_1,...,f_i)\cong \End_{\CG}\big(P/(f_1,...,f_i)\big).\end{equation}
In fact, since $f_1,...,f_r$ lie in  the centre of $E$, the proof of \cite[Lem. 7.11]{HP} shows that   any morphism $P\ra P/(f_1,...,f_{i-1})$ factors through $P/(f_1,...,f_{i-1})\ra P/(f_1,...,f_{i-1})$. Since $P$ is projective, the exact sequence 
\[0\ra P/(f_1,...,f_{i-1})\overset{f_i}{\ra} P/(f_1,...,f_{i-1})\ra P/(f_1,...,f_i)\ra0\]
induces an isomorphism $\End_{\CG}\big(P/(f_1,...,f_{i-1})\big)/f_i\cong \End_{\CG}\big(P/(f_1,...,f_i)\big)$. 
An obvious induction then gives \eqref{equation-End}.  
\end{proof}

\subsubsection{Torsion $\Lambda$-modules}

 If $\eta\in\Rep_{\F}(T)$ is a smooth character, recall from \S\ref{subsection-PS} that $M_{\eta^{\vee}}=(\Ind_B^G\mathrm{Inj}_{T}\eta)^{\vee}$ and $E_{\eta^{\vee}}=\End_{\mathfrak{C}(G)}(M_{\eta^{\vee}})$.
 
 \begin{definition}\label{definition-P-r}
Let $\Sigma$ be a subset of $\{1,...,r\}$.  Given a smooth character $\eta_i\in\Rep_{\F}(T)$ for each $i\in \Sigma$ and an  irreducible $\pi_i\in\Rep_{\F}(G)$ for each $i\notin \Sigma$, we define $P(\underline{\eta},\underline{\pi},\Sigma)\in\CG$ by \[P(\underline{\eta},\underline{\pi},\Sigma):=\big(\widehat{\otimes}_{i\in \Sigma}M_{\eta_i^{\vee}}\big)\widehat{\otimes}\big(\widehat{\otimes}_{i\notin \Sigma}P_{\pi_i^{\vee}}).\]
\end{definition}

Denote by $E_{\Sigma}$ the endomorphism ring $\End_{\CG}(P(\underline{\eta},\underline{\pi},\Sigma))$; then \[E_{\Sigma}\cong \big(\widehat{\otimes}_{i\in \Sigma}E_{\eta_i^{\vee}}\big) \widehat{\otimes} \big(\widehat{\otimes}_{i\notin \Sigma}E_{\pi_i^{\vee}}\big)\]
is a quotient of $E=\widehat{\otimes}_{i=1}^rE_{\pi_i^{\vee}}$, where $\pi_i:=\rsoc_G\pi_{\eta_i}$ if $i\in\Sigma$. We employ  the rings constructed in Proposition \ref{proposition-flatness} once again by setting \begin{equation}\label{equation-define-R_N}R_{\Sigma}:=\big(\widehat{\otimes}_{i\in\Sigma}E_{\eta_i^{\vee}}\big)\widehat{\otimes}\big(\widehat{\otimes}_{i\notin\Sigma, i\notin\mathrm{IV}}R_{\pi_i^{\vee}}\big)\widehat{\otimes}\big(\widehat{\otimes}_{i\notin\Sigma,i\in \mathrm{IV}}R_{\pi_i^{\vee}}'\big),\end{equation}
where the notation is as in \S\ref{subsection-dimformula}. 
Then $R_{\Sigma}$ is a (commutative) regular local ring of Krull dimension $3r-|\Sigma|$. Clearly,  $R_{\Sigma}$ is contained in (the centre of) $E_{\Sigma}$ and $E_{\Sigma}$ is finite free over $R_{\Sigma}$.

Let $\sigma\in\Rep_{\F}(\mathscr{K})$ be  an  irreducible representation with  $P(\sigma)\neq0$. Corollary \ref{corollary-P=Meta} implies an isomorphism
\[ P(\sigma)\cong \Hom_{\mathscr{K}}^{\rm cont}(P(\underline{\eta},\underline{\pi},\Sigma),\sigma^{\vee})^{\vee}, \]
so $P(\sigma)$ can be viewed as a module over $E_{\Sigma}$ and  $R_{\Sigma}$.

 \begin{lemma}\label{lemma-annihilator-PS}
With the above notation, $\mathrm{Ann}_{R_{\Sigma}}(P(\sigma))$ can be generated by $2r-|\Sigma|$ elements.
\end{lemma}
\begin{proof}
To simplify and uniform the notation, we write $R_i$ for the ring at index $i$ in \eqref{equation-define-R_N} so that $R_{\Sigma}=\widehat{\otimes}_{i=1}^rR_i$; similarly write $E_{\Sigma}=\widehat{\otimes}_{i=1}^rE_i$.  

By Lemma \ref{lemma-Serre-r}(ii) together with Corollary \ref{corollary-P=Meta},  $P(\sigma)$ is a cyclic $E_{\Sigma}$-module such that \[E_{\Sigma}/\mathrm{Ann}_{E_{\Sigma}}(P(\sigma))\cong\widehat{\otimes}_{i=1}^r\F[[S]]\cong\F[[S_1,...,S_r]].\] For each $i$, the composite morphism $R_i\ra E_i\ra \F[[S]]$ is either  surjective or equal to $\F[[S^2]]$; this is an easy  check if $i\in\Sigma$ or $i\notin \mathrm{III}$, and follows from   Proposition \ref{proposition-III-image}(ii) if $i\in\mathrm{III}$ (and  $i\notin\Sigma$). In all, the image of $R_\Sigma\hookrightarrow E_{\Sigma}\twoheadrightarrow \widehat{\otimes}_{i=1}^r\F[[S]]$ is  a regular local ring of Krull dimension $r$. The result then follows from  \cite[Thm. 21.2(ii)]{Mat}.
\end{proof}

\begin{lemma}\label{lemma-general-PS-torsion}
Let $M$ be a coadmissible quotient of $P(\underline{\eta},\underline{\pi},\Sigma)$ and assume $\Sigma$ is non-empty. Then $M$ is a torsion $\Lambda$-module. In fact, $\delta_{\Lambda}(M)\leq 3r-|\Sigma|$.
\end{lemma}
\begin{proof}
Note that the action of $E$ on $\mathrm{m}(M)$ factors through $E_{\Sigma}$ by \cite[Prop. 7.1(iii)]{Pa13}, hence $\mathrm{m}(M)$ can be viewed as an $R_{\Sigma}$-module.  Since $M$ is coadmissible, $\mathrm{m}(M)\otimes_{R_{\Sigma}}P(\sigma)$ is finite dimensional, where $\sigma$ is as before. As a consequence, \[R_{\Sigma}/\big(\mathrm{Ann}_{R_{\Sigma}}(\mathrm{m}(M))+\mathrm{Ann}_{R_{\Sigma}}(P(\sigma))\big)\] is an Artinian ring. As in Proposition  \ref{proposition-BP-replaced}, we can find a sequence $f_1,...,f_r\in \mathrm{Ann}_{R_{\Sigma}}(\mathrm{m}(M))$ which  is regular for $P(\sigma)$.  
By Lemma \ref{lemma-annihilator-PS}, $\mathrm{Ann}_{R_{\Sigma}}(P(\sigma))$ is generated by $2r-|\Sigma|$ elements, say $g_1,...,g_{2r-|\Sigma|}$. Since $R_{\Sigma}$ is Cohen-Macaulay of Krull dimension $3r-|\Sigma|$, the sequence $f_1,...,f_r,g_1,...,g_{2r-|\Sigma|}$  is $R_{\Sigma}$-regular. As a consequence, 
$\dim R_{\Sigma}/(f_1,...,f_r)=2r-|\Sigma|$.  But, $\mathrm{m}(M)$ is annihilated by  $(f_1,...,f_r)$, hence
\[\dim \mathrm{m}(M)\leq \dim R_{\Sigma}/(f_1,...,f_r)=2r-|\Sigma|.\]
We conclude by Proposition \ref{proposition-dim-equality}.  
\end{proof}

  \begin{proposition}\label{proposition-M-to-m(M)-torsion}
Let $M\in\mathfrak{C}(\mathscr{G})$ be a coadmissible quotient of $P$. 
If $M$ is a torsion $\Lambda$-module, then so is $\mathrm{m}(M)\otimes_EP$.
\end{proposition}
\begin{proof}
As in the proof of Proposition \ref{proposition-M-to-m(M)}(ii), it is enough to show the following:\medskip

\textbf{Claim}: For $1\leq i\leq r$ and  $\pi_i'\in \mathfrak{B}_i$ distinct with $\pi_i$, let $Q_i'$  be the maximal quotient of $P_{\pi_i'^{\vee}}$ none of whose irreducible subquotients is isomorphic to   $\pi_i^{\vee}$. If $M$ is  a coadmissible quotient of \[Q_i'\widehat{\otimes}\big(\widehat{\otimes}_{j\neq i}P_{\pi_j'^{\vee}}\big),\]
then $M$ is a torsion $\Lambda$-module. 
\medskip

 As in Proposition \ref{proposition-M-to-m(M)}(ii),  we know that $\mathfrak{B}_i$ can only be of type (II) or (IV), and we may assume that $M$ is a  coadmissible quotient of $M_{\eta_i^{\vee}}\widehat{\otimes}\big(\widehat{\otimes}_{j\neq i}P_{\pi_j'^{\vee}}\big)$. The claim is then a special case of  Lemma \ref{lemma-general-PS-torsion}.  
\end{proof}

 \begin{corollary}\label{corollary-Krull-m}
Let $M\in \CG$ be a coadmissible quotient of $P$. 

(i) $\mathrm{m}(M)$ has Krull dimension $\leq 2r$. 

(ii)  $M$ is a torsion $\Lambda$-module if and only if $\dim_R\mathrm{m}(M)\leq 2r-1$. 
\end{corollary}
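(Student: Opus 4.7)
The plan is to deduce both claims directly from Proposition \ref{proposition-dim-equality}, the surjectivity of the evaluation map ${\rm ev}\colon \mathrm{m}(M)\otimes_E P \twoheadrightarrow M$ (which carries over to the product setting from \S\ref{subsection-coadmissible} since $P$ remains projective in $\mathfrak{C}(\mathscr{G})$ by Lemma \ref{lemma-tensor-P}(i)), and Lemma \ref{lemma-M-to-m(M)}. We may assume $M\neq 0$, in which case $\mathrm{m}(M)\neq 0$, because any non-zero quotient $P\twoheadrightarrow M$ itself lies in $\mathrm{m}(M)=\Hom_{\mathfrak{C}(\mathscr{G})}(P,M)$. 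By Lemma \ref{lemma-M-to-m(M)}(i), $\mathrm{m}(M)\otimes_E P$ is coadmissible, so Proposition \ref{proposition-dim-equality} applies and gives the key identity
$$\delta_{\Lambda}\bigl(\mathrm{m}(M)\otimes_E P\bigr) \;=\; \dim_R \mathrm{m}(M) + r.$$

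For (i), note that $\mathscr{K}_1/\mathscr{Z}_1$ is a uniform pro-$p$ group of dimension $3r$, so any coadmissible object of $\mathfrak{C}(\mathscr{G})$ satisfies $\delta_{\Lambda}\leq 3r$. Applying this bound to $\mathrm{m}(M)\otimes_E P$ and substituting into the displayed identity immediately yields $\dim_R\mathrm{m}(M)\leq 2r$.

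For (ii), I shall use that a finitely generated $\Lambda$-module $N$ is $\Lambda$-torsion if and only if $j_{\Lambda}(N)\geq 1$, i.e.\ $\delta_{\Lambda}(N)\leq 3r-1$; this is the standard characterization of torsion modules over the Iwasawa algebra of a uniform pro-$p$ group, compatible with the discussion of rank in \S\ref{section-Iwasawa}. Granting this, the ``only if'' direction is immediate from Lemma \ref{lemma-M-to-m(M)}(ii): if $M$ is torsion, so is $\mathrm{m}(M)\otimes_E P$, and the displayed identity forces $\dim_R\mathrm{m}(M)\leq 2r-1$. Conversely, if $\dim_R\mathrm{m}(M)\leq 2r-1$, then the identity gives $\delta_{\Lambda}(\mathrm{m}(M)\otimes_E P)\leq 3r-1$, and the surjection ${\rm ev}$ combined with \eqref{equation-delta-short} yields $\delta_{\Lambda}(M)\leq 3r-1$, so $M$ is torsion.

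There is no serious obstacle here: the corollary is essentially a repackaging of Proposition \ref{proposition-dim-equality} and Lemma \ref{lemma-M-to-m(M)} through the evaluation map, the only non-formal inputs being the dimension bound $\delta_{\Lambda}\leq 3r$ for coadmissible objects and the identification of $\Lambda$-torsion with codimension $\geq 1$.
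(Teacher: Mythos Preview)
Your proof is correct and follows essentially the same route as the paper's: both deduce (i) from Lemma~\ref{lemma-M-to-m(M)}(i), the bound $\delta_{\Lambda}\leq 3r$, and Proposition~\ref{proposition-dim-equality}, and both deduce (ii) by combining Lemma~\ref{lemma-M-to-m(M)}(ii) with Proposition~\ref{proposition-dim-equality} and the surjectivity of~${\rm ev}$. The only difference is cosmetic: the paper compresses the ``if'' direction of (ii) into the phrase ``$M$ is torsion if and only if $\mathrm{m}(M)\otimes_EP$ is torsion'' (the reverse implication being immediate from the surjection), whereas you spell out that step explicitly via \eqref{equation-delta-short}.
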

\begin{proof}
(i) Since $M$ is coadmissible, so is $\mathrm{m}(M)\otimes_EP$ by Proposition \ref{proposition-M-to-m(M)-adm}. Since we always have $\delta_{\Lambda}(M)\leq \delta_{\Lambda}(\mathrm{m}(M)\otimes_EP)\leq 3r$, the result follows from  Proposition \ref{proposition-dim-equality}. 

(ii)  $M$ is torsion if and only if $\mathrm{m}(M)\otimes_EP$ is torsion by Proposition \ref{proposition-M-to-m(M)-torsion}, if and only if $\delta_{\Lambda}(\mathrm{m}(M)\otimes_EP)\leq 3r-1$, if and only if $\dim_R\mathrm{m}(M)\leq 2r-1$ by Proposition \ref{proposition-dim-equality}.   
\end{proof}

\subsubsection{Breuil-Pa\v{s}k\=unas construction}
We generalize the construction of Breuil-Pa\v{s}k\={u}nas \cite[\S9]{BP} to our setting.

\begin{proposition}\label{proposition-BP-adapted}
Let $M\in\mathfrak{C}(\mathscr{G})$ be  coadmissible and $\widetilde{\sigma}$ be the $\mathscr{K}$-cosocle of $M$.   Then there exists a surjection in $\CG$:
\[\overline{P}\twoheadrightarrow M\]
where $\overline{P}|_{\mathscr{K}}$ is isomorphic to a projective envelope of $\widetilde{\sigma}$ (with central character).  In particular, $\overline{P}$ is finite free as a $\Lambda$-module.
 
\end{proposition}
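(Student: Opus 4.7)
The plan is to reduce to the Breuil-Pa\v{s}k\={u}nas construction for a single factor of $G = \GL_2(\Q_p)$ and then assemble the results across the $r$ factors by a completed tensor product. Since $M$ is coadmissible, $\tilde{\sigma}$ is finite dimensional; decompose it as $\tilde{\sigma} = \bigoplus_{j=1}^{m} \sigma^{(j)}$ with each $\sigma^{(j)}$ absolutely irreducible. By Lemma \ref{lemma-tensor-pi}(i) applied to $\mathscr{K}/\mathscr{Z}_1$, each $\sigma^{(j)}$ is a tensor product $\sigma_1^{(j)} \otimes \cdots \otimes \sigma_r^{(j)}$ of irreducible $K$-representations with the correct $Z_1$-character on each factor.

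Next, for each $j$ and each $i$, invoke the single-factor Breuil-Pa\v{s}k\={u}nas construction of \cite[Cor.~9.11]{BP} to obtain $\overline{P}_i^{(j)} \in \mathfrak{C}(G)$ whose restriction to $K$ is a projective envelope of $\sigma_i^{(j)}$ in $\mathfrak{C}(K)$, together with the universal property that any coadmissible $N \in \mathfrak{C}(G)$ admitting a surjection $N \twoheadrightarrow (\sigma_i^{(j)})^\vee$ on its cosocle is a quotient of $\overline{P}_i^{(j)}$. Then set
\[\overline{P}^{(j)} := \widehat{\otimes}_{i=1}^{r} \overline{P}_i^{(j)} \in \mathfrak{C}(\mathscr{G}), \qquad \overline{P} := \bigoplus_{j=1}^{m} \overline{P}^{(j)}.\]
A completed tensor product of $K$-projective envelopes is a $\mathscr{K}$-projective envelope of the tensor product of the cosocles, so $\overline{P}|_{\mathscr{K}}$ is a projective envelope of $\tilde{\sigma}$. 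Because $\mathscr{K}_1/\mathscr{Z}_1$ is pro-$p$ and $\tilde{\sigma}$ is finite dimensional, this projective envelope is finite free over $\Lambda$, giving the last assertion.

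It remains to produce the surjection $\overline{P} \twoheadrightarrow M$. For each $j$, fix a surjection $M \twoheadrightarrow (\sigma^{(j)})^\vee$ coming from the chosen decomposition of the $\mathscr{K}$-cosocle. The crux is to upgrade the single-factor universal property to the product setting: for any coadmissible $N \in \mathfrak{C}(\mathscr{G})$ with a surjection $N \twoheadrightarrow (\sigma^{(j)})^\vee$, the map should factor through a $\mathscr{G}$-equivariant morphism $\overline{P}^{(j)} \to N$. Granting this, summing over $j$ yields a $\mathscr{G}$-equivariant morphism $\overline{P} \to M$ whose composition with $M \twoheadrightarrow \tilde{\sigma}^\vee$ is surjective; topological Nakayama applied to the compact $\Lambda$-module $M$ then forces $\overline{P} \to M$ itself to be surjective, since $\tilde{\sigma}^\vee$ is precisely the $\Lambda$-cosocle of $M$.

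The main technical obstacle, which is carried out in the appendix \S\ref{section-appendix}, is precisely this tensor-product compatibility: one must verify that $\widehat{\otimes}_i \overline{P}_i^{(j)}$ enjoys the universal property with respect to coadmissible quotients of $\mathscr{G}$-representations, not merely the factorwise property for $G$-representations on each $\GL_2(\Q_p)$-component. This requires redoing the Breuil-Pa\v{s}k\={u}nas machinery --- in particular the analysis of coefficient systems on the Bruhat-Tits tree of $\GL_2(\Q_p)$ and the interaction with $K_1$-invariants --- in the setting of a product of $r$ trees, and checking that no new $\mathscr{G}$-equivariant obstruction beyond the per-factor ones appears.
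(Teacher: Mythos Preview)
Your approach has a genuine gap: the ``universal property'' you attribute to the single-factor Breuil--Pa\v{s}k\={u}nas construction does not exist. The result \cite[Cor.~9.11]{BP} (restated here as Theorem~\ref{theorem-Breuil-Paskunas}) says that for a \emph{given} admissible $\pi$ with $K$-socle $\tilde\sigma$ there is \emph{some} $\Omega$ with $\pi\hookrightarrow\Omega$ and $\Omega|_K\cong\rInj_K\tilde\sigma$; the $G$-structure on $\Omega$ is built from, and depends on, $\pi$. There is no single such $\Omega$ that receives all admissible $\pi$ with a fixed $K$-socle. Concretely, for a fixed Serre weight $\sigma$ there are infinitely many pairwise non-isomorphic irreducible principal series with $K$-socle $\sigma$ (obtained by varying the unramified part of the inducing character), whereas any coadmissible $\overline{P}\in\mathfrak{C}(G)$ has $G$-cosocle of finite length and therefore surjects onto only finitely many irreducibles of $\mathfrak{C}(G)$. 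So an object $\overline{P}_i^{(j)}$ manufactured from $\sigma_i^{(j)}$ alone cannot cover an arbitrary $N$ with that $K$-cosocle, already for $r=1$; taking completed tensor products over the factors does not repair this.

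The paper's argument in the appendix (Theorem~\ref{theorem-BP-appendix}) is organised quite differently and makes no use of a factorwise tensor decomposition. Working dually with $\pi=M^\vee$, one first takes a $\mathscr{K}$-injective envelope $\iota:\pi\hookrightarrow\Omega$ and then upgrades the $\mathscr{K}$-action on $\Omega$ to a $\mathscr{G}$-action \emph{compatible with the $\mathscr{G}$-action already present on the subspace $\pi$}. By the amalgam structure of $\mathscr{G}/\mathscr{Z}_\varpi$ it suffices to extend the $\mathscr{I}$-action on $\Omega$ to an $\mathscr{R}_1$-action agreeing with the one on $\pi$. The only new input beyond the case $r=1$ is the combinatorial Lemma~\ref{lemma-appendix-action}, which inductively constructs an action of $\Delta\cong(\Z/2\Z)^r$ on an $\mathscr{H}$-complement of $\pi^{\mathscr{I}_1}$ inside $\Omega^{\mathscr{I}_1}$; the lift to all of $\Omega$ (Lemma~\ref{lemma-appendix-(S)}) and the gluing then proceed exactly as in \cite[\S9]{BP}. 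The resulting $\overline{P}=\Omega^\vee$ genuinely depends on $M$, not only on $\tilde\sigma$.
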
 
\begin{proof}
The proof is given in Appendix, Theorem \ref{theorem-BP-appendix}.
\end{proof}  

The following result generalizes \cite[Cor. 3.8]{EP}.

\begin{corollary}\label{corollary-EP}
If $\widetilde{P}\in \mathfrak{C}(\mathscr{G})$ is projective, then $\widetilde{P}$ is also projective in $\mathfrak{C}(\mathscr{K})$.  
\end{corollary}
\begin{proof}
The proof is identical to that of \cite[Cor. 3.8]{EP}, using Proposition \ref{proposition-BP-adapted} in place of \cite[Thm. 3.4]{EP}. 
\end{proof}

\subsubsection{Finite free modules}
If $M$ is a quotient of $P=P_{\pi^{\vee}}$, then $R$ acts (from left) on $M$ and (from right) on $\mathrm{m}(M):=\Hom_{\mathfrak{C}(\mathscr{G})}(P,M)$.  We make explicit these actions. Let $\phi\in R$ and view it as an endomorphism $\phi: P\ra P$. It induces an endomorphism $\overline{\phi}:M\ra M$ (because $R$ is contained in the Bernstein center of $\mathfrak{C}(\mathscr{G})^{\mathfrak{B}}$), and for any $\theta\in \mathrm{m}(M)$ the following diagram is commutative:
\begin{equation}\label{equation-action-commute}\xymatrix{P\ar^{\phi}[r]\ar_{\theta}[d]&P\ar^{\theta}[d]\\
M\ar^{\overline{\phi}}[r]&M.}\end{equation}
The action of $R$ on $M$ is given by $(\phi,m)\mapsto \overline{\phi}(m)$, and the action on $\mathrm{m}(M)$ is given by
\begin{equation}\label{equation-action-m(M)}(\theta,\phi)\mapsto \theta\circ\phi=\bar{\phi}\circ\theta.\end{equation}
We have the following simple lemma. 
\begin{lemma}\label{lemma-annihilator}
We have $\mathrm{Ann}_R(M)=\mathrm{Ann}_R(\mathrm{m}(M))$.
\end{lemma}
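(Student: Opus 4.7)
My plan is to prove both inclusions directly using the explicit formula \eqref{equation-action-m(M)} together with the fact that the quotient map $q\colon P\twoheadrightarrow M$ is itself a distinguished element of $\mathrm{m}(M)=\Hom_{\mathfrak{C}(\mathscr{G})}(P,M)$.

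First I would record the one obvious direction. By \eqref{equation-action-m(M)}, the action of $\phi\in R$ on $\mathrm{m}(M)$ satisfies $\phi\cdot\theta=\bar\phi\circ\theta$ for every $\theta\in\mathrm{m}(M)$. Hence if $\phi\in\mathrm{Ann}_R(M)$, i.e.\ $\bar\phi=0$ on $M$, then $\phi\cdot\theta=0$ for every $\theta$, so $\phi\in\mathrm{Ann}_R(\mathrm{m}(M))$. This gives the inclusion $\mathrm{Ann}_R(M)\subseteq\mathrm{Ann}_R(\mathrm{m}(M))$.

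For the reverse inclusion, I would exploit the fact that $M$ is a coadmissible quotient of $P$, so the quotient map $q\colon P\twoheadrightarrow M$ defines a \emph{surjective} element $q\in\mathrm{m}(M)$. Assume $\phi\in\mathrm{Ann}_R(\mathrm{m}(M))$; in particular $\phi\cdot q=0$, and using \eqref{equation-action-m(M)} this reads $\bar\phi\circ q=0$. Since $q$ is surjective, this forces $\bar\phi=0$ as an endomorphism of $M$, so $\phi\in\mathrm{Ann}_R(M)$.

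The argument is essentially formal; the only point that requires care is checking that $\bar\phi\colon M\to M$ is well defined (so that $\mathrm{Ann}_R(M)$ makes sense in the first place). This is the content of Theorem \ref{theorem-Paskunas}(i), which identifies $R$ with the Bernstein center of $\mathfrak{C}^{\mathfrak{B}}$ and therefore guarantees that the endomorphism $\phi\colon P\to P$ preserves the kernel of $P\twoheadrightarrow M$. I do not foresee any serious obstacle here, since the verification amounts to combining the definition of the $R$-action on $M$ with the surjectivity of the canonical projection $q\in\mathrm{m}(M)$.
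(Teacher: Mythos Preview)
Your proof is correct and essentially parallel to the paper's. The paper also obtains the inclusion $\mathrm{Ann}_R(M)\subseteq\mathrm{Ann}_R(\mathrm{m}(M))$ directly from \eqref{equation-action-m(M)}, and for the reverse inclusion it invokes the surjectivity of the evaluation map $\mathrm{ev}\colon \mathrm{m}(M)\otimes_E P\twoheadrightarrow M$ (equation \eqref{equation-evaluation}) together with its $R$-equivariance; your argument is a slight streamlining of this, since you only use the single surjective element $q\in\mathrm{m}(M)$ rather than the full evaluation map.
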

\begin{proof} 
Given $\phi\in R$, we have the following equivalences
\[\begin{array}{rll} \phi\in \mathrm{Ann}_R(M) &\Longleftrightarrow&\overline{\phi}=0\\
&\overset{(*)}{\Longleftrightarrow}& \overline{\phi}\circ\theta=0, \ \  \forall\ \theta\in\mathrm{m}(M)\\
&\overset{\eqref{equation-action-commute}}{\Longleftrightarrow}& \theta\circ \phi=0, \ \ \forall\ \theta\in\mathrm{m}(M) \\&\overset{\eqref{equation-action-m(M)}}{\Longleftrightarrow}& \phi\in \mathrm{Ann}_R(\mathrm{m}(M)),\end{array}\]
where ($*$) holds  because there exists  at least one $\theta$ which is surjective, e.g. the natural quotient map $P\twoheadrightarrow M$.
\end{proof}

Let $\overline{P}$ be a quotient of $P$ which is finite free as a $\Lambda$-module. Set   \[\overline{R}:=R/\mathrm{Ann}_R(\mathrm{m}(\overline{P}))=R/\Ann_R(\overline{P}).\] 
 
\begin{proposition}\label{proposition-Krull-Rbar}
(i) $\overline{R}$ has Krull dimension $2r$. There exists a subring $A\subset \overline{R}$ which is formally smooth of dimension $2r$ such that $\overline{R}$ is finite over $A$. 

(ii) $\overline{P}$ is flat over $A$. 
\end{proposition}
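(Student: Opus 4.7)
For part (i), I would combine Corollary \ref{corollary-Krull-m}(i) with Proposition \ref{proposition-dim-equality} to pin down $\dim\overline{R}$. The faithfulness of the $\overline{R}$-action on $\mathrm{m}(\overline{P})$, provided by Lemma \ref{lemma-annihilator} together with the definition of $\overline{R}$, gives $\dim\overline{R}=\dim_R\mathrm{m}(\overline{P})$. The upper bound $\leq 2r$ is Corollary \ref{corollary-Krull-m}(i), while the lower bound comes from Proposition \ref{proposition-dim-equality}: since the evaluation $\mathrm{m}(\overline{P})\otimes_E P\twoheadrightarrow\overline{P}$ is surjective and $\delta_{\Lambda}(\overline{P})=3r$ (because $\overline{P}$ is finite free over $\Lambda$, which has Krull dimension $3r$), one gets $\dim_R\mathrm{m}(\overline{P})+r\geq 3r$, hence $\dim\overline{R}=2r$. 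To produce $A$, I would apply Cohen's structure theorem: lift a system of parameters $a_1,\ldots,a_{2r}\in\mathfrak{m}_{\overline{R}}$ to a continuous $\F$-algebra map $A:=\F[[X_1,\ldots,X_{2r}]]\to\overline{R}$ sending $X_i\mapsto a_i$. Finiteness of $\overline{R}$ over the image follows by Nakayama, since $(a_1,\ldots,a_{2r})$ is an ideal of definition of the complete local ring $\overline{R}$ and $\overline{R}/(a_1,\ldots,a_{2r})$ is Artinian. Injectivity of $A\hookrightarrow\overline{R}$ is a dimension comparison: $A$ is an integral domain of Krull dimension $2r$, and the image in $\overline{R}$ must also have dimension $2r$, forcing the map to be injective.

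For part (ii), the strategy is to reduce flatness to the regularity of $a_1,\ldots,a_{2r}$ on $\overline{P}$. Observe first that $\overline{P}$ is finitely generated over the Noetherian local ring $B:=A\widehat{\otimes}_{\F}\Lambda$, which is isomorphic to $\Lambda[[X_1,\ldots,X_{2r}]]$ since $\overline{P}$ is already finitely generated over $\Lambda$, and the structure map $A\to B$ is local. The local criterion of flatness then says that $\overline{P}$ is $A$-flat iff $\Tor_1^A(\F,\overline{P})=0$. Since $A$ is regular, the Koszul complex on $a_1,\ldots,a_{2r}$ is a free resolution of $\F$ over $A$, so $\Tor_*^A(\F,\overline{P})$ is computed as Koszul homology, and it suffices to prove that $a_1,\ldots,a_{2r}$ is a regular sequence on $\overline{P}$.

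The induction: set $\overline{P}_i:=\overline{P}/(a_1,\ldots,a_i)\overline{P}$, and prove simultaneously that $\delta_{\Lambda}(\overline{P}_i)=3r-i$ and that $a_{i+1}$ is injective on $\overline{P}_i$. The case $i=0$ is immediate. For the inductive step, I would apply the exact functor $\mathrm{m}(-)=\Hom_{\CG}(P,-)$: since elements of $R$ commute with $\mathrm{m}$ via \eqref{equation-action-m(M)}, one has $\mathrm{m}(\overline{P}_i)\cong\mathrm{m}(\overline{P})/(a_1,\ldots,a_i)\mathrm{m}(\overline{P})$, whose Krull $R$-dimension is then tracked by Proposition \ref{proposition-dim-equality} applied to $\mathrm{m}(\overline{P}_i)\otimes_EP$. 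The main obstacle is the injectivity of $a_{i+1}$ on $\overline{P}_i$: even with the correct $\delta_{\Lambda}$-count, Proposition \ref{proposition-GKdim} alone does not rule out a nonzero $\Lambda$-submodule kernel (which would be torsion-free in $\overline{P}_i$).

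To close this gap, I would prove separately that $\mathrm{m}(\overline{P})$ is Cohen-Macaulay as a finitely generated $A$-module of dimension $2r=\dim A$, by descent from the Cohen-Macaulay structure of $R$ (Lemma \ref{lemma-tensor-P}(ii)) through the finite extension $A\hookrightarrow\overline{R}$. Miracle flatness over the regular local ring $A$ then forces $\mathrm{m}(\overline{P})$ to be free over $A$, so $a_1,\ldots,a_{2r}$ is automatically a regular sequence on $\mathrm{m}(\overline{P})$. The required regularity on $\overline{P}$ itself will follow by transferring this regularity from $\mathrm{m}(\overline{P})\otimes_EP$ along the evaluation surjection, using Lemma \ref{lemma-M-to-m(M)} and (in the type (IV) case) the $\Tor$-computations of \S\ref{subsection-type-IV} to control the kernel of the evaluation. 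This is the technical heart of the proof and is where the careful setup of Proposition \ref{proposition-BP-adapted} feeds in.
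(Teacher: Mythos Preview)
Your part (i) is correct and coincides with the paper's proof, which cites Corollary~\ref{corollary-Krull-m} for the dimension computation and Cohen's structure theorem for the existence of $A$.

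For (ii), however, you take a long detour where the paper gives a one-line argument. The paper observes that $\overline{P}$, being finite free over $\Lambda$, is Cohen--Macaulay with $\delta_\Lambda(\overline{P})=3r$; the miracle flatness criterion of \cite[Prop.~A.30]{GN} then applies \emph{directly} to $\overline{P}$ over the regular local ring $A$ and gives the flatness.

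Your route through $\mathrm{m}(\overline{P})$ contains a genuine gap. The claim that $\mathrm{m}(\overline{P})$ is Cohen--Macaulay over $A$ ``by descent from the Cohen--Macaulay structure of $R$'' is not justified: Lemma~\ref{lemma-tensor-P}(ii) says only that the \emph{ring} $R$ is Cohen--Macaulay, which places no constraint on the depth of an arbitrary finitely generated $R$-module such as $\mathrm{m}(\overline{P})$, nor on that of the quotient $\overline{R}$. Without this, your own appeal to miracle flatness for $\mathrm{m}(\overline{P})$ has no footing. The subsequent transfer of regularity from $\mathrm{m}(\overline{P})\otimes_E P$ back to $\overline{P}$ along the evaluation map is also left as a sketch and would require nontrivial additional work across the various block types. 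The observation you are missing is simply that miracle flatness should be applied to $\overline{P}$ itself, whose Cohen--Macaulayness is immediate from freeness over $\Lambda$; by contrast, Cohen--Macaulayness of $\mathrm{m}(\overline{P})$ is not available a priori and would in fact follow \emph{from} (ii) rather than imply it.
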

\begin{proof}
(i) The first assertion is a direct consequence of Corollary \ref{corollary-Krull-m}.
 The second one is proved as in \cite[Cor. 4.2]{Pa18} by applying Cohen's structure theorem for Noetherian complete local rings (see \cite[Thm. 29.4(iii)]{Mat}).

 
(ii)  Because $\overline{P}$ is  Cohen-Macaulay (being free over $\Lambda$) with $\delta_{\Lambda}(\overline{P})=3r$ and 
\[\delta_{\Lambda}(\F\otimes_{A}\overline{P})=\delta_{\Lambda}(\F\otimes_{\overline{R}}\overline{P})=r,\]  
the result follows from the ``miracle flatness'' criterion, see \cite[Prop. A.30]{GN}.
 \end{proof}

\subsection{A lemma}  \label{subsection-keylemma}
In this subsection, we prove a lemma which can be viewed as an analogue of Proposition \ref{proposition-GKdim}. These two results will allow us to relate the canonical dimension of a coadmissible module $M\in\CG$ and the $\F$-dimension of $\Tn$-coinvariants of $M$. It is where the quantity $\kappa(\bn)=\max_i\{n_i\}$ comes.
\begin{lemma}\label{lemma-keystep-r}
Let $M$ be a finitely generated $\Lambda$-module and $\phi\in\End_{\Lambda}(M)$. Assume that $\bigcap_{k\geq 1}\phi^k(M)=0$.
\begin{enumerate}
\item[(i)] If $\phi$ is nilpotent, then $\dim_{\F}M_{\mathscr{T}_1(p^{\bf n})}\sim \dim_{\F}\big(M/\phi(M)\big)_{\mathscr{T}_1(p^{\bf n})}$.
\item[(ii)] If $\phi$ is not nilpotent, then for some sufficiently large $k_0$ we have 
\[\dim_{\F}M_{\mathscr{T}_1(p^{\bf n})}\ll  \dim_{\F}\big(M/\phi(M)\big)_{\mathscr{T}_1(p^{\bf n})}+p^{\kappa({\bf n})} \dim_{\F}\big(\phi^{k_0}(M)/\phi^{k_0+1}(M)\big)_{\mathscr{T}_1(p^{\bf n})}\] 
where $\kappa(\bn):=\max_i\{n_i\}$. 
\end{enumerate}
In any case, we have $\dim_{\F}M_{\mathscr{T}_1(p^{\bf n})}\ll p^{\kappa({\bf n})}\dim_{\F}\big(M/\phi(M)\big)_{\mathscr{T}_1(p^{\bf n})}$.
\end{lemma}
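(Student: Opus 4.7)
The plan is to mimic the proof of the analogous statement for canonical dimension (Proposition \ref{proposition-GKdim}) using a filtration by iterates of $\phi$, but now at the level of $\mathscr{T}_1(p^{\bn})$-coinvariants, and then to control the length of the relevant filtration.

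For (i), since $\phi^{k_0}=0$ for some $k_0$, the $\phi$-adic filtration $M\supseteq\phi(M)\supseteq\cdots\supseteq\phi^{k_0}(M)=0$ is finite, and multiplication by $\phi^i$ induces a surjection $M/\phi(M)\twoheadrightarrow \phi^i(M)/\phi^{i+1}(M)$. Taking $\mathscr{T}_1(p^{\bn})$-coinvariants (right exact) and summing over $i$ gives $\dim_{\F}M_{\mathscr{T}_1(p^{\bn})}\leq k_0\dim_{\F}(M/\phi(M))_{\mathscr{T}_1(p^{\bn})}$, while the reverse inequality is immediate from $M\twoheadrightarrow M/\phi(M)$.

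For (ii), Lemma \ref{lemma-phi} and Remark \ref{remark-condition-phi} let me choose $k_0$ large enough so that $\phi$ is injective on $N:=\phi^{k_0}(M)$ and $\phi(N)\subseteq JN$. The short exact sequence $0\to N\to M\to M/N\to 0$ splits the estimate into two: $(M/N)_{\mathscr{T}_1(p^{\bn})}$ is bounded as in (i) by $k_0\dim_{\F}(M/\phi(M))_{\mathscr{T}_1(p^{\bn})}$, while for $N_{\mathscr{T}_1(p^{\bn})}$ I will use the decreasing filtration $\{\phi^i(N)\}_{i\geq 0}$. Injectivity of $\phi$ on $N$ makes each graded piece $\phi^i(N)/\phi^{i+1}(N)\cong N/\phi(N)$, so letting $W_i\subseteq N_{\mathscr{T}_1(p^{\bn})}$ denote the image of $\phi^i(N)$, the composite $(N/\phi(N))_{\mathscr{T}_1(p^{\bn})}\cong (\phi^i(N)/\phi^{i+1}(N))_{\mathscr{T}_1(p^{\bn})}\twoheadrightarrow W_i/W_{i+1}$ yields $\dim_{\F}(W_i/W_{i+1})\leq \dim_{\F}(N/\phi(N))_{\mathscr{T}_1(p^{\bn})}$.

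The crucial quantitative step, and the main obstacle, will be to exhibit an index $l(\bn)\leq C\,p^{\kappa(\bn)}$ (with $C$ independent of $\bn$) such that $W_{l(\bn)}=0$. Existence of any such $l$ follows from profinite compactness: $M$ is a compact $\Lambda$-module, the submodule $I\cdot M$ defining $\mathscr{T}_1(p^{\bn})$-coinvariants is open in $M$ (since the augmentation ideal $I$ of $\F[[\mathscr{T}_1(p^{\bn})/\mathscr{Z}_1]]$ has finite index in $\Lambda$), and $\bigcap_k\phi^k(M)=0$, so the descending chain $\phi^k(M)+I\cdot M$ must terminate at $I\cdot M$. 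The sharp bound will use $\phi(N)\subseteq JN$, which gives $\phi^l(N)\subseteq J^lN$, reducing the question to showing that $J^l$ acts as zero on $\Lambda_{\mathscr{T}_1(p^{\bn})}$ for $l$ of order $p^{\kappa(\bn)}$; this is a statement about the nilpotency order of the augmentation ideal of $\F[[\mathscr{K}_1/\mathscr{Z}_1]]$ acting on the finite quotient $\Lambda_{\mathscr{T}_1(p^{\bn})}$, and the improvement over the naive upper bound $\prod_i p^{2(n_i-1)}=\dim_{\F}\Lambda_{\mathscr{T}_1(p^{\bn})}$ will have to exploit the fact that $\mathscr{T}_1(p^{\bn})$ and $\mathscr{K}_1$ differ only along the off-diagonal entries in each of the $r$ factors of $\GL_2(\Q_p)$. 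Granting this bound, summation gives $\dim_{\F}N_{\mathscr{T}_1(p^{\bn})}\leq C\,p^{\kappa(\bn)}\dim_{\F}(N/\phi(N))_{\mathscr{T}_1(p^{\bn})}$, which combined with the identification $N/\phi(N)\cong \phi^{k_0}(M)/\phi^{k_0+1}(M)$ proves (ii); the final universal bound follows because $\phi^{k_0}(M)/\phi^{k_0+1}(M)$ is itself a quotient of $M/\phi(M)$ via $\phi^{k_0}$, so the second term in the maximum is always dominated by $p^{\kappa(\bn)}\dim_{\F}(M/\phi(M))_{\mathscr{T}_1(p^{\bn})}$.
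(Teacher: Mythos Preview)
Your plan is essentially the paper's proof. One correction: you cannot in general arrange $\phi(N)\subseteq JN$ merely by enlarging $k_0$; Remark~\ref{remark-condition-phi} applied to $N$ only gives $\phi^{k_1}(N)\subseteq JN$ for some $k_1\geq 1$, and the paper accordingly replaces $\phi$ by $\phi^{k_1}$ before running the filtration argument (this costs only a constant factor $k_1$ in the final bound, so is harmless). For the key nilpotency step, the paper's argument is simply that each topological generator $g_i$ of $\mathscr{K}_1/\mathscr{Z}_1$ satisfies $(g_i-1)^{p^{\kappa(\bn)}}=g_i^{p^{\kappa(\bn)}}-1\in I_{\bn}$ (because $g_i^{p^{\kappa(\bn)}}\in \mathscr{K}_{\kappa(\bn)}\subseteq \mathscr{T}_1(p^{\bn})$), whence the ordered-monomial description of $J^k$ forces $J^k\subseteq I_{\bn}$ once $k\geq 3p^{\kappa(\bn)}$; so the operative fact is the containment $\mathscr{T}_1(p^{\bn})\supseteq \mathscr{K}_{\kappa(\bn)}$ rather than the off-diagonal description.
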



\begin{proof}
(i) We trivially have $\dim_{\F}M_{\mathscr{T}_1(p^{\bf n})}\geq \dim_{\F}\big(M/\phi(M)\big)_{\mathscr{T}_1(p^{\bf n})}$. 
If $\phi$ is nilpotent, then $M$ admits a finite filtration by $\phi^k(M)$, for $k\leq k_0$ where $k_0\gg1 $ is such that $\phi^{k_0}=0$. For any $k\geq 1$, $\phi$ induces a surjective morphism 
$M/\phi(M)\twoheadrightarrow \phi^{k}(M)/\phi^{k+1}(M)$, hence $\dim_{\F}M_{\mathscr{T}_1(p^{\bf n})}\leq k_0\cdot \dim_{\F}\big(M/\phi(M)\big)_{\mathscr{T}_1(p^{\bf n})}$, giving the result.

(ii) By Lemma \ref{lemma-phi}, there exists $k_0\gg0$ such that $\phi: \phi^{k_0}(M)\ra \phi^{k_0}(M)$ is injective. Using the short exact sequence $0\ra \phi^{k_0}(M)\ra M\ra M/\phi^{k_0}(M)\ra0$ and applying (i) to $M/\phi^{k_0}(M)$, we are reduced to prove
\[\dim_{\F}\phi^{k_0}(M)_{\mathscr{T}_1(p^{\mathbf{n}})}\ll p^{\kappa(\mathbf{n})}\dim_{\F}\big(\phi^{k_0}(M)/\phi^{k_0+1}(M)\big)_{\mathscr{T}_1(p^{\mathbf{n}})}. \]
That is, by replacing $M$ by $\phi^{k_0}(M)$, we may assume $\phi$ is injective in the rest of the proof.
 
Set $Q:=M/\phi(M)$ so that we have a short exact sequence $0\ra M\overset{\phi}{\ra}M\ra Q\ra0$.  Let $J$ denote the maximal ideal of $\Lambda$.  By Remark \ref{remark-condition-phi},   we may choose $k_1\gg 1$ such that $\phi^{k_1}(M)\subset JM$. Replacing $\phi$ by $\phi^{k_1}$ and $Q$ by $M/\phi^{k_1}(M)$, we may assume $\phi(M)\subset JM$. Since $\phi$ is a $\Lambda$-morphism,  we obtain inductively
\begin{equation}\label{equation-phi-k}\phi^k(J^sM)\subset J^{k+s}M,\ \ \forall k,s\geq 1.\end{equation}
Letting $Q_k:=M/\phi^k(M)$, the short exact sequence  
$0\ra M\overset{\phi^{k}}{\ra} M\ra Q_{k}\ra0$ then  
 induces by modulo $J^{k+1}$: 
\[M/JM\overset{\phi^k}{\ra} M/J^{k+1}M\ra Q_k/J^{k+1}Q_k\ra0,\]
where we have used the fact  $\phi^k(JM)\subset J^{k+1}M$ by \eqref{equation-phi-k}.
If $I$ is a right ideal of $\Lambda$  containing $J^{k+1}$ (and contained in $J$) then, by tensoring  the above sequence with   $(\Lambda/I)\otimes_{\Lambda}$, we obtain an exact sequence of $\F$-vector spaces:
\[M/JM\ra M/IM\ra Q_k/IQ_k\ra0.\]
Since $Q_k$ is a successive extension of $Q$ ($k$ times), we obtain the following inequality with $c_0:=\dim_{\F}M/JM$:
\begin{equation}\label{equation-key-inequality}\dim_{\F}M/IM\leq \dim_{\F}Q_k/IQ_k+c_0\leq k\dim_{\F}Q/IQ+c_0.\end{equation}

We specialize the above inequality to our situation.  Let $I_{\bn}$ denote the \emph{right} ideal of $\Lambda$ generated by the maximal ideal of $\F[[\mathscr{T}_1(p^{\bn})/\mathscr{Z}_1]]$. Then Lemma \ref{lemma-Iwasawa-ideal} below shows that  $J^{3rp^{\kappa(\bn)-1}}$ is contained in $I_{\bn}$ because $I_{\bn}$ clearly  contains $J_{\bn}\Lambda$. 
Applying \eqref{equation-key-inequality} to $I=I_n$, we obtain 
\[\dim_{\F}M_{\mathscr{T}_1(p^{\bn})}=\dim_{\F}M/I_{\bn}M\leq (3rp^{\kappa(\bn)-1}-1)\cdot\dim_{\F}Q/I_{\bn}Q+c_0,\]
giving the result.
\end{proof}

\begin{lemma}\label{lemma-Iwasawa-ideal}
Let $J$ be the maximal ideal of $\Lambda$ and $J_{\bn}$ be the maximal ideal of $\F[[\mathscr{K}_{\bn}/(\mathscr{K}_{\bn}\cap \mathscr{Z}_1)]]$ (viewed as a sub-algebra of $\Lambda$). Then the right ideal $J_{\bn}\Lambda$ is a two-sided ideal of $\Lambda$ and satisfies $J^{3rp^{\kappa(\bn)-1}}\subset J_{\bn}\Lambda$.
\end{lemma}
\begin{proof}
The first assertion  follows from that $\mathscr{K}_{\bn}$ is a normal subgroup of $\mathscr{K}_1$.

We are left to prove the inclusion $J^{3rp^{\kappa(\bn)-1}}\subset J_{\bn}\Lambda$. We first consider the case $r=1$. Then $\Lambda=\F[[K_1/Z_1]]$ is topologically generated by three elements, say $z_1, z_2, z_3$, such that every element of $\Lambda$ can be uniquely expressed as a sum over multi-indices $\alpha=(\alpha_1,\alpha_2,\alpha_3)\in\N^3$:
\[x=\sum_{\alpha}\lambda_{\alpha}z^{\alpha},\ \ \ z^{\alpha}=z_1^{\alpha_1}z_2^{\alpha_2}z_3^{\alpha_3}.\]
Moreover, $z^{\alpha}z^{\beta}=z^{\alpha+\beta}$ up to terms of degree $>|\alpha|+|\beta|$, which we refer to as the almost commutativity of $\Lambda$; see \cite[Thm. 10]{Mar-Annals}. 
The ideal $J$ is simply topologically spanned by the set of monomials $z^{\alpha}$ with $|\alpha|>0$. Similarly, $\F[[K_n/(K_n\cap Z_1)]]$ is topologically generated by $z_1^{p^{n-1}}, z_2^{p^{n-1}},z_3^{p^{n-1}}$, and $J_{n}$ is topologically spanned by the set of monomials $z^{p^{n-1}\cdot\alpha}$ with $|\alpha|>0$. 
Hence, the ideal $J_n\Lambda$ is topologically spanned by monomials $z^{\alpha}$  with at least one of $\alpha_j$ greater than  or equal to $p^{n-1}$ (cf. the proof of \cite[Lem. 12]{Mar-Annals}).  By the almost commutativity, we deduce that $J^{3p^{n-1}}\subset J_n\Lambda$.    
For general $r$, the proof is identical noting that $J$ is  topologically generated by $3r$ elements  $\{z_{i,j}: 1\leq i\leq r, 1\leq j\leq 3\}$, and $J_{\bn}$ is topologically generated by $z_{i,j}^{p^{n_i-1}}$, hence 
\[J^{\sum_{i=1}^r3p^{n_i-1}}\subset J_{\bn}\Lambda\]
which in particular implies the result.
\end{proof} 

\begin{remark}
In the proof of Lemma \ref{lemma-keystep-r}, it is crucial that we are working with $T_1(p^n)$ instead of $K_1(p^{2n})$ (this group is defined in \eqref{equation-define-K1(pn)} below), although they are (up to finite order) conjugate to each other in $\GL_2(\Q_p)$. We have learnt this trick of ``averaging'' from \cite{Mar-Annals} (used in a different manner  there).
\end{remark}

\subsection{Main result}\label{subsection-results}
In this subsection, we prove the following theorem.

\begin{theorem}\label{theorem-higher-r}
Let $M\in\mathfrak{C}^{\rm fg,tor}(\mathscr{G})$. Then for any $i\geq 0$  
\begin{equation}\label{equation-main-dim}\dim_{\F}H_i(\mathscr{T}_{1}(p^{\bn})/\mathscr{Z}_1,M)\ll  \kappa(\bn)^rp^{(2r-1)\kappa(\bn)},\end{equation}
where $\kappa(\bn):=\max_i\{n_i\}$.
\end{theorem}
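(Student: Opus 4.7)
The plan is to reduce the statement to the degree-zero case, and then prove the degree-zero bound by induction on the canonical dimension using a regular sequence in a formally smooth subring of $R$.

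First, by the evident generalization of Proposition \ref{proposition-Gabriel} to $\mathscr G$ together with Lemma \ref{lemma-tensor-pi}(iii), I may assume that all Jordan--H\"older factors of $M$ lie in a single block $\mathfrak B$ of $\mathfrak C(\mathscr G)$. Fixing irreducibles $\pi_i\in\Rep_\F(G)$ with $\pi^\vee:=\pi_1^\vee\otimes\cdots\otimes\pi_r^\vee$ lying in $\mathfrak B$, I may further arrange $M$ to be a coadmissible torsion quotient of $P=\widehat\otimes_i P_{\pi_i^\vee}$.

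For the degree-zero bound I would apply Proposition \ref{proposition-BP-adapted} to produce a surjection $\overline P\twoheadrightarrow M$ with $\overline P$ finite free as a $\Lambda$-module. Setting $\overline R:=R/\Ann_R(\overline P)$, Proposition \ref{proposition-Krull-Rbar}(i) supplies a formally smooth subring $A\subset\overline R$ of Krull dimension $2r$ over which $\overline R$ is finite. The torsion hypothesis on $M$, together with Corollary \ref{corollary-Krull-m}(ii) and Lemma \ref{lemma-annihilator}, forces $\dim_A\mathrm m(M)\le 2r-1$, so I can choose $a_1$ in the maximal ideal of $A$ that annihilates $M$ and extend it to a system of parameters $(a_1,\dots,a_{2r})$ of $A$. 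The quotient $M/(a_2,\dots,a_{2r})M$ is then annihilated by an Artinian quotient of $\overline R$ (since $\overline R$ is finite over $A$ and $(a_1,\dots,a_{2r})$ is a system of parameters of $A$), so $\mathrm m(M/(a_2,\dots,a_{2r})M)$ has finite length over $E$, and consequently $M/(a_2,\dots,a_{2r})M$ has finite length in $\mathfrak C(\mathscr G)$ by Lemma \ref{lemma-tensor-P}(iii). Lemma \ref{lemma-pi-r-finitelength} then gives
\[\dim_\F\big(M/(a_2,\dots,a_{2r})M\big)_{\Tn}\ll\textstyle\prod_i n_i\le \kappa(\bn)^r.\]
Next I would apply Lemma \ref{lemma-keystep-r} successively with $a_{2r},a_{2r-1},\dots,a_2$ — the required hypothesis $\bigcap_k a_j^k(-)=0$ being ensured by Remark \ref{remark-condition-phi} and Krull's intersection theorem for the $J$-adic topology on the coadmissible $\Lambda$-modules involved. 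Each of the $2r-1$ applications contributes at most a factor of $p^{\kappa(\bn)}$, producing the desired bound $\kappa(\bn)^r\cdot p^{(2r-1)\kappa(\bn)}$ for $H_0$.

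For $i\ge 1$ the naive dimension-shift is obstructed, because the kernel of $\overline P\twoheadrightarrow M$ has the same positive $\Lambda$-rank as $\overline P$, so the degree-zero bound does not apply to it. Following the strategy announced in the introduction, I would replace $\overline P\twoheadrightarrow M$ by a resolution by \emph{elementary} torsion modules in $\mathfrak C(\mathscr G)$ — modules of the form $\overline P_j/I_j\overline P_j$, where $\overline P_j$ is built as in Proposition \ref{proposition-BP-adapted} and $I_j\subset A_j$ is generated by a partial system of parameters of the associated subring $A_j$. For such elementary modules, the flatness of $\overline P_j$ over $A_j$ (Proposition \ref{proposition-Krull-Rbar}(ii)) lets me compute the higher $H_i(\Tn/\mathscr Z_1,-)$ via a Koszul complex, reducing them to degree-zero coinvariants of related elementary modules that are already controlled by the case $i=0$. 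A hyperhomology spectral sequence, or an inductive descent along the resolution, then transfers the bound to $M$. The main obstacle lies in constructing the elementary resolution equivariantly: at each stage the kernel must remain a coadmissible torsion object of $\mathfrak C(\mathscr G)$ rather than merely a $\Lambda$-module, so that the dimension-shift can be iterated. This is exactly the motivation for the generalization of the Breuil--Pa\v sk\=unas construction to $\mathscr G$ recorded in Proposition \ref{proposition-BP-adapted} and carried out in the appendix.
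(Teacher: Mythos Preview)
Your degree-zero argument is essentially the paper's: reduce to a quotient of $P_{\pi^\vee}$, cover $M$ by a $\Lambda$-free $\overline P$ via Proposition~\ref{proposition-BP-adapted}, find a nonzero $a\in A$ annihilating $M$, and then run Lemma~\ref{lemma-keystep-r} against a system of parameters to land in Lemma~\ref{lemma-pi-r-finitelength}. (One quibble: the hypothesis $\bigcap_k a_j^k N=0$ is not a consequence of Krull's intersection theorem for the $J$-adic topology as you suggest; rather, it holds because $a_j\in\fm_R$ kills every irreducible in $\mathfrak C^{\mathfrak B}$, hence acts nilpotently on every finite-length quotient of $N$, and $N\in\mathfrak C$ is the inverse limit of such quotients.)

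Where your proposal diverges from the paper is in the passage to $i\ge1$, and here you are making life harder than necessary. You propose elementary modules of the form $\overline P_j/I_j\overline P_j$ with $I_j$ generated by a \emph{partial} system of parameters, and then invoke Koszul complexes and hyperhomology. The paper instead takes ``elementary'' to mean $\overline P/a\overline P$ for a \emph{single} regular element $a\in\overline R$. The payoff is immediate: the two-term $\Lambda$-free resolution $0\to\overline P\xrightarrow{a}\overline P\to\overline P/a\overline P\to0$ gives $H_1=H_0$ and $H_i=0$ for $i\ge2$, so the bound for elementary modules holds in all degrees at once (Lemma~\ref{lemma-elementary}). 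Now the surjection $\overline P/a\overline P\twoheadrightarrow M$ you already built has kernel $M'$ which is automatically in $\mathfrak C^{\rm fg,tor}(\mathscr G)$ --- it is a kernel in $\mathfrak C(\mathscr G)$, a submodule of a coadmissible module, and a submodule of a torsion module --- so your worry about ``constructing the elementary resolution equivariantly'' evaporates. A single long exact sequence and induction on $i$ finishes: $H_{i+1}(\overline P/a\overline P)\to H_{i+1}(M)\to H_i(M')$, with the first term controlled by Lemma~\ref{lemma-elementary} and the last by the inductive hypothesis applied to $M'$. No full resolution, no Koszul complex, no spectral sequence is needed.
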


\begin{lemma}\label{lemma-firstreduction}
In Theorem \ref{theorem-higher-r}, we may assume 
that $M$ has an irreducible $\mathscr{G}$-cosocle (hence indecomposable). 
\end{lemma}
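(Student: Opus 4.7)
The plan is to reduce Theorem~\ref{theorem-higher-r} to the irreducible $\mathscr{G}$-cosocle case in two stages: a block decomposition, followed by an induction on the $\F$-dimension of the cosocle.

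For the first stage, I would extend Proposition~\ref{proposition-Gabriel} to $\mathfrak{C}(\mathscr{G})$: by Lemma~\ref{lemma-tensor-pi}(iii) the blocks of $\mathscr{G}$ are precisely the $r$-fold tensor products $\mathfrak{B}_1\otimes\cdots\otimes\mathfrak{B}_r$ of blocks of $G$, yielding a direct product decomposition $\mathfrak{C}(\mathscr{G})=\prod_{\mathfrak{B}}\mathfrak{C}(\mathscr{G})^{\mathfrak{B}}$. Any $M\in\mathfrak{C}^{\mathrm{fg,tor}}(\mathscr{G})$ decomposes accordingly as $M=\bigoplus_{\mathfrak{B}}M^{\mathfrak{B}}$, with only finitely many summands nonzero by coadmissibility, each again in $\mathfrak{C}^{\mathrm{fg,tor}}(\mathscr{G})$. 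Since homology commutes with finite direct sums and the bound in \eqref{equation-main-dim} is uniform up to an implicit constant, it suffices to establish it on each $M^{\mathfrak{B}}$; hence we may assume $M$ is supported on a single block $\mathfrak{B}$.

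For the second stage, write $\cosoc_{\mathscr{G}}(M)=\bigoplus_{j=1}^{\ell}\tau_j^{\vee}$ as a direct sum of irreducibles with multiplicities, where $\ell<\infty$ by coadmissibility, and induct on $\ell$. If $\ell=1$, we are done. Otherwise pick an irreducible summand $\pi^{\vee}:=\tau_1^{\vee}$, set $C':=\bigoplus_{j\geq 2}\tau_j^{\vee}$, and let $N\subseteq M$ be the preimage of $\pi^{\vee}$ under $M\twoheadrightarrow\cosoc_{\mathscr{G}}(M)$, so that $M/N\cong C'$ is semisimple of finite length. Then $N$ is coadmissible (as $\Lambda$ is noetherian) and torsion (as a submodule of a torsion module), and the long exact sequence associated to $0\to N\to M\to C'\to 0$ gives
\[\dim_{\F}H_i(\mathscr{T}_1(p^{\bn})/\mathscr{Z}_1,M)\leq\dim_{\F}H_i(\mathscr{T}_1(p^{\bn})/\mathscr{Z}_1,N)+\dim_{\F}H_i(\mathscr{T}_1(p^{\bn})/\mathscr{Z}_1,C').\]
The term in $C'$ is negligible relative to the target bound: degree zero is handled by Lemma~\ref{lemma-pi-r-finitelength}, and higher degrees by the standard bound for the homology of a finite-dimensional $\F$-module under the uniform pro-$p$ group $\mathscr{T}_1(p^{\bn})/\mathscr{Z}_1$ of dimension $3r$.

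The main obstacle is to ensure the induction actually terminates: a priori $\cosoc_{\mathscr{G}}(N)$ can be strictly larger than $\pi^{\vee}$, since taking submodules can promote elements of the radical of $M$ to cosocle components of $N$. To overcome this I would first apply Krull-Schmidt to decompose $M$ into a finite direct sum of indecomposable coadmissible torsion modules, handling each summand separately, and then use Proposition~\ref{proposition-BP-adapted} to realize each indecomposable as a quotient of a finite-free $\Lambda$-module $\overline{P}$ whose $\mathscr{K}$-cosocle matches that of $M$. This bounds the total number of irreducible cosocle components that can appear over the course of the iteration by an invariant of $M$ independent of $\bn$ (essentially the Jordan-H\"older length of a bounded-depth truncation $\overline{P}/J^{c}\overline{P}$ for a fixed constant $c$), so that the peeling procedure terminates after a bounded number of steps and the accumulated error terms in $C'$ are absorbed into the implicit constant in \eqref{equation-main-dim}. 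Once the cosocle is a single irreducible $\pi^{\vee}$, $M$ is a quotient of the projective envelope $P_{\pi^{\vee}}$ in $\mathfrak{C}(\mathscr{G})$ and is automatically indecomposable.
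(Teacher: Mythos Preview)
Your block decomposition is fine but unnecessary. The genuine gap is in the second stage. You take $N=\ker(M\twoheadrightarrow C')$, so that the \emph{quotient} $C'$ is under control, and try to induct on the cosocle length of the \emph{submodule} $N$. As you correctly observe, $\cosoc_{\mathscr{G}}N$ need not be shorter than $\cosoc_{\mathscr{G}}M$: the inclusion $\rad M\subset N$ only gives $\rad N\subseteq\rad M$, so passing to $N$ can expose new cosocle components from $\rad M$. Your proposed fixes do not close this. Krull--Schmidt only reduces to indecomposable summands, and an indecomposable object in $\mathfrak{C}(\mathscr{G})$ can perfectly well have reducible cosocle. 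The truncation bound via $\overline{P}$ does not help either: $N$ is a submodule of $M$, not a quotient of $\overline{P}$, so the length of $\overline{P}/J^c\overline{P}$ says nothing about how many new cosocle components appear when you pass from $M$ to $N$, to the next $N$, and so on.

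The paper avoids the problem by reversing which piece is controlled. Lift the projection $M\twoheadrightarrow S_1$ to $\alpha_1:P_{S_1}\to M$ by projectivity. Then $\im\alpha_1$, being a nonzero quotient of $P_{S_1}$, automatically has irreducible cosocle $S_1$; and since $\Hom_{\mathfrak{C}}(\mathrm{coker}\,\alpha_1,S_j)$ has dimension $\dim\Hom_{\mathfrak{C}}(M,S_j)-\delta_{1j}$, the cokernel has cosocle exactly $\bigoplus_{i\geq 2}S_i$. Iterating on the \emph{cokernel} gives a filtration of $M$ of length exactly $\ell$, each graded piece coadmissible (sub of coadmissible), torsion (sub of torsion), with irreducible $\mathscr{G}$-cosocle. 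The long exact sequence in $\mathscr{T}_1(p^{\bn})/\mathscr{Z}_1$-homology then reduces \eqref{equation-main-dim} for $M$ to the same bound for the $\ell$ graded pieces.
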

\begin{proof}
Let $S$ be the $\mathscr{G}$-cosocle of $M$. Since $M$ is coadmissible, $S$ decomposes as a finite direct sum $\oplus_{i=1}^sS_i$ with each $S_i$  irreducible. For each $i$, let $P_{S_i}$ be a  projective envelope of $S_i$ in $\CG$. 
The projection $M\twoheadrightarrow S_1$ extends to  a $\CG$-equivariant morphism $\alpha_1:P_{S_1}\ra M$. It is clear that $\mathrm{coker}(\alpha_1)$ has $\mathscr{G}$-cosocle isomorphic to $\oplus_{i=2}^sS_i$ and $\im(\alpha_1)$ has $\mathscr{G}$-cosocle $S_1$. Continuing this with $\mathrm{coker}(\alpha_1)$, we get a finite filtration of $M$ such that each graded piece, say $\mathrm{gr}^i(M)$, has an irreducible $\mathscr{G}$-cosocle. Since $M$ is  torsion as a $\Lambda$-module if and only if each $\mathrm{gr}^i(M)$ is, we are reduced to prove \eqref{equation-main-dim} for all $\mathrm{gr}^i(M)$.
\end{proof}

Let $M$ be a quotient of $P_{\pi^{\vee}}$ for some  irreducible $\pi\in\CG$. Let $P=P_{\pi^{\vee}}$, $E$, $R$ be as before.

\begin{definition}\label{definition-elementary}
We say $M\in \CGtor$ is  \emph{elementary}\footnote{The notation is motivated by the corresponding one in commutative ring theory; see \cite[\S11.6]{Iv}.} if there exists a short exact sequence in $\CG$:
\begin{equation}\label{equation-define-elementary}0\ra \overline{P}\overset{a}{\ra} \overline{P}\ra M\ra0,\end{equation}
where  $\overline{P}\in \CG$ is a quotient of $P$ and is  finite free as a $\Lambda$-module, and $a\in \overline{R}:=R/\mathrm{Ann}(\mathrm{m}(\overline{P}))$. 
\end{definition}

\begin{lemma}\label{lemma-elementary}
Theorem \ref{theorem-higher-r} is true if $M$ is an elementary quotient of $P$.
\end{lemma}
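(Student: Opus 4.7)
The plan is to exploit the elementary structure of $M$: it is the cokernel of multiplication by $a \in \overline{R}$ on the finite free $\Lambda$-module $\overline{P}$, so all of its group homology can be read off from $\overline{P}$. First I would apply $H_\bullet(\mathscr{T}_1(p^{\bn})/\mathscr{Z}_1, -)$ to \eqref{equation-define-elementary}. Since $\overline{P}$ is finite free over $\Lambda$ and $\Lambda$ is free over the sub-Iwasawa algebra $\F[[\mathscr{T}_1(p^{\bn})/\mathscr{Z}_1]]$, $\overline{P}$ is projective over the latter, so $H_i(\mathscr{T}_1(p^{\bn})/\mathscr{Z}_1, \overline{P}) = 0$ for $i \geq 1$. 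The long exact sequence then collapses: $H_i(\mathscr{T}_1(p^{\bn})/\mathscr{Z}_1, M) = 0$ for $i \geq 2$, while $H_0$ and $H_1$ identify with the cokernel and kernel of the endomorphism $a$ on the finite-dimensional $\F$-space $\overline{P}_{\mathscr{T}_1(p^{\bn})}$. By rank--nullity the two have the same dimension, so it suffices to bound $\dim_\F M_{\mathscr{T}_1(p^{\bn})}$.

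Next I would reduce to killing an element of the smaller ring $A$. Invoking Proposition \ref{proposition-Krull-Rbar}, fix a formally smooth subring $A \subset \overline{R}$ of Krull dimension $2r$ over which $\overline{R}$ is finite and $\overline{P}$ is flat. The faithfulness of the $\overline{R}$-action on $\overline{P}$ (which holds by the very definition of $\overline{R}$) together with the injectivity of $a$ on $\overline{P}$ forces $a$ to be a nonzero divisor in $\overline{R}$. Taking an integral equation $a^d + c_{d-1} a^{d-1} + \cdots + c_0 = 0$ of $a$ over $A$ of minimal degree $d$, minimality forces $c_0 \neq 0$ (otherwise the equation would shorten since $a$ is regular), and the identity $c_0 = -a(a^{d-1} + c_{d-1} a^{d-2} + \cdots + c_1)$ exhibits $c_0 \in a\overline{R}$. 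Consequently $c_0 \overline{P} \subset a \overline{P}$, so $M$ is a quotient of $N := \overline{P}/c_0 \overline{P}$ and it suffices to bound $\dim_\F N_{\mathscr{T}_1(p^{\bn})}$. Since $M\neq 0$ (else there is nothing to prove), $c_0$ must lie in $\mathfrak{m}_A$.

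I would then extend $c_0$ to a regular system of parameters $c_0, c_1, \ldots, c_{2r-1}$ of $A$, which is possible because $A$ is regular local of dimension $2r$. By $A$-flatness of $\overline{P}$, this is a $\overline{P}$-regular sequence, so each $c_{k+1}$ is injective on $N_k := \overline{P}/(c_0, \ldots, c_k)\overline{P}$. Moreover $c_{k+1} \in \mathfrak{m}_A \subset \mathfrak{m}_{\overline{R}}$ acts topologically nilpotently on the compact $\Lambda$-module $N_k$, ensuring $\bigcap_j c_{k+1}^j N_k = 0$ (via the $\mathfrak{m}_{\overline{R}}$-adic separation of $N_k$). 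The concluding inequality of Lemma \ref{lemma-keystep-r} then yields
\[ \dim_\F (N_k)_{\mathscr{T}_1(p^{\bn})} \ll p^{\kappa(\bn)} \dim_\F (N_{k+1})_{\mathscr{T}_1(p^{\bn})}, \]
and iterating over $k = 0, 1, \ldots, 2r-2$ produces
\[ \dim_\F N_{\mathscr{T}_1(p^{\bn})} \ll p^{(2r-1)\kappa(\bn)} \dim_\F (N_{2r-1})_{\mathscr{T}_1(p^{\bn})}. \]

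Finally I would show that the terminal module $N_{2r-1} = \overline{P}/\mathfrak{m}_A \overline{P}$ has finite length in $\mathfrak{C}(\mathscr{G})$. Since $\overline{R}$ is finite over $A$, the quotient $\overline{R}/\mathfrak{m}_A \overline{R}$ is Artinian, so $\mathfrak{m}_{\overline{R}}^N \subset \mathfrak{m}_A \overline{R}$ for some $N$; hence $N_{2r-1}$ is a quotient of $\overline{P}/\mathfrak{m}_{\overline{R}}^N \overline{P}$, which is an iterated extension of copies of $\overline{P}/\mathfrak{m}_{\overline{R}} \overline{P} = \overline{P}/\mathfrak{m}_R \overline{P}$, itself a quotient of the finite-length module $\F \otimes_R P$ (Lemma \ref{lemma-tensor-P}(iii)). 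Thus $N_{2r-1}$ has finite length, and Lemma \ref{lemma-pi-r-finitelength} gives $\dim_\F (N_{2r-1})_{\mathscr{T}_1(p^{\bn})} \ll \prod_i n_i \leq \kappa(\bn)^r$. Combining the four steps produces the desired bound $\kappa(\bn)^r p^{(2r-1)\kappa(\bn)}$. The main obstacles I anticipate are the bookkeeping of Step 3 — verifying at each stage both the regularity of $c_{k+1}$ on $N_k$ and the Krull-intersection hypothesis of Lemma \ref{lemma-keystep-r} — and ensuring in Step 4 that the terminal module really sits inside the finite-length part of $\mathfrak{C}(\mathscr{G})$, which is the linchpin linking the iterative reduction to the base case already established.
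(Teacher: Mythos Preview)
Your argument is correct and follows the same overall strategy as the paper: use freeness of $\overline{P}$ over $\Lambda$ to collapse the long exact sequence and reduce to $i=0$, then descend through a system of parameters via repeated application of Lemma~\ref{lemma-keystep-r} until reaching a finite-length object handled by Lemma~\ref{lemma-pi-r-finitelength}.

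The one substantive difference is your detour through the regular subring $A$. The paper works directly in $\overline{R}$: since $a$ is $\overline{P}$-regular and $\overline{R}$ acts faithfully on $\overline{P}$, $a$ is $\overline{R}$-regular, so one extends $a$ itself to a system of parameters $a=a_1,a_2,\dots,a_{2r}$ of $\overline{R}$ and applies Lemma~\ref{lemma-keystep-r} to the successive quotients $M/(a_2,\dots,a_k)M$. This avoids the integral-equation step and the passage from $a$ to $c_0\in A$. Your route via $A$ is slightly longer but has the compensating virtue that $A$-flatness of $\overline{P}$ (Proposition~\ref{proposition-Krull-Rbar}(ii)) makes the regularity of the sequence on $\overline{P}$ immediate, whereas in $\overline{R}$ one does not know that $\overline{P}$ is flat. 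One small terminological slip: you should say ``system of parameters'' rather than ``regular system of parameters'', since $c_0$ may lie in $\mathfrak{m}_A^2$; but any nonzero element of the domain $A$ extends to a system of parameters, which is automatically a regular sequence as $A$ is Cohen--Macaulay, so the argument goes through unchanged.
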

\begin{proof}
Let $\overline{P}$, $a\in \overline{R}$ be as in \eqref{equation-define-elementary}. 
Since $\overline{P}$ is a free $\Lambda$-module, taking homology of \eqref{equation-define-elementary}  we obtain
\[\dim_{\F}H_0(\mathscr{T}_{1}(p^{\bn})/\mathscr{Z}_1,M)=\dim_{\F}H_1(\mathscr{T}_{1}(p^{\bn})/\mathscr{Z}_1,M)\]
\[H_i(\mathscr{T}_{1}(p^{\bn})/\mathscr{Z}_1,M)=0,\ \ \forall i\geq 2.\] 
Hence, it suffices to prove \eqref{equation-main-dim}  when $i=0$. Since $a$ is $\overline{P}$-regular and $\overline{R}$ acts faithfully on $\overline{P}$ (by Lemma \ref{lemma-annihilator}), $a$ is also $\overline{R}$-regular. Since $\overline{R}$ has Krull dimension $2r$ by Proposition \ref{proposition-Krull-Rbar}, we may extend $a$ to a system of parameters of $\overline{R}$, say $a_1=a,a_2,...,a_{2r}$. Then $\overline{R}/(a_1,...,a_{2r})$ is finite dimensional over $\F$ and hence $M/(a_2,...,a_{2r})$ has finite length in $\CG$. By Lemma \ref{lemma-pi-r-finitelength}, we deduce   that 
\[\dim_{\F}H_0(\mathscr{T}_{1}(p^{\bn})/\mathscr{Z}_1,M/(a_2,...,a_{2r}))\ll \kappa(\mathbf{n})^r,\]
and we conclude by repeatedly applying  Lemma \ref{lemma-keystep-r} to $M/(a_2,...,a_{2r-1})$, $\cdots$, $M$.
\end{proof}
\begin{remark}
Let $M$ be an elementary quotient of $P$ and let $\overline{P}, a\in\overline{R}$ be as in \eqref{equation-define-elementary}. Moreover, we assume $\mathbf{n}=(n,...,n)$ is parallel. Since $\overline{P}$ is finite free over $\Lambda$, we have \[\dim_{\F}\overline{P}_{\mathscr{T}_1(p^{\mathbf{n}})}\sim [\mathscr{K}_1:\mathscr{T}_1(p^{\mathbf{n}})]\sim p^{2rn}. \] 
From Lemma \ref{lemma-keystep-r} we deduce that
\[\dim_{\F}M_{\mathscr{T}_1(p^{\mathbf{n}})}\gg p^{(2r-1)n}.\]
This shows that the upper bound \eqref{equation-main-dim} is almost optimal.
\end{remark} 

\begin{proof}[Proof of Theorem \ref{theorem-higher-r}]
For each $i\geq 0$, denote by $(\mathrm{C}_i)$ the inequality: 
\[\dim_{\F}H_i(\mathscr{T}_1(p^{\mathbf{n}})/\mathscr{Z}_1,M)\ll \kappa(\mathbf{n})^rp^{(2r-1)\kappa(\mathbf{n})}. \]
We will prove $(\mathrm{C}_i)$ for any $M\in \CGtor$ by induction on $i$. 

First  prove $(\mathrm{C}_0)$. By Lemma \ref{lemma-firstreduction}, we may ssume  $M\in \CGtor$ is a quotient of $P=P_{\pi^{\vee}}$ for some irreducible $\pi\in \CG$.  By Proposition \ref{proposition-BP-adapted}, we may find $\overline{P}\in\CG$, which is finite free as a $\Lambda$-module and has the same $\mathscr{K}$-cosocle as $M$, such that $M$ is a quotient of $\overline{P}$.  In particular, $\overline{P}$ has the same $\mathscr{G}$-cosocle as $M$ and we may view $\overline{P}$ as a quotient of $P$.\footnote{Alternatively, we may apply Proposition \ref{proposition-BP-replaced} to obtain such an object $\overline{P}$ (but without cosocle condition).} 
Set $\overline{R}=R/\mathrm{Ann}(\mathrm{m}(\overline{P}))$. 
By Proposition \ref{proposition-Krull-Rbar},  $\overline{R}$ has Krull dimension $2r$ and is finite over a formally smooth subring $A\cong \F[[y_1,...,y_{2r}]]$. We view $\mathrm{m}(M)$ as an $A$-module. Since $\mathrm{m}(M)$ has Krull dimension $<2r$ by Corollary \ref{corollary-Krull-m}, there exists a non-zero $a\in A$ which annihilates $\mathrm{m}(M)$. In particular, we obtain a surjection 
\begin{equation}
\label{equation-surj-to-M}\overline{P}/a\overline{P}\twoheadrightarrow M.\end{equation}
 Proposition \ref{proposition-Krull-Rbar}(ii) shows that $\overline{P}$ is flat over $A$, hence $a$ is $\overline{P}$-regular and $\overline{P}/a\overline{P}$ is an elementary module.  By Lemma \ref{lemma-elementary}, $(\mathrm{C}_0)$ holds for $\overline{P}/a\overline{P}$, hence also holds for $M$.
  
Now we assume $(\mathrm{C}_i)$ holds for \emph{any} object in $\CGtor$, and prove $(\mathrm{C}_{i+1})$ for (the fixed) $M$.   
Let $M'$ be the  kernel of \eqref{equation-surj-to-M}. Taking homology we obtain an exact sequence
\[H_{i+1}(\mathscr{T}_1(p^{\mathbf{n}})/\mathscr{Z}_1,\overline{P}/a\overline{P})\ra H_{i+1}(\mathscr{T}_1(p^{\mathbf{n}})/\mathscr{Z}_1,M)\ra H_i(\mathscr{T}_1(p^{\mathbf{n}})/\mathscr{Z}_1,M').\]
Since $M'\in\CGtor$, $(\mathrm{C}_{i})$ holds for $M'$ by inductive hypothesis and $(\mathrm{C}_{i+1})$ holds for $\overline{P}/a\overline{P}$ by Lemma \ref{lemma-elementary},  we obtain that
 $(\mathrm{C}_{i+1})$ holds for $M$. 
\end{proof}

\subsection{Change of groups}\label{subsection-changegroups}
We keep the notation in the previous subsection. For $n\geq 1$, let 
\begin{equation}\label{equation-define-K1(pn)}K_1(p^n):=K_1\cap K_0(p^n)=\matr{1+p\Z_p}{p\Z_p}{p^n\Z_p}{1+p\Z_p}.\end{equation} 
These groups are closely related  to  the  $T_1(p^n)$ in the sense that letting $D=\smatr{1}00{p^{\lfloor n/2\rfloor}}$ and $n'=\lfloor n/2\rfloor+1$, we have 
\begin{equation}\label{equation-conjugate}D^{-1}K_1(p^n)D< T_1(p^{n'}),\ \ |T_1(p^{n'}):D^{-1}K_1(p^n)D|\leq p,\ \ |n'-n/2|\leq 1.\end{equation}

On the other hand, using (essentially) the fact  that the restriction of $\F[K_1/K_1(p^n)]$ to $\smatr{1}0{p\Z_p}1$ is uniserial, which follows from the Iwahori decomposition for $K_1$ and the fact that $\smatr{1}0{p\Z_p}1$ is pro-cyclic, Marshall proved the following interesting result (\cite[Cor. 14]{Mar-Annals}).
\begin{lemma}\label{lemma-Marshall-filtration}
Let $L\subset \F[K_1/K_1(p^n)]$
be a submodule of dimension $d$, and let the base $p$ expansion of $d$ be written as
$d=\sum_{i=1}^lp^{\alpha(i)}$, 
where $\{\alpha(i)\}$ is a non-increasing sequence of non-negative integers. Then there exists a filtration $0=L_0\subset \cdots \subset L_{l}=L$ of $L$ by submodules $L_i$ such that $L_i/L_{i-1}\cong \F[K_1/K_1(p^{\alpha(i)+1})]$. 
\end{lemma}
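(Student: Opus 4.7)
The plan is to reduce the statement to an elementary fact about cyclic $p$-groups via the Iwahori decomposition of $K_1$. First, I would observe that $K_1$ admits the decomposition $K_1 = \Gamma\cdot B_1$, where $\Gamma:=\{\smatr{1}{0}{c}{1}:c\in p\Z_p\}$ and $B_1:=\smatr{1+p\Z_p}{p\Z_p}{0}{1+p\Z_p}$, and that $B_1\subset K_1(p^n)$. Hence $K_1 = \Gamma\cdot K_1(p^n)$, and the $\Gamma$-action on $K_1/K_1(p^n)$ is simply transitive with trivial stabiliser $\Gamma\cap K_1(p^n) = \Gamma(p^n):=\{\smatr{1}{0}{c}{1}: c\in p^n\Z_p\}$. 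Consequently $K_1/K_1(p^n)\cong \Gamma/\Gamma(p^n)\cong p\Z_p/p^n\Z_p\cong \Z/p^{n-1}\Z$ as $\Gamma$-sets, and similarly for $K_1/K_1(p^m)$ with $m\leq n$.

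Second, as a $\Gamma$-module the permutation representation $\F[K_1/K_1(p^n)]$ is the regular representation of the cyclic $p$-group $\Z/p^{n-1}\Z$. Setting $x:=g-1$ for a generator $g$ and using that $\F$ has characteristic $p$, we get an isomorphism of $\F$-algebras $\F[K_1/K_1(p^n)]\cong\F[x]/(x^{p^{n-1}})$, a local principal ideal ring whose ideals are exactly the $(x^k)$ for $0\leq k\leq p^{n-1}$. Any $K_1$-submodule $L$ of dimension $d$ is \emph{a fortiori} a $\Gamma$-submodule, so $L=(x^{p^{n-1}-d})$. A crucial observation is that for each $1\leq m\leq n$ the natural surjection $\F[K_1/K_1(p^n)]\twoheadrightarrow \F[K_1/K_1(p^m)]$ is a morphism of $K_1$-modules whose kernel, under the above identification, is precisely the ideal $(x^{p^{m-1}})$; therefore these particular ideals are automatically $K_1$-stable with quotients identifiable with $\F[K_1/K_1(p^m)]$.

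Third, I would build the filtration from the base-$p$ expansion. Setting $k_i:=p^{n-1}-\sum_{j=1}^{i} p^{\alpha(j)}$ and $L_i:=(x^{k_i})$, the non-increasing hypothesis on the $\alpha(j)$ together with $d\leq p^{n-1}$ yields $0\leq k_l<k_{l-1}<\dots<k_0=p^{n-1}$, so $L_0=0$ and $L_l=L$. A direct computation in the truncated polynomial ring identifies each graded piece as a $\Gamma$-module:
\[L_i/L_{i-1}\;\cong\;\F[x]/(x^{k_{i-1}-k_i})\;=\;\F[x]/(x^{p^{\alpha(i)}})\;\cong\;\F[K_1/K_1(p^{\alpha(i)+1})].\]
The main obstacle is then to verify that each $L_i$ is actually $K_1$-stable (and not merely $\Gamma$-stable) and that the identification above is $K_1$-equivariant. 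To handle this I would proceed by induction on $i$, exploiting the non-increasing condition on $\alpha$: each $k_i$ is obtained from $k_{i-1}$ by subtracting $p^{\alpha(i)}\leq p^{\alpha(1)}$, so one can iteratively realise each $L_i$ as the kernel of a $K_1$-equivariant composite built from the canonical surjections $\F[K_1/K_1(p^m)]\twoheadrightarrow \F[K_1/K_1(p^{m-1})]$, recognised in Step 2 as having kernel of the desired form. Once $K_1$-stability is established, the action on each graded piece is annihilated by $x^{p^{\alpha(i)}}$ and hence factors through the cyclic quotient corresponding to $K_1/K_1(p^{\alpha(i)+1})$, where it realises the regular representation, completing the identification.
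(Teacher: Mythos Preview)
The paper does not give its own proof of this lemma; it simply cites Marshall \cite[Cor.~14]{Mar-Annals}, noting only that the essential ingredient is the identification $K_1/K_1(p^n)\cong p\Z/p^n\Z$. Your approach via the Iwahori factorisation $K_1=\Gamma\cdot K_1(p^n)$ unpacks this hint correctly, and the identification of the $\Gamma$-module with $\F[x]/(x^{p^{n-1}})$ is exactly right.

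One of the two obstacles you flag is in fact harmless. Since $\F[K_1/K_1(p^n)]$ is already uniserial as a $\Gamma$-module, its lattice of $K_1$-submodules (a sublattice of the chain of $\Gamma$-submodules) is again a chain; as $K_1$ is pro-$p$, all composition factors are trivial, so the $K_1$-submodules are \emph{exactly} the ideals $(x^k)$. Thus each $L_i$ is automatically $K_1$-stable and no extra work is needed there.

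The genuine gap is the identification $L_i/L_{i-1}\cong\F[K_1/K_1(p^{\alpha(i)+1})]$ \emph{as $K_1$-modules}. Two uniserial $K_1$-modules of the same length need not be isomorphic, and your proposed iteration using only the canonical surjections $\pi_*\colon\F[K_1/K_1(p^n)]\twoheadrightarrow\F[K_1/K_1(p^m)]$ does not suffice: those only identify the \emph{top} $p^{m-1}$-dimensional quotient, whereas $L_i/L_{i-1}$ sits in the middle. What is missing is the companion $K_1$-equivariant \emph{transfer} map $\pi^!\colon\F[K_1/K_1(p^m)]\hookrightarrow\F[K_1/K_1(p^n)]$, sending a coset to the sum over its fibre, which identifies the \emph{bottom} $p^{m-1}$-dimensional submodule with $\F[K_1/K_1(p^m)]$. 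With both $\pi_*$ and $\pi^!$ available, one proves by induction on $n-m$ that every subquotient $(x^{p^{n-1}-tp^{m-1}})/(x^{p^{n-1}-(t-1)p^{m-1}})$ is $K_1$-isomorphic to $\F[K_1/K_1(p^m)]$: for $t<p^{n-m}$ embed via $\pi^!$ into a strictly smaller $\F[K_1/K_1(p^{n'})]$ and use the inductive hypothesis; for $t=p^{n-m}$ use $\pi_*$ directly. The non-increasing hypothesis on $\alpha$ is then precisely what guarantees that each $k_{i-1}$ and $k_i$ is a multiple of $p^{\alpha(i)}$, so this subquotient lemma applies to every graded piece.
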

If $\bn=(n_1,...,n_r)\in(\Z_{\geq 1})^r$, let 
\[\mathscr{K}_1(p^{\bn})=\prod_{i=1}^rK_1(p^{n_i}).\]

\begin{theorem}\label{theorem-finalform}
Let $M\in\CGtor$ and let $L$ be any sub-representation of $\F[\mathscr{K}_1/\mathscr{K}_1(p^{\bf n})]$ which factorizes as $\otimes_{i=1}^r L_i$ with  $L_i\subset \F[K_1/K_1(p^{n_i})]$. 
Then for $i\geq 0$ we have
\[\dim_{\F} H_i(\mathscr{K}_1/\mathscr{Z}_1,M\otimes L)\ll \kappa(\bn)^{2r}p^{(r-\frac{1}{2})\kappa(\bn)}.\] 
In particular, if $\bn=(n,...,n)$ is parallel, then 
\[\dim_{\F} H_i(\mathscr{K}_1/\mathscr{Z}_1,M\otimes L)\ll n^{2r}p^{(r-\frac{1}{2})n}.\]
\end{theorem}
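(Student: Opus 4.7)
The plan is to combine Marshall's filtration (Lemma~\ref{lemma-Marshall-filtration}), Shapiro's lemma, a conjugation trick based on \eqref{equation-conjugate}, and Theorem~\ref{theorem-higher-r}. The idea is to trade the $T_1(p^{\bn})$ appearing in Theorem~\ref{theorem-higher-r} for $K_1(p^{\bn})$ at the cost of halving the parameter $\bn$; it is precisely this halving which converts the exponent $2r-1$ of Theorem~\ref{theorem-higher-r} into $r-1/2$ here.

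First I would apply Lemma~\ref{lemma-Marshall-filtration} in each tensor factor. Since $\dim_{\F} L_i \leq |K_1:K_1(p^{n_i})| = p^{n_i-1}$, the base-$p$ expansion of $\dim L_i$ has at most $n_i$ digits, so $L_i$ admits an $\F[K_1]$-filtration of length $l_i\leq n_i$ whose graded pieces are isomorphic to $\F[K_1/K_1(p^{m_{i,j}})]$ for various $1\leq m_{i,j}\leq n_i$. Tensoring these filtrations produces a filtration of $L=\otimes_i L_i$ of total length $\prod_i l_i \leq \kappa(\bn)^r$ whose graded pieces have the form $\F[\mathscr{K}_1/\mathscr{K}_1(p^{\mathbf{m}})]$ for some $\mathbf{m}\leq\bn$ componentwise. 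By the long exact sequence in homology it suffices to bound $\dim_{\F} H_i(\mathscr{K}_1/\mathscr{Z}_1, M\otimes \F[\mathscr{K}_1/\mathscr{K}_1(p^{\mathbf{m}})])$ by $\kappa(\bn)^r p^{(r-1/2)\kappa(\bn)}$; the extra factor $\kappa(\bn)^r$ from the number of filtration steps then yields the final exponent $2r$.

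Next, by Shapiro's lemma (compatible with the central character of $\mathscr{Z}_1$, which lies inside every subgroup considered), $H_i(\mathscr{K}_1/\mathscr{Z}_1, M\otimes \F[\mathscr{K}_1/\mathscr{K}_1(p^{\mathbf{m}})])\cong H_i(\mathscr{K}_1(p^{\mathbf{m}})/\mathscr{Z}_1, M)$. I would then set $D:=\prod_i \smatr{1}{0}{0}{p^{\lfloor (m_i-1)/2\rfloor}}\in\mathscr{G}$ and $\mathbf{m}':=(\lfloor m_i/2\rfloor+1)_i$; by \eqref{equation-conjugate}, $H:=D^{-1}\mathscr{K}_1(p^{\mathbf{m}})D$ is a subgroup of $H':=\mathscr{T}_1(p^{\mathbf{m}'})$ of index at most $p^r$, with $\kappa(\mathbf{m}')\leq\lfloor \kappa(\bn)/2\rfloor+1$. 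Conjugating by $D$ (permissible since $M$ is a $\mathscr{G}$-module) identifies $H_i(\mathscr{K}_1(p^{\mathbf{m}})/\mathscr{Z}_1, M)$ with $H_i(H/\mathscr{Z}_1,M)$, and a second application of Shapiro through the standard $H'$-equivariant isomorphism $\Ind_H^{H'} M\cong \F[H'/H]\otimes M$ (diagonal action on the right, valid for restrictions of $H'$-modules) yields $H_i(H/\mathscr{Z}_1,M)\cong H_i(H'/\mathscr{Z}_1, \F[H'/H]\otimes M)$.

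Since $H'$ is pro-$p$ and $\F[H'/H]$ is a continuous finite-dimensional $\F$-representation, it admits a composition series of length $[H':H]\leq p^r$ with trivial graded pieces, so the long exact sequence in homology gives $\dim_{\F}H_i(H/\mathscr{Z}_1,M)\leq p^r\cdot\dim_{\F} H_i(\mathscr{T}_1(p^{\mathbf{m}'})/\mathscr{Z}_1, M)$. Applying Theorem~\ref{theorem-higher-r} to the right-hand side and invoking $\kappa(\mathbf{m}')\leq\lfloor\kappa(\bn)/2\rfloor+1$ produces the desired bound $\ll\kappa(\bn)^r p^{(r-1/2)\kappa(\bn)}$; combined with the factor $\kappa(\bn)^r$ from the first step this yields the statement. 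The main subtlety I anticipate is the second Shapiro isomorphism: $H$ need not be normal in $H'$, so one has to use the twisting-form of the induced representation carefully, though this is by now standard when $M$ is restricted from an $H'$-module.
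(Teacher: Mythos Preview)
Your proposal is correct and follows essentially the same route as the paper's proof: Marshall's filtration (Lemma~\ref{lemma-Marshall-filtration}) to reduce to $L=\F[\mathscr{K}_1/\mathscr{K}_1(p^{\mathbf{m}})]$, then Shapiro, conjugation by the diagonal element $D$ (using that $M$ is a $\mathscr{G}$-module), a finite-index comparison with $\mathscr{T}_1(p^{\mathbf{m}'})$, and finally Theorem~\ref{theorem-higher-r}. The only cosmetic differences are that the paper quotes \cite[Lem.~20]{Mar-Annals} for the finite-index step where you spell out the Shapiro/composition-series argument, and the paper organizes the final sum by isomorphism class of graded piece (each occurring $\leq p^r$ times) rather than bounding the total number of steps by $\kappa(\bn)^r$; both bookkeepings give the same bound.
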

\begin{proof}
The proof goes as  that of \cite[Lem. 19]{Mar-Annals}. For convenience of the reader, we briefly explain it. First, if $L=\F[\mathscr{K}_1/\mathscr{K}_1(p^{\bf m})]$ for some ${\bf m}\in(\Z_{\geq 1})^r$, we apply Shapiro's lemma to obtain
\begin{equation}\label{equation-global-H_i}H_i(\mathscr{K}_1/\mathscr{Z}_1, M\otimes L)\cong H_i(\mathscr{K}_1(p^{\bf m})/\mathscr{Z}_1, M).\end{equation}
Using a suitable diagonal element of $\mathscr{G}$, precisely 
\[D=\bigg(\matr{1}00{p^{{\lfloor m_1\rfloor}/2}},\dots, \matr{1}00{p^{\lfloor m_r/2\rfloor}}\bigg)\]
we obtain by \eqref{equation-conjugate} that for some ${\bf m}'$: 
\[D^{-1}\mathscr{K}_1(p^{\bf m})D\leq \mathscr{T}_1(p^{{\bf m}'}),\ \  \ [\mathscr{T}_1(p^{{\bf m}'}):D^{-1}\mathscr{K}_1(p^{\bf m})D]\leq p^{r},\ \ \ |m_i'-m_i/2|\leq 1.\] 
Since $M$ carries a compatible action of $\mathscr{G}$, we have natural isomorphisms
\[H_i(\mathscr{K}_1(p^{\bf m})/\mathscr{Z}_1,M)\cong H_i(D^{-1}\mathscr{K}_1(p^{\bf m})D/\mathscr{Z}_1,M).\]
Hence we deduce from \cite[Lem. 20]{Mar-Annals} that
\[\dim_{\F}H_i(\mathscr{K}_1(p^{\bf m})/\mathscr{Z}_1,M)\leq p^r\dim_{\F}H_i(\mathscr{T}_1(p^{{\bf m}'})/\mathscr{Z}_1,M)\]
and the result follows from Theorem \ref{theorem-higher-r}.
 
For general $L$, Lemma \ref{lemma-Marshall-filtration} provides a finite filtration of $L$
\[0=L_0\subset L_1\subset \cdots   \subset L \]
such that every quotient $L_i/L_{i-1}$ is isomorphic to $\F[\mathscr{K}_{1}/\mathscr{K}_1(p^{\bf m})]$ for some ${\bf m}\leq {\bf n}$ and each isomorphism class of quotient occurs at most  $p^r$ times. We then deduce from the first case that 
\[\begin{array}{rll}
\dim_{\F}H_i(\mathscr{K}_1/\mathscr{Z}_1, M\otimes L)&\leq &p^r\sum\limits_{{\bf m}\leq {\bf n}}\dim_{\F}H_i(\mathscr{K}_1(p^{\bf m})/\mathscr{Z}_1, M)\\
&\ll & \sum\limits_{{\bf m}\leq{\bn}} \kappa({\bf m})^r p^{(r-\frac{1}{2})\kappa({\bf m})}\\
&\ll &   \kappa(\bn)^{2r}p^{(r-\frac{1}{2})\kappa({\bf n})}.
\end{array}\]
Here we have used the fact that the cardinality of the set $\{{\bf m}: {\bf m}\leq \bn\}$ is $\prod_{i=1}^rn_i$, hence bounded by $\kappa(\bn)^r$.
\end{proof}

\subsection{$\GL_2(\Q_p)$ vs $\SL_2(\Q_p)$} \label{subsection-SL2}
For the application in Section \ref{section-global}, we need to consider smooth admissible $\F$-representations of  $\mathscr{G}'=\prod_{i=1}^r\SL_2(\Q_p)$ and their Pontryagin duals. The results above translate to this situation. To explain this, we give the proof of the following analog of Theorem \ref{theorem-higher-r}.  If $H$ is a subgroup of $\mathscr{G}$, we denote by $H'$ the intersection $H\cap \mathscr{G}'$. 
 
\begin{theorem}\label{theorem-SL2}
Let $M'\in\mathfrak{C}^{\rm fg,tor}(\mathscr{G}')$. Then for all $i\geq 0$, 
\[\dim_{\F}H_i(\mathscr{T}_1(p^{\mathbf{n}})'/\mathscr{Z}_1',M')\ll \kappa({\mathbf{n}})^rp^{(2r-1)\kappa(\mathbf{n})}.\]
\end{theorem}
 
\begin{proof}
After twisting we may assume the central character of $M'$ is trivial. Then we may extend $M'$ to be an object $M^+$ in $\mathfrak{C}^{\rm fg,tor}(\mathscr{G}^+)$ by letting $\mathscr{Z}$ act trivially, where $\mathscr{G}^+:=\mathscr{G}'\mathscr{Z}$.  Note that $\mathscr{T}_1(p^{\mathbf{n}})$ is contained in $\mathscr{G}^+$.

It is easy to see that  $\mathscr{G}^+$ is a normal subgroup of $\mathscr{G}$ of finite index. Let $M:=\Ind_{\mathscr{G}^+}^{\mathscr{G}}M^+$, which is an object in $\mathfrak{C}^{\rm fg,tor}(\mathscr{G})$. We can apply Theorem \ref{theorem-higher-r} to $M$ and obtain 
\[\dim_{\F}H_i(\mathscr{T}_1(p^{\mathbf{n}})/\mathscr{Z}_1,M))\ll \kappa(\mathbf{n})^{r}p^{(2r-1)\kappa(\mathbf{n})}.\]
Since $M^+$ is a direct summand of $M|_{\mathscr{G}^+}$ by Mackey's theorem, we obtain 
\[\dim_{\F}H_i(\mathscr{T}_1(p^{\mathbf{n}})/\mathscr{Z}_1,M^+))\ll \kappa(\mathbf{n})^{r}p^{(2r-1)\kappa(\mathbf{n})}.\]
Since $\mathscr{T}_1(p^{\mathbf{n}})/\mathscr{Z}_1\cong \mathscr{T}_1(p^{\mathbf{n}})'/\mathscr{Z}_1'$, restricting $M^+$ to $\mathscr{G}'$ gives the result.  
\end{proof}  
 
\section{Application} \label{section-global}

Let $F$ be a number field of degree $r$, and $r_1$ (resp. $2r_2$) be the number of real (resp. non-real) places. 
Let $F_{\infty}=F\otimes_{\Q}\R$, so that $\SL_2(F_{\infty})=\SL_2(\R)^{r_1}\times\SL_2(\C)^{r_2}$.  Let $K_{\infty}$ be the standard maximal compact subgroup of $\SL_2(F_{\infty})$.

Let $\{\sigma_1,...,\sigma_r\}$ be the set of complex embeddings of $F$ and let ${\bf d}=(d_1,...,d_r)\in(\Z_{\geq 1})^r$ be an $r$-tuple indexed by the $\sigma_i$ such that $d_i=d_j$ when $\sigma_i$ and $\sigma_j$ are complex conjugate to each other.
Let $W_{\bf d}$ be the representation of $\SL_2(F_{\infty})$ obtained by forming the tensor product 
\[\big(\bigotimes_{\sigma_i\ {\rm real}}\Sym^{d_i}\C^2\big)\bigotimes \big(\bigotimes_{\{\sigma_i,\sigma_j\}\ {\rm complex}}\Sym^{d_i}\C^2\otimes \overline{\Sym}^{d_j}\C^2\big).\]
If $K_f\subset \SL_2(\mathbb{A}_f)$ is a compact open  subgroup
we write
\[Y(K_f):=\SL_2(F)\backslash \SL_2(\mathbb{A})/K_fK_{\infty},\]  and still use $W_{\bf d}$ to denote the local system  on $Y(K_f)$ attached to $W_{\bf d}$.

\begin{theorem}\label{theorem-global-A}
 If $F$ is not totally real and $K_f\subset \SL_2(\mathbb{A}_f)$ is a compact open subgroup, then   
\[\dim_{\C} H_i(Y(K_f),W_{\bf d})\ll_{\epsilon}\kappa({\bf d})^{r-1/2+\epsilon}\]
where $\kappa({\bf d})=\max_i\{d_i\}$.
\end{theorem}



\begin{proof}
The proof follows closely the one presented in \cite[\S5]{Mar-Annals}.  We content ourselves with briefly explaining the main ingredients.  Below we abuse the notation by letting the same letters to denote subgroups of $\SL_2$ obtained by intersection from $\GL_2$. Write $Y=Y(K_f)$ in the proof.
\begin{enumerate}

\item Choose a rational prime $p\geq 5$ which splits completely in $F$.  By \cite[Lem. 18]{Mar-Annals}, there exists a $p$-adic local system $V_{\bf d}$ defined over $\cO=W(\F)$, such that
\[\dim_{\C}H_i(Y,W_{\bf d})=\dim_{\cO[1/p]}H_i(Y,V_{\bf d}).\]
For this we need to choose a bijection between the set of complex embeddings $F\hookrightarrow \C$ and $p$-adic embeddings  $F\hookrightarrow\bQp$, as done in \cite[Lem. 18]{Mar-Annals}. 

 \item As is explained in \cite[\S5]{Mar-Annals}, by passing to an open subgroup we may assume that $K_f$ has  the form $\prod_{v}K_{f,v}$, with   $K_{f,v}=K_1$ ($\subset \SL_2(\Z_p)$) for all $v|p$. To achieve this, we could first choose $p$ such that $K_{f,v}=\SL_2(\Z_p)$, then pass to an open subgroup with $K_{f,v}=K_1$.

\item 
  Emerton's theory of completed  homology gives a bound (\cite[\S5, (34),(35)]{Mar-Annals})
\[\dim_{\cO[1/p]} H_q(Y,V_{\bf d})\leq \sum_{i+j=q}\dim_{\cO[1/p]} H_i(\mathscr{K}_1/\mathscr{Z}_1, \widetilde{H}_{j,\Q_p}\otimes V_{\bf d})\]  
where $\widetilde{H}_j$ is the $j$-th completed homology of Emerton with (trivial) coefficients in $\cO$, and $\widetilde{H}_{j,\Q_p}=\widetilde{H}_j\otimes_{\Z_p}\Q_p$. Note that $\widetilde{H}_j$ is a coadmissible module over $\cO[[\mathscr{K}_1]]$ and carries a natural compatible action of $\prod_{i=1}^r\SL_2(\Q_p)$.

\item Let $\bn=(n_1,...,n_r)$ where $n_i$ is the smallest integer such that $p^{n_i-1}-1\geq d_i$ (resp. $p^{n_i-1}-1\geq d_i/2$) if $\sigma_i$ is real (resp. complex). By \cite[Lem. 17]{Mar-Annals} we may choose lattice $\mathscr{V}_{d_i}\subset V_{d_i}$ such that $\mathscr{V}_{d_i}/p\subset \F[[K_1/K_1(p^{n_i})]]$. Let $L_{\bf d}$ be the reduction mod $p$ of $\otimes_{i=1}^r \mathscr{V}_{d_i}$.

\item Let $M_j$ be the reduction modulo $p$ of the image of $\widetilde{H}_j\ra \widetilde{H}_{j,\Q_p}$. We then have
\[\dim_{\cO[1/p]}H_i(\mathscr{K}_1/\mathscr{Z}_1,\widetilde{H}_{j,\Q_p}\otimes V_{\bf d})\leq \dim_{\F}H_i(\mathscr{K}_1/\mathscr{Z}_1, M_j\otimes L_{\bf d}).\]

\item Because $\SL_2(\C)$ does not admit discrete series, the assumption that $F$ is not totally real implies that  $\widetilde{H}_{j,\Q_p}$ is a torsion   $\cO[[\mathscr{K}_1]]\otimes_{\Z_p}\Q_p$-module, see \cite[Thm. 3.4]{CE-Annals}. So by Lemma \ref{lemma-p-free}, $M_j$ is a torsion $\Lambda$-module. Therefore our Theorem \ref{theorem-finalform} applies, via Theorem \ref{theorem-SL2}, and shows that 
\[\dim_{\F}H_i(\mathscr{K}_1,M_j\otimes L_{\bf d})\ll \kappa(\bn)^{2r}p^{(r-\frac{1}{2})\kappa(\bn)}\ll_{\epsilon} \kappa({\bf d})^{r-\frac{1}{2}+\epsilon}.\]

\end{enumerate}
\end{proof}

\begin{remark}\label{remark-global}
Our Theorem \ref{theorem-global-A} only gives interesting bound when all the $d_i$ tend to infinity at a parallel rate, while \cite[Thm. 1]{Mar-Annals} allows a subset of the weights $d_i$ to be fixed. 
Nonetheless,  this already includes the most interesting cases: for example,  when $F$ is imaginary quadratic, we do have $d_1=d_2$.  
\end{remark}

Next we deduce Theorem \ref{theorem-intro-A} in the introduction. We change slightly the notation. Let $Z_{\infty}$ be the centre of $\GL_2(F_{\infty})$, $K_f$ be a compact open subgroup of $\GL_2(\mathbb{A}_f)$ and let 
\[X=\GL_2(F)\backslash \GL_2(\mathbb{A})/K_fZ_{\infty}.\]  If ${\bf d}=(d_1,...,d_{r_1+r_2})$ is an $(r_1+r_2)$-tuple of positive even integers, let $S_{\bf d}(K_f)$ denote the space of cusp forms on $X$ which are of cohomological type with weight ${\bf d}$.  Then using the Eichler-Shimura isomorphism, see \cite[\S2.1]{Mar-Annals}, Theorem \ref{theorem-global-A} can be restated as follows. 

\begin{theorem}\label{theorem-global-B}
If $F$ is not totally real then for any fixed $K_f$ and ${\bf d}=(d_1,...,d_{r_1+r_2})$ as above, we have
\[\dim_{\C} S_{\bf d}(K_f)\ll_{\epsilon} \kappa({\bf d})^{r-1/2+\epsilon}.\]
\end{theorem}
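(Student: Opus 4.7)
The plan is to deduce Theorem~\ref{theorem-global-B} from Theorem~\ref{theorem-global-A} by translating between the automorphic side (cusp forms on $\GL_2$) and the cohomological side ($\SL_2$-cohomology of the local system $W_{\bf d}$). This dictionary is precisely the content of \cite[\S2.1]{Mar-Annals}, and I would simply invoke it.

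First, I would apply the Eichler-Shimura / Matsushima isomorphism to embed $S_{\bf d}(K_f)$ as a direct summand of $\bigoplus_{i} H^i(X,V_{\bf d})$, where $V_{\bf d}$ is the $\C$-local system on $X$ attached to the irreducible representation of $\GL_2(F_\infty)$ of highest weight ${\bf d}$. Each cohomological cuspidal automorphic representation of weight ${\bf d}$ contributes to $H^i(X,V_{\bf d})$ in a range of degrees determined by its $(\mathfrak{g},K_\infty)$-cohomology, but for the purpose of an upper bound the crude inequality
\[
\dim_\C S_{\bf d}(K_f)\leq \sum_{i}\dim_\C H^i(X,V_{\bf d})
\]
suffices.

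Second, I would pass from $\GL_2$ to $\SL_2$. Because $\GL_2 = Z\cdot \SL_2^{\pm}$ and the forms in $S_{\bf d}(K_f)$ are required to have a fixed central character (absorbed into the quotient by $Z_\infty$), strong approximation writes the double coset space $X$ as a finite disjoint union of arithmetic manifolds of the form $Y = \SL_2(F)\backslash \SL_2(\mathbb{A})/K_f' K_\infty$ for suitable compact open subgroups $K_f'\subset \SL_2(\mathbb{A}_f)$ derived from $K_f$; the number of components and the indices $[K_f:K_f']$ depend only on $F$ and $K_f$. Under this decomposition the local system $V_{\bf d}$ pulls back, up to a central twist that does not affect dimensions, to the local system $W_{\bf d}$ on each $Y$-component.

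Applying Theorem~\ref{theorem-global-A} to each component and summing then gives
\[
\dim_\C S_{\bf d}(K_f)\leq \sum_{i}\dim_\C H^i(X,V_{\bf d})\ll_\epsilon \kappa({\bf d})^{r-1/2+\epsilon},
\]
which is the desired bound. There is no real obstacle here: the only work is the adelic bookkeeping of how the level $K_f$ and the central character behave under the passage from $\GL_2$ to $\SL_2$, and all constants lost in this step depend only on $K_f$ and $F$ and are therefore absorbed into the implicit constant in $\ll_\epsilon$.
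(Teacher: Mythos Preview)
Your proposal is correct and follows essentially the same route as the paper, which simply invokes the Eichler--Shimura isomorphism via \cite[\S2.1]{Mar-Annals} to restate Theorem~\ref{theorem-global-A} as Theorem~\ref{theorem-global-B}. You have spelled out in more detail (the Matsushima decomposition, the passage from $\GL_2$ to $\SL_2$ via strong approximation) what the paper leaves to that reference; the only cosmetic point is that Theorem~\ref{theorem-global-A} is stated for homology $H_i$ while you write cohomology $H^i$, but over $\C$ the dimensions agree and this makes no difference.
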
 
In particular, when ${\bf d}=(d,...,d)$ is parallel,  we obtain 
\[\dim_{\C} S_{\bf d}(K_f)\ll_{\epsilon} d^{r-1/2+\epsilon}\]
which strengthens Corollary 2 of \cite{Mar-Annals} by a power $d^{1/6}$.

\section{Appendix: A generalization of Breuil-Pa\v{s}k\={u}nas' construction} \label{section-appendix}
In this appendix, we generalize a construction of Breuil and Pa\v{s}k\=unas  in \cite{BP} for $\GL_2(F)$ to a finite product of $\GL_2(F)$, where $F$ is a local field  with finite residue field $k$ of characteristic $p$. Let $\cO$ be  the ring of integers in $F$ with $\varpi$ a fixed uniformizer. We  assume $p>2 $ for simplicity.

In \cite[\S9]{BP} (which is based on \cite{Pa04}), Breuil and Pa\v{s}k\={u}nas have proven the following theorem, see \cite[Cor. 9.11]{BP}.
\begin{theorem}\label{theorem-Breuil-Paskunas}
Let $\pi$ be an admissible representation of $\GL_2(F)$ such that $\smatr{\varpi}00{\varpi}$ acts trivially on $\pi$ and $\tilde{\sigma}:=\rsoc_{\GL_2(\cO)}\pi$. Then there exists an injection $\pi\hookrightarrow \Omega$ where $\Omega$ is a smooth representation of $\GL_2(F)$ such that $\Omega|_{\GL_2(\cO)}\cong \rInj_{\GL_2(\cO)}\tilde{\sigma}$ (an injective envelope of $\tilde{\sigma}$ in the category of smooth $\F$-representations of $\GL_2(\cO)$).
\end{theorem}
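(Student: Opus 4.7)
The plan is to exploit the Bass--Serre structure of $G/\varpi^{\Z}$. Acting on its Bruhat--Tits tree, $G/\varpi^{\Z}$ is an amalgamated product $K/\varpi^{\Z} *_{I/\varpi^{\Z}} K'/\varpi^{\Z}$, where $K'=\alpha K\alpha^{-1}$ for $\alpha=\smatr{\varpi}{0}{0}{1}$ and $I=K\cap K'$ is the Iwahori. Consequently, giving a smooth $G$-representation with trivial $\varpi$-action is equivalent to giving a smooth $K$-representation $V$, a smooth $K'$-representation $V'$, and an $I$-equivariant isomorphism $V|_I\simto V'|_I$. So the whole task reduces to producing such data extending $\pi$ and with $K$-restriction isomorphic to $J:=\rInj_K\tilde{\sigma}$.

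First I would embed $\pi|_K\hookrightarrow J$ using injectivity of $J$ and that $\rsoc_K\pi=\tilde{\sigma}$, and similarly $\pi|_{K'}\hookrightarrow J':=\rInj_{K'}(\alpha\cdot\tilde{\sigma})$. The $G$-structure on $\pi$ provides an $I$-equivariant identification of $\pi|_I$ inside both $J|_I$ and $J'|_I$, but typically $J|_I$ and $J'|_I$ are not isomorphic, so one cannot directly glue. To remedy this, I would construct $\Omega$ as the direct limit of a chain $\pi=\pi_0\subset \pi_1\subset \pi_2\subset\cdots$ of smooth admissible $G$-representations, obtained inductively by alternately enlarging the $K$-socle and the $K'$-socle of the current step using partial injective envelopes, while preserving the existing $G$-action by taking amalgamated sums over the already-fixed $I$-action. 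At each step admissibility is preserved because only finitely many new irreducible $K$-constituents are added, and the ``cosocle-free'' choice of extensions ensures the $K$-socle stays equal to $\tilde{\sigma}$.

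Passing to $\Omega:=\bigcupp_n\pi_n$, the restriction $\Omega|_K$ is smooth, has socle exactly $\tilde{\sigma}$ (since each $\pi_n$ does), and has become injective by construction, because for every irreducible $K$-subquotient $\tau$ of $J$ and every $n$, the $\tau$-isotypic part of $\Omega/\pi_n$ surjects onto the corresponding piece of $J/\pi_n|_K$ in the next step of the limit. Hence by uniqueness of the injective envelope, $\Omega|_K\cong J$, giving the desired injection $\pi\hookrightarrow\Omega$.

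The main obstacle is the inductive step: verifying that enlargement can always be performed compatibly with the amalgamated product structure and that the $K$-socle does not grow under this enlargement. The former requires a careful pushout construction based on the amalgamated decomposition of $G/\varpi^{\Z}$ (essentially the argument of Pa\v{s}k\=unas \cite{Pa04}); the latter rests on the fact that extensions by the added $K$-constituents are forced to lie above $\tilde{\sigma}$ because the ambient $J$ has simple socle $\tilde{\sigma}$. Once these are controlled, the tower is well-defined and the limit $\Omega$ has all the asserted properties.
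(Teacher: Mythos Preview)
The paper does not prove this statement itself but cites \cite[Cor.~9.11]{BP}; the proof of the generalization (Theorem~\ref{theorem-BP-appendix}) in the appendix follows that method, so I compare against that.

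Your approach is genuinely different and, as written, has a real gap at exactly the point you flag as the ``main obstacle''. The paper (following \cite{BP}) does not build $\Omega$ as a direct limit of an increasing chain of $G$-representations. Instead it fixes $\Omega$ as the $K$-module $\rInj_K\tilde{\sigma}$ from the outset and constructs the missing structure on that fixed object: one defines an action of the normalizer $\mathscr{R}_1$ of the Iwahori (generated by $I$ and $t=\smatr{0}{1}{\varpi}{0}$) on $\Omega$, extending both the given $I$-action and the $\mathscr{R}_1$-action already present on $\pi\subset\Omega$. The key input is the numerical fact that the $\chi$- and $\chi^t$-isotypic pieces of $\Omega^{I_1}$ have equal dimension (Lemma~\ref{lemma-appendix-dim} here, or its $r=1$ analog in \cite{BP}); this lets one put a compatible $t$-action on a complement of $\pi^{I_1}$ inside $\Omega^{I_1}$ (Lemma~\ref{lemma-appendix-action}) and then propagate it to all of $\Omega$ by injectivity (Lemma~\ref{lemma-appendix-(S)}). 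The $K$- and $\mathscr{R}_1$-actions then glue to a $G$-action via the amalgam.

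In your tower, by contrast, each step asks you to enlarge $\pi_n|_K$ inside $J$ and then re-extend to a $G$-representation $\pi_{n+1}$. But the pushout you sketch does not produce a $G$-action: enlarging on the $K$-side changes the $I$-restriction, so there is no ``already-fixed $I$-action'' over which to amalgamate with the $K'$-side. If instead you enlarge on both sides independently and hope the $I$-restrictions match, you lose control of the $K$-socle. Nothing in your outline replaces the dimension symmetry of $I_1$-invariants, which is precisely what makes the extension of the $\mathscr{R}_1$-action exist on the fixed injective envelope in one stroke and guarantees the socle does not grow.
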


Let $r\geq 1$ be an integer. Denote \[\mathscr{G}=\prod_{i=1}^r\GL_2(F),\ \ \mathscr{K}=\prod_{i=1}^r\GL_2(\cO),\ \ \mathscr{Z}_{\varpi}=\prod_{i=1}^r \varpi^{\Z}\mathrm{Id}.\] 
The main result of this appendix is the following.
\begin{theorem}\label{theorem-BP-appendix}
Let $\pi$ be an admissible representation of $\mathscr{G}$ such that $\mathscr{Z}_{\varpi}$ acts trivially on $\pi$ and $\tilde{\sigma}:=\rsoc_{\mathscr{K}}\pi$. Then there exists an injection $\pi\hookrightarrow \Omega$ where $\Omega$ is a representation of $\mathscr{G}$ such that $\Omega|_{\mathscr{K}}\cong \rInj_{\mathscr{K}}\tilde{\sigma}$.  

If moreover $\pi$ admits a central character $\eta$, then we may require $\Omega$ to be an injective envelope of $\tilde{\sigma}$ in the category of smooth representations of $\mathscr{K}$ with the central character $\eta|_{\mathscr{K}}$.
\end{theorem}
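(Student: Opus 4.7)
The plan is to deduce the theorem from the case $r=1$ (Theorem~\ref{theorem-Breuil-Paskunas}) in two stages: first reducing to the case where $\pi$ admits a central character, and then constructing $\Omega$ inductively on $r$ by applying the Breuil--Pa\v{s}k\=unas construction one $\GL_2(F)$-factor at a time.

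First I would reduce to the central-character case. Because $\mathscr{Z}_\varpi$ acts trivially on $\pi$, the center $\mathscr{Z}$ of $\mathscr{G}$ acts through the profinite group $\mathscr{Z}/\mathscr{Z}_\varpi \cong \prod_{i=1}^{r}\cO^\times$. Smoothness together with admissibility force this action to factor through a finite quotient on each $\mathscr{K}$-stable finite-dimensional subspace, yielding a direct sum decomposition $\pi = \bigoplus_\eta \pi_\eta$ indexed by the smooth characters $\eta \colon \mathscr{Z}/\mathscr{Z}_\varpi \to \F^\times$. Each $\pi_\eta$ is admissible with central character $\eta$. If the strengthened second assertion is established for each $\pi_\eta$ separately, then setting $\Omega := \bigoplus_\eta \Omega_\eta$ gives $\pi \hookrightarrow \Omega$ with $\Omega|_\mathscr{K} \cong \bigoplus_\eta \rInj_\mathscr{K}(\rsoc_\mathscr{K}\pi_\eta) \cong \rInj_\mathscr{K}\tilde{\sigma}$, which simultaneously proves the first assertion. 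So from now on I may assume that $\pi$ has a central character $\eta$.

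Next I would argue by induction on $r$. The base case $r=1$ is Theorem~\ref{theorem-Breuil-Paskunas}. For the inductive step, write $\mathscr{G} = G \times \mathscr{G}'$ and $\mathscr{K} = K \times \mathscr{K}'$ with $G = \GL_2(F)$ and $\mathscr{G}' = \prod_{i \geq 2} G_i$. The idea is to apply BP to the $G$-action on $\pi$: by injectivity the $K$-socle inclusion extends to an embedding $\pi|_K \hookrightarrow \rInj_K(\rsoc_K\pi)$, and then the $K$-action is extended to a $G$-action via the normalizer element $w = \smatr{0}{1}{\varpi}{0}$, whose action on $\pi$ dictates the extension. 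Since the $\mathscr{G}'$-action on $\pi$ commutes with $G$, each step of the BP construction can be carried out $\mathscr{G}'$-equivariantly, producing an embedding $\pi \hookrightarrow \Omega^{(1)}$ of smooth $G \times \mathscr{G}'$-representations such that $\Omega^{(1)}|_{K}$ realizes an injective $K$-envelope of $\rsoc_K\pi$, carrying a compatible smooth admissible $\mathscr{G}'$-action. Applying the inductive hypothesis to $\Omega^{(1)}$ as a smooth admissible $\mathscr{G}'$-representation (with the $K$-action intact) then produces the desired $\Omega$.

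The hard part will be verifying rigorously that the construction of \cite[\S9]{BP} extends $\mathscr{G}'$-equivariantly: one must inspect the diagrams of $(K_IZ, KZ)$-representations, Hecke operators, and choices of lifts used there and check that each is functorial with respect to commuting smooth actions. All the intermediate constructions there are essentially canonical, so this should reduce to careful bookkeeping, but it is the main non-trivial point that the appendix needs to address. An alternative, perhaps cleaner route would be to work directly on the dual side with coadmissible objects in $\mathfrak{C}(\mathscr{G})$: construct a surjection from a finite-free-over-$\Lambda$ projective cover carrying a compatible $\mathscr{G}$-action, by combining the factor-wise BP constructions via completed tensor products; the delicate step there is to compare the tensor-product envelope with one computed directly over $\mathscr{K}$.
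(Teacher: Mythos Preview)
Your inductive scheme has two genuine obstructions that are not mere bookkeeping.

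First, after applying the $r=1$ construction to the first factor, the resulting $\Omega^{(1)}$ is \emph{not} admissible as a $\mathscr{G}'$-representation, so the inductive hypothesis does not apply. Already for $\pi=\pi_1\otimes\pi_2$ with each $\pi_i$ infinite-dimensional irreducible, one expects $\Omega^{(1)}\cong\Omega_1\otimes\pi_2$ with $\Omega_1=\rInj_K(\rsoc_K\pi_1)$, and then $(\Omega^{(1)})^{U'}=\Omega_1\otimes\pi_2^{U'}$ is infinite-dimensional for every open $U'\subset\mathscr{K}'$ because $\Omega_1$ is. (A smaller issue in the same spirit: your preliminary decomposition $\pi=\bigoplus_\eta\pi_\eta$ is also invalid in general, since the pro-$p$ part of $\mathscr{Z}/\mathscr{Z}_\varpi$ need not act semisimply on $\pi$.)

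Second, the Breuil--Pa\v{s}k\=unas construction is not canonical: extending the $H$-action on the $I_1$-invariants to $\langle H,s\rangle$ requires \emph{choosing} an isomorphism $V_\chi\simto V_{\chi^s}$ (see \cite[Lem.~9.6]{BP}), and the extension of the normalizer action via property~(S) in \cite[Lem.~9.5]{BP} likewise involves a choice. There is no mechanism forcing these choices to be $\mathscr{G}'$-equivariant, and demanding that they be is tantamount to the theorem you are trying to prove. So the step you flag as careful bookkeeping is in fact the entire difficulty, and your claim that ``all the intermediate constructions there are essentially canonical'' is incorrect.

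The paper sidesteps both issues by generalizing the construction directly to the product group rather than inducting on the $\GL_2$-factors: one takes a $\mathscr{K}$-injective envelope $\pi\hookrightarrow\Omega$ once and for all, and then extends the $\mathscr{K}$-action to a $\mathscr{G}$-action in one shot via the amalgam structure of $\mathscr{G}$ over $\mathscr{K}$ and $\mathscr{R}_1$. The only genuinely new ingredient beyond $r=1$ is extending an $\mathscr{H}$-action on $\Omega^{\mathscr{I}_1}$ to an action of $\mathscr{H}\rtimes\Delta$ with $\Delta\cong(\Z/2\Z)^r$; this is handled by an elementary linear-algebra induction on the $\Z/2\Z$-factors of $\Delta$ (Lemma~\ref{lemma-appendix-action}), where at each stage the new isomorphisms $V_{\chi}\simto V_{\chi^{t_s}}$ are chosen compatibly with the previously constructed $\Delta_{s-1}$-action.
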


The proof of Theorem \ref{theorem-BP-appendix} is an easy generalization of the original proof in \cite[\S9]{BP}. We only indicate the changes needed; for this we keep mostly the notation there.  We define the following subgroups of $\GL_2(F)$ (where $\p=\varpi\cO$ and $[\cdot]$ means Teichm\"uller lift):
\[I_1=\matr{1+\p}{\cO}{\p}{1+\p}, \ \ \ I=\matr{\cO^{\times}}{\cO}{\p}{\cO^{\times}},\]
\[H=\bigg\{\matr{[\lambda]}00{[\mu]},\lambda,\mu\in k^{\times}\bigg\}\subset I.\]
Let $\mathscr{I}_1,\mathscr{I},\mathscr{H}$ be respectively a product of $r$ copies of $I_1$, $I$, $H$, viewed as subgroups of $\mathscr{G}$. Then $\mathscr{H}$ has order prime to $p$ and provides a section for $\mathscr{I}\twoheadrightarrow \mathscr{I}/\mathscr{I}_1$. 
Let $\mathscr{R}_1$ denote the normalizer of $\mathscr{I}$ in $\mathscr{G}$, which as a group is generated by $\mathscr{I}$ and the  elements $\{t_i, 1\leq i\leq r\}$, where $t_i\in \mathscr{G}$ takes $\smatr{0}1{\varpi}0$ at the index $i$ and $\smatr{1}001$ at other indices.
Note that $t_it_j=t_jt_i$ and $t_i^2\in \mathscr{Z}_{\varpi}$. In other words, 
\[\Delta:=\langle t_i,1\leq i\leq r\rangle/\mathscr{Z}_{\varpi}\cong \prod_{i=1}^r\Z/2\Z.\]
It is clear that $\Delta$ normalizes $\mathscr{H}$ and there is an isomorphism of groups 
\begin{equation}\label{equation-appendix-isom}
 \mathscr{R}_1/\mathscr{I}_1\mathscr{Z}_{\varpi}\cong \mathscr{H}\rtimes \Delta.\end{equation} 
 

\begin{lemma}\label{lemma-appendix-(S)}
Let $\tau$ be a smooth admissible representation of $\mathscr{R}_1$ on which $\mathscr{Z}_{\varpi}$ acts trivially. Let  $\iota:\tau|_{\mathscr{I}}\hookrightarrow \rInj_{\mathscr{I}}(\tau|_{\mathscr{I}})$ be an injective envelope of $\tau|_{\mathscr{I}}$, then there exists an action of $\mathscr{R}_1$ on $\rInj_{\mathscr{I}} (\tau|_{\mathscr{I}})$ such that $\iota$ is $\mathscr{R}_1$-equivariant.
\end{lemma}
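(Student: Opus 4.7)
The argument closely parallels Pa\v{s}k\=unas' original proof in \cite[\S9]{BP} for the case $r = 1$, and the structural fact that makes the generalization succeed is \eqref{equation-appendix-isom}: the finite group $\Delta = \mathscr{R}_1/\mathscr{I}\mathscr{Z}_\varpi \cong (\Z/2\Z)^r$ has order $2^r$, which is prime to $p$ under the standing hypothesis $p > 2$.

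First, for each $i$ I would construct an $\F$-linear automorphism $T_i$ of $\Omega := \rInj_\mathscr{I}(\tau|_\mathscr{I})$ extending the $t_i$-action $\phi_i: \tau \to \tau$ and satisfying the twisted equivariance $T_i(xv) = (t_i x t_i^{-1}) T_i(v)$ for all $x \in \mathscr{I}$ and $v \in \Omega$. This follows from the universal property of injective envelopes: $\Omega$, viewed with the $\Ad(t_i)$-twisted $\mathscr{I}$-action, is an injective envelope of $\tau|_\mathscr{I}$ with its $\Ad(t_i)$-twisted action, and the latter is identified with $\tau|_\mathscr{I}$ via $\phi_i$; extending $\phi_i$ to $\Omega$ yields $T_i$, which is automatically an isomorphism by essentialness of the embedding.

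Next, since $t_i^2 \in \mathscr{Z}_\varpi$ and $[t_i, t_j] \in \mathscr{Z}_\varpi$ act trivially on $\tau$ and centrally on $\mathscr{I}$, both $T_i^2$ and $[T_i, T_j]$ lie in the subgroup $U := \{T \in \End_\mathscr{I}(\Omega)^\times : T \circ \iota = \iota\}$. Hence the $T_i$ together with the $\mathscr{I}$-action define a group extension
\[ 1 \to U \to \widetilde{\Delta} \to \Delta \to 1, \]
and my task reduces to splitting this extension. A chosen section $\Delta \to \widetilde{\Delta}$ produces modified operators $\widetilde{T}_i = T_i \cdot u_i$ (with $u_i \in U$) satisfying $\widetilde{T}_i^2 = \id$ and $[\widetilde{T}_i, \widetilde{T}_j] = \id$; since every $u_i$ restricts to the identity on $\iota(\tau)$, these modifications automatically preserve the prescribed action on $\tau$, yielding the desired $\mathscr{R}_1$-equivariance of $\iota$.

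The crucial technical input is that $U$ is pro-$p$. Since the embedding $\tau \hookrightarrow \Omega$ is essential, the $\mathscr{I}$-socle of $\Omega$ is contained in $\iota(\tau)$, so any element of $U$ restricts to the identity on $\soc(\Omega)$ and thus differs from the identity by an element of the Jacobson radical of $E := \End_\mathscr{I}(\Omega)$; the latter, being the endomorphism ring of a projective cover of $\tau^\vee$ in the category of pseudo-compact $\F[[\mathscr{I}/\mathscr{Z}_\varpi]]$-modules, is a semi-local pseudo-compact $\F$-algebra whose radical consists of topologically nilpotent elements. Thus $U \subseteq 1 + \rad(E)$ is pro-$p$, and Schur--Zassenhaus (applicable because $|\Delta| = 2^r$ is coprime to $p$) supplies the required splitting. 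The main obstacle, compared to the single-copy case, is just this verification that $U$ is pro-$p$ together with keeping track of multiple commuting involutions; the essential input --- invertibility of $2$ in $\F$ --- is unchanged.
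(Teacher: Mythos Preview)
Your proposal is correct and follows essentially the same route as the paper, which simply defers to \cite[Lem.~9.5]{BP} and remarks that the ``property~(S)'' used there (valid for $p>2$) generalises verbatim. What you have written is precisely the unwinding of that citation: construct each $T_i$ by the universal property of injective envelopes, observe that the obstructions $T_i^2$ and $[T_i,T_j]$ land in the pro-$p$ group $U=1+\rad\big(\End_{\mathscr{I}}(\Omega)\big)$, and then invoke Schur--Zassenhaus (coprimality of $|\Delta|=2^r$ and $p$) to split the resulting extension; this is exactly the ``generalised property~(S)'' the paper alludes to.
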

\begin{proof}
The proof is identical to that of \cite[Lem. 9.5]{BP}, using the (generalized) property (S)  defined in \cite[Def. 9.1]{BP} which holds as $p>2$ (see \cite[Prop. 9.2]{BP}). 
\end{proof}
Any $\F$-representation $V$ of $\mathscr{H}$ is semi-simple and decomposes as
$\oplus_{\chi}V_{\chi}$ where $\chi:\mathscr{H}\ra \F^{\times}$ runs over all the characters and $V_{\chi}$ denotes the $\chi$-isotypic subspace. For a character $\chi:\mathscr{H}\ra \F^{\times}$ and $t\in \mathscr{R}_1$, we let $\chi^t$ denotes the conjugate character:
\[\chi^t(g):=\chi(t^{-1}gt);\] 
this induces an action of $\Delta$ on the set of characters $\{\chi:\mathscr{H}\ra \F^{\times}\}$. We write $\langle \Delta.\chi\rangle$ for the $\Delta$-orbit generated by 
$\chi$, that is,  the set of characters (without multiplicities) $\{\chi^{t}, t\in\Delta\}$.

\begin{lemma}\label{lemma-appendix-action}
Let $V$ be a finite dimensional $\F$-representation of $\mathscr{H}$  such that:
\begin{equation}\label{equation-appendix-equality}\dim_{\F}V_{\chi}=\dim_{\F} V_{\chi^{t}},\ \ \forall t\in\Delta.\end{equation}
Then the action of $\mathscr{H}$ on $V$ can be extended to an action of $\mathscr{H}\rtimes\Delta$. 
\end{lemma}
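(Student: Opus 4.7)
The plan is to decompose $V$ according to $\Delta$-orbits on the set of characters of $\mathscr{H}$ and recognize each summand as an induced representation from the stabilizer in $\Delta$ of an orbit representative. Since $\mathscr{H}$ has order prime to $p$, we have $V=\bigoplus_{\chi}V_{\chi}$, and this decomposition refines as $V=\bigoplus_{\mathcal{O}}V_{\mathcal{O}}$ where $\mathcal{O}$ ranges over the $\Delta$-orbits and $V_{\mathcal{O}}=\bigoplus_{\chi\in\mathcal{O}}V_{\chi}$. It clearly suffices to extend the $\mathscr{H}$-action on each $V_{\mathcal{O}}$ to an $\mathscr{H}\rtimes\Delta$-action, so I would fix an orbit $\mathcal{O}$ in the rest.

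Next, I would pick a representative $\chi_0\in\mathcal{O}$ and let $\Delta_0=\{t\in\Delta:\chi_0^t=\chi_0\}$ be its stabilizer. Since $\chi_0^t=\chi_0$ for every $t\in\Delta_0$, the trivial action of $\Delta_0$ on $V_{\chi_0}$ commutes with the $\mathscr{H}$-action and defines an $\mathscr{H}\rtimes\Delta_0$-module structure on $V_{\chi_0}$. Form the induced module
\[ W:=\mathrm{Ind}_{\mathscr{H}\rtimes\Delta_0}^{\mathscr{H}\rtimes\Delta}V_{\chi_0}, \]
which is naturally an $\mathscr{H}\rtimes\Delta$-module. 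A direct computation using coset representatives shows that as an $\mathscr{H}$-module $W=\bigoplus_{s\in\Delta/\Delta_0}V_{\chi_0}^{(s)}$, where $V_{\chi_0}^{(s)}$ is a copy of $V_{\chi_0}$ on which $\mathscr{H}$ acts through $\chi_0^{s}$. Therefore the $\chi$-isotypic component of $W$ has dimension $\dim_{\F}V_{\chi_0}$ for each $\chi\in\mathcal{O}$.

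By the hypothesis \eqref{equation-appendix-equality}, this matches $\dim_{\F}V_{\chi}$ for each $\chi\in\mathcal{O}$, so $W$ and $V_{\mathcal{O}}$ are isomorphic as $\mathscr{H}$-modules. Transporting the $\mathscr{H}\rtimes\Delta$-structure on $W$ to $V_{\mathcal{O}}$ via any $\mathscr{H}$-equivariant isomorphism yields the desired extension. Summing over orbits gives the $\mathscr{H}\rtimes\Delta$-action on $V$. The only compatibility to check is that the extended action respects the relations in $\Delta$; since $\Delta$ is an elementary abelian $2$-group and $t_i^2\in\mathscr{Z}_{\varpi}$ acts trivially, and our construction is directly an $\mathscr{H}\rtimes\Delta$-module structure obtained from induction, this is automatic. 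The construction involves no real obstacle—the whole content is the observation that \eqref{equation-appendix-equality} is exactly the numerical constraint for $V_{\mathcal{O}}$ to be $\mathscr{H}$-isomorphic to an induction from $\mathscr{H}\rtimes\Delta_0$.
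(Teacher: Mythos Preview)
Your proof is correct and takes a genuinely different route from the paper's. The paper proceeds by induction on $s$, building the action of $\Delta_s=\prod_{i=1}^s\Z/2\Z$ one generator $t_s$ at a time: on each $\Delta_s$-orbit of characters it lets $t_s$ act trivially when $\chi^{t_s}=\chi$, and otherwise chooses an arbitrary isomorphism $V_\chi\simto V_{\chi^{t_s}}$ and propagates it via the already-constructed $\Delta_{s-1}$-action so that $t_s$ commutes with $\Delta_{s-1}$. Your argument bypasses the induction entirely by recognising each orbit-summand $V_{\mathcal{O}}$ as $\mathscr{H}$-isomorphic to $\Ind_{\mathscr{H}\rtimes\Delta_0}^{\mathscr{H}\rtimes\Delta}V_{\chi_0}$ (with $\Delta_0$ acting trivially on $V_{\chi_0}$), which is legitimate precisely because $\Delta_0$ stabilises $\chi_0$. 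This is cleaner, packages all the relations in $\Delta$ at once, and in fact works for any finite group $\Delta$ acting on $\mathscr{H}$, not just $(\Z/2\Z)^r$; the paper's hands-on construction has the virtue of being more explicit about where the choices lie, which is closer in spirit to how the resulting $\Omega$ is later used. Your closing paragraph about checking relations in $\Delta$ is superfluous, since the induced representation is by construction an honest $\mathscr{H}\rtimes\Delta$-module.
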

\begin{proof}
We will construct an action of $\Delta$ on $V$ by constructing  inductively actions of $\Delta_s:=\prod_{i=1}^s\Z/2\Z$ (so $\Delta_r=\Delta$). The case $s=1$ is easy, see the proof of \cite[Lem. 9.6]{BP}.  Assume the action of $\Delta_{s-1}$ has been constructed. To construct the action of $\Delta_s$ amounts to defining an action of $t_s$ which has order $2$ and commutes with the given one of $\Delta_{s-1}$. It is clear that $V$ decomposes as a direct sum of subspaces  each of which has the form $\oplus_{\chi'\in \langle\Delta_{s}.\chi\rangle}V_{\chi'}$ (i.e. with respect to the action of $\Delta_s$), so it suffices to define the action of $t_s$ on each summand. Fixing a character $\chi$, we have two cases:

\begin{enumerate}
\item[--] If   $\chi^{t_s}=\chi$, then $\langle \Delta_{s}.\chi\rangle=\langle\Delta_{s-1}.\chi\rangle$, and we let $t_s$ acts trivially on $\oplus_{\chi'\in\langle\Delta_{s}.\chi\rangle}V_{\chi'}$.
\item[--] If $\chi^{t_s}\neq \chi$, then we have a disjoint union
\[\langle \Delta_s.\chi\rangle=\langle \Delta_{s-1}.\chi\rangle \cup \langle \Delta_{s-1}.\chi^{t_s}\rangle.\]
We choose an (arbitrary) $\F$-linear isomorphism $\phi_{\chi,\chi^{t_s}}: V_{\chi}\simto V_{\chi^{t_s}}$ and set $\phi_{\chi^{t_s},\chi}:=\phi_{\chi,\chi^{t_s}}^{-1}:V_{\chi^{t_s}}\simto V_{\chi}$. For any $t\in \Delta_{s-1}$, we consider
\[\xymatrix{V_{\chi}\ar^{\phi}_{\sim}[d]&V_{\chi^{t}}\ar@{-->}[d]\ar^{\sim}_{t^{-1}}[l]\\ 
V_{\chi^{t_s}}\ar_{\sim}^{t}[r]&V_{\chi^{t\cdot t_s}}}\]
and define  $\phi_{\chi^{t},\chi^{t\cdot t_s}}: V_{\chi^{t}}\simto V_{\chi^{t\cdot t_s}}$ to be   the composition
$t\circ\phi\circ t^{-1}$, resp.  $\phi_{\chi^{t\cdot t_s},\chi^{t}}:=\phi_{\chi^{t},\chi^{t\cdot t_s}}^{-1}:V_{\chi^{t\cdot t_s}}\simto V_{\chi^{t}}$. Clearly, putting them together uniquely determines a (compatible) action of $t_s$, hence of $\Delta_s$.
\end{enumerate}
 This finishes the proof by induction.
\end{proof}
\begin{lemma}\label{lemma-appendix-dim}
Let $\sigma$ be an irreducible $\F$-representation of $\mathscr{K}$ and $\rInj_{\mathscr{K}}\sigma$ an injective envelope of $\sigma$. Then $V:=(\rInj_{\mathscr{K}}\sigma)^{\mathscr{I}_1}$ satisfies the condition \eqref{equation-appendix-equality}.
\end{lemma}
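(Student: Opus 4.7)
The plan is to reduce the lemma to the case $r=1$ via a tensor product decomposition of $\rInj_{\mathscr{K}}\sigma$, and then to reduce the $r=1$ case to a classical fact about parabolic induction in $\Rep_{\F}(\GL_2(k))$.

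Since $\F$ is sufficiently large, $\sigma$ factors as an external tensor product $\sigma\cong\bigotimes_{i=1}^r\sigma_i$ with each $\sigma_i\in\Rep_{\F}(\GL_2(\cO))$ irreducible. The projective cover of $\bigotimes_i\sigma_i^{\vee}$ in $\mathfrak{C}(\mathscr{K})$ is the completed tensor product of the $P_{\sigma_i^{\vee}}$, so dualising gives $\rInj_{\mathscr{K}}\sigma\cong\bigotimes_{i=1}^r\rInj_{\GL_2(\cO)}\sigma_i$ as smooth $\mathscr{K}$-representations, and taking $\mathscr{I}_1=\prod_i I_1$-invariants factorwise yields $V\cong\bigotimes_{i=1}^r V_i$ with $V_i:=(\rInj_{\GL_2(\cO)}\sigma_i)^{I_1}$, viewed as an $H$-representation. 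Writing $\chi=\bigotimes_i\chi_i$ we obtain $\dim_{\F}V_{\chi}=\prod_i\dim_{\F}(V_i)_{\chi_i}$. A direct matrix calculation (using $t_j\cdot\diag([\lambda],[\mu])\cdot t_j^{-1}=\diag([\mu],[\lambda])$ in the $j$-th factor) shows that conjugation by $t_j$ is trivial on $\mathscr{H}$ outside the $j$-th factor and coincides with the non-trivial Weyl involution $w$ on the $j$-th factor, so $\chi^{t_j}$ differs from $\chi$ only by replacing $\chi_j$ with $\chi_j^w$. Since $\Delta$ is generated by the $t_j$, condition \eqref{equation-appendix-equality} is equivalent to $\dim_{\F}(V_j)_{\chi_j}=\dim_{\F}(V_j)_{\chi_j^w}$ for every $j$ and $\chi_j$, i.e.\ to the case $r=1$.

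For $r=1$, write $K=\GL_2(\cO)$ and let $K^{(1)}=1+\varpi M_2(\cO)$ be the first principal congruence subgroup, so that $K^{(1)}\subset I_1$ is normal and pro-$p$ with $K/K^{(1)}\cong\GL_2(k)$. Using the Teichm\"uller splitting $\F[[K]]\cong\F[\GL_2(k)]\ltimes\F[[K^{(1)}]]$, a direct computation identifies the $K^{(1)}$-coinvariants of the projective cover $P_{\sigma^{\vee}}\in\mathfrak{C}(K)$ with the projective cover of $\sigma^{\vee}$ in $\Rep_{\F}(\GL_2(k))$; dually, $(\rInj_K\sigma)^{K^{(1)}}\cong\rInj_{\GL_2(k)}\sigma=:P(\sigma)$ (injective envelope $=$ projective cover for the finite group $\GL_2(k)$). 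Since $I_1/K^{(1)}=U(k)$ is the unipotent radical of the Borel $B(k)\subset\GL_2(k)$, we get $V=P(\sigma)^{U(k)}$. Frobenius reciprocity together with $\dim_{\F}\Hom_{\GL_2(k)}(M,P(\sigma))=[M:\sigma]$ for $M$ of finite length (valid since $P(\sigma)$ has simple socle $\sigma$ and $\Hom(-,P(\sigma))$ is exact) gives
\[\dim_{\F}V_{\chi}=\dim_{\F}\Hom_{B(k)}(\chi,P(\sigma))=[\Ind_{B(k)}^{\GL_2(k)}\chi:\sigma],\]
so it suffices to show that $\Ind_{B(k)}^{\GL_2(k)}\chi$ and $\Ind_{B(k)}^{\GL_2(k)}\chi^w$ have identical composition factors in $\Rep_{\F}(\GL_2(k))$. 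This is classical in the mod $p$ theory of $\GL_2(\F_q)$: one may either compare Brauer characters on $p$-regular semisimple elements (both induced characters are given by the orbit sum $\chi+\chi^w$ on the split torus and both vanish on the non-split torus) or invoke the long intertwining operator $T_\chi=\sum_{u\in U(k)}$ and its opposite $T_{\chi^w}$, whose composition is a scalar and which together force the kernel and image of $T_\chi$ to have equal semisimplifications.

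The main obstacle is this final step in characteristic $p$: when $\chi=\chi^w$ the principal series becomes reducible and the intertwining operator fails to be an isomorphism, so one must track composition factors through the kernel and image with a little care. This is nevertheless well-documented in the mod $p$ representation theory of $\GL_2(\F_q)$, so it should not cause a serious difficulty.
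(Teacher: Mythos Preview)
Your argument is correct and follows essentially the same route as the paper: both reduce to $r=1$ by writing $(\rInj_{\mathscr{K}}\sigma)^{\mathscr{I}_1}\cong\bigotimes_i(\rInj_{\GL_2(k)}\sigma_i)^{U(k)}$ and observing that each $t_j$ acts by the Weyl involution on the $j$-th factor only. The paper then simply cites \cite[Lem.~9.6]{BP} for the $r=1$ case, whereas you go further and supply a proof via $\dim V_\chi=[\Ind_{B(k)}^{\GL_2(k)}\chi:\sigma]$ together with the equality of Brauer characters of $\Ind\chi$ and $\Ind\chi^w$ on $p$-regular classes; this argument is clean and correct (and makes your alternative intertwining-operator sketch unnecessary). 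One small remark: your ``main obstacle'' paragraph mislocates the difficulty --- the case $\chi=\chi^w$ is trivially symmetric, and it is rather for $\chi\neq\chi^w$ that the intertwining operator can degenerate in characteristic $p$; but since your Brauer-character argument handles all cases uniformly, this does not affect the proof.
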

\begin{proof}
Any irreducible $\sigma\in\Rep_{\F}(\mathscr{K})$ has the form $\otimes_{i=1}^r\sigma_i$ with each $\sigma_i\in \Rep_{\F}(K)$. We have 
\[(\rInj_{\mathscr{K}}\sigma)^{\mathscr{I}_1}=\big(\rInj_{\prod_{i=1}^r\GL_2(k)}\sigma\big)^{\mathscr{I}_1}\cong\big(\otimes_{i=1}^r\rInj_{\GL_2(k)}\sigma_i\big)^{\mathscr{I}_1}=\otimes_{i=1}^r(\rInj_{\GL_2(k)}\sigma_i)^{I_1};\]
here we have used the isomorphism $\rInj_{\prod_{i=1}^r\GL_2(k)}\sigma\cong \otimes_{i=1}^r\rInj_{\GL_2(k)}\sigma_i$. The result then follows from a similar result in the case $r=1$, see the proof of \cite[Lem. 9.6]{BP}.
\end{proof}

\begin{proof}[Proof of Theorem \ref{theorem-BP-appendix}] Since $\pi$ is admissible, we may take an injective envelope $\iota:\pi\hookrightarrow \Omega$. We will define an action of $\mathscr{R}_1$ on $\Omega$ which extends the given action of $\mathscr{I}$ on $\Omega$ and such that $\iota$ is $\mathscr{R}_1$-equivariant.  

  By  Lemma \ref{lemma-appendix-dim}, $V:=\Omega^{\mathscr{I}_1}$ satisfies the condition \eqref{equation-appendix-equality}. On the other hand, since $\pi$ carries an action of $\mathscr{G}$, $W:=\pi^{\mathscr{I}_1}$ also satisfies \eqref{equation-appendix-equality}.  So we may decompose $\mathscr{I}$-equivariantly $V$ as $W\oplus W'$ with $W'$ satisfying \eqref{equation-appendix-equality}, hence a decomposition: \[\Omega|_{\mathscr{I}}=\rInj_{\mathscr{I}}W\oplus\rInj_{\mathscr{I}}W'\]
  such that $\pi\subset \rInj_{\mathscr{I}}W$. Lemma \ref{lemma-appendix-action} allows us to define an action of $\Delta$, hence an action of $\mathscr{R}_1$ on $W'$  via \eqref{equation-appendix-isom}.  Then Lemma \ref{lemma-appendix-(S)} allows to extend the action of $\mathscr{R}_1$ on $\pi$ (resp. on $W'$) to the whole $\rInj_{\mathscr{I}}W$ (resp. $\rInj_{\mathscr{I}}W'$).  Putting them together, we obtain an action of $\mathscr{R}_1$ on $\Omega$ which makes $\iota$ to be $\mathscr{R}_1$-equivariant. Finally using  the ``amalgame'' structure of $\GL_2(F)$  which generalizes to $\mathscr{G}$, the two actions of $\mathscr{K}$ and $\mathscr{R}_1$ on $\Omega$ glue to an action of $\mathscr{G}$ on $\Omega$ (such that $\mathscr{Z}_{\varpi}$ acts trivially), as in \cite[Cor. 5.5.5]{Pa04}.  Note that in \cite{Pa04}, Cor. 5.5.5 is proved by passing to diagrams, but this can be circumvented  because we can simply write down the gluing action of $\mathscr{G}$ using the ones of $\mathscr{K}$ and $\mathscr{R}_1$.

The last assertion is clear, by taking the sub-space of  $\Omega_{\eta}\subset \Omega$ on which the centre of $\mathscr{G}$ acts via $\eta$.
\end{proof}

\bigskip


\bigskip

\bigskip

 \end{document}